\documentclass{amsart}

\numberwithin{equation}{section}

\usepackage[english]{babel}
\usepackage[letterpaper,top=2cm,bottom=2cm,left=3cm,right=3cm,marginparwidth=1.75cm]{geometry}

\usepackage{amsmath,amssymb,amsthm,mathtools}
\usepackage{mathabx}
\usepackage{enumitem}
\usepackage{mathrsfs}
\usepackage{scalerel,stackengine}
\usepackage{booktabs}
\usepackage{tabularx}
\usepackage{xfrac}
\usepackage{indentfirst}
\usepackage{cases}
\usepackage{graphicx}
\usepackage[colorlinks=true, linkcolor=blue, citecolor=blue, urlcolor=blue]{hyperref}

\stackMath

\newcommand{\dd}{\,d}
\newcommand{\ang}[1]{\langle #1\rangle}
\newcommand{\eps}{\varepsilon}

\newcommand{\spar}{\scriptscriptstyle /\!/}
\newcommand\reallywidehat[1]{%
\savestack{\tmpbox}{\stretchto{%
  \scaleto{%
    \scalerel*[\widthof{\ensuremath{#1}}]{\kern-.6pt\bigwedge\kern-.6pt}%
    {\rule[-\textheight/2]{1ex}{\textheight}}%
  }{\textheight}%
}{0.5ex}}%
\stackon[1pt]{#1}{\tmpbox}%
}

\theoremstyle{plain}
\newtheorem{theorem}{Theorem}[section]

\newtheorem{prop}[theorem]{Proposition}
\newtheorem{lemma}[theorem]{Lemma}

\newtheorem{cor}[theorem]{Corollary}

\theoremstyle{definition}
\newtheorem{definition}[theorem]{Definition}

\newtheorem{remark}[theorem]{Remark}

\title[Sharp Thresholds for Wave Kinetic Equations]{Sharp Decay Thresholds in Weighted $L^\infty$ for Wave Kinetic Equations with Power-Law Dispersion}

\author{Xilu Zhu}
\address{Department of Mathematics, University of Michigan, Ann Arbor, MI, USA}
\email{xiluzhu@umich.edu}

\keywords{wave kinetic equation, dispersive equations, sharp threshold, local well-posedness}

\begin{document}

\begin{abstract}
We study four-wave kinetic equations in space dimension three with power-law dispersion $\omega(p)=|p|^a$ and collision kernels with high-frequency growth measured by $\beta$. In weighted $L^\infty$ spaces, we identify the sharp decay threshold
$$
s_c=4\beta+3-\frac a2.
$$
For $s>s_c$, we prove local well-posedness by establishing trilinear bounds for the full gain-loss collision operator. For $s<s_c$, we prove ill-posedness by constructing data concentrated near a high-low-low-high resonant configuration. This threshold captures the balance between the high-frequency strength of the kernel and the geometry of the resonant manifold. The proof also shows that gain-loss cancellations are essential in the most delicate regimes.
\end{abstract}

\maketitle

\tableofcontents

\section{Introduction}

\subsection{Introduction of the Problem}
\ \par
Kinetic equations are fundamental models in statistical mechanics. They describe the effective evolution of systems with a large number of interacting particles or waves, where the microscopic dynamics are too complicated to track directly. Classical examples include the Boltzmann equation for dilute gases and wave kinetic equations arising in weak turbulence theory. In the latter setting, many weakly nonlinear dispersive equations are expected, under suitable random phase assumptions and kinetic scaling limits, to lead to effective kinetic descriptions for the spectral statistics of the waves.

The rigorous derivation of kinetic equations is a major problem in mathematical physics. For particle systems, the derivation of the Boltzmann equation from microscopic dynamics goes back to the classical work of Lanford, and recent work of Deng, Hani, and Ma has obtained a long-time derivation from hard sphere dynamics \cite{Lanford1975,DengHaniMa2024}. For wave systems, important progress toward the derivation of wave kinetic equations was made by Buckmaster, Germain, Hani, and Shatah \cite{BuckmasterGermainHaniShatah2021}, by Collot and Germain \cite{CollotGermainHomogeneous,CollotGermainLonger} and by Deng and Hani \cite{DengHani2021, DengHani2023FullRange}. The strongest results in this direction are due to Deng and Hani, who first obtained a full derivation of the wave kinetic equation from the cubic NLS at the kinetic time scale \cite{DengHani2023Full}, and later extended the justification to arbitrarily long times covering the full lifespan of the limiting WKE \cite{DengHani2023LongTime}. We also mention related derivation results for other dispersive models, including KdV-type equations, stochastic KP equations, and inhomogeneous kinetic limits \cite{Ma2022,StaffilaniTran2021,Faou2020,AmpatzoglouCollotGermain2025,HaniShatahZhu2024,HannaniRosenzweigStaffilaniTran2022,VassilevWu2026FullFPUT,Vassilev2025OneDimensional,Wu2025FPUT}.

Once a kinetic equation is derived or proposed as an effective model, it is natural to study the equation itself. This analytic problem is independent of the derivation question: one wants to know in which spaces the collision operator is well defined, whether the Cauchy problem is locally well posed, and where such a theory breaks down. This is particularly delicate for wave kinetic equations, since the collision operator is supported on resonant sets and its mapping properties depend strongly on both the interaction kernel and the dispersion relation.

There has also been substantial progress on the analysis of wave kinetic equations themselves. For the classical four-wave kinetic equation with Schr{\"o}dinger dispersion, Escobedo and Vel{\'a}zquez studied weak solutions in the isotropic setting \cite{EscobedoVelazquez2015}, and Germain, Ionescu, and Tran established an optimal local well-posedness theory in weighted $L^2$ and $L^\infty$ spaces \cite{GermainIonescuTran2020}. In one dimension, Germain, La, and Zhang developed a local theory for the kinetic MMT model with general dispersion relation, using nonlinear smoothing effects of the collision operator \cite{GermainLaZhang2025}. More recently, Ampatzoglou and L{\'e}ger identified a sharp well/ill-posedness threshold for quasilinear kinetic wave equations \cite{AmpatzoglouLeger2025}, and also gave a unified treatment of the optimal local theory in weighted $L^r$ spaces, $2\le r\le\infty$ \cite{AmpatzoglouLegerJDE}. For kinetic equations arising from one-dimensional FPUT chains, Germain, La, and Menegaki proved local well-posedness and stability results in weighted $L^\infty$ spaces \cite{GermainLaMenegaki2024FPUT}; see also their recent review \cite{GermainLaMenegaki2026OscillatorChains}. We also mention the work of Pan and Wu on the gravity water wave kinetic equation, which involves a more complicated physical collision kernel \cite{PanWu2026}. These results show that the behavior of a kinetic wave equation depends sensitively on the collision kernel, the dispersion relation, and the geometry of the resonant set.

The goal of the present paper is to study a model family of four-wave kinetic equations which makes this dependence explicit. On the one hand, the collision kernel contains a high-frequency weight measured by a parameter $\beta$, which represents the strength of the interaction and can be viewed as a derivative loss. Larger $\beta$ makes the collision operator more singular. On the other hand, we consider the power-law dispersion relation
$$\omega(p)=|p|^a.$$
The exponent $a$ changes the geometry of the resonance manifold and therefore changes the size of the resonant integral. Our main result gives a sharp local well-posedness and ill-posedness threshold in weighted $L^\infty$ spaces, showing quantitatively how the kernel strength and the dispersive geometry balance each other.

\subsection{The Model}
\ \par
Let $1\le a\le 5$ and $0\le \beta\le 1$. The purpose of this paper is to investigate the kinetic theory for systems of waves governed by the quasilinear equations
\begin{align}
    i\partial_t u+|\nabla|^a u=|\nabla|^\beta\left(|\nabla|^\beta u\, \big||\nabla|^\beta u\big|^2\right)
\end{align}
for $u:(-T,T)\times \mathbb{T}^3\rightarrow \mathbb{C}$, $T>0$, where the operator $|\nabla|^a$ is defined on its Fourier side.\par
Then it is standard to derive (formally) the corresponding homogeneous kinetic wave equation see \cite{Nazarenko2011}:
\begin{align}
    &\partial_t f(t,p)=\mathcal{C}[f](t,p)\qquad\mbox{ on }\mathbb{R}_+\times\mathbb{R}^3\label{1.2}\\
    &\mathcal{C}[f](p)\triangleq |p|^{2\beta} \iiint_{\mathbb{R}^9} \left(|p_1|^{2\beta}|p_2|^{2\beta}|p_3|^{2\beta}\right)f\,f_1 f_2 f_3\left(\frac{1}{f}+\frac{1}{f_1}-\frac{1}{f_2}-\frac{1}{f_3}\right)\notag\\
    &\qquad\qquad\qquad\qquad\qquad\qquad\qquad\qquad\times\delta(p+p_1-p_2-p_3)\delta\left(|p|^a+|p_1|^a-|p_2|^a-|p_3|^a\right)\,dp_1 dp_2 dp_3,\notag
\end{align}
where we use the convention $f=f(p)$ and $f_i=f(p_i)$ ($i=1,2,3$).\par
We consider the initial-value problems of (\ref{1.2}):
\begin{equation}
\begin{aligned}
    \partial_t f&=\mathcal{C}[f]\triangleq\mathcal{T}_1(f,f,f)+\mathcal{T}_2(f,f,f)-2\mathcal{T}_3(f,f,f),\\
    f(0)&=f_0,
\end{aligned}\label{1.3}
\end{equation}
where
\begin{equation}
\begin{aligned}
\mathcal{T}_1(f,g,h)&\triangleq\sup_p\left|p\right|^{2\beta}\iiint_{\mathbb{R}^9}\left|p_1\right|^{2\beta}\left|p_2\right|^{2\beta}\left|p_3\right|^{2\beta}\,f(p_1)\,g(p_2)\,h(p_3)\\
&\qquad\qquad\qquad\qquad\qquad\times\delta(p+p_1-p_2-p_3) \delta\left(\left|p\right|^a+\left|p_1\right|^a-\left|p_2\right|^a-\left|p_3\right|^a\right)\,dp_1 dp_2 dp_3\\
\mathcal{T}_2(f,g,h)&\triangleq\sup_p\left|p\right|^{2\beta}f(p)\iiint_{\mathbb{R}^9}\left|p_1\right|^{2\beta}\left|p_2\right|^{2\beta}\left|p_3\right|^{2\beta}\,g(p_2)\,h(p_3)\\
&\qquad\qquad\qquad\qquad\qquad\times\delta(p+p_1-p_2-p_3) \delta\left(\left|p\right|^a+\left|p_1\right|^a-\left|p_2\right|^a-\left|p_3\right|^a\right)\,dp_1 dp_2 dp_3\\
\mathcal{T}_3(f,g,h)&\triangleq\sup_p\left|p\right|^{2\beta}f(p)\iiint_{\mathbb{R}^9}\left|p_1\right|^{2\beta}\left|p_2\right|^{2\beta}\left|p_3\right|^{2\beta}\,g(p_1)\,h(p_2)\\
&\qquad\qquad\qquad\qquad\qquad\times\delta(p+p_1-p_2-p_3) \delta\left(\left|p\right|^a+\left|p_1\right|^a-\left|p_2\right|^a-\left|p_3\right|^a\right)\,dp_1 dp_2 dp_3\\
\end{aligned}.\label{1.4}    
\end{equation}
We define the function spaces $L^\infty_s$ ($s\ge 0$) by
$$L^\infty_s\triangleq\left\{f\in C(\mathbb{R}^3):\left\|\ang{x}^s f(x)\right\|_{L^\infty}<\infty\right\},$$
where by convention we denote $\ang{x}\triangleq\left(1+|x|^2\right)^{1/2}$. For convenience, we set $\mathcal{T}(f,g,h)\triangleq\mathcal{T}_1(f,g,h)+\mathcal{T}_2(f,g,h)-2\mathcal{T}_3(f,g,h)$.

Let us now comment on the physical meaning of the exponent $a$ in the dispersion relation. Although the dispersion laws arising in concrete physical models may be more complicated than a pure power, the model
$$
\omega(p)=|p|^a
$$
provides a simple way to distinguish different dispersive regimes. For example, $a=1$ corresponds to a linear, acoustic-type dispersion, where the resonance geometry is highly degenerate. The exponent $a=3/2$ corresponds to the power law of the deep-water pure capillary-wave dispersion. The case $a=2$ is the classical Schrödinger dispersion and is one of the standard examples in four-wave kinetic theory. Super-quadratic powers $a>2$ can also arise as high-frequency models for more complicated radial dispersion laws. For instance, in the very low temperature regime of the Bohm--Pines dispersion law \cite{BohmPines1951}, one may consider polynomial approximations of the form
$$
\omega(p)=\lambda_0+\lambda_1|p|^2+\lambda_2|p|^4.
$$
When $\lambda_2>0$, this dispersion behaves like $|p|^4$ at high frequencies, and is therefore modeled by the case $a=4$ at large frequencies.

\subsection{Main Results}
\ \par
We denote the following two key assumptions by {\rm (H1)} and {\rm (H2)}:\begin{enumerate}[label=(H\arabic*),itemsep=6pt]
    \item $\displaystyle{s>4\beta+3-\frac{a}{2}}$ and $\displaystyle{0\le\beta\le\frac{a-1}{4}}$;
    \item $\displaystyle{s=4\beta+3-\frac{a}{2}}$ and  $\displaystyle{0<\beta<\frac{a-1}{4}}$.
\end{enumerate}
Then, a simplified version of the main result is:
\begin{theorem}
    If either (H1) or (H2) holds, then (\ref{1.3}) is locally well-posed in $L^\infty_s$. Otherwise, if neither (H1) nor (H2) holds, then(\ref{1.3}) is ill-posed in $L^\infty_s$ in the sense that no strong solution can be constructed regardless of the size of the initial datum.
\end{theorem}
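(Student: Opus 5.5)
The plan is to treat the two halves separately: well-posedness follows from a trilinear estimate and a contraction, and ill-posedness from explicit concentrated data.

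\textbf{Well-posedness.} I would first reduce local well-posedness to a trilinear estimate of the form
$$\big\|\mathcal{T}(f,g,h)\big\|_{L^\infty_s}\lesssim \|f\|_{L^\infty_s}\|g\|_{L^\infty_s}\|h\|_{L^\infty_s}.$$
Given this, a Picard iteration for $f(t)=f_0+\int_0^t\mathcal{C}[f]\,d\tau$ in $C([0,T];L^\infty_s)$ closes for $T\sim\|f_0\|_{L^\infty_s}^{-2}$. This yields existence, uniqueness and Lipschitz dependence on the data. To estimate the resonant integrals, I would eliminate $p_3=p+p_1-p_2$ and integrate the frequency delta over the surface
$$\Sigma_{p,p_1}=\{p_2:\ |p|^a+|p_1|^a-|p_2|^a-|p+p_1-p_2|^a=0\}$$
with respect to the measure $d\sigma/|\nabla\Omega|$, where $\nabla\Omega=a(|p_3|^{a-2}p_3-|p_2|^{a-2}p_2)$.

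I would then decompose dyadically in $|p|,|p_1|,|p_2|,|p_3|$. By symmetry in $p_2\leftrightarrow p_3$, there are finitely many configurations: all frequencies comparable, one high output with low inputs, and the high-low-low-high configurations. In each configuration I would bound the surface measure using the area of $\Sigma$ localized to a dyadic shell together with a lower bound on $|\nabla\Omega|$. The weights $\ang{p_i}^{-s}$ and the kernel factors $|p_i|^{2\beta}$ then reduce each piece to a dyadic sum. This sum converges precisely when $s>4\beta+3-a/2$, and the condition $\beta\le (a-1)/4$ should be what is needed to handle the low-frequency and degenerate regions, where $|\nabla\Omega|$ can vanish.

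\textbf{Main obstacle.} The hardest step should be the regime $|p|\sim|p_3|=N$ large with $|p_1|,|p_2|$ small. There the gain term $\mathcal{T}_1$ and the loss terms $\mathcal{T}_2,\mathcal{T}_3$ are each individually too large, roughly by a factor $N^{4\beta}$ relative to $N^{-s}$, and only their difference is controlled. I would pair them explicitly by writing the integrand as
$$f_1f_2\big(f_3-f\big)+f f_3\big(f_1-f_2\big)+\cdots,$$
and use the fact that $p_3-p=p_1-p_2$ is small. For the equality case (H2) I would need a sharper, non-dyadic-loss estimate. Since an $L^\infty$ difference $f_3-f$ gains nothing without regularity, I would first try to exploit the symmetry of the resonant surface under $p\leftrightarrow p_3$, $p_1\leftrightarrow p_2$ to make the difference cancel at the level of measures. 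For (H2) I would try a refined angular estimate on $\Sigma$, which uses the strict inequality $0<\beta<(a-1)/4$ to gain a small power.

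\textbf{Ill-posedness.} I would take data $f_0=\epsilon\,\chi$, where $\chi$ is a sum of two bumps. The first is a high bump $\ang{N}^{-s}\mathbf 1_{|p-Ne|<r}$; the second is a low bump near the origin, chosen so that the set of high-low-low-high resonant quartets has maximal measure. I would compute $\mathcal{C}[f_0](p)$ at $p\approx Ne$ and show that the cancellation fails there, so that
$$\ang{N}^s\,|\mathcal{C}[f_0](p)|\gtrsim \epsilon^3 N^{s_c-s}\to\infty.$$
Summing such packets over $N=2^k$ with small coefficients gives a single datum in $L^\infty_s$ with $\mathcal{C}[f_0]\notin L^\infty_s$. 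Consequently $\partial_tf|_{t=0}$ is not in $L^\infty_s$, so no strong solution exists however small $\epsilon$ is.

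The complementary failures of the parameter conditions also need separate examples. When $\beta>(a-1)/4$ or $\beta$ sits at an endpoint in the equality case, I would use a logarithmically divergent version of the same construction.
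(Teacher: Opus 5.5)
\textbf{Well-posedness.} Your reduction to a trilinear bound plus Picard iteration matches the paper. The problem is that you place the cancellation in the wrong regime, and so miss the one mechanism the argument actually needs.

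In the configuration you single out ($|p|\sim|p_3|\sim N$, $|p_1|\sim|p_2|\sim M\ll N$), no gain--loss cancellation is available. Inputs in $L^\infty_s$ have no regularity, so you may take $f$ vanishing near the output $p$ and comparable to $\langle\cdot\rangle^{-s}$ near $p_1,p_2,p_3$; this kills $\mathcal T_2,\mathcal T_3$ at $p$, and it is precisely the paper's ill-posedness device. No cancellation is needed there either: $\mathcal J_1,\mathcal J_2,\mathcal J_3$ are each bounded, the gain piece having size $N^{4\beta+1-a}M^{4\beta+5-2s}$ (Jacobian $N^{a-1}$, volume $M^5$). Summing over dyadic $M\le N$:
\begin{itemize}
\item $M\sim 1$ forces $\beta\le (a-1)/4$; this, not a degenerate-$\nabla\Omega$ region, is where that condition comes from;
\item $M\sim N$ forces $s\ge s_c$;
\item at $s=s_c$, the strict inequality $\beta<(a-1)/4$ simply makes the sum geometric, so no refined angular estimate is involved.
\end{itemize}

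The separate bounds genuinely fail in the opposite configuration, for $1\le a<2$: $|p|\ll|p_1|\approx|p_3|$, $|p_2|\ll|p_1|$ (low output, high partner). There the loss term $f(p)\int f_2f_3$ integrates over the unweighted high partner $p_1$ and behaves like $\sum_{k_1}2^{(4\beta+4-a-s)k_1}$. This diverges for $s\le 4\beta+4-a$, a range lying strictly above $s_c$ when $a<2$, so your claim that the dyadic sums converge precisely when $s>s_c$ fails term by term.

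The paper pairs this term with the other loss term $-2f(p)\int f_1f_2$. For fixed $p_2$, both become integrals of the same input $f(Y)$ over two nearby latitude circles $\sigma\cdot\omega=\mu_\pm$ on each sphere $|Y|=r$. Here $\omega$ is the direction of $p_2-p$ and $|\mu_+-\mu_-|/(1-|\mu_0|)\lesssim |p_2-p|/r$. Since $f$ may concentrate near one circle, there is no pointwise gain. Instead, the Funk--Hecke/Legendre-difference bound (Lemma 6.3), applied after averaging in $\omega$ against $f(p_2)$, gains $(|p_2-p|/r)^{1/2}$. That averaged spherical estimate is the missing idea, and the symmetry $p\leftrightarrow p_3$, $p_1\leftrightarrow p_2$ you propose does not supply it.

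\textbf{Ill-posedness.} The integrand is $f_1f_2(f_3-f)+ff_3(f_2-f_1)$. With a flat high bump $\langle N\rangle^{-s}\mathbf 1_{|p-Ne|<r}$ containing the output $p\approx Ne$, the gain $f_1f_2f_3$ and the loss $-ff_1f_2$ cancel exactly wherever $p_3$ stays in the bump. What survives comes from boundary effects whose size and sign you would still have to control, and ``show that the cancellation fails'' is left without a mechanism.

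The paper avoids the issue entirely. The high mass sits on off-axis patches $|q-2^{2^i}e_1|\sim 2^k$, $|q_\perp|\sim 2^k$, and one tests at the on-axis points $p^{(j)}=2^{2^j}e_1$, which lie outside the support of $f_0$. All loss terms therefore vanish there, and the nonnegative gain gives $\langle p^{(j)}\rangle^s\mathcal T[f_0](p^{(j)})\gtrsim (2^{2^j})^{4\beta+1-a}(2^k)^{4\beta+5-2s}$ for each admissible $k$. With $2^k\sim\varepsilon 2^{2^j}$ this is $N^{2(s_c-s)}$, not $N^{s_c-s}$. The doubly exponential scales, with low shells comparable to any $2^{2^m}$ removed, keep packets off each other's output points; packets at consecutive $N=2^k$ give no such control.

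Two further points need correcting:
\begin{itemize}
\item For $\beta>(a-1)/4$ the blow-up is a power $N^{4\beta+1-a}$ coming from $O(1)$ low frequencies, valid for every $s$. The logarithmic superposition of about $2^j$ shells is needed only at $s=s_c$, $\beta=(a-1)/4$.
\item $\mathcal C[f_0]\notin L^\infty_s$ alone does not exclude a solution $f\in C([0,T];L^\infty_s)$ of the integral equation. You must show the lower bound persists for every $f(t)$ with $\|f(t)-f_0\|_{L^\infty_s}\le\eta$. The paper does this using $|f(t,p^{(j)})|\le\eta\langle p^{(j)}\rangle^{-s}$, which keeps the loss terms negligible at the output points.
\end{itemize}
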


For the more precise statements, see Theorem 4.2 and Theorem 5.1 below.

\begin{remark}
We expect that the local well-posedness and ill-posedness mechanisms discussed here can be extended to the inhomogeneous equation in spaces such as $L_x^\infty L_{s,p}^\infty$, where the transport flow is an isometry, although we do not pursue this extension in the present paper.
\end{remark}

\begin{remark}
Let us briefly explain why we work in weighted $L^\infty$ spaces. From the physical point of view, in wave turbulence theory the unknown $f(p)$ represents the spectral density of wave action; equivalently, it describes how the wave intensity is distributed among different Fourier modes. Thus the norm $\left\|f\right\|_{L_s^\infty}$ provides pointwise control on the high-frequency tail of the wave spectrum, namely on the possible polynomial decay of $f(p)$ as $|p|\to\infty$. This is a natural class to consider, since several physically important spectra, including Rayleigh--Jeans equilibrium distributions and Kolmogorov--Zakharov cascade spectra, exhibit polynomial behavior in the momentum variable.

From the mathematical point of view, the weighted $L^\infty$ framework isolates the decay mechanism of the collision operator. Since the main difficulty in the kinetic wave equation comes from resonant interactions between different frequency regimes, the weighted $L^\infty$ norm allows us to identify the amount of high-frequency decay needed to control these interactions. Moreover, the space $L_s^\infty$ imposes no regularity in the momentum variable. Hence the threshold obtained here should be viewed as a pure decay threshold, rather than a condition mixing decay with smoothness.

Finally, the weighted $L^\infty$ framework allows us to test the collision operator in a pointwise sense. More precisely, it asks whether a distribution satisfying
$$|f(p)|\lesssim \langle p\rangle^{-s}$$
continues to have the same type of pointwise decay after one nonlinear collision interaction. This is different from working in weighted $L^p$ spaces, where the norm gives an averaged control in the momentum variable. In $L_s^\infty$, every output frequency must be controlled, so the worst resonant configurations cannot be averaged out. Thus the estimates
$$\mathcal{T}:(L_s^\infty)^3\to L_s^\infty$$
directly measure whether the collision dynamics is closed in a class of spectra with prescribed pointwise polynomial decay. Our well-posedness result proves this closure above the critical threshold, while the ill-posedness construction shows that it fails below the threshold.
\end{remark}

\begin{remark}
Let us also discuss the meaning of the critical threshold
$$
s_c=4\beta+3-\frac a2.
$$
This exponent reflects the balance between the strength of the interaction coefficient and the geometry of the resonant set. The term $4\beta$ reflects the high-frequency growth of the collision kernel: larger $\beta$ makes the nonlinear interaction more singular and requires stronger decay. The term $3$ comes from the three-dimensional resonant integration. The negative contribution $-a/2$ reflects the effect of the dispersion relation $\omega(p)=|p|^a$. Changing $a$ changes the geometry of the resonance manifold
$$
p+p_1=p_2+p_3,\qquad |p|^a+|p_1|^a=|p_2|^a+|p_3|^a.
$$
In the estimates for the collision operator, this geometry affects both the size of the resonant set and the Jacobian factor coming from the resonance function. The resulting geometric gain depends on $a$, and it appears in the final threshold as the term $-a/2$.

Therefore, $s_c$ is understood as the sharp polynomial decay threshold in weighted $L^\infty$ spaces and it records the sharp balance between the high-frequency strength of the kernel, the dimension of the resonant integration, and the dispersive geometry of the resonance manifold.
\end{remark}

\begin{remark}

Let us compare our result with several related works. In the Schrödinger case $\omega(p)=|p|^2$ without additional kernel growth, namely $a=2$ and $\beta=0$, our threshold gives
$$s_c=2.$$
This agrees with the sharp weighted $L^\infty$ threshold obtained by Germain, Ionescu, and Tran \cite{GermainIonescuTran2020}. Thus our result extends the weighted $L^\infty$ theory from the classical Schrödinger dispersion to the family of power-law dispersions $\omega(p)=|p|^a$, with an additional interaction weight measured by $\beta$.

Our result is also consistent with the recent sharp well/ill-posedness threshold for quasilinear kinetic wave equations obtained by Ampatzoglou and L{\'e}ger \cite{AmpatzoglouLeger2025}. In our theorem, besides the decay threshold$$
s_c=4\beta+3-\frac a2,$$
there is a natural restriction on the strength of the interaction weight,
$$\beta\leq \frac{a-1}{4}.$$
In the Schrödinger case $a=2$, this condition becomes exactly $\beta\leq 1/4$, which is the critical threshold found in the quasilinear model. Thus our result recovers the same critical value of $\beta$ when $a=2$, and extends it to the power-law dispersion $\omega(p)=|p|^a$.
\end{remark}

\subsubsection{First Reduction}
\ \par
We will use contraction principle to prove the local well-posedness result in later chapters. So, the key point here is to establish the boundedness of the operator $\mathcal{T}$ (See Proposition 3.9). Therefore, for later use, we write
\begin{align}
    \mathcal{J}_1&\triangleq\sup_p\iiint_{\mathbb{R}^9}\frac{\langle p\rangle^{s}\left|p\right|^{2\beta}\left|p_1\right|^{2\beta}\left|p_2\right|^{2\beta}\left|p_3\right|^{2\beta}}{\langle p_1\rangle^{s}\langle p_2\rangle^{s}\langle p_3\rangle^{s}}\delta(p+p_1-p_2-p_3) \delta\left(\left|p\right|^a+\left|p_1\right|^a-\left|p_2\right|^a-\left|p_3\right|^a\right)\,dp_1 dp_2 dp_3\\
    &=\sup_{\substack{p\\\left|p\right|\lesssim 1}}\iiint_{\left|p_1\right|\lesssim 1}\left(\ldots\right)+\sup_{\substack{p\\\left|p\right|\lesssim 1}}\iiint_{\left|p_1\right|\gg 1}\left(\ldots\right)+\sup_{\substack{p\\\left|p\right|\gg 1}}\iiint_{\left|p_1\right|\lesssim 1}\left(\ldots\right)+\sup_{\substack{p\\\left|p\right|\gg 1}}\iiint_{\substack{\left|p_1\right|\le\left|p\right|\\\left|p_1\right|\gg 1}}\left(\ldots\right)+\sup_{\substack{p\\\left|p\right|\gg 1}}\iiint_{\substack{\left|p_1\right|>\left|p\right|\\\left|p_1\right|\gg 1}}\left(\ldots\right)
    \notag\\
    &\triangleq\mathcal{J}_{1,\mbox{low},\mbox{low}}+\mathcal{J}_{1,\mbox{low},\mbox{high}}+\mathcal{J}_{1,\mbox{high},\mbox{low}}+\mathcal{J}_{1,\mbox{high},\mbox{high},1}+\mathcal{J}_{1,\mbox{high},\mbox{high},2},\notag
\end{align}
\begin{align}
    \mathcal{J}_2&\triangleq\sup_p\iiint_{\mathbb{R}^9}\frac{\left|p\right|^{2\beta}\left|p_1\right|^{2\beta}\left|p_2\right|^{2\beta}\left|p_3\right|^{2\beta}}{\langle p_2\rangle^{s}\langle p_3\rangle^{s}}\delta(p+p_1-p_2-p_3) \delta\left(\left|p\right|^a+\left|p_1\right|^a-\left|p_2\right|^a-\left|p_3\right|^a\right)\,dp_1 dp_2 dp_3\\
    &=\sup_{\substack{p\\\left|p\right|\lesssim 1}}\iiint_{\left|p_1\right|\lesssim 1}\left(\ldots\right)+\sup_{\substack{p\\\left|p\right|\lesssim 1}}\iiint_{\left|p_1\right|\gg 1}\left(\ldots\right)+\sup_{\substack{p\\\left|p\right|\gg 1}}\iiint_{\left|p_1\right|\lesssim 1}\left(\ldots\right)+\sup_{\substack{p\\\left|p\right|\gg 1}}\iiint_{\substack{\left|p_1\right|\le\left|p\right|\\\left|p_1\right|\gg 1}}\left(\ldots\right)+\sup_{\substack{p\\\left|p\right|\gg 1}}\iiint_{\substack{\left|p_1\right|>\left|p\right|\\\left|p_1\right|\gg 1}}\left(\ldots\right)
    \notag\\
    &\triangleq\mathcal{J}_{2,\mbox{low},\mbox{low}}+\mathcal{J}_{2,\mbox{low},\mbox{high}}+\mathcal{J}_{2,\mbox{high},\mbox{low}}+\mathcal{J}_{2,\mbox{high},\mbox{high},1}+\mathcal{J}_{2,\mbox{high},\mbox{high},2},\notag
\end{align}
and
\begin{align}
    \mathcal{J}_3&\triangleq\sup_p\iiint_{\mathbb{R}^9}\frac{\left|p\right|^{2\beta}\left|p_1\right|^{2\beta}\left|p_2\right|^{2\beta}\left|p_3\right|^{2\beta}}{\langle p_1\rangle^{s}\langle p_2\rangle^{s}}\delta(p+p_1-p_2-p_3) \delta\left(\left|p\right|^a+\left|p_1\right|^a-\left|p_2\right|^a-\left|p_3\right|^a\right)\,dp_1 dp_2 dp_3\\
    &=\sup_{\substack{p\\\left|p\right|\lesssim 1}}\iiint_{\left|p_1\right|\lesssim 1}\left(\ldots\right)+\sup_{\substack{p\\\left|p\right|\lesssim 1}}\iiint_{\left|p_1\right|\gg 1}\left(\ldots\right)+\sup_{\substack{p\\\left|p\right|\gg 1}}\iiint_{\left|p_1\right|\lesssim 1}\left(\ldots\right)+\sup_{\substack{p\\\left|p\right|\gg 1}}\iiint_{\substack{\left|p_1\right|\le\left|p\right|\\\left|p_1\right|\gg 1}}\left(\ldots\right)+\sup_{\substack{p\\\left|p\right|\gg 1}}\iiint_{\substack{\left|p_1\right|>\left|p\right|\\\left|p_1\right|\gg 1}}\left(\ldots\right)
    \notag\\
    &\triangleq\mathcal{J}_{3,\mbox{low},\mbox{low}}+\mathcal{J}_{3,\mbox{low},\mbox{high}}+\mathcal{J}_{3,\mbox{high},\mbox{low}}+\mathcal{J}_{3,\mbox{high},\mbox{high},1}+\mathcal{J}_{3,\mbox{high},\mbox{high},2}.\notag
\end{align}
For $l=1,2,3$, We also set $\mathcal{J}_{l,\mbox{high},\mbox{high}}\triangleq\mathcal{J}_{l,\mbox{high},\mbox{high},1}+\mathcal{J}_{l,\mbox{high},\mbox{high},2}$. For later purpose, for $l=1,2,3$, we respectively define
$$\mathcal{J}_{l,\mbox{low},\mbox{high},\mbox{bad}}\triangleq \sup_{\substack{p\\\left|p\right|\lesssim 1}}\left[\iiint_{\substack{\left|p_1\right|\gg 1\\\left|p\right|\ll\left|p_1\right|\approx\left|p_3\right|\\\left|p_2\right|\ll\left|p_1\right|}}\left(\ldots\right)+\iiint_{\substack{\left|p_1\right|\gg 1\\\left|p\right|\ll\left|p_1\right|\approx\left|p_2\right|\\\left|p_3\right|\ll\left|p_1\right|}}\left(\ldots\right)\right]\triangleq \mathcal{J}_{l,\mbox{low},\mbox{high},\mbox{bad},1}+\mathcal{J}_{l,\mbox{low},\mbox{high},\mbox{bad},2}$$
and
$$\mathcal{J}_{l,\mbox{low},\mbox{high},\mbox{good}}\triangleq\mathcal{J}_{l,\mbox{low},\mbox{high}}-\mathcal{J}_{l,\mbox{low},\mbox{high},\mbox{bad}}.$$
Thus, proving the boundedness of the operator $\mathcal T_l$ (or $\mathcal{T}$) $:(L^\infty_s)^3\rightarrow L^\infty_s$ essentially reduces to controlling the corresponding $\mathcal J_l$-integrals (or $\mathcal J_1+\mathcal J_2-2\mathcal J_3$).

\subsubsection{Outline of the Proof}
\ \par
We first give an outline of the proof of local well-posedness. The main task is to prove suitable bounds for the trilinear operators associated with the nonlinear collision term. Once these estimates are established, the local theory follows from a standard contraction mapping argument.

The trilinear estimates are reduced to integral estimates on the resonant set. Since the collision integral contains delta constraint, one has to carefully integrate over the corresponding resonance manifold. In the all-low regime, this is straightforward. In the high-frequency regimes, especially in the all-high case, a direct use of the coarea formula through the Jacobian of the resonance function is less convenient, unless one makes a rather fine decomposition. For this reason, following \cite{GermainIonescuTran2020},  in Chapter 3 we introduce a parametrization of the resonant manifold. This reduces the all-high contribution to an explicit integral, which can then be estimated by elementary methods. We emphasize that the parametrization is used mainly for convenience and to present a different approach; we do not claim that the coarea method alone cannot handle these regimes.

The collision operator contains both gain and loss terms. In many regimes, these terms can be estimated separately. However, in several bad regions, separate estimates are not sufficient, and one has to exploit the cancellation between the gain and loss contributions. This cancellation is delicate in the weighted $L^\infty$ setting. Since the input functions have no regularity, we cannot simply use the mean value theorem on the inputs. Instead, the proof extracts cancellation from the structure of the kernel and the geometry of the resonant set. This is one of the main difficulties discussed in Section 1.4. The main cancellation regimes are summarized in the following table.

\begin{table}[htbp]
\centering
\caption{Overview of the cancellation structure}
\label{tab:cancellation-structure}
\renewcommand{\arraystretch}{1.3}
\begin{tabularx}{\textwidth}{c c c X}
\toprule
Range of $a$ & Regime & Cancellation? & Comment \\
\midrule
$2\le a\le 5$
& $c_\star\gg 1$
& Yes
& The only cancellation regime in this range. \\

$2\le a\le 5$
& $c_\star\lesssim 1$
& No
& Direct estimates suffice. \\
\midrule
$1\le a<2$
& $c_\star\gg 1$
& Yes
& Same cancellation as in the range $2\le a\le 5$. \\

$1\le a<2$
& $|p_1|\gg |p|$
& Yes
& Extra cancellation is needed in high-low-low-high type regimes. \\

$1\le a<2$
& Other regimes
& No
& Direct estimates suffice. \\
\bottomrule
\end{tabularx}

\vspace{0.4em}
\begin{minipage}{\textwidth}
\small
Here $c_\star$ is a geometric parameter used to distinguish one delicate resonant regime (See (\ref{3.8}) for its precise definition); when $1\le a<2$, there are additional cancellation regimes not captured solely by $c_\star$.
\end{minipage}

\end{table}

\begin{remark}
In the regime $2\le a\le 5$, the main obstruction in the direct estimates for the large $c_\star$ case (i.e. $c_\star\gg 1$) appears only near the upper end of the range, namely when $a>2+2\sqrt2$. For simplicity and uniformity, however, we use the cancellation structure throughout the whole region $2\le a\le5$ and $c_\star\gg1$. This avoids introducing an additional subdivision in the proof.
\end{remark}

Next, we outline the proof of ill-posedness. The first step is to identify a bad region where the nonlinear operator has the largest possible growth. A scaling analysis suggests that the high-low-low-high interaction is one of the worst regions. In fact, we consider the dyadic patch
$$|p|\sim |p_3|\sim 2^k,\qquad |p_1|\sim |p_2|\sim 2^{k_1},\qquad 1\ll k_1\ll k,$$
with $p_3=p-p_1+p_2$. Let
$$\Phi(p_1,p_2)=|p|^a+|p_1|^a-|p_2|^a-|p-p_1+p_2|^a.$$
Choosing a coordinate direction in which $p_3$  has size $\sim 2^k$, we have
$$\left|\partial_{p_1^{(1)}}\Phi\right|=\left|a|p_1|^{a-2}p_1^{(1)}+a|p_3|^{a-2}p_3^{(1)}\right|\sim 2^{(a-1)k},$$
where $p_1^{(1)}$ means the first coordinate of $p_1$ and $p_3^{(1)}$ means the first coordinate of $p_3$. Therefore the delta constraint contributes the Jacobian factor $2^{-(a-1)k}$, while the remaining five low-frequency variables contribute the volume factor $2^{5k_1}$. Hence
$$\int_{\mathrm{HL LH}}\delta(\Phi)\,dp_1dp_2\sim 2^{5k_1}2^{-(a-1)k}.$$
Combining this with the kernel weight and the input weights formally gives
\begin{align*}
\langle p\rangle^s \mathcal{T}_1(f,f,f)(p)
&\sim
\frac{\langle p\rangle^s
|p|^{2\beta}|p_1|^{2\beta}|p_2|^{2\beta}|p_3|^{2\beta}}
{\langle p_1\rangle^s\langle p_2\rangle^s\langle p_3\rangle^s}
\cdot
2^{5k_1}2^{-(a-1)k} \\[8pt]
&\sim
\frac{2^{sk}\cdot 2^{2\beta k}\cdot 2^{2\beta k_1}\cdot 2^{2\beta k_1}\cdot 2^{2\beta k}}
{2^{sk_1}\cdot 2^{sk_1}\cdot 2^{sk}}
\cdot
2^{5k_1}2^{-(a-1)k} \\[8pt]
&=
2^{(4\beta+1-a)k}2^{(4\beta+5-2s)k_1}.
\end{align*}
Letting $k_1$ approach $k$ gives the growth $\displaystyle{\,2^{(8\beta+6-a-2s)k}}$, which is unbounded when $\displaystyle{s<4\beta+3-\frac a2}$. This explains why the high-low-low-high region is a bad region. More precisely, for the operator $\mathcal{T}_1$, this region gives a bad contribution throughout the whole parameter range considered in this paper. There are other bad regions as well, but they do not provide a uniform obstruction in all regimes.

We therefore construct initial data whose support is concentrated near the high-low-low-high region. The supports are chosen so that the main contribution comes from $\mathcal{T}_1$, while the terms $\mathcal{T}_2$ and $\mathcal{T}_3$ do not cancel or dominate this lower bound. The key estimate is then a lower bound showing that $\mathcal{T}_1$ fails to satisfy the required weighted $L^\infty$ mapping estimate on this family of data. This failure of the trilinear estimate gives the desired ill-posedness result. The endpoint case requires a more delicate construction; this is also one of the main challenges discussed in Section 1.4.

\subsection{Challenges}
\ \par
The first difficulty appears in the proof of local well-posedness. When $a<2$, the curvature of the dispersion relation degenerates at high frequencies. As a result, in some worst frequency configurations, the individual bounds for $\mathcal T_1,\mathcal T_2,\mathcal T_3$ are no longer sufficient, and one has to exploit the cancellation structure of the nonlinear term.

%The main obstruction is that we work in weighted $L^\infty$ spaces. Thus the input functions have no regularity, and the usual mean value theorem cannot be applied directly to the functions in order to gain a small factor from the cancellation.The collision integral contains delta functions enforcing the momentum and energy constraints. Instead, we exploit the cancellation through the geometry of the resonance surfaces. 
The main obstruction comes from the fact that the cancellation is not a pointwise cancellation of smooth weight factors. Although the weights, such as $\langle p\rangle^{-s}$, are smooth and can be estimated by a mean-value argument, this only treats part of the integrand and does not capture the actual structure of the collision operator. The collision integral contains delta functions enforcing the momentum and energy constraints, so the cancellation has to be analyzed at the level of the full constrained integral. In particular, these delta functions determine the resonance surfaces, the angular variables, and the associated Jacobian factors, all of which enter into the cancellation mechanism. After fixing the radial variables and passing to spherical coordinates, the relevant slices of the resonance manifolds become nearby latitude circles on the sphere. This geometric observation makes it possible to extract a cancellation gain.

However, this geometric cancellation is not strong enough in a pointwise sense for arbitrary bounded test functions. Indeed, a test function may concentrate in a thin neighborhood of only one of the two latitude circles, in which case the difference of the two integrals may still be large. To overcome this obstruction, we use an averaged spherical difference estimate: instead of comparing the two circles pointwise, we control the angularly averaged difference uniformly for bounded input functions; see Lemma 6.3 for a precise statement.

The second difficulty occurs in the ill-posedness argument. Although the high-low-low-high configuration suggests the expected bad interaction at a heuristic level, turning this observation into a rigorous ill-posedness proof is far from immediate. The first issue is that one has to isolate the gain contribution. We do this by choosing the initial data so that its mass is concentrated on carefully selected frequency patches, especially in different directions. With this choice, the dominant contribution comes from the gain term, while the loss terms are either absent or much smaller in the main testing region. This separation is essential for obtaining a clean lower bound.

This is relatively straightforward for a single testing patch. However, at the critical threshold, a single patch only gives a bounded contribution of size $O(1)$ and is not enough to prove ill-posedness. We therefore superpose many high-low-low-high patches in order to recover a logarithmic divergence. This creates a new difficulty. Once many patches are superposed together, different patches may interact with each other through the loss terms or through mixed frequency configurations. Such interactions could destroy the lower bound coming from the gain term. To avoid this, we choose the output scales of the patches to be well separated. This ensures that the desired gain interactions dominate, while the cross interactions between different patches are negligible.

Finally, a further technical point is that the geometry of the resonance function changes with the parameter $a$. The regimes $a=1$, $1<a<2$, and $2\leq a\leq5$ have different leading-order behaviors, and the corresponding angular localization estimates are not identical. For this reason, several geometric estimates 

\subsection{Notations}
\ \par
We write $A\lesssim B$ if there exists a (large) constant $C>0$ such that $A\leq CB$. We write $A\ll B$ if there exists a sufficiently small constant $c>0$ such that $A\leq cB$. The notations $A\gtrsim B$ and $A\gg B$ are defined similarly. The implicit constants may depend on the fixed parameters $s,a$, and $\beta$.
We use the standard Japanese bracket notation
$$\langle x\rangle \triangleq (1+|x|^2)^{1/2}.$$
Throughout the proof, we set
$$\rho \coloneqq p+p_1,\qquad r\coloneqq |p_1|.$$
In a few places, we will slightly abuse the notation $r$. Such abuses will always be explicitly indicated. For later use, we also define
$$
\widetilde r\coloneqq \frac{r}{|p|},\qquad \overline r\coloneqq \frac{|p|}{r}.
$$

\subsection{Plan of the Article}
\ \par
The rest of the paper is organized as follows.

In Chapter 2, we prove the boundedness of the low-frequency part of the collision operator. The all-low regime is treated directly by the coarea formula, while the one-high-one-low regime requires the cancellation between the gain and loss terms.

In Chapter 3, we turn to the high-frequency part of the collision operator, which is the most complicated part of the proof. We divide the analysis into cancellation and non-cancellation regimes. In the non-cancellation regimes, we parametrize the resonant manifold and further split the proof according to the range of the exponent $a$.

In Chapter 4, we complete the proof of local well-posedness. Once the boundedness of the trilinear operators is established, the local theory follows from a standard contraction mapping argument.

In Chapter 5, we prove the ill-posedness result by making rigorous the heuristic mechanism described above.

Finally, in Chapter 6, we collect several auxiliary lemmas used throughout the paper.

\section{LWP Proof Part I -- small-$\left|p\right|$ regime}
In this chapter, we will prove the boundedness of operator $\mathcal{T}_l$ ($l=1,2,3$) or $\mathcal{T}$ when $\left|p\right|$ and/or $\left|p_1\right|$ are $\lesssim 1$. From now until Chapter 4, we assume that either (H1) or (H2) holds.

\subsection{All-Low Case}
\ \par
We begin with the all-low regime. This case is relatively straightforward since all frequencies remain bounded, so no high-frequency growth has to be controlled. The estimate can be obtained directly, and no cancellation between the gain and loss terms is needed.
\begin{prop}
    $\mathcal{J}_{1,\mbox{low},\mbox{low}},\ \mathcal{J}_{2,\mbox{low},\mbox{low}},\ \mathcal{J}_{3,\mbox{low},\mbox{low}}\lesssim 1$.
\end{prop}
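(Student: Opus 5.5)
The plan is to use that in the all-low regime every momentum is bounded, which reduces all three $\mathcal J_{l,\mathrm{low},\mathrm{low}}$ to a single uniform bound on the resonant measure of a bounded set.

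\emph{Step 1: all frequencies are bounded.} On the support of the delta functions, $|p_2|^a+|p_3|^a=|p|^a+|p_1|^a\lesssim 1$, since $|p|,|p_1|\lesssim 1$. Hence $|p_2|,|p_3|\lesssim 1$ as well. Consequently every kernel factor $|p_i|^{2\beta}$ and every weight $\langle p_i\rangle^{\pm s}$ is $\approx 1$ or bounded above. So each of $\mathcal J_{1,\mathrm{low},\mathrm{low}},\mathcal J_{2,\mathrm{low},\mathrm{low}},\mathcal J_{3,\mathrm{low},\mathrm{low}}$ is bounded by
$$
\sup_{|p|\lesssim 1}\iint_{|p_1|,|p_2|\lesssim 1}\delta\big(|p|^a+|p_1|^a-|p_2|^a-|p+p_1-p_2|^a\big)\,dp_1\,dp_2 ,
$$
after integrating out $p_3$ with the momentum delta. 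It then suffices to show this quantity is $\lesssim 1$ uniformly in $p$.

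\emph{Step 2: symmetric coordinates in $p_2$.} Fix $p_1$, set $\rho=p+p_1$, and write $p_2=\rho/2+t\omega$ with $t\ge 0$ and $\omega\in\mathbb S^2$. Then $p_3=\rho/2-t\omega$, and the resonance condition reads $h_\omega(t)=E$, where
$$
h_\omega(t)=|\rho/2+t\omega|^a+|\rho/2-t\omega|^a,\qquad E=|p|^a+|p_1|^a .
$$
Since $x\mapsto|x|^a$ is convex for $a\ge 1$, $h_\omega$ is even and convex in $t$. Under the additional convexity estimate below it is strictly increasing on $t>0$, so for each $\omega$ there is at most one root $t_*$, and
$$
\int\delta(h_\omega(t)-E)\,t^2\,dt=\frac{t_*^2}{h_\omega'(t_*)} .
$$
The key input is the standard monotonicity estimate for power functions,
$$
\big(|x|^{a-2}x-|y|^{a-2}y\big)\cdot(x-y)\gtrsim |x-y|^2\,(|x|+|y|)^{a-2}.
$$
Applied with $x=\rho/2+t\omega$ and $y=\rho/2-t\omega$, it gives $h_\omega'(t)\gtrsim t\,(|\rho|+t)^{a-2}$. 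Therefore
$$
\frac{t_*^2}{h_\omega'(t_*)}\lesssim t_*\,(|\rho|+t_*)^{2-a}.
$$
For $1\le a\le 2$ this is bounded by a constant because $|\rho|,t_*\lesssim 1$. Integrating over $\omega\in\mathbb S^2$ and then over $|p_1|\lesssim 1$ finishes this range.

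\emph{Step 3, the main obstacle: $2<a\le 5$.} Here the bound becomes $\lesssim t_*(|\rho|+t_*)^{2-a}$, which can blow up when both $|\rho|$ and $t_*$ are small. That is the degenerate configuration $p_2\approx p_3\approx\rho/2\approx 0$, where the Jacobian of the resonance function vanishes to high order. I would treat it by integrating in $p_1$ as well rather than taking a sup over $p_1$.

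First, convexity gives $h_\omega(t)-h_\omega(0)\gtrsim t^2(|\rho|+t)^{a-2}$, so
$$
t_*^2\,(|\rho|+t_*)^{a-2}\lesssim E-2^{1-a}|\rho|^a .
$$
The bound to integrate is then $\min\big(t_*^{3-a},\,t_*|\rho|^{2-a}\big)$, and it must be integrated over $p_1$ in a unit ball.

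Splitting dyadically in $|\rho|\sim 2^{-j}$, the $p_1$-volume is $\lesssim 2^{-3j}$ and the integrand is $\lesssim 2^{-j(3-a)}$ when $t_*\lesssim|\rho|$. This gives $\sum_j 2^{-j a}<\infty$.

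In the remaining region $t_*\gg|\rho|$, the constraint forces $|p|^a+|p_1|^a\approx t_*^a$ with $p_1\approx-p$. I would handle it by reversing the order of integration: integrate first radially in $p_1$ near $-p$, using the full six-dimensional coarea formula with the gradient in $p_1$ instead of in $p_2$. This is the only nonroutine point, and I expect it to give a bound $\lesssim\int_0^1 \tau^{5-a}\,d\tau<\infty$ for $a\le 5$; I have not checked that this region is fully covered, so this is the step to verify first. The constraint $a\le 5$ would enter exactly there. No gain–loss cancellation is used, so the three operators are treated identically.
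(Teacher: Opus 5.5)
Your Step 3 has a genuine gap exactly where you locate the difficulty. The region $t_*\gg|\rho|$ is never estimated, and the reversal of integration order via a six-dimensional coarea formula is only conjectured.

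No reversal is needed. The missing ingredient is the lower bound that the constraint itself supplies. With $x=\rho/2+t_*\omega$ and $y=\rho/2-t_*\omega$,
\[
E=|x|^a+|y|^a\le(|x|+|y|)^a\lesssim(|\rho|+t_*)^a ,
\]
so $|\rho|+t_*\gtrsim E^{1/a}\sim\max(|p|,|p_1|)$. Your pointwise bound satisfies $t_*(|\rho|+t_*)^{2-a}\le(|\rho|+t_*)^{3-a}$. This is $\lesssim1$ for $a\le3$, and $\lesssim\max(|p|,|p_1|)^{3-a}$ for $3\le a\le5$. Hence
\[
\int_{|p_1|\lesssim1}\max(|p|,|p_1|)^{3-a}\,dp_1\lesssim|p|^{6-a}+\int_{|p|}^{1}r^{5-a}\,dr\lesssim1 .
\]
In your region $t_*\gg|\rho|$ this simply says $t_*\sim|p|\sim|p_1|$, with $p_1$ in a ball of radius $\ll|p|$ about $-p$. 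The contribution is therefore $\lesssim|p|^{3-a}|p|^3$.

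This is the paper's bookkeeping. For fixed $p_1$ it bounds the coarea integral in $p_2$ by $2^{-k(a-3)}$, where $2^k\sim\max(|p|,|p_1|,|p_2|,|p_3|)$. It splits $|p_2|\ll|p_3|$ from $|p_2|\sim|p_3|$ with $|p_2-p_3|\sim2^t$, and in the latter case uses your monotonicity estimate. It then multiplies by the $p_1$-volume $2^{3k}$ and sums $\sum_{k\le k_0}2^{k(6-a)}$. The condition actually used is $a<6$, not $a\le5$. Also, in your first region the sum is $\sum_j2^{-3j}2^{j(a-3)}=\sum_j2^{-j(6-a)}$, not $\sum_j2^{-ja}$; it is still finite.

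Second, Step 2 does not cover $a=1$, which is in scope: (H1) permits $a=1$, $\beta=0$, $s>5/2$. The monotonicity inequality you quote holds only for $a>1$, with constant degenerating like $a-1$. At $a=1$ the quantity $(x/|x|-y/|y|)\cdot(x-y)$ vanishes when $x,y$ point in the same direction.

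Accordingly, the pointwise bound $t_*^2/h_\omega'(t_*)\lesssim t_*(|\rho|+t_*)$ fails when $E$ is close to $|\rho|$, i.e. when $p,p_1$ are nearly parallel. The level set $\{|z|+|\rho-z|=E\}$ is then a thin prolate spheroid with semi-axes $E/2$ and $B=\tfrac12\sqrt{E^2-|\rho|^2}$. For $\omega$ at angle $\sim B/E$ from $\hat\rho$ one finds $t_*^2/h_\omega'(t_*)\sim E^4/B^2$. Only the $\omega$-integrated quantity stays bounded.

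This is why the paper treats $a=1$ separately in prolate spheroidal coordinates, proving $\int\delta(E-|z|-|\rho-z|)\,dz\lesssim E^2$. Equivalently, that integral is the $E$-derivative of the spheroid volume $\frac{\pi}{6}E(E^2-|\rho|^2)$, namely $\frac{\pi}{6}(3E^2-|\rho|^2)$.
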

\begin{proof}
We will divide into two cases: (i) $1< a\le 5$; (ii) $a=1$ due to the concavity of the resonance function.\par
\noindent\underline{Case 1. $1<a\le 5$}\par
Since $a<6$, we get $\displaystyle{s\ge4\beta+3-\frac{a}{2}}$, which implies that $\langle p\rangle^{s-2\beta},\langle p_1\rangle^{s-2\beta},\langle p_2\rangle^{s-2\beta},\langle p_3\rangle^{s-2\beta}\lesssim 1$. Therefore, it suffices to show that
$$\displaystyle{\sup_{\left|p\right|\lesssim 1}\int_{\mathbb{R}^3}\left(\int_{\mathcal{S}_{p,p_1}}\frac{d\mu(z)}{\left|\nabla_z\Phi(z)\right|}\right)dp_1\lesssim 1,}$$
where $z=p_2$ and $\mathcal{S}_{p,p_1}\triangleq\left\{z:\Phi(z)\triangleq\left|p+p_1-z\right|^a+\left|z\right|^a-\left|p\right|^a-\left|p_1\right|^a=0\right\}$.\par
Now, we perform dyadic decomposition: let $\max\left(\left|p\right|,\left|p_1\right|,\left|p_2\right|,\left|p_3\right|\right)\sim 2^k$, where $k\le k_0$, $k_0>0$ is a fixed number. Thanks to $\left|p\right|^a+\left|p_1\right|^a=\left|p_2\right|^a+\left|p_3\right|^a$, we must have $\max\left(\left|p\right|,\left|p_1\right|\right)\sim \max\left(\left|p_2\right|,\left|p_3\right|\right)\sim 2^k$. Without loss of generality, we may furthermore assume that $\left|p_2\right|\le\left|p_3\right| \Rightarrow \left|p_3\right|\sim 2^k$. Next we perform dyadic decomposition to $\left|p_2\right|$: let $\left|p_2\right|\sim 2^{k_2}$ ($k_2<k$). We also denote
\begin{align*}
    &\sup_{\left|p\right|\lesssim 1}\int_{\mathbb{R}^3}\left(\int_{\mathcal{S}_{p,p_1}}\frac{d\mu(z)}{\left|\nabla_z\Phi(z)\right|}\right)dp_1\\
    =&\sup_{\left|p\right|\lesssim 1}\int_{\mathbb{R}^3}\left(\int_{\substack{\mathcal{S}_{p,p_1}\\\left|p_2\right|\ll\left|p_3\right|}}\frac{d\mu(z)}{\left|\nabla_z\Phi(z)\right|}\right)dp_1+\sup_{\left|p\right|\lesssim 1}\int_{\mathbb{R}^3}\left(\int_{\substack{\mathcal{S}_{p,p_1}\\\left|p_2\right|\sim\left|p_3\right|}}\frac{d\mu(z)}{\left|\nabla_z\Phi(z)\right|}\right)dp_1\\
    \triangleq& \sup_{\left|p\right|\lesssim 1}\int_{\mathbb{R}^3}I_A\,dp_1+\sup_{\left|p\right|\lesssim 1}\int_{\mathbb{R}^3}I_B\,dp_1.
\end{align*}
$1^\circ$:  $\left|p_2\right|\ll\left|p_3\right|$\par
Since $\nabla_z\Phi(z)=\nabla_{p_2}\Phi(p_2)=-a\left|p_2\right|^{a-2}p_2+a\left|p_3\right|^{a-2}p_3$ and $a>1$, we see that $\left|\nabla_z\Phi(z)\right|\sim \left(2^k\right)^{a-1}$. In addition, due to $\left|p_2\right|\sim 2^{k_2}$, the area of $\mathcal{S}_{p,p_1}$ is $\lesssim 2^{2k_2}$. Therefore, by coarea formula, we get
$$I_A=\sum_{\substack{\left|p_2\right|\sim 2^{k_2}\\\left|p_2\right|\ll\left|p_3\right|}}\frac{2^{2k_2}}{\left(2^k\right)^{a-1}}\lesssim \frac{2^{2k}}{\left(2^k\right)^{a-1}}=\frac{1}{\left(2^k\right)^{a-3}}.$$\\
$2^\circ$: $\left|p_2\right|\sim\left|p_3\right|\sim 2^k$\par
In this case, we further decompose $\left|p_2-p_3\right|\sim 2^t$ (say $t<k+3$ since $\left|p_2-p_3\right|\lesssim 2^k$). Then by mean value theorem, it's easy to see that
$$\bigl|\left|p_3\right|^{a-2}p_3-\left|p_2\right|^{a-2}p_2\bigr|\sim \left(2^k\right)^{a-2}\left|p_2-p_3\right|\sim  \left(2^k\right)^{a-2}\cdot 2^t,$$
where we use the observation $\displaystyle{p_2=\frac{p+p_1}{2}+\frac{p_2-p_3}{2}=\mbox{a fixed vector}+\frac{p_2-p_3}{2}}$ in the last step. Therefore, again by coarea formula, we get
$$I_B=\sum_{\substack{t\\t<k+3}}\frac{2^{2t}}{2^t\cdot\left(2^k\right)^{a-2}}\lesssim \frac{1}{\left(2^k\right)^{a-2}}\sum_{\substack{t\\t<k+3}}2^t\sim\frac{1}{\left(2^k\right)^{a-3}}.$$\par
Finally, since the volume of $p_1$ is $\lesssim 2^{3k}$, we conclude 
$$\displaystyle{\sup_{\left|p\right|\lesssim 1}\int_{\mathbb{R}^3}\left(\int_{\mathcal{S}_{p,p_1}}\frac{d\mu(z)}{\left|\nabla_z\Phi(z)\right|}\right)dp_1\lesssim \sum_{\substack{k\\k\le k_0}}\frac{2^{3k}}{\left(2^k\right)^{a-3}}=\sum_{\substack{k\\k\le k_0}}\left(2^k\right)^{6-a}\lesssim 1.}$$
This finishes the proof of Case 1.\par
\noindent\underline{Case 2. $a=1$}\par
In this case, we must have $\beta=0$ and $\langle p\rangle^{s},\langle p_1\rangle^{s},\langle p_2\rangle^{s},\langle p_3\rangle^{s}\lesssim 1$. Denote $E\triangleq\left|p\right|+\left|p_1\right|$, $z\triangleq p_2$ and $u\triangleq\left|p_2\right|$ in this case. If necessary, we will represent $z=\left(z_1,z_2,z_3\right)$, where $z_1,z_2,z_3\in \mathbb{R}$. Then it suffices to show that
$$\int_{\left|p_1\right|\lesssim 1}\int \delta\left(\left|p\right|+\left|p_1\right|-\left|p_2\right|-\left|p_3\right|\right)\,dp_2\,dp_1\triangleq\int_{\left|p_1\right|\lesssim 1} I_{\mbox{in}}\left(E,\rho\right)\,dp_1\lesssim 1.$$
We first show that
\begin{align}
    I_{\mbox{in}}\left(E,\rho\right)\lesssim E^2. \label{2.1}
\end{align}\par
When $\rho=0$, we have $p_2=p+p_1-p_2$. Therefore, it's easy to see that
\begin{align*}
    I_{\mbox{in}}\left(E,0\right)=\int_{\mathbb{R}^3}\delta \left(E-2u\right)\,dp_2=4\pi\int^{+\infty}_0 u^2\delta\left(E-2u\right)\,du=4\pi\frac{\left(\frac{E}{2}\right)^2}{2}\lesssim E^2.
\end{align*}\par
When $\rho\neq 0$, to simplify our computation, we use the following spheroidal coordinates:
\begin{align}
    \mu\triangleq\frac{\left|p_2\right|+\left|\rho-p_2\right|}{\left|\rho\right|}\ \ \ \ \ \ \ \ \ \ \nu\triangleq\frac{\left|p_2\right|-\left|\rho-p_2\right|}{\left|\rho\right|}.\label{2.2}
\end{align}
Without loss of generality, we also assume that the vector $\rho$ has the same direction as $x-$axis. Then, we have the following observations: (i) $\mu\ge 1$ and $-1\le \nu\le 1$, (ii) $\displaystyle{z_1=\frac{\left|\rho\right|}{2}\left(\mu\nu+1\right)}$ and (iii) $\displaystyle{z_2^2+z_3^2=\frac{\left|\rho\right|^2}{4}\left(\mu^2-1\right)\left(1-\nu^2\right)}$. (i) is trivial. For (ii), this is because from (\ref{2.2}) we have
$$\mu\cdot\nu=\frac{\left|z\right|^2-\left|\rho-z\right|^2}{\left|\rho\right|^2}=\frac{z_1^2-\left(\left|\rho\right|-z_1\right)^2}{\left|\rho\right|^2}=\frac{\left|\rho\right|^2+2\left|\rho\right|\cdot z_1}{\left|\rho\right|^2}.$$
For (iii), this is because using (\ref{2.2}) and above result we see
$$z_2^2+z_3^2=\left|z\right|^2-z_1^2=\left|z\right|^2-\frac{\left|\rho\right|^2}{4}\left(\mu\nu+1\right)^2=\frac{\left|\rho\right|^2}{4}\left[\left(\mu+\nu\right)^2-\left(\mu\nu+1\right)^2\right]=\frac{\left|\rho\right|^2}{4}\left(\mu^2-1\right)\left(1-\nu^2\right).$$
Therefore, we can perform the change of variables as:
\begin{align*}
    \begin{cases}
    z_1&=\displaystyle{\frac{\left|\rho\right|}{2}\left(\mu\nu+1\right)}\\[8pt]
    z_2&=\displaystyle{\frac{\left|\rho\right|}{2}\sqrt{\left(\mu^2-1\right)\left(1-\nu^2\right)}\cos\theta}\\[8pt]
    z_3&=\displaystyle{\frac{\left|\rho\right|}{2}\sqrt{\left(\mu^2-1\right)\left(1-\nu^2\right)}\sin\theta}
\end{cases}.
\end{align*}\par
Next, we show that
$$dz=dz_1dz_2dz_3=\frac{\left|\rho\right|^3}{8}\left(\mu^2-\nu^2\right)d\mu d\nu d\theta.$$
To prove this, we introduce an auxiliary variable for convenience:
$$z_\perp\triangleq\sqrt{z_2^2+z_3^2}=\frac{\left|\rho\right|}{2}\sqrt{\left(\mu^2-1\right)\left(1-\nu^2\right)}.$$
Then, we may write
\begin{align*}
    \begin{cases}
    z_1&=\displaystyle{\frac{\left|\rho\right|}{2}\left(1+\mu\nu\right)}\\
    z_2&=\displaystyle{z_\perp\cos\theta}\\
    z_3&=\displaystyle{z_\perp\sin\theta}
\end{cases},
\end{align*}
which gives us $dz=dz_1\cdot dz_2\cdot dz_3=dz_1\cdot z_\perp\cdot dz_\perp\cdot d\theta$. Therefore, we only need to compute $\displaystyle{\left|\frac{\partial\left(z_1,z_\perp\right)}{\partial\left(\mu,\nu\right)}\right|}$, where
\begin{align*}
    \begin{cases}
        z_1&=\displaystyle{\frac{\left|\rho\right|}{2}\left(1+\mu\nu\right)}\\
        z_\perp&=\displaystyle{\frac{\left|\rho\right|}{2}\sqrt{\left(\mu^2-1\right)\left(1-\nu^2\right)}}
    \end{cases}.
\end{align*}
In fact, we can get
\begin{align*}
    \frac{\partial\left(z_1,z_\perp\right)}{\partial\left(\mu,\nu\right)}=\begin{bmatrix}
\displaystyle{\frac{\left|\rho\right|}{2}\nu} & \displaystyle{\frac{\left|\rho\right|}{2}\mu}\\[8pt]
\displaystyle{\frac{\left|\rho\right|}{2}\frac{\mu\left(1-\nu^2\right)}{\sqrt{\left(\mu^2-1\right)\left(1-\nu^2\right)}}} & \displaystyle{-\frac{\left|\rho\right|}{2}\frac{\nu\left(\mu^2-1\right)}{\sqrt{\left(\mu^2-1\right)\left(1-\nu^2\right)}}}
\end{bmatrix},
\end{align*}
which gives $\displaystyle{\left|\frac{\partial\left(z_1,z_\perp\right)}{\partial\left(\mu,\nu\right)}\right|=\frac{\left|\rho\right|^2}{4}\frac{\left(\mu^2-\nu^2\right)}{\sqrt{\left(\mu^2-1\right)\left(1-\nu^2\right)}}}$. To sum up, we obtain
$$dz_1dz_2dz_3=\frac{\left|\rho\right|}{2}\sqrt{\left(\mu^2-1\right)\left(1-\nu^2\right)}\,\,\frac{\left|\rho\right|^2}{4}\frac{\left(\mu^2-\nu^2\right)}{\sqrt{\left(\mu^2-1\right)\left(1-\nu^2\right)}} d\mu d\nu d\theta=\frac{\left|\rho\right|^3}{8}\left(\mu^2-\nu^2\right)d\mu d\nu d\theta.$$\par
Now, we are ready to compute $I_{\mbox{in}}\left(E,\rho\right)$. In fact, we get
\begin{align*}
    I_{\mbox{in}}\left(E,\rho\right)&=\int_{\mathbb{R}^3}\delta\left(E-\left|\rho\right|\mu\right)\,dp_2=\frac{1}{\left|\rho\right|}\int_{\mathbb{R}^3}\delta\left(\mu-\frac{E}{\left|\rho\right|}\right)\,dp_2\\[5pt]
    &=\frac{\left|\rho\right|^2}{8}\int^1_{-1}\int^{2\pi}_0\int^{+\infty}_1\delta\left(\mu-\frac{E}{\left|\rho\right|}\right)\left(\mu^2-\nu^2\right)d\mu d\nu d\theta\\[5pt]
    &\sim \left|\rho\right|^2\int^1_{-1}\int^{2\pi}_0\left(\frac{E^2}{\left|\rho\right|^2}-\nu^2\right) d\nu d\theta \lesssim \left|\rho\right|^2\cdot\frac{E^2}{\left|\rho\right|^2}=E^2.
\end{align*}\par
Finally, we perform the dyadic decomposition as before. Let  $\max\left(\left|p\right|,\left|p_1\right|,\left|p_2\right|,\left|p_3\right|\right)\sim 2^k$, where $k\le k_0$, $k_0>0$ is a fixed number. Once again we must have $\max\left(\left|p\right|,\left|p_1\right|\right)\sim \max\left(\left|p_2\right|,\left|p_3\right|\right)\sim 2^k$. Now, it's easy to compute
\begin{align*}
    \int_{\left|p_1\right|\lesssim 1} I_{\mbox{in}}\left(E,\rho\right)\,dp_1 \lesssim \int_{\left|p_1\right|\lesssim 1} E^2\,dp_1\lesssim\sum_{\substack{k\\k\le k_0}}2^{2k}\int_{\left|p_1\right|\lesssim 1}dp_1\lesssim \sum_{\substack{k\\k\le k_0}}2^{2k}\cdot 2^{3k}\lesssim 1.
\end{align*}
This finishes the proof of Case 2.
\end{proof}

\subsection{Low-High Case}
\ \par
Now, we turn to consider the low-high case. We first discuss the endpoint case $a=1$, and then turn to the case $1<a<2$. In both cases, the cancellation mechanism is needed and yields the same estimates. The proofs are similar in spirit; however, due to the different convexity properties of the resonance functions, the two cases must be treated separately.

\begin{prop}
    When $a=1$, we have 
    \begin{enumerate}[label=(\roman*),itemsep=3pt]
        \item $\mathcal{J}_{1,\mbox{low},\mbox{high}},\ \mathcal{J}_{2,\mbox{low},\mbox{high},\mbox{good}},\ \mathcal{J}_{3,\mbox{low},\mbox{high},\mbox{good}}\lesssim 1$;
        \item $\mathcal{J}_{1,\mbox{low},\mbox{high},\mbox{bad}}+\mathcal{J}_{2,\mbox{low},\mbox{high},\mbox{bad}}-2\mathcal{J}_{3,\mbox{low},\mbox{high},\mbox{bad}}\lesssim 1.$
    \end{enumerate}
   In particular, this implies $\mathcal{J}_{1,\mbox{low},\mbox{high}}+\mathcal{J}_{2,\mbox{low},\mbox{high}}-2\mathcal{J}_{3,\mbox{low},\mbox{high}}\lesssim 1$.
\end{prop}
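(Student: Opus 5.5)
Since $a=1$, hypothesis (H2) cannot hold, because it requires $0<\beta<\tfrac{a-1}{4}=0$. So we are under (H1), which forces $\beta=0$ and $s>\tfrac52$. All kernel weights $|p|^{2\beta},|p_i|^{2\beta}$ are therefore identically $1$. The factor $\langle p\rangle^s$ is $O(1)$ because $|p|\lesssim 1$. On the resonant set $|p|+|p_1|=|p_2|+|p_3|$ with $|p_1|=r\gg1$ we have $\max(|p_2|,|p_3|)\gtrsim r$. The only geometric input I plan to use is the bound $I_{\rm in}(E,\rho)\lesssim E^2$ from (\ref{2.1}), localized when needed to the part of the resonant surface where $p_2$ (or $p_3$) lies in a ball of radius $R$. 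Via the spheroidal coordinates (\ref{2.2}) this localized version gives $\lesssim \min(E^2,R^2)$ up to constants. The reason is that for $|\rho|\approx E$ the ellipsoid is thin, and its intersection with a ball of radius $R$ has weighted measure $\lesssim R^2$.

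\emph{Part (i).} For $\mathcal J_1$ the integrand is bounded by $\langle p_1\rangle^{-s}\langle\max(|p_2|,|p_3|)\rangle^{-s}\lesssim r^{-2s}$. Integrating first in $p_2$ with (\ref{2.1}) gives $\lesssim r^{-2s}E^2\lesssim r^{2-2s}$. Then $\int_{r\gg1} r^{2-2s}\,r^2\,dr<\infty$ since $s>\tfrac52$.

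For $\mathcal J_{2}$ and $\mathcal J_3$ on the good region, I will split it into two pieces. In the first, both $|p_2|$ and $|p_3|$ are comparable to $r$; the weight is then $\lesssim r^{-2s}$ and the same computation applies. In the second, the smaller of $|p_2|,|p_3|$ has size $2^{j}$ with $1\ll 2^j\ll r$; the weight is then $\lesssim r^{-s}2^{-js}$ (or better), and the localized bound gives measure $\lesssim 2^{2j}$. Summing in $j$ and integrating gives $\int r^{2-s}\,dr\cdot\sum_j 2^{(2-s)j}$. This is off by at most a logarithm-free power, so I will refine the counting. In this configuration the constraint reads $|p_2|-|p_1|+|p_1+p-p_2|\approx|p|$, i.e. roughly $|p_2|-\hat p_1\cdot p_2\approx O(1)$. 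This confines $p_2$ to a paraboloid-like region around the $\hat p_1$ direction, of area $\lesssim 2^{j}$ per unit thickness, so the $j$-sum is convergent.

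The regions where the small momentum is $\lesssim 1$ are exactly the bad regions. The one exception is $\mathcal J_3$ with $|p_3|\ll|p_1|\approx|p_2|$: there the weight is already $\langle p_1\rangle^{-s}\langle p_2\rangle^{-s}\lesssim r^{-2s}$, so that piece is good anyway.

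\emph{Part (ii), the cancellation.} On the bad sets $\mathcal J_1$ is harmless by the argument above. Using the symmetry $p_2\leftrightarrow p_3$ of the measure, $\mathcal J_{2,\rm bad,1}=\mathcal J_{2,\rm bad,2}$, so $\mathcal J_{2,\rm bad}=2\mathcal J_{2,\rm bad,1}$. For $\mathcal J_3$, the bad$_2$ piece is good as noted, so it remains to compare $\mathcal J_{3,\rm bad,1}$ with $\mathcal J_{2,\rm bad,1}$. On this set the measure $\delta(\cdot)\delta(\cdot)\,dp_1dp_2dp_3$ and the factor $\langle p_2\rangle^{-s}$ are the same in both. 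The only difference is the weight $\langle p_3\rangle^{-s}$ in $\mathcal J_2$ versus $\langle p_1\rangle^{-s}$ in $\mathcal J_3$. Since $p_3-p_1=p-p_2$ has size $\lesssim 1$, the mean value theorem gives
\[
\bigl|\langle p_3\rangle^{-s}-\langle p_1\rangle^{-s}\bigr|\lesssim r^{-s-1}.
\]
The difference $2(\mathcal J_{2,\rm bad,1}-\mathcal J_{3,\rm bad,1})$ is then bounded by $\int_{r\gg1} r^{-s-1}\,m(p_1)\,dp_1$. Here $m(p_1)$ is the resonant measure of $p_2$ in a unit ball, which is $O(1)$ by the localized version of (\ref{2.1}). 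This gives $\int r^{1-s}\,dr<\infty$, since $s>2$.

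I expect the main obstacle to be that the sets $\{|p_2|\ll|p_1|\approx|p_3|\}$ used for $\mathcal J_2$ and $\mathcal J_3$ are defined with sharp cutoffs, and these do not match exactly after the swap. I would handle this by defining the bad regions through a common smooth cutoff $\chi(|p_2|)$, with $|p_2|\lesssim 1$ say. The mismatch then lives where $|p_2|$ is large, which is a good region and is controlled as in part (i). The other place requiring care is the localized $R^2$ (respectively $2^j$) measure bound on the thin spheroid when $|\rho|\approx E$. For this I would compute directly in the $(\mu,\nu,\theta)$ coordinates, noting that $\mu-1\approx (E-|\rho|)/|\rho|$ is small there.
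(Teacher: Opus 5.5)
Your bound for $\mathcal J_{1,\mathrm{low},\mathrm{high}}$ and for the pieces where $|p_2|\sim|p_3|\sim r$ is fine. The argument breaks where you shrink the bad region to $|p_2|\lesssim 1$ (resp.\ $|p_3|\lesssim 1$) and declare $1\ll|p_2|\ll|p_1|\approx|p_3|$ good. In the paper the bad region is all of $\{|p_2|\ll|p_1|\approx|p_3|\}$, and it has to be, because on the extra piece $\mathcal J_2$ and $\mathcal J_3$ are separately infinite when $\tfrac52<s\le 3$.

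To see this, take $|p_2|\sim 2^j$ with $1\ll 2^j\ll r$. Both integrands are $\approx 2^{-js}r^{-s}$. For fixed $p_1$, the resonant $\delta$-measure of $\{|p_2|\sim 2^j\}$ is $\sim 2^{2j}$; this is the paper's $I_{\mathrm{in},A,k_1,k_2}\sim 2^{2k_2}$. Your paraboloid does have opening angle $\lesssim 2^{-j/2}$, but $|\nabla_{p_2}\Phi|=|\hat p_3-\hat p_2|$ is small by the same factor, so the measure is not $O(2^j)$. Hence, for instance with $|p|\sim 1$, each of $\mathcal J_2,\mathcal J_3$ contributes $\sim\sum_j 2^{(2-s)j}\int_{r\gg1}r^{2-s}\,dr=\infty$ on this region.

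The divergence sits in the $r$-integral, not in the $j$-sum, which already converges. So no sharper counting of $p_2$ can rescue it: you need the gain--loss cancellation on the whole region $|p_2|\ll|p_1|$. This is what the paper's Step 2 does. For $|p_2-p|\sim 2^l$, $l\le k_1-C$, the difference produces a factor $2^{(l-k_1)/2}$. Without that factor one only has $I_{B,k_1,l}\sim 2^{(2-s)l}2^{(3-s)k_1}$, which is not summable in $k_1$ for $s\le 3$.

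For the explicit weights, your mean value idea does repair this if you apply it on the full bad region rather than only on $|p_2|\lesssim1$. There $|p_3-p_1|=|p_2-p|\lesssim\langle p_2\rangle$, so $|\langle p_3\rangle^{-s}-\langle p_1\rangle^{-s}|\lesssim r^{-s-1}\langle p_2\rangle$. This gives a contribution $\lesssim r^{-s-1}\bigl(1+\sum_{1\le 2^j\ll r}2^{(3-s)j}\bigr)$ per $p_1$, and the resulting $r$-integral converges exactly when $s>\tfrac52$.

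Be aware, though, that this gain comes from the smoothness of $\langle\cdot\rangle^{-s}$. It is lost once $\langle p_1\rangle^{-s}$ and $\langle p_3\rangle^{-s}$ are replaced by values of a rough $f\in L^\infty_s$, and that is what Proposition 3.9 ultimately needs. This is why the paper substitutes $Y=p_3$ in one term and $Y=p_1$ in the other, so that the input is common and the difference sits on two nearby latitude circles. It then applies the $L^2$ spherical difference estimate of Lemma 6.3, which gives the weaker gain $(m/r)^{1/2}$ but one that does not depend on the regularity of the input.
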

\begin{proof}
In this low-high regime, since $\left|p\right|\lesssim 1$ and $1\ll r$, we must have $\left|\rho\right|\approx r\gg 1$. Moreover, we must have $\beta=0$ and $s>2.5$. We divide the proof into four steps.\par
\noindent\underline{{Step 1. We first prove $\mathcal{J}_{1,\mbox{low},\mbox{high}}\lesssim 1$.}}\par Since $\langle p \rangle\sim 1$, we have
\begin{align}
\mathcal{J}_{1,\mbox{low},\mbox{high}}&\lesssim\sup_{\substack{p\\\left|p\right|\lesssim 1}}\int_{\left|p_1\right|\gg 1}\frac{1}{\langle p_1\rangle^s}\int_{\mathbb{R}^3}\delta\left(\left|p\right|+\left|p_1\right|-\left|p_2\right|-\left|p+p_1-p_2\right|\right)\frac{dp_2}{\langle p_2\rangle^s \langle p+p_1-p_2\rangle^s}\,dp_1\label{2.3}\\
&\triangleq \sup_{\substack{p\\\left|p\right|\lesssim 1}}\int_{\left|p_1\right|\gg 1}\frac{1}{\langle p_1\rangle^s}I_{\mbox{in}}\,dp_1.\notag
\end{align}
Denote $u\triangleq\left|p_2\right|, v\triangleq\left|p_3\right|, E\triangleq\left|p\right|+\left|p_1\right|$ and let $\theta$ be the angle between $p_2$ and $\rho$. Writing in spherical coordinate and $\theta$ variable, we see that
\begin{align*}
    I_{\mbox{in}}=\int^{+\infty}_0\int_{\mathbb{S}^2}\delta\left(E-u-v\right)\frac{u^2}{\langle u\rangle^s \langle v\rangle^s}d\omega du\sim \int^{+\infty}_0\int^1_{-1}\delta\left(E-u-v(\mu)\right)\frac{u^2}{\langle u\rangle^s \langle v(\mu)\rangle^s}d\mu du,
\end{align*}
where we used $d\omega=\sin\theta\,d\theta d\phi=-d(\cos\theta) d\phi\triangleq-d\mu d\phi$ and $\int d\phi=2\pi$. Denote $\Phi(\mu)\triangleq E-u-v(\mu)$, then 
\begin{align}
\Phi^\prime(\mu)=-v^\prime(\mu)=-\frac{-2u\left|\rho\right|}{2\sqrt{u^2+\left|\rho\right|^2-2u\left|\rho\right|\mu}}=\frac{u\left|\rho\right|}{v},\label{2.4}
\end{align}
which implies 
$$\int^1_{-1}\delta\left(E-u-v(\mu)\right)\frac{u^2}{\langle u\rangle^s \langle v(\mu)\rangle^s}d\mu=\frac{u^2}{\langle u\rangle^s \langle v\rangle^s}\frac{v}{\left|\rho\right|u}=\frac{uv}{\left|\rho\right|\langle u\rangle^s \langle v\rangle^s}.$$
Moreover, since $\left|u-v\right|\le\left|\rho\right|$ and $u+v=E$, we must have $\displaystyle{\frac{E-\left|\rho\right|}{2}\le u\le\frac{E-\left|\rho\right|}{2}}$. Therefore, we get
\begin{align*}
    I_{\mbox{in}}&=\frac{1}{\left|\rho\right|}\int^{+\infty}_0 \frac{u^2}{\langle u\rangle^s \langle v\rangle^s}du\sim \frac{1}{\left|\rho\right|}\int^{\frac{E+\left|\rho\right|}{2}}_\frac{E-\left|\rho\right|}{2}  \frac{u(E-u)}{\langle u\rangle^s \langle E-u\rangle^s}du\sim \frac{1}{\left|\rho\right|}\int^{\frac{E}{2}}_\frac{E-\left|\rho\right|}{2}  \frac{u(E-u)}{\langle u\rangle^s \langle E-u\rangle^s}du\\
    &\sim \frac{1}{\left|\rho\right|}\int^{\frac{E}{2}}_\frac{E-\left|\rho\right|}{2}  \frac{u}{\langle u\rangle^s }du\lesssim \frac{1}{\left|\rho\right|}\int^{\frac{E}{2}}_0  \frac{u}{\langle u\rangle^s }du\sim \frac{1}{\left|\rho\right|^s},
\end{align*}
where in the second step we used the symmetry of the integral; in the third step we used $E-u\sim E\sim \left|\rho\right|$; in the last step we used $s>2.5$. Finally, by (\ref{2.3}) and noticing that $\left|\rho\right|\approx r$, we conclude that
$$\mathcal{J}_{1,\mbox{low},\mbox{high}}\lesssim\int_{r\gg 1}\frac{1}{r^s}\cdot\frac{1}{\left|\rho\right|^s}\cdot r^2\,dr=\int_{r\gg 1}\frac{dr}{r^{2s-2}}\lesssim 1.$$\par
\noindent\underline{{Step 2. We next prove $\mathcal{J}_{1,\mbox{low},\mbox{high},\mbox{bad},1}+\mathcal{J}_{2,\mbox{low},\mbox{high},\mbox{bad},1}-2\mathcal{J}_{3,\mbox{low},\mbox{high},\mbox{bad},1}\lesssim 1$.}}\par 
Recall that
\begin{align}
    &\mathcal{J}_{1,\mbox{low},\mbox{high},\mbox{bad},1}+\mathcal{J}_{2,\mbox{low},\mbox{high},\mbox{bad},1}-2\mathcal{J}_{3,\mbox{low},\mbox{high},\mbox{bad},1}\notag\\
    \lesssim &\sup_{\substack{p\\\left|p\right|\lesssim 1}}\iint_{\left|p_1\right|\gg 1}\left(\frac{\langle p\rangle^s}{\langle p_1\rangle^s\langle p_2\rangle^s\langle p_3\rangle^s}+\frac{1}{\langle p_2\rangle^s\langle p_3\rangle^s}-\frac{1}{\langle p_1\rangle^s\langle p_2\rangle^s}-\frac{1}{\langle p_1\rangle^s\langle p_3\rangle^s}\right)\notag\\
    &\qquad\qquad\qquad\qquad\qquad\qquad\qquad\qquad\times\delta\left(\left|p\right|+\left|p_1\right|-\left|p_2\right|-\left|p+p_1-p_2\right|\right)\,dp_1 dp_2\notag\\
    \lesssim &\sup_{\substack{p\\\left|p\right|\lesssim 1}}\iint_{\left|p_1\right|\gg 1}\left(\frac{\langle p\rangle^s}{\langle p_1\rangle^s\langle p_2\rangle^s\langle p_3\rangle^s}-\frac{1}{\langle p_1\rangle^s\langle p_3\rangle^s}\right)+\left(\frac{1}{\langle p_2\rangle^s\langle p_3\rangle^s}-\frac{1}{\langle p_1\rangle^s\langle p_2\rangle^s}\right)\label{badarea}\\
    &\qquad\qquad\qquad\qquad\qquad\qquad\qquad\qquad\times\delta\left(\left|p\right|+\left|p_1\right|-\left|p_2\right|-\left|p+p_1-p_2\right|\right)\,dp_1 dp_2\notag\\
    \triangleq& I_A+I_B,\notag
\end{align}
where in the first inequality we used change of variables in $\mathcal{J}_{3,\mbox{low},\mbox{high},\mbox{bad},1}$.\par
We first estimate $I_B$, where
\begin{align*}
I_B&=\sup_{\substack{p\\\left|p\right|\lesssim 1}}\int\frac{1}{\langle p_2\rangle^s}\int\left(\frac{1}{\langle p_3\rangle^s}-\frac{1}{\langle p_1\rangle^s}\right)\delta\left(\left|p\right|+\left|p_1\right|-\left|p_2\right|-\left|p+p_1-p_2\right|\right)\,dp_1\,dp_2\\
&\triangleq \sup_{\substack{p\\\left|p\right|\lesssim 1}}\int\frac{1}{\langle p_2\rangle^s} I_{\mbox{in},B}\,dp_2.
\end{align*}
It turns out that this $I_B$ is the worst term, and we need to exploit a delicate cancellation argument to control it. Set $z\triangleq p_2$, $p_3=p+p_1-z$ and $h\triangleq z-p=p_1-p_3$. Then we perform the dyadic decompositions: $|p_1|\sim |p_3|\sim 2^{k_1}$, $k_1\gg1$ and $|h|=|p_1-p_3|\sim 2^l$, $l\le k_1-C$, \vspace{0.3em} where $C>0$ is a fixed large constant. Since $|p|\leq 1$, we have $\ang{z}\sim \ang{h}\sim \ang{2^l}$. We write
\begin{align*}
    I_B=\sum_{\substack{k_1\\k_1\gg 1}}\sum_{\substack{l\\l\leq k_1-C}} I_{B,k_1,l}\qquad\mbox{and}\qquad I_{\mbox{in},B}=\sum_{\substack{k_1\\k_1\gg 1}}\sum_{\substack{l\\l\leq k_1-C}} I_{\mbox{in},B,k_1,l}.
\end{align*}
Now we focus on the inner integral $I_{\mbox{in},B,k_1,l}$. For the first term we set $Y=p_3$, so that $p_1=Y+h$. For the second term we set $Y=p_1$, so that $p_3=Y-h$. Then, denote
$$\begin{cases}
\Phi_+(Y,z)=\left|p\right|+\left|Y+h\right|-\left|z\right|-\left|Y\right|;\\
\Phi_-(Y,z)=\left|p\right|+\left|Y\right|-\left|z\right|-\left|Y-h\right|
\end{cases}$$
and we can write
$$I_{\mbox{in},B}=\int_{\left|p_1\right|\gg 1}\frac{1}{\ang{Y}^s}\left[\delta\left(\Phi_+(Y,z)-\Phi_-(Y,z)\right)\right]\,dY.$$
We furthermore denote $Y=r\sigma$, $h=m\omega$ and $A\triangleq\left|z\right|-\left|p\right|=\left|p+h\right|-\left|p\right|$, where $\sigma,\omega\in\mathbb{S}^2$. Then $r\sim 2^{k_1}$ and $m\sim 2^l$. (Note that $r$ does not necessarily represent $|p_1|$ here, so we somehow mix the notation here.)\par
Next, if $\Phi_+=0$, then we have $\left|Y+h\right|=\left|Y\right|+\left|z\right|-\left|p\right|$, which is equivalent to $\left|r\sigma+h\right|=r+A$. Taking square on both sides and we see that 
\begin{align}
\sigma\cdot\omega=\mu_+(r,z)\triangleq\frac{2rA+A^2-m^2}{2rm}.\label{2.5}
\end{align}
Similarly, if $\Phi_-=0$, then we can get 
\begin{align}
\sigma\cdot\omega=\mu_-(r,z)\triangleq\frac{2rA+m^2-A^2}{2rm}.\label{2.6}
\end{align}
Thus, the two resonant surfaces two actually two latitudes on $\mathbb{S}^2$ with axis $\omega$.\par
Moreover, denote $\displaystyle{\mu_0\triangleq\frac{\mu_++\mu_-}{2}}$ and we observe that these two latitudes are very close in the sense that $\displaystyle{\frac{\left|\mu_+-\mu_-\right|}{1-\left|\mu_0\right|}\lesssim \frac{m}{r}}$. In fact, by (\ref{2.5}) and (\ref{2.6}), we have $\displaystyle{\mu_0=\frac{A}{m}}$ and $\displaystyle{\mu_+-\mu_-=\frac{A^2-m^2}{rm}}$. Since $A=\left|p+h\right|-\left|p\right|$ and $m=h$, we know that $\left|A\right|\le m$ and $\mu_+-\mu_-<0$. Therefore, if $A>0$, then we obtain
$$\frac{\left|\mu_+-\mu_-\right|}{1-\left|\mu_0\right| }=\frac{\frac{m^2-A^2}{rm}}{1-\frac{A}{m}}=\frac{m+A}{r}\sim\frac{m}{r}.$$
If $A<0$, then we obtain
$$\frac{\left|\mu_+-\mu_-\right|}{1-\left|\mu_0\right| }=\ldots=\frac{m-A}{r}\lesssim\frac{m}{r}.$$\par
In addition, denote $\mu=\sigma\cdot\omega$ and we recall that 
\begin{align*}
    \Phi_+=&\left|p\right|+\left|r\sigma+h\right|-\left|z\right|-\left|r\sigma\right|\\
    =&\left|p\right|+\sqrt{r^2+m^2+2rm\mu}-\left|z\right|-\left|r\sigma\right|.
\end{align*}
Then, we obtain 
$$\partial_\mu\Phi_+=\frac{1}{2}\frac{2rm}{\sqrt{r^2+m^2+2rm\mu}}=\frac{rm}{\left|r\sigma+h\right|}\sim\frac{rm}{r}=m.$$
An analogous argument also yields that $\partial_\mu\Phi_-\sim m$. These give us that
$$\delta(\Phi_\pm)\sim\frac{1}{m}\delta(\mu-\mu_\pm)=\frac{1}{m}\delta(\sigma\cdot\omega-\mu_\pm).$$\par
Now, by Lemma 6.3, we can compute
\begin{align*}
    I_{\mbox{in},B,k_1,l}&\sim \frac{1}{m}\int \frac{r^2}{\ang{r}^s}\int_{\mathbb{S}^2}\left[\delta\left(\sigma\cdot\omega-\mu_+\right)-\delta\left(\sigma\cdot\omega-\mu_-\right)\right]d\sigma dr\\
    &\lesssim\frac{1}{2^l}\cdot 2^{\frac{1}{2}\left(l-k_1\right)}\cdot\int_{r\sim  2^{k_1}} \frac{r^2}{\ang{r}^s}dr=\frac{1}{2^{l/2+k_1/2}}\cdot\frac{1}{\left(2^{k_1}\right)^{s-3}}=\frac{1}{2^{l/2}\cdot\left(2^{k_1}\right)^{s-5/2}}.
\end{align*}
Note that
$$\int \frac{dp_2}{\ang{p_2}^s}=\int \frac{dh}{\ang{h}^s}=\int_{m\sim 2^l}\frac{m^2 }{\ang{m}^s}dm=\begin{cases}
    \displaystyle{\frac{1}{\left(2^l\right)^{s-3}}}\qquad&\mbox{, if }l\ge 0\vspace{0.3em}\\
    2^{3l}\qquad\ \ \ \ \ \ \ &\mbox{, if }l<0
\end{cases}.$$
Therefore, we get
$$I_{B,k_1,l}\lesssim\frac{1}{2^{l/2}\cdot \left(2^{k_1}\right)^{s-5/2}}\sup_p\int \frac{dp_2}{\ang{p_2}^s}\lesssim\begin{cases}
    \displaystyle{\frac{1}{\left(2^l\right)^{s-5/2}\cdot\left(2^{k_1}\right)^{s-5/2}}}\qquad&\mbox{, if }l\ge 0\vspace{0.3em}\\
    \displaystyle{\frac{2^{\frac{5}{2}l}}{\left(2^{k_1}\right)^{s-5/2}}}\qquad&\mbox{, if }l< 0
\end{cases}.$$
Finally, summing over all dyadic pieces and recalling that $s>\frac{5}{2}$, we obtain
\begin{align}
    I_B&=\sum_{\substack{k_1\\k_1\gg 1}}\sum_{\substack{l\\l\leq k_1-C}} I_{B,k_1,l}=\sum_{k_1}\left(\sum_{\substack{l\\l\leq 0}}I_{B,k_1,l}+\sum_{\substack{l\\0<l\leq k_1-C}}I_{B,k_1,l}\right)\notag\\
    &=\sum_{k_1}\left(\sum_{\substack{l\\l\leq 0}}\frac{2^{\frac{5}{2}l}}{\left(2^{k_1}\right)^{s-5/2}}+\sum_{\substack{l\\0<l\leq k_1-C}}\frac{1}{\left(2^{l}\right)^{s-5/2}\left(2^{k_1}\right)^{s-5/2}}\right)=\sum_{\substack{k_1\\k_1\gg 1}}\frac{1}{\left(2^{k_1}\right)^{s-5/2}}\lesssim 1.\label{2.7}
\end{align}
If $k_1-C<0$, the same argument still applies. This finishes the proof of $I_B\lesssim 1$. \par
It remains to estimate $I_A$. This contribution is easier and can be controlled similarly as the case of $\mathcal{J}_{1,\mbox{low},\mbox{high}}$. First, we see that
\begin{align*}
I_A&\lesssim\sup_{\substack{p\\\left|p\right|\lesssim 1}}\int_{\left|p_1\right|\gg 1}\frac{1}{\langle p_1\rangle^s \ang{p_3}^s}\int\delta\left(\left|p\right|+\left|p_1\right|-\left|p_2\right|-\left|p+p_1-p_2\right|\right)\,dp_2\,dp_1\\
&\triangleq\sup_{\substack{p\\\left|p\right|\lesssim 1}}\int_{\left|p_1\right|\gg 1}\frac{1}{\langle p_1\rangle^s \ang{p_3}^s} \,I_{\mbox{in},A}\,dp_2\,dp_1,
\end{align*}
where instead of cancellation, we used
$$    \left|\frac{\ang{p}^{s}}{\ang{p_1}^{s}\ang{p_2}^{s}\ang{p_3}^{s}}-\frac{1}{\ang{p_1}^{s}\ang{p_3}^{s}}\right|\lesssim\frac{1}{\ang{p_1}^{s}\ang{p_3}^{s}}.$$
Denote again $E=\left|p\right|+\left|p_1\right|$, $u=\left|p_2\right|$, $v=\left|p_3\right|$ and we perform the dyadic decompositions: $u\sim 2^{k_2}$ and $\left|\rho\right|=\left|p+p_1\right|\approx\left|p_1\right|\sim 2^{k_1}$, $k_2\le k_1-C$, where $C>0$ is a fixed large constant. We also write
$$I_A=\sum_{\substack{k_1,k_2\\k_2\le k_1-C}}I_{A,k_1,k_2}\qquad\mbox{and}\qquad I_{\mbox{in},A}=\sum_{\substack{k_1,k_2\\k_2\le k_1-C}}I_{\mbox{in},A,k_1,k_2}$$
Furthermore, we denote $\theta$ be the angle of $p_2$ and $\rho$, and we write $p_2=u\sigma$, where $u>0$, $\sigma\in\mathbb{S}^2$. Then,
$$\Phi\triangleq\left|p\right|+\left|p_1\right|-\left|p_2\right|-\left|p+p_1-p_2\right|=E-u-\left|\rho-u\sigma\right|=E-u-\sqrt{\left|\rho\right|^2+u^2-2\left|\rho\right|u\cos\theta}.$$
Denote $\mu\triangleq\cos\theta$ and we then have $d\sigma=\sin\theta\,d\theta d\phi=-d(\cos\theta)d\phi=-d\mu d\phi$. By a same computation as (\ref{2.4}), we also have
$$\left|\partial_\mu\Phi\right|=\frac{\left|\rho\right|u}{v}\sim u$$
Therefore, we compute
\begin{align*}
    I_{\mbox{in},A,k_1,k_2}&=\int_{u\sim 2^{k_2}}\delta(\Phi)dp_2=\int_{u\sim 2^{k_2}}u^2\int_{\mathbb{S}^2}\delta(E-u-\left|\rho-u\sigma\right|)d\sigma du\\
    &\sim \int_{u\sim 2^{k_2}}u^2\int^1_{-1}\delta\left(E-u-\sqrt{\left|\rho\right|^2+u^2-2\left|\rho\right|u\mu}\right)d\mu du\sim \int_{u\sim 2^{k_2}}u^2\cdot\frac{1}{u}\,du\sim 2^{2k_2}.
\end{align*}
Finally, summing over all dyadic pieces and recalling that $s>\frac{5}{2}$ as before, we again obtain
\begin{align}
    I_A&\lesssim\sum_{\substack{k_1,k_2\\k_2\le k_1-C}}\frac{2^{2k_2}}{2^{2sk_1}}\int_{\left|p_1\right|\sim 2^{k_1}}dp_1=\sum_{\substack{k_1,k_2\\k_2\le k_1-C}}\frac{2^{2k_2}}{2^{2sk_1}}\cdot 2^{3k_1}\notag\\
    &=\sum_{\substack{k_1\\k_1\gg 1}}2^{\left(3-2s\right)k_1}\sum_{\substack{k_2\\k_2\le k_1-C}}2^{2k_2}\lesssim\sum_{\substack{k_1\\k_1\gg 1}}2^{\left(5-2s\right)k_1}\lesssim 1.\label{2.8}
\end{align}
Thus, combining (\ref{2.7}) and (\ref{2.8}), we finished the proof of
$$\mathcal{J}_{1,\mbox{low},\mbox{high},\mbox{bad},1}+\mathcal{J}_{2,\mbox{low},\mbox{high},\mbox{bad},1}-2\mathcal{J}_{3,\mbox{low},\mbox{high},\mbox{bad},1}\lesssim 1.$$\par
\noindent\underline{{Step 3. We then prove $\mathcal{J}_{1,\mbox{low},\mbox{high},\mbox{bad},2}+\mathcal{J}_{2,\mbox{low},\mbox{high},\mbox{bad},2}-2\mathcal{J}_{3,\mbox{low},\mbox{high},\mbox{bad},2}\lesssim 1$.}}\par
In this case, we slightly modify (\ref{badarea}) and write
\begin{align*}
    &\mathcal{J}_{1,\mbox{low},\mbox{high},\mbox{bad},2}+\mathcal{J}_{2,\mbox{low},\mbox{high},\mbox{bad},2}-2\mathcal{J}_{3,\mbox{low},\mbox{high},\mbox{bad},2}\\
    \lesssim &\sup_{\substack{p\\\left|p\right|\lesssim 1}}\iint_{\left|p_1\right|\gg 1}\left(\frac{\langle p\rangle^s}{\langle p_1\rangle^s\langle p_2\rangle^s\langle p_3\rangle^s}+\frac{1}{\langle p_2\rangle^s\langle p_3\rangle^s}-\frac{1}{\langle p_1\rangle^s\langle p_2\rangle^s}-\frac{1}{\langle p_1\rangle^s\langle p_3\rangle^s}\right)\\
    &\qquad\qquad\qquad\qquad\qquad\qquad\qquad\qquad\times\delta\left(\left|p\right|+\left|p_1\right|-\left|p_2\right|-\left|p+p_1-p_2\right|\right)\,dp_1 dp_2\\
    \lesssim &\sup_{\substack{p\\\left|p\right|\lesssim 1}}\iint_{\left|p_1\right|\gg 1}\left(\frac{\langle p\rangle^s}{\langle p_1\rangle^s\langle p_2\rangle^s\langle p_3\rangle^s}-\frac{1}{\langle p_1\rangle^s\langle p_2\rangle^s}\right)+\left(\frac{1}{\langle p_2\rangle^s\langle p_3\rangle^s}-\frac{1}{\langle p_1\rangle^s\langle p_3\rangle^s}\right)\\
    &\qquad\qquad\qquad\qquad\qquad\qquad\qquad\qquad\times\delta\left(\left|p\right|+\left|p_1\right|-\left|p_2\right|-\left|p+p_1-p_2\right|\right)\,dp_1 dp_2\\
    \triangleq& I_A+I_B.
\end{align*}
Then we only need to apply the exact same argument as in Step 2. Namely, we estimate directly for $I_A$, but use the cancellation for $I_B$. We omit the details here.\par
\noindent\underline{{Step 4. We finally prove $\mathcal{J}_{2,\mbox{low},\mbox{high},\mbox{good}},\ \mathcal{J}_{3,\mbox{low},\mbox{high},\mbox{good}}\lesssim 1$.}}\par
We first consider $\mathcal{J}_{3,\mbox{low},\mbox{high},\mbox{good}}$. In the case of $\left|p_3\right|\lesssim \left|p\right|$, we can use 
$$\frac{1}{\langle p_1\rangle^s\langle p_2\rangle^s}\lesssim\frac{\langle p\rangle^s}{\langle p_1\rangle^s\langle p_2\rangle^s\langle p_3\rangle^s}$$
to control $\mathcal{J}_{3,\mbox{low},\mbox{high},\mbox{good}}$ by the corresponding part of $\mathcal{J}_{1,\mbox{low},\mbox{high},\mbox{good}}$. Therefore, we can assume $\left|p\right|\ll \left|p_3\right|$. Now, according to our definition of $\mathcal{J}_{3,\mbox{low},\mbox{high},\mbox{good}}$, we only need to prove
\begin{align}
&\sup_{\substack{p\\\left|p\right|\lesssim 1}}\iiint_{\substack{\left|p_1\right|\gg 1\\\left|p\right|\ll\left|p_1\right|\\\left|p_1\right|\sim\left|p_2\right|\sim\left|p_3\right|}}\frac{1}{\langle p_1\rangle^s \langle p_2\rangle^s}\delta\left(p+p_1-p_2-p_3\right)\delta\left(\left|p\right|+\left|p_1\right|-\left|p_2\right|-\left|p_3\right|\right)\,dp_1 dp_2 dp_3\lesssim 1.\notag
\end{align}
However, this can be done by using the exact same argument of dealing with $I_A$ part in Step 2 above. Note that in this case all of $\left|p_i\right|$ ($i=1,2,3$) are large, so this case is even better than the $I_A$ part in Step 2.\par
Next, we consider $\mathcal{J}_{2,\mbox{low},\mbox{high},\mbox{good}}$. Similar as before, in the case of $\left|p_1\right|\lesssim \left|p\right|$, we can use 
$$\frac{1}{\langle p_2\rangle^s\langle p_3\rangle^s}\lesssim\frac{\langle p\rangle^s}{\langle p_1\rangle^s\langle p_2\rangle^s\langle p_3\rangle^s}$$
to control $\mathcal{J}_{2,\mbox{low},\mbox{high},\mbox{good}}$ by the corresponding part of $\mathcal{J}_{1,\mbox{low},\mbox{high},\mbox{good}}$. Moreover, in the case of $\left|p_1\right|\lesssim \left|p_3\right|$, we can use 
$$\frac{1}{\langle p_2\rangle^s\langle p_3\rangle^s}\lesssim\frac{1}{\langle p_2\rangle^s\langle p_3\rangle^s}\frac{\langle p_3\rangle^s}{\langle p_1\rangle^s}=\frac{1}{\langle p_1\rangle^s\langle p_2\rangle^s}$$
to control $\mathcal{J}_{2,\mbox{low},\mbox{high},\mbox{good}}$ by the corresponding part of $\mathcal{J}_{3,\mbox{low},\mbox{high},\mbox{good}}$. Therefore, we can assume $\left|p_1\right|\gg\left|p\right|$ and $\left|p_1\right|\gg\left|p_3\right|$. However, if so, then we must have $\left|p_1\right|\approx\left|p_2\right|$, which implies that this is a region classified in $\mathcal{J}_{2,\mbox{low},\mbox{high},\mbox{bad}}$.\par
To sum up, we already conclude $\mathcal{J}_{2,\mbox{low},\mbox{high},\mbox{good}},\ \mathcal{J}_{3,\mbox{low},\mbox{high},\mbox{good}}\lesssim 1$.    
\end{proof}

\vspace{1em}

\begin{prop}
    When $1<a<2$, we still have 
    \begin{enumerate}[label=(\roman*),itemsep=3pt]
        \item $\mathcal{J}_{1,\mbox{low},\mbox{high}},\ \mathcal{J}_{2,\mbox{low},\mbox{high},\mbox{good}},\ \mathcal{J}_{3,\mbox{low},\mbox{high},\mbox{good}}\lesssim 1$;
        \item $\mathcal{J}_{1,\mbox{low},\mbox{high},\mbox{bad}}+\mathcal{J}_{2,\mbox{low},\mbox{high},\mbox{bad}}-2\mathcal{J}_{3,\mbox{low},\mbox{high},\mbox{bad}}\lesssim 1.$
    \end{enumerate}
   In particular, this implies $\mathcal{J}_{1,\mbox{low},\mbox{high}}+\mathcal{J}_{2,\mbox{low},\mbox{high}}-2\mathcal{J}_{3,\mbox{low},\mbox{high}}\lesssim 1$.
\end{prop}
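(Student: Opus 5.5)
I will follow the four-step structure of the proof for $a=1$ and change only the geometric computations that depended on $\omega(p)=|p|$. Here $|p|\lesssim 1\ll r=|p_1|$, so $|\rho|\approx r$. The hypotheses give $0\le\beta\le\frac{a-1}{4}$ and $s\ge 4\beta+3-\frac a2>2$. The weights $|p|^{2\beta}\lesssim 1$, while on the high variables $|p_i|^{2\beta}\langle p_i\rangle^{-s}\sim\langle p_i\rangle^{-(s-2\beta)}$. So every estimate from the $a=1$ case should go through with $s$ replaced by $s-2\beta$ on each high factor, and with the Jacobians recomputed for $|p|^a$.

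\textbf{Step 1: the bound $\mathcal{J}_{1,\mbox{low},\mbox{high}}\lesssim 1$.} Write $p_2=u\sigma$ and $\mu=\cos\angle(p_2,\rho)$, and set $\Phi(\mu)=E_a-u^a-(|\rho|^2+u^2-2|\rho|u\mu)^{a/2}$ with $E_a=|p|^a+|p_1|^a$. Then
$$|\partial_\mu\Phi|=a\,v^{a-2}|\rho|u,\qquad v=|p_3|.$$
Energy conservation forces $\max(u,v)\sim r$. By symmetry I take $v\sim r$ and decompose $u\sim 2^{k_2}$. The $\mu$-integration then gives a factor $u^2/(v^{a-2}ru)\sim u\,r^{1-a}$. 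Summing over $u$ yields a bound of the form
$$r^{1-a}\int\frac{u\,|u|^{2\beta}}{\langle u\rangle^{s}}\,du\lesssim r^{1-a}.$$
The remaining $p_1$-integral is $\int r^{2-2(s-2\beta)}\,r^{1-a}\,dr$, which converges because $2s-4\beta>4-a$.

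\textbf{Step 4: the good parts of $\mathcal{J}_2,\mathcal{J}_3$.} These reduce to Step 1 and to the $I_A$-type estimate by the same weight comparisons used in the case $a=1$. Once $|p_1|\gg|p|$ and $|p_1|\gg|p_3|$, energy conservation forces $|p_2|\approx|p_1|$, which places us in the bad region.

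\textbf{Steps 2--3: the bad region, which is the main obstacle.} I regroup the integrand exactly as in (\ref{badarea}) into $I_A+I_B$. The term $I_A$ is handled as in Step 1, with the Jacobian $\sim u\,r^{a-1}$ and the weight $r^{-2(s-2\beta)}$. This gives
$$\sum_{k_1} 2^{(3-2(s-2\beta))k_1}\,2^{(1-a)k_1}\,2^{(2+2\beta)k_1},$$
which is summable because $2s>8\beta+6-a$; this is precisely the threshold (in the endpoint case I keep track of the small-$u$ sum more carefully).

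For $I_B$ I set $h=p_2-p=p_1-p_3$, with $|h|=m\sim 2^l\ll r$. I then write the two delta functions as $\delta(\Phi_\pm)$ with $Y=p_3$, respectively $Y=p_1$:
$$\Phi_+=|p|^a+|Y+h|^a-|z|^a-|Y|^a,\qquad \Phi_-=|p|^a+|Y|^a-|z|^a-|Y-h|^a.$$
Solving $\Phi_\pm=0$ for $\mu=\sigma\cdot\omega$ with $r$ fixed gives two latitudes $\mu_\pm$. The relation $|Y\pm h|^a=r^a\pm A_a$, with $A_a=|z|^a-|p|^a$, determines $|Y\pm h|^2$ explicitly, and the derivative is $\partial_\mu\Phi_\pm\sim a\,r^{a-1}m$. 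Expanding $(r^a\pm A_a)^{2/a}$ to second order, I expect $\mu_0\approx A_a/(a r^{a-1}m)$ and
$$|\mu_+-\mu_-|\lesssim \frac{m}{r}\,(1-|\mu_0|)+O\Big(\frac{A_a^2}{r^{2a-1}m}\Big).$$
Verifying the relative closeness $|\mu_+-\mu_-|/(1-|\mu_0|)\lesssim m/r$ uniformly in the region where the latitudes exist is the delicate point, since concavity of $|\cdot|^a$ for $a<2$ changes the sign structure. I would check it separately for $A_a\ge0$ and $A_a<0$, using $|A_a|\lesssim \max(m,1)^a$.

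After this I apply Lemma 6.3 to gain $2^{(l-k_1)/2}$, and I must also handle the mismatch between the two Jacobian factors $|\partial_\mu\Phi_+|^{-1}$ and $|\partial_\mu\Phi_-|^{-1}$. Their difference is of relative size $m/r$ by the mean value theorem applied to $|Y\pm h|^{a-2}$, so it is harmless. The result is
$$I_{B,k_1,l}\lesssim 2^{-l}\,2^{(1-a)k_1}\,2^{(l-k_1)/2}\,2^{(3-s+2\beta)k_1}\,\langle 2^l\rangle^{3-s+2\beta}\,2^{2\beta l}.$$
Summing over $l\le k_1-C$ and $k_1$ converges under (H1), and under (H2) using $\beta<\frac{a-1}{4}$. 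Step 3 is then the symmetric relabeling, exactly as in the case $a=1$.
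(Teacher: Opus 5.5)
Your route is the paper's own. You use the same Step 1 computation with $|\partial_\mu\Phi|\sim|\rho|\,u\,v^{a-2}$, the same $I_A+I_B$ regrouping, the two latitudes $\mu_\pm$ with Jacobian $\sim m r^{a-1}$, and the $(m/r)^{1/2}$ gain from Lemma 6.3. Your explicit treatment of the Jacobian mismatch is more careful than the paper, which absorbs it into a ``$\sim$''. In fact $|Y\pm h|^a=r^a\pm A_a$ is fixed on the respective latitude, so the integrand of the $\mathcal J$-integral is constant on each latitude. The Funk--Hecke difference therefore acts on a constant and vanishes, and the mismatch between the two constants (relative size $|A_a|/r^a\lesssim m/r$) is the entire contribution.

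The first gap is in the step you yourself flag as the main obstacle. You leave the bound $|\mu_+-\mu_-|/(1-|\mu_0|)\lesssim m/r$ unproved, and the tool you name for it is too weak. For $m\lesssim 1$, the bound $|A_a|\lesssim\max(m,1)^a$ only gives $|A_a|\lesssim 1$. Then for $m\ll r^{1-a}$ you get neither $|\mu_0|\ll1$ (your formula only gives $|\mu_0|\lesssim 1/(m r^{a-1})$) nor $A_a^2/(r^{2a-1}m)\lesssim m/r$. What is missing is the mean value bound $|A_a|=\bigl||p+h|^a-|p|^a\bigr|\lesssim m$ for $m\lesssim 1$ (since $|p|,|p+h|\lesssim1$ and $a>1$), together with $|A_a|\lesssim m^a$ for $m\gg1$. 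With these, $|\mu_0|\lesssim \max(1,m^{a-1})/r^{a-1}\ll 1$, so both latitudes sit near the equator and $1-|\mu_0|\approx1$. The bound $|\mu_+-\mu_-|\lesssim (r^{2-2a}A_a^2+m^2)/(rm)\lesssim m/r$ then follows directly, with no sign split in $A_a$ and no concavity issue. This is (\ref{2.10})--(\ref{2.11}) in the paper.

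The second gap is that your displayed bound for $I_{B,k_1,l}$ is misbookkept and, as written, does not sum.

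\begin{enumerate}
\item In $I_B$ both high weights $|p_1|^{2\beta}|p_3|^{2\beta}$ are present but there is only one decay factor $\langle Y\rangle^{-s}$. Your rule of replacing $s$ by $s-2\beta$ on each high factor therefore undercounts: the $r$-integral gives $2^{(3-s+4\beta)k_1}$, not $2^{(3-s+2\beta)k_1}$.
\item The $p_2$-integral over $|h|\sim 2^l$ is $\lesssim 2^{3l}$ for $l<0$ and $\lesssim 2^{(3+2\beta-s)l}$ for $l\ge 0$. Your factor $\langle 2^l\rangle^{3-s+2\beta}2^{2\beta l}$ drops the volume $2^{3l}$. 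Then $2^{-l}\,2^{l/2}\,2^{2\beta l}$ diverges as $l\to-\infty$, because $\beta\le\frac{a-1}{4}<\frac14$.
\end{enumerate}

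With the correct factors, $I_{B,k_1,l}\lesssim 2^{\frac52 l}\,2^{-(s-4\beta+a-\frac72)k_1}$ for $l\le 0$, and $I_{B,k_1,l}\lesssim 2^{-(s-2\beta-\frac52)l}\,2^{-(s-4\beta+a-\frac72)k_1}$ for $l>0$. These sum because $s-4\beta+a-\frac72\ge\frac{a-1}{2}>0$ and, when $s<2\beta+\frac52$, because $2s-6\beta+a-6>0$. The latter is also the true condition for your $I_A$ sum. Your exponent there is right, but the condition ``$2s>8\beta+6-a$'' is false at the endpoint. Under (H2) one has $2s-6\beta+a-6=2\beta$, so what closes the endpoint is $\beta>0$, not extra care with the small-$u$ sum. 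In Step 1 you likewise need $s>2\beta+2$, not merely $s>2$, for the $u$-integral; it holds because $a<2$.
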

\begin{proof}
Recall the assumptions (H1) and (H2). Also recall that we are now in the low-high case, where $|p|\lesssim 1$ and $1\ll r$. In particular, $|\rho|\approx r\gg1$. The proof is similar to that of Proposition 2.2, and therefore we will closely follow its argument. Especially, we will use the same notation as before. Also, we divide the proof into four steps as before. The last two steps are identical to the previous argument, so it suffices to focus on the first two steps. We will keep the proof relatively brief, emphasizing only the differences from the previous argument.\par
\noindent\underline{{Step 1. We first prove $\mathcal{J}_{1,\mbox{low},\mbox{high}}\lesssim 1$.}}\par Since $\langle p \rangle\sim 1$, we now have
\begin{align}
&\mathcal{J}_{1,\mbox{low},\mbox{high}}\notag\\
\lesssim&\sup_{\substack{p\\\left|p\right|\lesssim 1}}\int_{\left|p_1\right|\gg 1}\frac{|p_1|^{2\beta}}{\langle p_1\rangle^s}\int_{\mathbb{R}^3}\delta\left(\left|p\right|^a+\left|p_1\right|^a-\left|p_2\right|^a-\left|p+p_1-p_2\right|^a\right)\frac{|p_2|^{2\beta}|p+p_1-p_2|^{2\beta}}{\langle p_2\rangle^s \langle p+p_1-p_2\rangle^s}\,dp_2\,dp_1\notag\\
\triangleq& \sup_{\substack{p\\\left|p\right|\lesssim 1}}\int_{\left|p_1\right|\gg 1}\frac{|p_1|^{2\beta}}{\langle p_1\rangle^s}I_{\mbox{in}}\,dp_1.\notag
\end{align}
Then we compute $I_{\mbox{in}}$ as
\begin{align*}
    I_{\mbox{in}}&=\int^{+\infty}_0\int_{\mathbb{S}^2}\delta\left(E-u^a-v^a\right)\frac{|u|^{2\beta}|v|^{2\beta}u^2}{\langle u\rangle^s \langle v\rangle^s}d\omega du\sim \int^{+\infty}_0\int^1_{-1}\delta\left(E-u^a-v^a(\mu)\right)\frac{|u|^{2\beta+2}|v|^{2\beta}}{\langle u\rangle^s \langle v(\mu)\rangle^s}d\mu du\\
    &\sim \int\frac{|u|^{2\beta+2}|v|^{2\beta}}{\langle u\rangle^s \langle v\rangle^s}\frac{v^{2-a}}{|\rho|u}du=\frac{1}{|\rho|}\int\frac{|u|^{2\beta+1}}{\langle u\rangle^s }\frac{v^{2\beta+2-a}}{\langle v\rangle^s}du,
\end{align*}
where we used
$$\partial_\mu\Phi=-\frac{a}{2}\left(|\rho|^2+u^2-2|\rho|u
\,\mu\right)^{a/2-1}\left(-2|\rho|u\right)\sim|\rho|u\,v^{a-2}.$$
Now without loss of generality we assume $u\le v$ since the roles of $p_2$ and $p_3$ in $\mathcal{J}_{1,\mbox{low},\mbox{high}}$ are symmetric. Therefore, in view of the resonance function $|p|^a+|p_1|^a=|p_2|^a+|p_3|^a$, we must have $|\rho|\sim|p_1|\sim|p_3|=v$ and $u\lesssim |\rho|$. Thus, we can further compute $I_{\mbox{in}}$ as
\begin{align*}
    I_{\mbox{in}}\sim \frac{1}{|\rho|^{s-2\beta+a-1}}\int_0^{C|\rho|}\frac{u^{2\beta+1}}{\ang{u}^s}du\lesssim \frac{1}{|\rho|^{s-2\beta+a-1}},
\end{align*}
where we used $s>2\beta+2$. Finally, noticing that $|\rho|\approx r$ we conclude that
$$\mathcal{J}_{1,\mbox{low},\mbox{high}}\lesssim\int_{r\gg 1}\frac{1}{r^{s-2\beta-2}}\cdot\frac{1}{r^{s-2\beta+a-1}}\,dr=\int_{r\gg 1}\frac{dr}{r^{2s-4\beta+a-3}}\lesssim 1,$$
where we used $\displaystyle{s\ge 4\beta+3-\frac{a}{2}=(2\beta-\frac{a}{2}+2)+(2\beta+1)>2\beta-\frac{a}{2}+2}$.\par
\noindent\underline{{Step 2. We next prove $\mathcal{J}_{1,\mbox{low},\mbox{high},\mbox{bad},1}+\mathcal{J}_{2,\mbox{low},\mbox{high},\mbox{bad},1}-2\mathcal{J}_{3,\mbox{low},\mbox{high},\mbox{bad},1}\lesssim 1$.}}\par
In this step, we use a decomposition similar to (\ref{badarea}). Note that the resonance function is different in the present setting, and that the factor $|p|^{2\beta}|p_1|^{2\beta}|p_2|^{2\beta}|p_3|^{2\beta}$ also appears at the beginning. We first estimate the cancellation term $I_B$, where
\begin{align*}
I_B&=\sup_{\substack{p\\\left|p\right|\lesssim 1}}\int\frac{|p_2|^{2\beta}}{\langle p_2\rangle^s}\int\left(\frac{|p_1|^{2\beta}|p_3|^{2\beta}}{\langle p_3\rangle^s}-\frac{|p_1|^{2\beta}|p_3|^{2\beta}}{\langle p_1\rangle^s}\right)\delta\left(\left|p\right|^a+\left|p_1\right|^a-\left|p_2\right|^a-\left|p+p_1-p_2\right|^a\right)\,dp_1\,dp_2\\
&\triangleq \sup_{\substack{p\\\left|p\right|\lesssim 1}}\int\frac{|p_2|^{2\beta}}{\langle p_2\rangle^s} I_{\mbox{in},B}\,dp_2.
\end{align*}
As before, We also decompose $I_B$ and $I_{\mbox{in},B}$ as
\begin{align*}
    I_B=\sum_{\substack{k_1\\k_1\gg 1}}\sum_{\substack{l\\l\leq k_1-C}} I_{B,k_1,l}\qquad\mbox{and}\qquad I_{\mbox{in},B}=\sum_{\substack{k_1\\k_1\gg 1}}\sum_{\substack{l\\l\leq k_1-C}} I_{\mbox{in},B,k_1,l}.
\end{align*}
Using the same notation as before, the corresponding resonance functions now become
$$\begin{cases}
\Phi_+(Y,z)=\left|p\right|^a+\left|Y+h\right|^a-\left|z\right|^a-\left|Y\right|^a;\\
\Phi_-(Y,z)=\left|p\right|^a+\left|Y\right|^a-\left|z\right|^a-\left|Y-h\right|^a
\end{cases}$$
and $A\triangleq\left|z\right|^a-\left|p\right|^a=\left|p+h\right|^a-\left|p\right|^a$.
If $\Phi_+=0$ and we then have $|r\sigma+h|^a=r^a+A$ which is equivalent to $|r\sigma+h|^2=\left(r^a+A\right)^{2/a}$. This yields 
\begin{align*}
\sigma\cdot\omega=\mu_+(r,z)\triangleq\frac{\left(r^a+A\right)^{2/a}-r^2-m^2}{2rm}.\
\end{align*}
Similarly, if $\Phi_-=0$, then we can get 
\begin{align*}
\sigma\cdot\omega=\mu_-(r,z)\triangleq\frac{r^2+m^2-\left(r^a-A\right)^{2/a}}{2rm}.
\end{align*}
Note that
\begin{align*}
    &\left(r^a+A\right)^{2/a}+\left(r^a-A\right)^{2/a}-2r^2=r^2\left[\left(1+\frac{A}{r^a}\right)^{2/a}+\left(1-\frac{A}{r^a}\right)^{2/a}\right]-2r^2\\
    =&r^2\left[2+O\left(\frac{A}{r^a}\right)^2\right]-2r^2=O\left(\frac{A^2}{r^{2a-2}}\right)
\end{align*}
and we get
$$\left|\mu_+-\mu_-\right|\lesssim\frac{r^{2-2a}A^2+m^2}{rm}.$$
Now we claim that 
\begin{align}
    \left|\mu_+-\mu_-\right|\lesssim\frac{m}{r}.\label{2.10}
\end{align}
Indeed, if $|h|\gg 1$, then $A=|p+h|^a-|p|^a\lesssim |h|^a=m^a$, which implies that
\begin{align*}
    \left|\mu_+-\mu_-\right|\lesssim\frac{r^{2-2a}A^2+m^2}{rm}=r^{1-2a}m^{2a-1}+\frac{m}{r}=\left(\frac{m}{r}\right)^{2a-1}+\frac{m}{r}\lesssim\frac{m}{r},
\end{align*}
where we used $\displaystyle{\frac{m}{r}\ll 1}$. On the other hand, if $|h|\lesssim 1$, then $A=|p+h|^a-|p|^a\lesssim |h|=m$, which gives that
\begin{align*}
    \left|\mu_+-\mu_-\right|\lesssim\frac{r^{2-2a}A^2+m^2}{rm}=\frac{m}{r^{2a-1}}+\frac{m}{r}\lesssim\frac{m}{r},
\end{align*}
where we used $r\gg 1\Rightarrow r^{2a-1}\gg r$. Hence (\ref{2.10}) follows. Moreover, we claim that
\begin{align}
    1-|\mu_0|\approx 1.\label{2.11}
\end{align}
In fact, it suffices to show that $|\mu_0|\ll 1$. We first observe that
$$\mu_0=\frac{\mu_++\mu_-}{2}=\frac{\left(r^a+A\right)^{2/a}-\left(r^a-A\right)^{2/a}}{4rm}.$$
Using mean value theorem to the function $x\mapsto x^{2/a}$ and we obtain that 
\begin{align*}
|\mu_0|\lesssim\frac{r^{2-a}|A|}{rm}=\frac{|A|}{mr^{a-1}}\lesssim\begin{cases}
    \displaystyle{\frac{m^a}{mr^{a-1}}=\left(\frac{m}{r}\right)^{a-1}}\vspace{0.5em}\ \ &\mbox{, if }|h|\gg 1\\
    \displaystyle{\frac{m}{mr^{a-1}}=\frac{1}{r^{a-1}}}\ \ &\mbox{, if }|h|\lesssim 1
\end{cases}=\frac{\max\left(1,m^{a-1}\right)}{r^{a-1}}\ll 1.
\end{align*}
Hence (\ref{2.11}) also follows. Combine (\ref{2.10}) and (\ref{2.11}), we again achieve that
\begin{align*}
    \frac{\left|\mu_+-\mu_-\right|}{1-|\mu_0|}\lesssim\frac{m}{r}.
\end{align*}
In addition, we recall that 
\begin{align*}
    \Phi_+=\left|p\right|^a+\left|r\sigma+h\right|^a-\left|z\right|^a-\left|r\sigma\right|^a=\left|p\right|^a+\left(r^2+m^2+2rm\mu\right)^{a/2}-\left|z\right|^a-\left|r\sigma\right|^a.
\end{align*}
and we have
$$\partial_\mu|r\sigma+h|^a=\frac{a}{2}\left(r^2+m^2+2rm\mu\right)^{a/2-1}\cdot 2rm\sim r^{a-2}\cdot rm=mr^{a-1}.$$
An analogous argument also yields the same result for $\partial_\mu\Phi_-$. These tell us that
$$\delta(\Phi_\pm)\sim\frac{1}{mr^{a-1}}\delta(\mu-\mu_\pm)=\frac{1}{mr^{a-1}}\delta(\sigma\cdot\omega-\mu_\pm).$$
Thus, by Lemma 6.3, we can estimate as
\begin{align*}
    I_{\mbox{in},B,k_1,l}&\sim \frac{1}{mr^{a-1}}\int \frac{r^2}{\ang{r}^{s-4\beta}}\int_{\mathbb{S}^2}\left[\delta\left(\sigma\cdot\omega-\mu_+\right)-\delta\left(\sigma\cdot\omega-\mu_-\right)\right]d\sigma dr\\
    &\lesssim\frac{1}{2^l\,2^{(a-1)k_1}}\cdot 2^{\frac{1}{2}\left(l-k_1\right)}\cdot\int_{r\sim  2^{k_1}} \frac{1}{r^{s-4\beta-2}}dr=\frac{2^{\frac{1}{2}\left(l-k_1\right)}}{2^l\,2^{(a-1)k_1}} \frac{1}{\left(2^{k_1}\right)^{s-4\beta-3}}=\frac{1}{2^{l/2}\cdot\left(2^{k_1}\right)^{s-4\beta+a-3.5}}.
\end{align*}
Consequently, we obtain 
\begin{align*}
     I_{B,k_1,l}\lesssim \frac{1}{2^{l/2}\cdot\left(2^{k_1}\right)^{s-4\beta+a-3.5}}\sup_p \int\frac{|p_2|^{2\beta}}{\langle p_2\rangle^s} \,dp_2\lesssim\begin{cases}
         \displaystyle{\frac{1}{\left(2^l\right)^{s-2\beta-2.5}}\frac{1}{\left(2^{k_1}\right)^{s-4\beta+a-3.5}}}\vspace{0.7em}\ \ &\mbox{, if }l\ge 0\\
         \displaystyle{\frac{\left(2^l\right)^{2\beta+2.5}}{\left(2^{k_1}\right)^{s-4\beta+a-3.5}}}\ \ &\mbox{, if }l< 0
     \end{cases},
\end{align*}
where we used the fact that
$$\int\frac{|p_2|^{2\beta}}{\langle p_2\rangle^s}dp_2\sim\begin{cases}
    \displaystyle{\int_{m\sim 2^l}\frac{dm}{m^{s-2\beta-2}}=\frac{1}{\left(2^l\right)^{s-2\beta-3}}}\vspace{0.7em}\qquad&\mbox{, if }l\ge \\
    \displaystyle{\int_{m\sim 2^l}m^{2+2\beta}\,dm=\left(2^l\right)^{3+2\beta}}\qquad&\mbox{, if }l< 0
\end{cases}.$$
Finally, summing over all dyadic pieces and we obtain
\begin{align*}
    I_B&=\sum_{k_1}\left(\sum_{\substack{l\\l\leq 0}}\frac{(2^l)^{2\beta+2.5}}{\left(2^{k_1}\right)^{s-4\beta+a-3.5}}+\sum_{\substack{l\\0<l\leq k_1-C}}\frac{1}{\left(2^{l}\right)^{s-2\beta-2.5}\left(2^{k_1}\right)^{s-4\beta+a-3.5}}\right)\\
    &\lesssim\sum_{\substack{k_1\\k_1\gg 1}}\frac{1}{\left(2^{k_1}\right)^{s-4\beta+a-3.5}}+\sum_{\substack{l\\0<l\leq k_1-C}}\frac{1}{\left(2^{k_1}\right)^{s-4\beta+a-3.5}}\times\begin{cases}
        1\ \ &\mbox{, if }s>2\beta+2.5\\
        \displaystyle{\frac{1}{(2^{k_1})^{s-2\beta-2.5}}}\ \ &\mbox{, if }s=2\beta+2.5\\
        k_1\ \ &\mbox{, if }s<2\beta+2.5
    \end{cases}\\
    &\lesssim\begin{cases}
        \displaystyle{\sum_{\substack{k_1\\k_1\gg 1}}\frac{1}{\left(2^{k_1}\right)^{s-4\beta+a-3.5}}}\ \ &\mbox{, if }s>2\beta+2.5\\
        \displaystyle{\sum_{\substack{k_1\\k_1\gg 1}}\frac{k_1}{\left(2^{k_1}\right)^{s-4\beta+a-3.5}}}\ \ &\mbox{, if }s=2\beta+2.5\\
        \displaystyle{\sum_{\substack{k_1\\k_1\gg 1}}\frac{1}{\left(2^{k_1}\right)^{2s-6\beta+a-6}}}\ \ &\mbox{, if }s<2\beta+2.5
    \end{cases}
\end{align*}
In the case of $s=2\beta+2.5$, we have
$$s-4\beta+a-3.5=a-2\beta-1\ge a+\frac{1-a}{2}-1=\frac{a}{2}-\frac{1}{2}>0.$$
Then we can pick $\eps=\eps(a)\ll 1$ such that $s-4\beta+a-3.5-\eps>0$. In the case of $s<2\beta+2.5$, if $\displaystyle{s>4\beta+3-\frac{a}{2}}$, then 
$$2s-6\beta+a-6\ge 2s-8\beta+a-6>0;$$
if $\displaystyle{s=4\beta+3-\frac{a}{2}}$, then $\beta>0$, which implies that 
$$2s-6\beta+a-6=(2s-8\beta+a-6)+2\beta=2\beta>0.$$
Hence, we always have $2s-6\beta+a-6>0$. Consequently, $I_B\lesssim 1$.\par
Next, it remains to estimate $I_A$. The argument is exactly the same except the resonance function. In the present setting, we have
$$\Phi=E-u^a-\left(|\rho|^2+u^2-2|\rho|u\cos\theta\right)^{a/2}$$
Therefore, 
$$\partial_\mu\Phi=-\frac{a}{2}\left(|\rho|^2+u^2-2|\rho|u\,\mu\right)^{a/2-1}\left(-2|\rho|u\right)$$
and this gives that
$$|\partial_\mu\Phi|\sim|\rho|^{a-2}\cdot|\rho|\cdot u=|\rho|^{a-1}\cdot u\sim \left(2^{k_1}\right)^{a-1}\cdot 2^{k_2}.$$
Therefore, we compute
\begin{align}
    I_{\mbox{in},A,k_1,k_2}&\sim \int_{u\sim 2^{k_2}}u^2\int^1_{-1}\delta\left(E-u^a-\left(\left|\rho\right|^2+u^2-2\left|\rho\right|u\mu\right)^{a/2}\right)d\mu du\notag\\
    &\sim \int_{u\sim 2^{k_2}}u^2\cdot\frac{1}{(2^{k_1})^{a-1}2^{k_2}}\,du\sim \frac{2^{2k_2}}{(2^{k_1})^{a-1}}.\label{2.12}
\end{align}
Finally, noting that 
\begin{align}
|p|^{2\beta}|p_1|^{2\beta}|p_2|^{2\beta}|p_3|^{2\beta}\sim 2^{4\beta k_1}\cdot 2^{2\beta k_2}\label{2.13}
\end{align}
and summing over all dyadic pieces as before, we again obtain
\begin{align}
    I_A&\lesssim\sum_{\substack{k_1,k_2\\k_2\le k_1-C}}\frac{2^{4\beta k_1}2^{2\beta k_2}}{2^{2sk_1}}\frac{2^{2k_2}}{\left(2^{k_1}\right)^{a-1}}\int_{\left|p_1\right|\sim 2^{k_1}}dp_1\notag\\
    &=\sum_{\substack{k_1\\k_1\gg 1}}\frac{1}{(2^{k_1})^{2s+a-4\beta-4}}\sum_{\substack{k_2\\k_2\le k_1-C}}2^{(2\beta+2)k_2}\lesssim\sum_{\substack{k_1\\k_1\gg 1}}\frac{1}{(2^{k_1})^{2s-6\beta+a-6}}\lesssim 1.\,\label{2.14}
\end{align}
where we used $2s-6\beta+a-6>0$ as in the estimate of $I_B$ above. Thus, we completed the proof of
$$\mathcal{J}_{1,\mbox{low},\mbox{high},\mbox{bad},1}+\mathcal{J}_{2,\mbox{low},\mbox{high},\mbox{bad},1}-2\mathcal{J}_{3,\mbox{low},\mbox{high},\mbox{bad},1}\lesssim 1.$$\par
\end{proof}
\begin{remark}
\ \par
For later use, we also record that the same argument in both Proposition 2.2 and Proposition 2.3 applies to the high-high regime
$$r=|p_1|\gg |p|\gg 1.$$
Indeed, the proof is identical to that of Proposition 2.2 and 2.3, except for the following two modifications. In the present regime $|p|\gg1$, let $k$ be chosen such that $|p|\sim 2^k$.\par
The first modification is in the cancellation part $I_B$ of Step 2 and concerns only the case $1<a<2$; when $a=1$, no change is needed. Thus, in this part we assume $1<a<2$. In the cancellation part of Step 2, we now have
$$A=|p+h|^a-|p|^a\qquad\mbox{ and }\qquad|A|\lesssim |p|^{a-1}m,$$
where the inequality follows from the mean value theorem. Therefore, we still have
\begin{align*}
    |\mu_+-\mu_-|\lesssim\frac{r^{2-2a} A^2+m^2}{rm}\lesssim \frac{|p|^{2a-2}m^2}{r^{2a-1}m}+\frac{m}{r}=\frac{m}{r}\left(\frac{|p|}{r}\right)^{2a-2}+\frac{m}{r}\lesssim\frac{m}{r},
\end{align*}
where we used $|p|\ll r$ and $a>1$. Moreover, we have
\begin{align*}
    |\mu_0|\lesssim\frac{|A|}{mr^{a-1}}\lesssim\frac{|p|^{a-1}m}{mr^{a-1}}\ll 1,
\end{align*}
which also leads to
$$\frac{|\mu_+-\mu_-|}{1-|\mu_0|}
\lesssim \frac{m}{r}.$$
Consequently, the same spherical difference estimate gives the gain $(m/r)^{1/2}$, which is exactly the same gain as in Proposition 2.3. Another minor difference occurs when computing $p_2$-integration. In the current setting, if $m\ll |p|$, then $|p_2|\approx|p|\sim 2^k$ and
$$\int \frac{|p_2|^{2\beta}}{\ang{p_2}^s}dp_2\lesssim\int_{|p_2-p|\sim 2^l}\frac{dp_2}{|p|^{s-2\beta-2}}\sim\frac{2^{3l}}{(2^k)^{s-2\beta}};$$
whereas if $m\gtrsim |p|$, then 
$$\int \frac{|p_2|^{2\beta}}{\ang{p_2}^s}dp_2\sim (2^l)^{2\beta+3-s}.$$
However, by a similar elementary computation, it turns out that this does not hurt.\par 
The second modification is in the direct non-cancellation part $I_A$ of Step 2 and applies to both the endpoint case $a=1$ and the case $1<a<2$. When $|p_2|\gtrsim |p|$, the proof is the same, because the factor $\displaystyle{\frac{\langle p\rangle^s}{\langle p_2\rangle^s}}$ is harmless. Thus we only need to consider the subregion $|p_2|\sim 2^{k_2}\ll |p|\sim 2^k$. It turns out that in order to kill this additional factor, one needs some further gain. Denote by $\theta$ the angle between $p$ and $p_1$. We claim that:
\begin{align}
\Delta(\cos\theta)\lesssim\frac{u}{|p|}\sim\frac{2^{k_2}}{2^k}\qquad\qquad\mbox{ for all }1\le a<2.\label{2.15}
\end{align}
Equivalently, when integrating in the angular variable of $p_1$, we gain a factor of order $2^{k_2}/2^k$. We prove the claim (\ref{2.15}) later; for now, we show that this gain is sufficient to obtain the desired bound. In fact, in view of (\ref{2.12}), (\ref{2.13}) and (\ref{2.14}), plugging in this angular gain (\ref{2.15}), we obtain
\begin{align*}
    I_A&\lesssim
    \sum_{\substack{k_1,k_2\\k_1\ge k+C\\k_2\le k-C}}2^{(s+2\beta)k}\,\frac{2^{4\beta k_1}\,2^{2\beta k_2}}{2^{2sk_1}\,\ang{2^{k_2}}^s}\,I_{\mbox{in},A,k_1,k_2}\,\frac{2^{k_2}}{2^k}\int_{\left|p_1\right|\sim 2^{k_1}}dp_1\\
    &\sim\sum_{\substack{k_1,k_2\\k_1\ge k+C\\k_2\le k-C}}2^{(s+2\beta)k}\,\frac{2^{4\beta k_1}\,2^{2\beta k_2}}{2^{2sk_1}\,\ang{2^{k_2}}^s}\frac{2^{2k_2}}{\left(2^{k_1}\right)^{a-1}}\frac{2^{k_2}}{2^k}2^{3k_1}\\
    &\sim 2^{(s+2\beta)k}\sum_{\substack{k_1\\k_1\ge k+C}}\frac{1}{(2^{k_1})^{2s+a-4\beta-4}}\sum_{\substack{k_2\\k_2\le k-C}}\frac{(2^{k_2})^{2\beta+3}}{\ang{2^{k_2}}^s}\\
    &\lesssim\begin{cases}
        \displaystyle{\frac{1}{(2^k)^{s-6\beta+a-3}}}\qquad&\mbox{, if }s>2\beta+3\\[8pt]
        \displaystyle{\frac{1}{(2^k)^{s-6\beta+a-3-\eps}}}\qquad&\mbox{, if }s=2\beta+3\\[8pt]
        \displaystyle{\frac{1}{(2^k)^{2s-8\beta+a-6}}}\qquad&\mbox{, if }s<2\beta+3\\
    \end{cases}\ \ \lesssim 1,
\end{align*}
where we used $\log x\lesssim x^\eps$ for some small $\eps>0$.\par
Now, let's turn to prove the claim (\ref{2.15}). Recall our setting that $1\ll |p|\ll|p_1|\approx|p_3|$ and $|p_2|\ll |p|$. By triangle inequality, we have $\Big|\left|\rho\right|-v\Big|\le u$, where $u\ll |\rho|$. Note that $|\rho|\approx v$, so by mean value theorem we get $\Big|\left|\rho\right|^a-v^a\Big|\lesssim r^{a-1}u$. This yields 
$$\Big||p|^a+|p_1|^a-|\rho|^a\Big|=\Big|E-|\rho|^a\Big|\le\Big|E-v^a\Big|+\Big|\left|\rho\right|^a-v^a\Big|\lesssim u^a+r^{a-1}u\lesssim r^{a-1}u.$$
Set
$$G(\cos\theta)=|p|^a+r^a-\left(|p|^2+r^2+2|p|r\cos\theta\right)^{a/2}$$
and then we see
$$G^\prime(\cos\theta)=-a|p|r\left(|p|^2+r^2+2|p|r\cos\theta\right)^{a/2-1}\sim |p|r\,r^{a-2}$$
due to $r\gg |p|$. This gives (\ref{2.15}).\par
This completes the justification of the remark.
\end{remark}

\section{LWP Proof Part II -- large-$\left|p\right|$ regime}
In this chapter, we will prove the boundedness of operator $\mathcal{T}_1,\mathcal{T}_2$ and $\mathcal{T}_3$ when $\left|p\right|$ and/or $\left|p_1\right|$ are $\gg 1$. For convenience, and also to illustrate a different approach, we parametrize the resonant manifold associated with the integral $\mathcal{J}_1$, $\mathcal{J}_2$ and$\mathcal{J}_3$. This allows us to rewrite the integral in a completely explicit and elementary form, which can then be analyzed directly. We will follow this parametrization procedure in \cite{GermainIonescuTran2020}.\par
We first consider $\mathcal{J}_1$. Recall that
$$\mathcal{J}_1\triangleq\sup_p\iiint_{\mathbb{R}^9}\frac{\langle p\rangle^{s}\left|p\right|^{2\beta}\left|p_1\right|^{2\beta}\left|p_2\right|^{2\beta}\left|p_3\right|^{2\beta}}{\langle p_1\rangle^{s}\langle p_2\rangle^{s}\langle p_3\rangle^{s}}\delta(p+p_1-p_2-p_3) \delta\left(\left|p\right|^a+\left|p_1\right|^a-\left|p_2\right|^a-\left|p_3\right|^a\right)\,dp_1 dp_2 dp_3.$$
Since $\left|p\right|^a+\left|p_1\right|^a=\left|p_2\right|^a+\left|p_3\right|^a$, we have either $\ang{p}\lesssim\ang{p_2}$ or $\ang{p}\lesssim\ang{p_3}$. Without loss of generality, we assume $\ang{p}\lesssim\ang{p_3}$ here. Therefore, we obtain
\begin{align}
    \mathcal{J}_1&\lesssim\sup_p\iiint_{\mathbb{R}^9}\frac{\left|p\right|^{2\beta}\left|p_1\right|^{2\beta}\left|p_2\right|^{2\beta}\left|p_3\right|^{2\beta}}{\langle p_1\rangle^{s}\langle p_2\rangle^{s}}\delta(p+p_1-p_2-p_3) \delta\left(\left|p\right|^a+\left|p_1\right|^a-\left|p_2\right|^a-\left|p_3\right|^a\right)\,dp_1 dp_2 dp_3\notag\\
    &\lesssim\sup_p\iint_{\mathbb{R}^6}\frac{\left|p\right|^{4\beta}\left|p_1\right|^{2\beta}\left|p_2\right|^{2\beta}}{\langle p_1\rangle^{s}\langle p_2\rangle^{s}} \delta\left(\left|p\right|^a+\left|p_1\right|^a-\left|p_2\right|^a-\left|p+p_1-p_2\right|^a\right)\,dp_1 dp_2\notag\\
    &\lesssim\sup_p\int_{\mathbb{R}^3}\left(\int_{\mathcal{S}_{p,p_1}}\frac{\left|p\right|^{4\beta}d\mu(z)}{\ang{p_1}^{s-2\beta}\ang{z}^{s-2\beta}\left|\nabla_z\Phi(z)\right|}\right)dp_1,\label{3.1}
\end{align}
where $z=p_2$, $\mathcal{S}_{p,p_1}\triangleq\left\{z:\Phi(z)\triangleq\left|p+p_1-z\right|^a+\left|z\right|^a-\left|p\right|^a-\left|p_1\right|^a=0\right\}$ and $\mu$ is the surface measure on $\mathcal{S}_{p,p_1}$. (Note that $\mu$ here has different meaning as in Chapter 2.)\par
Now, we are ready to parameterize the resonant manifold $\mathcal{S}_{p,p_1}$. In order to do this, we compute
$$\nabla_z\Phi=a\frac{z-\rho}{\left|z-\rho\right|}\left|\rho-z\right|^{a-1}+a\frac{z}{\left|z\right|}\left|z\right|^{a-1}.$$
In particular, let $q$ be any vector orthogonal to $\rho$, i.e. $\rho\cdot q=0$. The directional derivative of $\Phi$ in the direction of $q$, with $z=\alpha\rho+q$, $\alpha\in\mathbb{R}$, satisfies
$$q\cdot\nabla_z\Phi=a\left|q\right|^2\left[\left|\rho-z\right|^{a-1}+\left|z\right|^{a-1}\right]>0,$$
which means that $\Phi(z)$ is strictly increasing in any direction that is orthogonal to $\rho$. This proves that the intersection between the surface $\mathcal{S}_{p,p_1}$ and the plane
$$\mathcal{P}_\alpha=\left\{\alpha \rho+q,\ \rho\cdot q=0\right\}$$
is either empty or the circle centered at $\alpha\rho$ and of a finite radius $r_\alpha$, for $\alpha\in \mathbb{R}$. As a consequence, we can parametrize $S_{p,p_1}$ as follows. Let $\rho^\perp$ be the vector orthogonal to both $\rho$ and a fixed vector $e$ of $\mathbb{R}^3$, and let $e_\theta$ be the unit vector in $\mathcal{P}_0=\left\{\rho\cdot q=0\right\}$ such that the angle between $\rho^\perp$ and $e_\theta$ is $\theta$. We parameterize $S_{p,p_1}$ by
\begin{align*}
\left\{z=\alpha \rho+r_\alpha e_\theta:\theta\in[0,2\pi],\ \alpha\in A_{p,p_1}\right\},
\end{align*}
where $A_{p,p_1}$ is the set of $\alpha$ for which a solution to $\Phi(z)=0$ exists.\par
We can also think of $\Phi$ as a function of $\alpha$ and $r$ as $\Phi=\Phi(r,\alpha)$. (Note that this $r$ temporarily does not represent $\left|p_1\right|$ here.) We just saw that $\partial_r \Phi>0$ for $r>0$. Therefore, by the implicit function theorem, the zero set of $\Phi$ can be parameterized as
$$\left\{(\alpha,r=r_\alpha),\ \alpha\in A_{p,p_1}\right\},$$
where $\alpha\mapsto r_\alpha$ is a smooth function on $A_{p,p_1}$ vanishing on its boundary. Moreover, we have by definition that $\Phi(z_\alpha)=0$ for all $\alpha$ and therefore, keeping $\theta$ fixed,
\begin{align}
0
&= \partial_\alpha z_\alpha\cdot \nabla_z \Phi= \partial_\alpha z_\alpha\cdot\left(a\frac{z_\alpha-\rho}{|z_\alpha-\rho|}\left|z_\alpha-\rho\right|^{a-1}+a\frac{z_\alpha}{|z_\alpha|}\left|z_\alpha\right|^{a-1}\right) \notag\\
&= \partial_\alpha z_\alpha\cdot\left(a\frac{z_\alpha}{|z_\alpha-\rho|}\left|z_\alpha-\rho\right|^{a-1}+a\frac{z_\alpha}{|z_\alpha|}\left|z_\alpha\right|^{a-1}\right)-a\partial_\alpha z_\alpha\cdot\frac{\rho}{|z_\alpha-\rho|}\left|z_\alpha-\rho\right|^{a-1} \label{3.2}\\
&= \frac{a}{2} \partial_\alpha |z_\alpha|^2\left(\left|\rho-z_\alpha\right|^{a-2}+\left|z_\alpha\right|^{a-2}\right)-a|\rho|^2\left|\rho-z_\alpha\right|^{a-2},\notag
\end{align}
where we used $\partial_\alpha z_\alpha=  \rho+e_\theta\partial_\alpha r_\alpha$ and $e_\theta\perp\rho$.
Therefore,
\begin{align}
\partial_\alpha \left|z_\alpha\right|^2=2\frac{\displaystyle\left|\rho-z_\alpha\right|^{a-2}|\rho|^2}{\displaystyle\left|\rho-z_\alpha\right|^{a-2}+\left|z_\alpha\right|^{a-2}}.\label{3.3}
\end{align}
This implies in particular that $\alpha\mapsto |z_\alpha|$ is increasing on $A_{p,p_1}$. Defining $r$ to be zero on the complement of $A_{p,p_1}$, we get that $\alpha\mapsto |z_\alpha|$ is an increasing function on $\mathbb R$; therefore, the change of coordinates $\alpha\mapsto |z_\alpha|$ is well-defined.\par
Next, we need to compute the surface measure on the resonant manifold. Since $\partial_\theta e_\theta$ is orthogonal to both $\rho$ and $e_\theta$, we compute
the surface area
$$d\mu(z)=\left|\partial_\alpha z\times \partial_\theta z\right|\,d\alpha d\theta=\left|\left(\rho+\partial_\alpha r_\alpha e_\theta\right)\times r_\alpha \partial_\theta e_\theta\right|\,d\alpha d\theta=\sqrt{\left|\rho\right|^2 r_\alpha^2+\frac14 \left|\partial_\alpha(r_\alpha^2)\right|^2}\,d\alpha d\theta.$$
Using $\left|z\right|^2=\alpha^2\left|\rho\right|^2+r_\alpha^2$,we learn from the last line of (\ref{3.2}) that
\begin{align}
\partial_\alpha r_\alpha^2=-2|\rho|^2\frac{\displaystyle\alpha \left|z_\alpha\right|^{a-2}+(\alpha-1)\left|\rho-z_\alpha\right|^{a-2}}{\displaystyle\left|\rho-z_\alpha\right|^{a-2}+\left|z_\alpha\right|^{a-2}}.\label{3.4}
\end{align}
Now, let us compute $|\nabla_z \Phi|$ under the new parameterization:
\begin{align*}
|\nabla_z \Phi|^2
&=
\left|a\frac{z_\alpha}{|z_\alpha|}\left|z_\alpha\right|^{a-1}+a\frac{z_\alpha-\rho}{|z_\alpha-\rho|}\left|z_\alpha-\rho\right|^{a-1}\right|^2 \\
&=\left|a\frac{\alpha\rho+q}{|z_\alpha|}\left|z_\alpha\right|^{a-1}+a\frac{(\alpha-1)\rho+q}{|z_\alpha-\rho|}\left|z_\alpha-\rho\right|^{a-1}\right|^2 \\
&=a^2|\rho|^2\left[\alpha\left|z_\alpha\right|^{a-2}+(\alpha-1)\left|\rho-z_\alpha\right|^{a-2}\right]^2+a^2 r_\alpha^2\left[\left|\rho-z_\alpha\right|^{a-2}+\left|z_\alpha\right|^{a-2}\right]^2 .
\end{align*}
In addition to (\ref{3.4}), this implies that
\begin{align*}
|\nabla_z \Phi|^2=\frac{a^2|\partial_\alpha r_\alpha^2|^2}{4|\rho|^2}\left[\left|\rho-z_\alpha\right|^{a-2}+\left|z_\alpha\right|^{a-2}\right]^2+a^2 r_\alpha^2\left[\left|\rho-z_\alpha\right|^{a-2}+\left|z_\alpha\right|^{a-2}\right]^2 .
\end{align*}
Therefore,
\begin{align}
\frac{d\mu(z)}{|\nabla_z \Phi|}=\frac{|\rho|}{\displaystyle a\left(\left|\rho-z_\alpha\right|^{a-2}+\left|z_\alpha\right|^{a-2}\right)}\,d\alpha d\theta .\label{3.5}
\end{align}
which, by (\ref{3.3}), implies that
$$\frac{d\mu(z)}{|\nabla_z \Phi|}=\frac{1}{a\left|\rho-z_\alpha\right|^{a-2}}\frac{|z|}{\left|\rho\right|}\,d|z| \,d\theta.$$
Set $h_\alpha\triangleq r_\alpha^2$. Since $\left|z\right|=\sqrt{r_\alpha^2+\alpha^2\left|\rho\right|^2}$, we have 
\begin{align}
\left(h_\alpha+\left(1-\alpha\right)^2\left|\rho\right|^2\right)^{a/2}+\left(h_\alpha+\alpha^2\left|\rho\right|^2\right)^{a/2}=C_{p,p_1}\triangleq\left|p\right|^a+\left|p_1\right|^a=\left|\rho-z\right|^a+\left|z\right|^a.\label{3.6}
\end{align}
Thus, by (\ref{3.3}), (\ref{3.6}) and $\displaystyle{\int_0^{2\pi}d\theta\sim1}$, we finally get
\begin{align}
\mathcal{J}_1&\lesssim\sup_p\int_{\mathbb{R}^3}\frac{\left|p\right|^{4\beta}}{\ang{p_1}^{s-2\beta}}\int_0^{+\infty}\int_0^{2\pi}\frac{\left|z\right|}{\ang{z}^{s-2\beta}\left|\rho-z\right|^{a-2}\left|\rho\right|}\,d\theta d\left|z\right| dp_1.\label{para}\\
&\sim\sup_p\int_{\mathbb{R}^3}\frac{\left|p\right|^{4\beta}\left|\rho\right|}{\ang{p_1}^{s-2\beta}}\int\int_0^{2\pi}\frac{d\theta d\alpha}{\ang{z}^{s-2\beta}\left(\left|\rho-z\right|^{a-2}+\left|z\right|^{a-2}\right)}\,dp_1\notag\\
&\sim\displaystyle{\sup_p\int_{\mathbb{R}^3}\frac{\left|p\right|^{4\beta}\left|\rho\right|}{\ang{p_1}^{s-2\beta}}\int\int_0^{2\pi}\frac{d\theta d\alpha}{\ang{z}^{s-2\beta}\left[\left(h_\alpha+\left|\rho\right|^2\left(1-\alpha\right)^2\right)^{(a-2)/2}+\left(h_\alpha+\left|\rho\right|^2\alpha^2\right)^{(a-2)/2}\right]}\,dp_1}\notag\\
&\lesssim\displaystyle{\sup_p\int_{\mathbb{R}^3}\frac{\left|p\right|^{4\beta}\left|\rho\right|}{\ang{p_1}^{s-2\beta}}\int\frac{d\alpha}{\ang{z}^{s-2\beta}\left[\left(h_\alpha+\left|\rho\right|^2\left(1-\alpha\right)^2\right)^{(a-2)/2}+\left(h_\alpha+\left|\rho\right|^2\alpha^2\right)^{(a-2)/2}\right]}\,dp_1}.\label{3.7}
\end{align}
For later purpose, we denote 
\begin{align}
    c_\star\triangleq\frac{\left|p\right|^a+r^a}{\left|\rho\right|^a}\label{3.8}
\end{align}
and from now on, in the proof below, we shall return to the notation used previously: $\rho=p+p_1$ and $r=\left|p_1\right|$.\par

\subsection{The Cancellation Case}
\ \par
First, we consider a case in which cancellation from the nonlinear term is needed.
\begin{prop}
    When $c_*\gg 1$, we have 
    $$\mathcal{J}_{1,\mbox{high},\mbox{high}}+\mathcal{J}_{2,\mbox{high},\mbox{high}}-2\mathcal{J}_{3,\mbox{high},\mbox{high}}\lesssim 1.$$
\end{prop}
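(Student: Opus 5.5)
The plan is to reduce the regime $c_\star\gg1$ to the configuration already handled by the cancellation argument of Step 2 in Propositions 2.2--2.3 and Remark 2.4. First we interpret the condition. Since $|p|,r\gg1$ and $c_\star=(|p|^a+r^a)/|\rho|^a\gg1$, we must have $|\rho|=|p+p_1|\ll\max(|p|,r)$. Thus $p$ and $p_1$ are nearly antipodal with $|p|\approx r$.

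On the resonant set, $|p_2|^a+|p_3|^a=|p|^a+r^a$ while $p_2+p_3=\rho$ is short. Hence $|p_2|\approx|p_3|\approx|p|\approx r\sim2^k$, and the vectors $p_2,p_3$ are also nearly antipodal. In this regime all four frequencies are comparable, so the kernel weight is $\sim2^{8\beta k}$. The four weights $\langle p\rangle^s,\langle p_1\rangle^s,\langle p_2\rangle^s,\langle p_3\rangle^s$ are all comparable as well, up to differences controlled by $|\rho|$.

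Next we group the integrand exactly as in (\ref{badarea}):
\[
\frac{\langle p\rangle^s}{\langle p_1\rangle^s\langle p_2\rangle^s\langle p_3\rangle^s}+\frac{1}{\langle p_2\rangle^s\langle p_3\rangle^s}-\frac{1}{\langle p_1\rangle^s\langle p_2\rangle^s}-\frac{1}{\langle p_1\rangle^s\langle p_3\rangle^s}.
\]
We pair the first term with $-1/(\langle p_1\rangle^s\langle p_3\rangle^s)$ and the second with $-1/(\langle p_1\rangle^s\langle p_2\rangle^s)$, after relabelling variables in the $\mathcal J_3$ term as before.

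For the first pair, we use the mean value theorem on the smooth weights. This gives a factor $\langle p\rangle^s/\langle p_2\rangle^s-1=O(|\,|p|-|p_2|\,|/2^k)$, which we will need to show is small on the resonant set. For the second pair, the weights alone give no gain, since the $\langle p_1\rangle$ versus $\langle p_3\rangle$ comparison is exactly as in $I_B$. So we will write $p_3=p_1-h$ with $h=p_2-p$ and use the variables $Y=p_3$ and $Y=p_1$ respectively. The two resonance conditions $\Phi_\pm=0$ then become two latitude circles $\sigma\cdot\omega=\mu_\pm$ on $\mathbb S^2$ with axis $\omega=h/|h|$.

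Here $|h|=m$ may now be comparable to $r$, but the structure is the same. The key computation is to bound $|\mu_+-\mu_-|/(1-|\mu_0|)$ by a small parameter. Natural candidates are a power of $c_\star^{-1}$ or $|\rho|/r$, which we expect to track through $A=|p+h|^a-|p|^a$ and the expansion $(r^a\pm A)^{2/a}$ done in Proposition 2.3. Lemma 6.3 then yields the square-root gain $(|\mu_+-\mu_-|/(1-|\mu_0|))^{1/2}$ uniformly for bounded inputs.

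It remains to integrate. The Jacobian $|\partial_\mu\Phi_\pm|\sim m\,r^{a-1}$ must be combined with the volume of the region in $(p_1,p_2)$ where $|\rho|\sim2^j$, $j\le k-C$, which is where $c_\star\gg1$. For the volume count, the parametrization (\ref{3.7}) is convenient: there $|\rho-z|\approx|z|\sim2^k$, so the $\alpha$-integral is explicit and gives a factor $|\rho|^{-1}\cdot2^{(2-a)k}\cdot$(length in $|z|$). We will then sum dyadically in $j$ and $k$. We expect a total power $2^{(8\beta+6-a-2s)k}$ times a positive power of $2^{j-k}$. This is bounded when $s>s_c$, and at $s=s_c$ it is summable in $j$ thanks to the extra gain, which is where the restriction $\beta<(a-1)/4$ should enter.

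The main obstacle is the geometric step: showing that the two latitude circles are close relative to their distance from the poles when $c_\star\gg1$ and $m$ can be as large as $r$. In particular, we must verify that $1-|\mu_0|$ does not degenerate when $a>2$; the remark before the proposition shows the direct bound fails precisely there, for $a>2+2\sqrt2$. If a uniform bound fails, the first thing to try is a further split according to $m\ll r$ versus $m\sim r$. In the latter case we would avoid cancellation and instead use the smallness of $|\rho|$ directly through (\ref{3.7}).
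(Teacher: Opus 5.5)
Your proposal has a genuine gap, and it starts with the claim that the second pair gains nothing from the weights.

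In the regime $c_\star\gg1$, all four moduli agree up to $O(|\rho|)$, not only $|p|$ and $|p_2|$. The triangle inequality gives $\bigl||p|-|p_1|\bigr|\le|\rho|$, and, since $p_2+p_3=\rho$, also $\bigl||p_2|-|p_3|\bigr|\le|\rho|$. The relation $|p|^a+|p_1|^a=|p_2|^a+|p_3|^a$ then forces the midpoints $\tfrac12(|p|+|p_1|)$ and $\tfrac12(|p_2|+|p_3|)$ to differ by $O(|\rho|^2 2^{-k})$, so $\bigl||p_1|-|p_3|\bigr|\lesssim|\rho|$. This differs from $I_B$ in Proposition 2.2, where $|p_1|-|p_3|$ is controlled only by $|h|$.

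At the level of the weight integrals $\mathcal J_l$ (which is what the statement is about, and where you already treat your first pair), the mean value theorem therefore gives $\bigl|\langle p_3\rangle^{-s}-\langle p_1\rangle^{-s}\bigr|\lesssim 2^{k_1}2^{-(s+1)k}$, exactly as for the first pair. This is the paper's entire argument:
regroup as in (\ref{3.9}) and gain $2^{k_1-k}$ on both differences, so the integrand is $\lesssim 2^{k_1}2^{-(2s+1-8\beta)k}$; then use (\ref{3.5}) with $|\alpha|\lesssim 2^{k-k_1}$ and $p_1$-volume $2^{3k_1}$, which gives $\sum_{k_1\le k-C}2^{4k_1}2^{-(2s-8\beta+a-2)k}\lesssim 2^{-(2s-8\beta+a-6)k}\lesssim1$.
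Only $s\ge s_c$ is used. The restriction $\beta<(a-1)/4$ plays no role here, and there is no borderline summation in $k_1$, since the sum is geometric.

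The spherical-difference argument you substitute does not close as set up, for two reasons.

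First, $p_2$ ranges over an almost round surface of radius $\approx|p|$ centred near $\rho/2$. So $m=|p_2-p|$ is comparable to $r=|Y|\sim 2^k$ on most of the resonant set. For $a=2$ one finds exactly $\mu_+-\mu_-=-m/r$, so the smallness hypothesis of Lemma 6.3 fails there and gives no gain.

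Second, the localization $|\rho|\sim 2^{k_1}$ reads $|p+Y|\sim 2^{k_1}$ when $Y=p_1$, but $|p_2+Y|\sim 2^{k_1}$ when $Y=p_3$. These are balls whose centres are $m$ apart. For $m\gg 2^{k_1}$ the two latitude integrals therefore live on different $Y$-regions and do not combine into the operator $D$ at all.

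Your fallback of a direct count through (\ref{3.5}) would in fact suffice on its own. Without any cancellation, the same bookkeeping gives $2^{(8\beta-2s)k}\,2^{(3-a)k}\sum_{k_1\le k-C}2^{3k_1}\sim 2^{(8\beta+6-a-2s)k}\lesssim 1$. As written, however, the decisive estimate is left as an expectation, and the source of the difficulty is misidentified.
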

\begin{proof}
First, since $\displaystyle{c^{1/a}_\star\sim \frac{\left|p\right|+\left|p_1\right|}{\left|p+p_1\right|}\gg 1}$, we must have $\left|p\right|\approx\left|p_1\right|\gg\left|\rho\right|$. Due to \vspace{0.3em}$\left|p\right|^a+\left|p_1\right|^a=\left|p_2\right|^a+\left|p_3\right|^a$, we must have $\left|p\right|\approx\left|p_1\right|\approx\left|p_2\right|\approx\left|p_3\right|$. Then pick $k\in\mathbb{Z}^+$, $k\gg 1$ such that $\left|p\right|\sim 2^k$. Therefore\vspace{0.3em}, we get \vspace{0.5em}$\left|p\right|\approx\left|p_1\right|\approx\left|p_2\right|\approx\left|p_3\right|\sim 2^k$. Now, we perform dyadic decomposition over $\left|\rho\right|$ as $\left|\rho\right|\sim 2^{k_1}$, where $k_1\le k-C$ and $C>0$ is a fixed large number. With these notation, we immediately see that $\displaystyle{c_\star=\frac{2^{ak}}{2^{ak_1}}=2^{a(k-k_1)}}$.\par
Now, let's explore the cancellation effect. We first write
\begin{align}
    &\frac{\langle p\rangle^s}{\langle p_1\rangle^s\langle p_2\rangle^s\langle p_3\rangle^s}+\frac{1}{\langle p_2\rangle^s\langle p_3\rangle^s}-\frac{2}{\langle p_1\rangle^s\langle p_2\rangle^s}\notag\\[5pt]
    =&\frac{1}{\langle p_2\rangle^s}\left[\frac{1}{\langle p_3\rangle^s}\left(1+\frac{\langle p\rangle^s}{\langle p_1\rangle^s}\right)-\frac{2}{\langle p_1\rangle^s}\right]\notag\\[5pt]
    =&\frac{1}{\langle p_2\rangle^s}\left[2\left(\frac{1}{\langle p_3\rangle^s}-\frac{1}{\langle p_1\rangle^s}\right)+\frac{1}{\langle p_3\rangle^s}\left(\frac{\langle p\rangle^s}{\langle p_1\rangle^s}-1\right)\right],\label{3.9}
\end{align}
By triangle inequality, we have $\bigl|\left|p\right|-\left|p_1\right|\bigr|\le|p+p_1|=|\rho|\sim 2^{k_1}$. Moreover, by mean value theorem and $\left|p\right|^a+\left|p_1\right|^a=\left|p_2\right|^a+\left|p_3\right|^a$, we also see that $\bigl|\left|p_1\right|-\left|p_3\right|\bigr|\le|p+p_1|=|\rho|\sim 2^{k_1}$. Therefore, apply mean value theorem once again, we obtain
\begin{align*}
    \begin{cases}
        \displaystyle{\left|\frac{1}{\langle p_3\rangle^s}-\frac{1}{\langle p_1\rangle^s}\right|\lesssim\frac{1}{2^{(s+1)k}}\bigl|\left|p\right|-\left|p_1\right|\bigr|\lesssim\frac{2^{k_1}}{2^{(s+1)k}}}\vspace{0.8em}\\
        \displaystyle{\left|\frac{\langle p\rangle^s}{\langle p_1\rangle^s}-1\right|\lesssim\frac{1}{2^k}\bigl|\left|p_1\right|-\left|p_3\right|\bigr|\lesssim\frac{2^{k_1}}{2^k}}
    \end{cases}.
\end{align*}
Thus, in view of (\ref{3.9}), we compute
\begin{align*}
    &\left|p\right|^{2\beta}\left|p_1\right|^{2\beta}\left|p_2\right|^{2\beta}\left|p_3\right|^{2\beta}\left[\frac{\langle p\rangle^s}{\langle p_1\rangle^s\langle p_2\rangle^s\langle p_3\rangle^s}+\frac{1}{\langle p_2\rangle^s\langle p_3\rangle^s}-\frac{2}{\langle p_1\rangle^s\langle p_2\rangle^s}\right]\\
    \lesssim\,&2^{8\beta k}\,\frac{1}{2^{sk}}\,\frac{2^{k_1}}{2^{(s+1)k}}=\frac{2^{k_1}}{2^{(2s+1-8\beta)k}}.
\end{align*}
Recall from (\ref{3.5}) that 
$$\frac{d\mu(z)}{\left|\nabla_z\Phi(z)\right|}\sim \frac{|\rho|}{\displaystyle \left|\rho-z\right|^{a-2}+\left|z\right|^{a-2}}\,d\alpha d\theta\sim\frac{2^{k_1}}{2^{(a-2)k}}d\alpha\, d\theta.$$
In addition, we observe that $\left|\alpha\right|\lesssim 2^{k-k_1}$. This is due to $\left|z\right|\sim 2^k$ and $\left|z\right|^2=\alpha^2\left|\rho\right|^2+r_\alpha^2\ge\alpha^2\left|\rho\right|^2$.
Then, as in (\ref{3.1}), this gives us that
\begin{align*}
&\mathcal{J}_{1,\mbox{high},\mbox{high}}+\mathcal{J}_{2,\mbox{high},\mbox{high}}-2\mathcal{J}_{3,\mbox{high},\mbox{high}}\\
\lesssim &\sup_{\substack{k\\k\gg 1}}\sum_{\substack{k_1\\k_1\le k-C}}\frac{2^{k_1}}{2^{(2s+1-8\beta)k}}\int_{\left|p_1\right|\sim 2^k}\left(\int_{\mathcal{S}_{p,p_1}}\frac{d\mu(z)}{\left|\nabla_z\Phi(z)\right|}\right)dp_1.\\
\lesssim &\sup_{\substack{k\\k\gg 1}}\sum_{\substack{k_1\\k_1\le k-C}}\frac{2^{k_1}}{2^{(2s+1-8\beta)k}}\frac{2^{k_1}}{2^{(a-2)k}}\int_{\left|p_1\right|\sim 2^k}\int_{\left|\alpha\right|\lesssim 2^{k-k_1}}d\alpha\,dp_1\\
\lesssim &\sup_{\substack{k\\k\gg 1}}\sum_{\substack{k_1\\k_1\le k-C}}\frac{2^{k_1}}{2^{(2s+1-8\beta)k}}\frac{2^{k_1}}{2^{(a-2)k}}\,2^{k-k_1}\,2^{3k_1}=\sup_{\substack{k\\k\gg 1}}\frac{1}{2^{(2s-8\beta+a-2)k}}\sum_{\substack{k_1\\k_1\le k-C}}2^{4k_1}\\
\lesssim &\sup_{\substack{k\\k\gg 1}}\frac{1}{2^{(2s-8\beta+a-6)k}}\lesssim 1,
\end{align*}
where we used $\displaystyle{s\ge4\beta+3-\frac{a}{2}}$ in the last inequality.
\end{proof}
\begin{remark}
    In fact, the proof above also works under the weaker assumption $k\ge1$, instead of $k\gg1$, in the dyadic regime $|p|\approx |p_1|\approx |p_2|\approx |p_3|\sim 2^k$.
\end{remark}
Now it remains to consider the case $c_\star \lesssim 1$, so from now on we assume $c_\star \lesssim 1$. Thanks to Proposition 2.1, we can assume that $\max\left(|p|,|p_1|\right)\gg 1$. Therefore, we can also assume $|\rho|\gtrsim 1$. This is because if $|\rho|\ll 1$, then we must have $|p|\approx|p_1|\gg 1$, which implies $c_\star\gg 1$. We divide the argument into the following three cases: (i) $2\le a\le 5$;  (ii) $1<a<2$ and (iii) $a=1$.

\subsection{The Non-cancellation Case: $2\le a\le 5$}
\ \par
In this case, we use the identity
$$\frac{1}{A^{t-1}+B^{t-1}}\sim \frac{\max\left(A,B\right)}{A^t+B^t},$$
where $A,B>0$ and $t\ge 1$. Therefore, in view of (\ref{3.7}) we have
\begin{align*}
    \mathcal{J}_1&\lesssim\sup_p\int_{\mathbb{R}^3}\frac{\left|p\right|^{4\beta}\left|\rho\right|}{\ang{p_1}^{s-2\beta}\left(\left|p\right|^a+\left|p_1\right|^a\right)}\int\frac{\max\left(h_\alpha+\left|\rho\right|^2\left(1-\alpha\right)^2,h_\alpha+\left|\rho\right|^2\alpha^2\right)}{\ang{z}^{s-2\beta}}\,d\alpha dp_1\\
    &\lesssim \sup_p\int_{\mathbb{R}^3}\frac{\left|p\right|^{4\beta}\left|\rho\right|}{\ang{p_1}^{s-2\beta}\left(\left|p\right|^a+\left|p_1\right|^a\right)}\left(\int_{\substack{\alpha\le 1/2\\h_\alpha\ge 0}}\frac{h_\alpha+\left|\rho\right|^2\left(1-\alpha\right)^2}{\ang{z}^{s-2\beta}}d\alpha+\int_{\substack{\alpha> 1/2\\h_\alpha\ge 0}}\frac{h_\alpha+\left|\rho\right|^2\alpha^2}{\ang{z}^{s-2\beta}}\, d\alpha\right)\, dp_1\\
    &\sim\sup_p\int_{\mathbb{R}^3}\frac{\left|p\right|^{4\beta}\left|\rho\right|}{\ang{p_1}^{s-2\beta}\left(\left|p\right|^a+\left|p_1\right|^a\right)}\left(\int_{\substack{\alpha\le 1/2\\h_\alpha\ge 0}}\frac{h_\alpha+\left|\rho\right|^2\left(1-\alpha\right)^2}{\ang{h_\alpha+\left|\rho\right|^2\alpha^2}^{s/2-\beta}}d\alpha+\right.\\
    &\qquad\qquad\qquad\qquad\qquad\qquad\qquad\qquad\qquad\qquad\left.\int_{\substack{\alpha> 1/2\\h_\alpha\ge 0}}\frac{h_\alpha+\left|\rho\right|^2\alpha^2}{\ang{h_\alpha+\left|\rho\right|^2\alpha^2}^{s/2-\beta}}\, d\alpha\right)\, dp_1,
\end{align*}
where $\left|z\right|=\sqrt{r_\alpha^2+\alpha^2\left|\rho\right|^2}=\sqrt{h_\alpha+\alpha^2\left|\rho\right|^2}$ is used in the last step.\par
If $h_\alpha^{a/2}\gtrsim \left|\rho\right|^a\max\left(\left|1-\alpha\right|^a,\left|\alpha\right|^a\right)$, then in view of (\ref{3.6}), we must have 
$$h_\alpha\sim C_{p,p_1}^{2/a}=\left(\left|p\right|^a+\left|p_1\right|^a\right)^{2/a},$$
and
$$\left|\alpha\right|\lesssim\frac{C_{p,p_1}^{1/a}}{\left|\rho\right|}=c_\star^{1/a}=\frac{\left(\left|p\right|^a+r^a\right)^{1/a}}{\left|\rho\right|}.$$
Therefore, in view of (\ref{3.7}), we see that
\begin{align*}
    \mathcal{J}_1&\lesssim\sup_p\int_{\mathbb{R}^3}\frac{\left|p\right|^{4\beta}\left|\rho\right|}{\ang{p_1}^{s-2\beta}\left(\left|p\right|^a+\left|p_1\right|^a\right)}\left(\int_{\substack{\alpha\le 1/2\\\left|\alpha\right|\lesssim c_\star^{1/a}}}\frac{\left(\left|p\right|^a+r^a\right)^{2/a}+\left|\rho\right|^2\left(1-\alpha\right)^2}{\ang{\left(\left|p\right|^a+r^a\right)^{2/a}+\left|\rho\right|^2\alpha^2}^{s/2-\beta}}d\alpha\right.\\
    &\left.\qquad\qquad\qquad\qquad\qquad+\int_{\substack{\alpha> 1/2\\\left|\alpha\right|\lesssim c_\star^{1/a}}}\frac{\left(\left|p\right|^a+r^a\right)^{2/a}+\left|\rho\right|^2\alpha^2}{\ang{\left(\left|p\right|^a+r^a\right)^{2/a}+\left|\rho\right|^2\alpha^2}^{s/2-\beta}}\, d\alpha\right)\, dp_1\\
    &\triangleq \sup_p\int_{\mathbb{R}^3}\frac{\left|p\right|^{4\beta}\left|\rho\right|}{\ang{p_1}^{s-2\beta}\left(\left|p\right|^a+\left|p_1\right|^a\right)}\left(I_{11}+I_{12}\right)\, dp_1.
\end{align*}\par
If $h_\alpha^{a/2}\ll \left|\rho\right|^a\max\left(\left|1-\alpha\right|^a,\left|\alpha\right|^a\right)$, then denote
\begin{align}
G(\alpha,h_\alpha)=\left(h_\alpha+\left(1-\alpha\right)^2\left|\rho\right|^2\right)^{a/2}+\left(h_\alpha+\alpha^2\left|\rho\right|^2\right)^{a/2}\label{3.10}
\end{align}
and we see that
$$\partial_{h_\alpha}G(\alpha,h_\alpha)=\frac{a}{2}\left(h_\alpha+\left(1-\alpha\right)^2\left|\rho\right|^2\right)^{\frac{a}{2}-1}+\frac{a}{2}\left(h_\alpha+\alpha^2\left|\rho\right|^2\right)^{\frac{a}{2}-1},$$
which implies that 
$$\partial_{h_\alpha}G\sim \left|\rho\right|^{a-2}\max\left(\left|1-\alpha\right|^{a-2},\left|\alpha\right|^{a-2}\right)$$ 
provided that $h_\alpha^{a/2}\ll \left|\rho\right|^a\max\left(\left|1-\alpha\right|^a,\left|\alpha\right|^a\right)$. Therefore, by mean value theorem, we obtain that
\begin{align}
h_\alpha\sim\frac{G(\alpha,h_\alpha)-G(\alpha,0)}{\partial_{h_\alpha}G(\alpha,\xi)}\sim \frac{C_{p,p_1}-\left|\rho\right|^a\left|1-\alpha\right|^a-\left|\rho\right|^a\left|\alpha\right|^a}{\left|\rho\right|^{a-2}\max\left(\left|1-\alpha\right|^{a-2},\left|\alpha\right|^{a-2}\right)}\sim \left|\rho\right|^2 \frac{c_\star-\left|1-\alpha\right|^a-\left|\alpha\right|^a}{\max\left(\left|1-\alpha\right|^{a-2},\left|\alpha\right|^{a-2}\right)},\label{3.11}
\end{align}
where $\xi\in(0,h_\alpha)$. Moreover, since $h_\alpha\ge 0$, we have $c_\star-\left|1-\alpha\right|^a-\left|\alpha\right|^a\ge 0$. Together with the assumption $c_* \lesssim 1$, this implies $|\alpha|\lesssim 1$. Therefore, (\ref{3.11}) can be further rewritten as
$$h_\alpha\sim \left|\rho\right|^2\left(c_\star-\left|1-\alpha\right|^a-\left|\alpha\right|^a\right)$$
Thus, in view of (\ref{3.7}), we can write
\begin{align*}
    \mathcal{J}_1&\lesssim\sup_p\int_{\mathbb{R}^3}\frac{\left|p\right|^{4\beta}\left|\rho\right|}{\ang{p_1}^{s-2\beta}\left(\left|p\right|^a+\left|p_1\right|^a\right)}\left(\int_{\substack{\alpha\le 1/2\\h_\alpha\ge 0}}\frac{c_\star-\left|1-\alpha\right|^a-\left|\alpha\right|^a+\left|\rho\right|^2\left(1-\alpha\right)^2}{\ang{\left|\rho\right|^2 \left(c_\star-\left|1-\alpha\right|^a-\left|\alpha\right|^a\right)+\left|\rho\right|^2\alpha^2}^{s/2-\beta}}d\alpha\right.\\
    &\qquad\qquad\qquad\qquad\qquad\left.+\int_{\substack{\alpha> 1/2\\h_\alpha\ge 0}}\frac{c_\star-\left|1-\alpha\right|^a-\left|\alpha\right|^a+\left|\rho\right|^2\alpha^2}{\ang{\left|\rho\right|^2 \left(c_\star-\left|1-\alpha\right|^a-\left|\alpha\right|^a\right)+\left|\rho\right|^2\alpha^2}^{s/2-\beta}}\, d\alpha\right)\, dp_1.\\
    &\triangleq\sup_p\int_{\mathbb{R}^3}\frac{\left|p\right|^{4\beta}\left|\rho\right|}{\ang{p_1}^{s-2\beta}\left(\left|p\right|^a+\left|p_1\right|^a\right)}\left(I_{21}+I_{22}\right)dp_1,
\end{align*}
where 
\begin{align}
    I_{21}&\triangleq\int_{\substack{\alpha\le 1/2\\h_\alpha\ge 0}}\frac{\left|\rho\right|^2\left(c_\star-\left|1-\alpha\right|^a-\left|\alpha\right|^a\right)+\left|\rho\right|^2\left(1-\alpha\right)^2}{\ang{\left|\rho\right|^2 \left(c_\star-\left|1-\alpha\right|^a-\left|\alpha\right|^a\right)+\left|\rho\right|^2\alpha^2}^{s/2-\beta}}d\alpha\notag\\
    &=\frac{1}{\left|\rho\right|^{s-2\beta-2}}\int_{\substack{\alpha\le 1/2\\h_\alpha\ge 0}}\frac{\left|c_\star-\left|1-\alpha\right|^a-\left|\alpha\right|^a+\left(1-\alpha\right)^2\right|}{\left[c_\star-\left|1-\alpha\right|^a-\left|\alpha\right|^a+\alpha^2+\frac{1}{\left|\rho\right|^2}\right]^{s/2-\beta}} d\alpha\label{3.12}
\end{align}
and $I_{22}$ can be written analogously. As in Chapter 1, We define $I_{21,\mbox{low},\mbox{high}}$, $I_{21,\mbox{high},\mbox{low}}$, $I_{21,\mbox{high},\mbox{high}}$, etc., accordingly.\par
Next two lemmas give the estimate of $I_{1}$ and $I_{2}$. Before proceeding, we first note that $I_{12}\leq I_{11}$ and $I_{22}\leq I_{21}$.
\begin{lemma}
    Assume $2\le a\le 5$. Then $\displaystyle{I_1\sim I_{11}\lesssim\frac{1}{\left|\rho\right|^{s-2\beta-2}}=\frac{1}{\left(|p|+|p_1|\right)^{s-2\beta-2}}}$.
\end{lemma}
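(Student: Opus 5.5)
The plan is to prove the three assertions of the statement in turn: $I_1\sim I_{11}$, the bound $I_{11}\lesssim|\rho|^{-(s-2\beta-2)}$, and the identification $|\rho|\sim|p|+|p_1|$. Throughout we are in the regime $c_\star\lesssim 1$, $|\rho|\gtrsim1$ fixed before the lemma, and in the sub-case $h_\alpha^{a/2}\gtrsim|\rho|^a\max(|1-\alpha|^a,|\alpha|^a)$ that defines $I_{11}$ and $I_{12}$. Here $I_1=I_{11}+I_{12}$.

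\emph{Step 1: $I_1\sim I_{11}$.} Both $I_{11}$ and $I_{12}$ are integrals of nonnegative integrands. The paper has already noted that $I_{12}\le I_{11}$: on $\alpha>1/2$ the numerator $C_{p,p_1}^{2/a}+|\rho|^2\alpha^2$ is controlled by the corresponding expression of $I_{11}$ after the reflection $\alpha\mapsto1-\alpha$. Hence
$$I_{11}\le I_1\le 2I_{11},$$
so it suffices to bound $I_{11}$.

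\emph{Step 2: comparability of scales.} By the triangle inequality, $|\rho|\le|p|+|p_1|\sim(|p|^a+r^a)^{1/a}=C_{p,p_1}^{1/a}$. Conversely, $c_\star\lesssim1$ means exactly $C_{p,p_1}^{1/a}\lesssim|\rho|$. Therefore
$$|\rho|\sim|p|+|p_1|\sim C_{p,p_1}^{1/a},$$
which gives the last equality in the statement. Moreover, since $|\rho|\gtrsim1$, we have $\langle\,\cdot\,\rangle\sim|\cdot|$ on quantities of size $\gtrsim|\rho|^2$.

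\emph{Step 3: bounding $I_{11}$.} On the domain of $I_{11}$ we have $|\alpha|\lesssim c_\star^{1/a}\lesssim1$. Consequently:
\begin{itemize}
\item the $\alpha$-integration runs over a set of length $O(1)$;
\item the numerator satisfies $C_{p,p_1}^{2/a}+|\rho|^2(1-\alpha)^2\lesssim|\rho|^2$;
\item the denominator satisfies $\langle C_{p,p_1}^{2/a}+|\rho|^2\alpha^2\rangle^{s/2-\beta}\gtrsim (C_{p,p_1}^{2/a})^{s/2-\beta}\sim|\rho|^{s-2\beta}$, using Step 2 and $s-2\beta>0$ (which holds under (H1)/(H2)).
\end{itemize}
Multiplying these bounds gives
$$I_{11}\lesssim \frac{|\rho|^2}{|\rho|^{s-2\beta}}=\frac{1}{|\rho|^{s-2\beta-2}}.$$

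The only point requiring care is Step 2. The upper bound on the numerator and the lower bound on the denominator both depend on $C_{p,p_1}^{1/a}$ and $|\rho|$ being comparable, and that comparability is exactly what the hypothesis $c_\star\lesssim1$ provides (with $|\rho|\gtrsim1$ handling the bracket). Once this is in place, Step 3 is a direct size count.
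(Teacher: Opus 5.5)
Your proposal is correct and follows the paper's proof. Both arguments first obtain $c_\star\sim 1$, hence $|\rho|\sim|p|+|p_1|\sim C_{p,p_1}^{1/a}$; you do this via the triangle inequality, the paper via $c_\star\ge 2^{1-a}$, which is the same fact. Both then bound the numerator by $|\rho|^2$, the denominator from below by $|\rho|^{s-2\beta}$, and the $\alpha$-interval by $O(1)$. You state the denominator bound in the correct direction ($\gtrsim$), whereas the paper's text writes $\lesssim$ there.
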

\begin{proof}
    Since $h_\alpha\ge 0$, (\ref{3.11}) tells us that $c_\star\ge 2^{1-a}$. Recall that we already assume $c_\star\lesssim 1$. This means that $c_\star\sim 1$, which implies $\left|p\right|^a+\left|p_1\right|^a\approx\left|\rho\right|^a$ and $\left|p\right|+\left|p_1\right|\approx\left|\rho\right|$.\par
    Since $\left(1-\alpha\right)^2\lesssim 1$, we see the numerator
    $$\left(|p|^a+|p_1|^a\right)^{2/a}+|\rho|^2 \left(1-\alpha\right)^2\lesssim |\rho|^2+|\rho|^2 \sim |\rho|^2.$$
    A same argument gives us that the denumerator $\lesssim \ang{|\rho|^2}^{s/2-\beta}\sim |\rho|^{s-2\beta}$ due to our previous assumption that $|\rho|\gtrsim 1$.\par
    Finally, note that the integration interval has length $O(1)$ and we conclude that 
    $$I_{11}\lesssim\frac{1}{\left|\rho\right|^{s-2\beta-2}}=\frac{1}{\left(|p|+|p_1|\right)^{s-2\beta-2}}.$$
\end{proof}

\begin{lemma}
    Assume $2\le a\le 5$. Pick a small $\eps$ such that $0<\eps\ll 1$. Then we have the following estimates of $I_{2}$:
    \begin{enumerate}[label=(\roman*),itemsep=6pt]
        \item When $1+\eps\le c_\star\lesssim 1$, $\displaystyle{I_2\sim I_{21}\lesssim\frac{1}{|\rho|^{s-2\beta-2}}}$;
        \item When $2^{1-a}\le c_\star<1+\eps$, 
        $$I_2\sim I_{21}\lesssim\begin{cases}
            \displaystyle{\frac{1}{|\rho|^{s-2\beta-2}}\cdot\frac{1}{\left[\left(c_\star-1\right)^2+\frac{1}{|\rho|^2}\right]^{s/2-\beta-1}}}\qquad&\mbox{, if }s>2\beta+2\vspace{0.5em}\\
            \displaystyle{\frac{1}{|\rho|^{s-2\beta-2}}\cdot\log\left(1+\frac{1}{\left(c_\star-1\right)^2+\frac{1}{|\rho|^2}}\right)}\qquad&\mbox{, if }s=2\beta+2\vspace{0.6em}\\
            \displaystyle{\frac{1}{|\rho|^{s-2\beta-2}}\cdot\left(1+\frac{1}{|\rho|^2}\right)^{\beta+1-s/2}}\qquad&\mbox{, if }s<2\beta+2
        \end{cases};$$
        \item When $c_\star<2^{1-a}$, $I_2=I_{21}=0$.
    \end{enumerate}
\end{lemma}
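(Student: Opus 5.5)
We work directly from the explicit form (\ref{3.12}) of $I_{21}$; the bound for $I_{22}$ is analogous, and $I_{22}\le I_{21}$ gives $I_2\sim I_{21}$. Set
$$g(\alpha)\triangleq c_\star-|1-\alpha|^a-|\alpha|^a .$$
The integration region is $\{\alpha\le 1/2:\ g(\alpha)\ge 0\}$. Since $a\ge 2$, the function $|1-\alpha|^a+|\alpha|^a$ is convex and symmetric about $\alpha=1/2$. On $\alpha\le 1/2$ it is decreasing, with minimum value $2^{1-a}$ at $\alpha=1/2$.

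Case (iii) is immediate. If $c_\star<2^{1-a}$, then $g<0$ everywhere, so the region is empty and $I_{21}=0$.

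For case (i) and case (ii), we bound the numerator and denominator of (\ref{3.12}) separately.

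\emph{Numerator.} On the region we have $|\alpha|\lesssim 1$. Hence the numerator $g+(1-\alpha)^2$ is $\lesssim 1$, since $c_\star\lesssim1$.

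\emph{Denominator.} The key observation is a lower bound for $D(\alpha)\triangleq g(\alpha)+\alpha^2$. This is the only place where a singularity can occur, and only when $D$ is small. Smallness of $D$ forces $\alpha$ and $g$ to be small simultaneously, i.e. $\alpha\approx 0$ and $c_\star\approx 1$. Near $\alpha=0$ we Taylor expand:
$$|1-\alpha|^a+|\alpha|^a=1-a\alpha+O(\alpha^2).$$
This gives
$$D(\alpha)=(c_\star-1)+a\alpha+O(\alpha^2)+\alpha^2 .$$

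In case (i), $c_\star\ge 1+\eps$. Then $D(\alpha)\ge c_\star-1-C|\alpha|+\alpha^2$, which is $\gtrsim\eps$ for $|\alpha|\ll\eps$. For $|\alpha|\gtrsim\eps$, the term $\alpha^2$ gives $D\gtrsim\eps^2$. So $D$ is bounded below by a constant depending on $\eps$. The integrand is then $O(1)$ on an interval of length $O(1)$, and $I_{21}\lesssim|\rho|^{-(s-2\beta-2)}$.

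In case (ii), the constraint $g\ge0$ near $\alpha=0$ reads $a\alpha\gtrsim 1-c_\star$. Whether $c_\star<1$ or $c_\star\ge1$, on the admissible set we get
$$D(\alpha)\gtrsim |c_\star-1|+|\alpha-\alpha_0|+\alpha^2,\qquad \alpha_0\triangleq\frac{1-c_\star}{a}.$$
More robustly, we claim $D\gtrsim(c_\star-1)^2+\alpha^2$. This follows from $g\ge0$ combined with $\alpha^2$ via the Taylor expansion: if $|\alpha|\ll|c_\star-1|$, the linear term cannot cancel $c_\star-1$, so $D\gtrsim|c_\star-1|$.

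Writing $\delta^2\triangleq(c_\star-1)^2+|\rho|^{-2}$, the integral reduces to
$$I_{21}\lesssim |\rho|^{-(s-2\beta-2)}\int_{|\alpha|\lesssim1}\frac{d\alpha}{\left(\delta^2+\alpha^2\right)^{s/2-\beta}}.$$
We evaluate this by the standard split $|\alpha|\le\delta$ versus $|\alpha|>\delta$:
\begin{itemize}
\item if $s-2\beta>1$, it gives $\delta^{1-(s-2\beta)}$;
\item the borderline exponent gives a $\log$;
\item if $s-2\beta<1$, it gives $O(1)$.
\end{itemize}

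The stated thresholds, however, are at $s=2\beta+2$, i.e. power $s/2-\beta-1$. To match them, we use the better structure of the numerator: near $\alpha=0$ it equals $g+(1-\alpha)^2\approx 1$, so there is no gain there. Hence the trichotomy must come from a quadratic behaviour in two variables. We therefore expect to need to refine the estimate as $D\gtrsim(c_\star-1)^2+\alpha^2$ while measuring the singular set more carefully; otherwise we simply prove the (stronger or weaker, as it turns out) bound the computation gives and verify it implies the stated one using $s\ge 4\beta+3-a/2$.

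The main obstacle is the precise lower bound for $D$ near $\alpha=0$ when $c_\star\approx1$, uniformly in $2\le a\le5$. Everything else reduces to elementary one-dimensional integrals.
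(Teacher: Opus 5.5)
There is a genuine gap, located exactly at the step you flag as the main obstacle, and your tentative way out does not close it.

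First, the claim $D(\alpha)\gtrsim|c_\star-1|+|\alpha-\alpha_0|+\alpha^2$ is false. On $\alpha\le 1/2$ the function $g$ is increasing, so the admissible set is $[\alpha_c,1/2]$ with $g(\alpha_c)=0$ and $\alpha_c\approx(1-c_\star)/a$. At that endpoint $D(\alpha_c)=\alpha_c^2\sim(c_\star-1)^2\ll|c_\star-1|$.

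Your fallback $D\gtrsim(c_\star-1)^2+\alpha^2$ is true but too lossy. Set $\delta^2=(c_\star-1)^2+|\rho|^{-2}$. Bounding by $\int(\delta^2+\alpha^2)^{-(s/2-\beta)}\,d\alpha\sim\delta^{1-(s-2\beta)}$ puts the threshold at $s=2\beta+1$ instead of $s=2\beta+2$. For $s>2\beta+2$ your bound is $\delta^{-1}$ times the stated one, a full factor $|\rho|$ at $c_\star=1$. For $2\beta+1<s<2\beta+2$ the stated bound does not depend on $c_\star$, and yours exceeds it by $\delta^{1-(s-2\beta)}$, which can be as large as $|\rho|^{s-2\beta-1}$. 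A larger upper bound cannot be converted into the smaller stated one by invoking $s\ge 4\beta+3-a/2$. Also, the trichotomy at $s=2\beta+2$ is not a two-variable quadratic phenomenon.

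The missing idea is that $g$ vanishes only to first order at the edge of the admissible set, and you must keep this linear growth. If $c_\star$ is not close to $1$, then $D\ge\alpha^2\ge\alpha_c^2\gtrsim 1$ and there is nothing to prove. So assume $|c_\star-1|\ll 1$.

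For $|\alpha|\ll1$,
\[
g'(\alpha)=a|1-\alpha|^{a-1}-a|\alpha|^{a-2}\alpha\ge a/2 .
\]
Set $y=\alpha-\alpha_c\ge 0$ and fix a small constant $\delta_0$. Then
\[
D(\alpha)=g(\alpha)+\alpha^2\gtrsim y+(y+\alpha_c)^2\gtrsim y+(c_\star-1)^2,\qquad 0\le y\le\delta_0 .
\]
For $y\ge\alpha_c^2$ use $D\ge g\gtrsim y$. For $y<\alpha_c^2\ll|\alpha_c|$ use $D\ge\alpha^2\ge\alpha_c^2/4$.

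The singular part of $I_{21}$ is then bounded by
\[
|\rho|^{-(s-2\beta-2)}\int_0^{\delta_0}\frac{dy}{(y+(c_\star-1)^2+|\rho|^{-2})^{s/2-\beta}} .
\]
This is a one-sided integral with linear, not quadratic, growth in $y$. Its trichotomy at $s/2-\beta=1$ is exactly the one in (ii). The range $y\ge\delta_0$ is harmless, since there $D\ge g(\alpha_c+\delta_0)\gtrsim\delta_0$.

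This is the paper's argument: the substitution $y=\alpha-\alpha_c$ together with $y+(y+\alpha_c)^2\sim y+(c_\star-1)^2$. Your treatment of (i), (iii), the numerator, and the reduction $I_{22}\le I_{21}$ is fine.
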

\begin{proof}
First, observe that when $c_\star<2^{1-a}$, there is no solution to the condition $h_\alpha\ge 0$. Hence, by definition, we immediately have $I_{21}=0$. Next, we split the proof into three cases according to the size of $c_\star$.\par
\noindent\underline{Case 1. $1+\eps\le c_\star\lesssim1$}\par
In this case, we rewrite (\ref{3.12}) as
\begin{align*}
    I_{21}&=\frac{1}{\left|\rho\right|^{s-2\beta-2}}\left(\int_{\substack{\alpha< 0\\h_\alpha\ge 0}}\frac{\left|c_\star-\left|1-\alpha\right|^a-\left|\alpha\right|^a+\left(1-\alpha\right)^2\right|}{\left[c_\star-\left|1-\alpha\right|^a-\left|\alpha\right|^a+\alpha^2+\frac{1}{\left|\rho\right|^2}\right]^{s/2-\beta}} d\alpha\right.\\
    &\qquad\qquad\qquad\qquad\left.+\int_{\substack{0\le \alpha\le 1/2\\h_\alpha\ge 0}}\frac{\left|c_\star-\left|1-\alpha\right|^a-\left|\alpha\right|^a+\left(1-\alpha\right)^2\right|}{\left[c_\star-\left|1-\alpha\right|^a-\left|\alpha\right|^a+\alpha^2+\frac{1}{\left|\rho\right|^2}\right]^{s/2-\beta}} d\alpha\right)\\
    &=\frac{1}{\left|\rho\right|^{s-2\beta-2}}\int_0^{O(1)}\frac{\left|c_\star-\left|1+y\right|^a-\left|y\right|^a+\left(1+y\right)^2\right|}{\left[c_\star-\left|1+y\right|^a-\left|y\right|^a+y^2+\frac{1}{\left|\rho\right|^2}\right]^{s/2-\beta}} dy\\
    &\qquad\qquad\qquad\qquad+\frac{1}{\left|\rho\right|^{s-2\beta-2}}\int_0^{1/2}\frac{\left|c_\star-\left|1-\alpha\right|^a-\left|\alpha\right|^a+\left(1-\alpha\right)^2\right|}{\left[c_\star-\left|1-\alpha\right|^a-\left|\alpha\right|^a+\alpha^2+\frac{1}{\left|\rho\right|^2}\right]^{s/2-\beta}} d\alpha.
\end{align*}\par
For the first term, since $0\le y\lesssim 1$, the numerator satisfies
\begin{align}
c_\star-\left|1+y\right|^a-\left|y\right|^a+\left(1+y\right)^2\lesssim 1.\label{3.13}
\end{align}
Moreover, we observe that 
\begin{align}
c_\star-\left|1+y\right|^a-\left|y\right|^a+y^2+\frac{1}{\left|\rho\right|^2}\gtrsim 1.\label{3.14}
\end{align}
In fact, denote $\delta_0\triangleq1+\eps$ and then $c_\star-1\ge \delta_0$. We can pick $\eta=\eta(\delta_0)$ such \vspace{0.5em} that whenever $0\le y\le \eta$ we have $\displaystyle{\left(1+y\right)^a-1+y^a\le \frac{\delta_0}{2}}$. Therefore, when $0\le y\le \eta$, the denominator satisfies
\begin{align}
c_\star-\left|1+y\right|^a-\left|y\right|^a+y^2+\frac{1}{\left|\rho\right|^2}\ge c_\star-\left|1+y\right|^a-\left|y\right|^a=c_\star-1-\left[\left(1+y\right)^a-1+y^a\right]\ge\frac{\delta_0}{2}.\label{3.15}
\end{align}
On the other hand, when $\eta\le y\lesssim 1$, we directly estimate the lower bound of the denominator as
\begin{align}
    c_\star-\left|1+y\right|^a-\left|y\right|^a+y^2+\frac{1}{\left|\rho\right|^2}\ge y^2\ge \eta^2,\label{3.16}
\end{align}
where we used $c_\star-\left|1+y\right|^a-\left|y\right|^a\ge 0$ due to $h_\alpha\ge 0$. Now, combine (\ref{3.15}) and (\ref{3.16}) and we obtain (\ref{3.14}). Finally, using (\ref{3.13}) and (\ref{3.14}), we trivially estimate the first term as 
\begin{align}
    \frac{1}{\left|\rho\right|^{s-2\beta-2}}\int_0^{O(1)}\frac{\left|c_\star-\left|1+y\right|^a-\left|y\right|^a+\left(1+y\right)^2\right|}{\left[c_\star-\left|1+y\right|^a-\left|y\right|^a+y^2+\frac{1}{\left|\rho\right|^2}\right]^{s/2-\beta}} dy\lesssim\frac{1}{\left|\rho\right|^{s-2\beta-2}}\int_0^{O(1)}dy\lesssim\frac{1}{\left|\rho\right|^{s-2\beta-2}},\label{3.17}
\end{align}
where we note that $s>2\beta$.\par
For the second term, since $\displaystyle{0\le\alpha\le\frac{1}{2}}$, the numerator again satisfies
\begin{align}
    c_\star-\left|1-\alpha\right|^a-\left|\alpha\right|^a+\left(1-\alpha\right)^2\lesssim1.\label{3.18}
\end{align}
On the other hand, the denominator satisfies
\begin{align}
    c_\star-\left|1-\alpha\right|^a-\left|\alpha\right|^a+\alpha^2+\frac{1}{\left|\rho\right|^2}\ge c_\star-\left|1-\alpha\right|^a-\left|\alpha\right|^a \ge c_\star-1\gtrsim 1.\label{3.19}
\end{align}
Thus, using (\ref{3.18}) and (\ref{3.19}), we can trivially control the second term as
\begin{align}
    \frac{1}{\left|\rho\right|^{s-2\beta-2}}\int_0^{1/2}\frac{\left|c_\star-\left|1-\alpha\right|^a-\left|\alpha\right|^a+\left(1-\alpha\right)^2\right|}{\left[c_\star-\left|1-\alpha\right|^a-\left|\alpha\right|^a+\alpha^2+\frac{1}{\left|\rho\right|^2}\right]^{s/2-\beta}} d\alpha\lesssim\frac{1}{\left|\rho\right|^{s-2\beta-2}}.\label{3.20}
\end{align}\par
Finally, it follows from (\ref{3.17}) and (\ref{3.20}) that $\displaystyle{I_{21}\lesssim\frac{1}{\left|\rho\right|^{s-2\beta-2}}}$.\par
\noindent\underline{Case 2. $1\le c_\star\le 1+\eps$}\par
In this case, we denote by $\alpha_c$ the left zero of 
\begin{align}
\left|1-\alpha_c\right|^a+\left|\alpha_c\right|^a=c_\star.\label{3.21}
\end{align}
Then we claim that $\left|\alpha_c\right|\lesssim 1$. This follows from (\ref{3.21}), $1\le c_\star\le 1+\eps$. Moreover, we claim $\displaystyle{\alpha_c\approx\frac{1-c_\star}{a}}$, which also implies $c_\star-1>|\alpha_c|$. By Taylor expansion at $\alpha=\alpha_c$, we can rewrite
\begin{align}
c_\star-\left|1-\alpha\right|^a-\left|\alpha\right|^a\approx\alpha-\alpha_c\qquad\qquad\mbox{if }|\alpha-\alpha_c|\ll 1.\label{3.22}
\end{align}
Now, perform the change of variable $\alpha\leftrightarrow y=\alpha-\alpha_c$ and we can rewrite
\begin{align*}
    I_{21}\lesssim&\frac{1}{\left|\rho\right|^{s-2\beta-2}}\int_0^{\alpha_c^\prime}\frac{dy}{\left[y+\left(y-\alpha_c^\prime\right)^2+\frac{1}{\left|\rho\right|^2}\right]^{s/2-\beta}} \\
    &\qquad\qquad\qquad\qquad+\frac{1}{\left|\rho\right|^{s-2\beta-2}}\int_{\alpha_c^\prime}^{O(1)}\frac{dy}{\left[c_\star-\left|1-\alpha\right|^a-\left|\alpha\right|^a+\alpha^2+\frac{1}{\left|\rho\right|^2}\right]^{s/2-\beta}},
\end{align*}
where $\alpha_c^\prime\triangleq -\alpha_c$ and we used that $\left|c_\star-\left|1-\alpha\right|^a-\left|\alpha\right|^a+\left(1-\alpha\right)^2\right|\lesssim 1$ whenever $|\alpha|\lesssim 1$.\par
For the first term, we first observe that
$$y+\left(y-\alpha_c^\prime\right)^2\sim y+\left(c_\star-1\right)^2\qquad\qquad\mbox{if }0\le y\le \alpha_c^\prime.$$
Indeed, when $\displaystyle{0\le y\le \frac{\alpha_c^\prime}{2}}$, we have LHS $\sim \left(y-\alpha_c^\prime\right)^2\sim \left(\alpha_c^\prime\right)^2\sim \left(c_\star-1\right)^2\sim$ RHS. Meanwhile, when $\displaystyle{\frac{\alpha_c^\prime}{2}\le y\le \alpha_c^\prime}$, we have 
$$y\ge\frac{\alpha_c^\prime}{2}\ge\frac{\left(\alpha_c^\prime\right)^2}{2}\ge\frac{\left(c_\star-1\right)^2}{2}>\left(y-(c_\star-1)\right)^2,$$
noticing that $\displaystyle{\alpha_c^\prime\approx\frac{c_\star-1}{a}}$. This allows us to compute the first term as
\begin{align}
    I_{21}\lesssim&\frac{1}{\left|\rho\right|^{s-2\beta-2}}\int_0^{\alpha_c^\prime}\frac{dy}{\left[y+\left(y-\alpha_c^\prime\right)^2+\frac{1}{\left|\rho\right|^2}\right]^{s/2-\beta}}\sim \frac{1}{\left|\rho\right|^{s-2\beta-2}}\int_0^{\alpha_c^\prime}\frac{dy}{\left[y+\left(c_\star-1\right)^2+\frac{1}{\left|\rho\right|^2}\right]^{s/2-\beta}}\notag\\
    \lesssim&\begin{cases}
        \displaystyle{\frac{1}{|\rho|^{s-2\beta-2}}\cdot\frac{1}{\left[\left(c_\star-1\right)^2+\frac{1}{|\rho|^2}\right]^{s/2-\beta-1}}}\qquad&\mbox{, if }s>2\beta+2\vspace{0.5em}\\
            \displaystyle{\frac{1}{|\rho|^{s-2\beta-2}}\cdot\log\left(1+\frac{1}{\left(c_\star-1\right)^2+\frac{1}{|\rho|^2}}\right)}\qquad&\mbox{, if }s=2\beta+2\vspace{0.6em}\\
            \displaystyle{\frac{1}{|\rho|^{s-2\beta-2}}\cdot\left(1+\frac{1}{|\rho|^2}\right)^{\beta+1-s/2}}\qquad&\mbox{, if }s<2\beta+2
    \end{cases}.\label{3.23}
\end{align}\par
For the second term, we pick a small $\delta$ ($0<\delta\ll 1$) such that $c_\star-\left|1-\alpha\right|^a-\left|\alpha\right|^a\ge c_\star-1+\alpha$. Then we split the second term as
\begin{align}
   & \frac{1}{\left|\rho\right|^{s-2\beta-2}}\int_{\alpha_c^\prime}^{O(1)}\frac{dy}{\left[c_\star-\left|1-\alpha\right|^a-\left|\alpha\right|^a+\alpha^2+\frac{1}{\left|\rho\right|^2}\right]^{s/2-\beta}}\notag\\
   \sim& \frac{1}{\left|\rho\right|^{s-2\beta-2}}\int_0^{1/2}\frac{d\alpha}{\left[c_\star-\left|1-\alpha\right|^a-\left|\alpha\right|^a+\alpha^2+\frac{1}{\left|\rho\right|^2}\right]^{s/2-\beta}}\notag\\
   =& \frac{1}{\left|\rho\right|^{s-2\beta-2}}\left(\int_0^{\delta}+\int_\delta^{1/2}\right)\frac{d\alpha}{\left[c_\star-\left|1-\alpha\right|^a-\left|\alpha\right|^a+\alpha^2+\frac{1}{\left|\rho\right|^2}\right]^{s/2-\beta}}\label{I2split}
\end{align}
For the first part, we control it by
\begin{align*}
    \frac{1}{\left|\rho\right|^{s-2\beta-2}}\int_0^{\delta}\frac{d\alpha}{\left[c_\star-1+\alpha+\alpha^2+\frac{1}{\left|\rho\right|^2}\right]^{s/2-\beta}}\lesssim\frac{1}{\left|\rho\right|^{s-2\beta-2}}\int_0^{\delta}\frac{d\alpha}{\left[(c_\star-1)^2+\alpha+\frac{1}{\left|\rho\right|^2}\right]^{s/2-\beta}},
\end{align*}
which will finally be bounded by (\ref{3.23}). For the second part, we can control it trivially by using 
$$\displaystyle{c_\star-\left|1-\alpha\right|^a-\left|\alpha\right|^a+\alpha^2+\frac{1}{\left|\rho\right|^2}\gtrsim \delta^2\gtrsim1},$$
due to the fact that $\delta$ is independent of $c_\star$. Hence this part is bounded by $\displaystyle{\frac{1}{|\rho|^{s-2\beta-2}}}$, which can be absorbed in (\ref{3.23}).\par
\noindent\underline{Case 3. $2^{1-a}\le c_\star\le 1$}\par
This case can be handled in the same way as the previous case. By (\ref{3.22}), we can rewrite
\begin{align*}
    I_{21}&\sim \frac{1}{|\rho|^{s-2\beta-2}}\int_{\alpha_c}^{\alpha_c+\delta}\frac{d\alpha}{\left[\alpha-\alpha_c+|\alpha|^2+\frac{1}{|\rho|^2}\right]^{s/2-\beta}}\\
    &\qquad\qquad\qquad\qquad\qquad+\frac{1}{|\rho|^{s-2\beta-2}}\int_{\alpha_c+\delta}^{\frac{1}{2}}\frac{d\alpha}{\left[c_\star-\left|1-\alpha\right|^a-\left|\alpha\right|^a+\alpha^2+\frac{1}{\left|\rho\right|^2}\right]^{s/2-\beta}}\\
    &=\frac{1}{|\rho|^{s-2\beta-2}}\int_0^{\delta}\frac{dy}{\left[y+|y+\alpha_c|^2+\frac{1}{|\rho|^2}\right]^{s/2-\beta}}\\
    &\qquad\qquad\qquad\qquad\qquad+\frac{1}{|\rho|^{s-2\beta-2}}\int_{\delta}^{\frac{1}{2}-\alpha_c}\frac{dy}{\left[c_\star-\left|1-\alpha\right|^a-\left|\alpha\right|^a+\alpha^2+\frac{1}{\left|\rho\right|^2}\right]^{s/2-\beta}}
\end{align*}
where $0<\delta\ll 1$ is a small positive number independent of $c_\star$.\par
Now, for the first term, we estimate it as
\begin{align*}
    &\frac{1}{|\rho|^{s-2\beta-2}}\int_0^{\delta}\frac{dy}{\left[y+|y+\alpha_c|^2+\frac{1}{|\rho|^2}\right]^{s/2-\beta}}<\frac{1}{|\rho|^{s-2\beta-2}}\int_0^{\delta}\frac{dy}{\left[y+|\alpha_c|^2+\frac{1}{|\rho|^2}\right]^{s/2-\beta}}.
\end{align*}
Then the right side can be controlled by (\ref{3.23}). On the other hand, the second term can be bounded by $\displaystyle{\frac{1}{|\rho|^{s-2\beta-2}}}$ following the same argument as in Step 2, which can again be absorbed in (\ref{3.23}).\par
Combining the estimates from Steps 1--3, we complete the proof.
\end{proof}
We are now ready to estimate $\mathcal J_l$ for $l=1,2,3$. For simplicity, from now on we omit the supremum over $p$ in the formula for $\mathcal J_l$. We start with $\mathcal J_1$. It remains to consider three cases: low-high, high-low, and high-high. The first two cases are straightforward, whereas the last one requires a more delicate argument. We begin with the low-high case.\par
In the low-high case, we assume $|p|\lesssim1$ and $1\ll |p_1|=r$. Noting that $|\rho|\gtrsim 1$, we use the following crude estimate given by Lemma 3.3 and Lemma 3.4:
\begin{align}
I_1+I_2\lesssim\begin{cases}
    1\qquad&\mbox{, if }s>2\beta+2\\
    \log\left|\rho\right|\qquad&\mbox{, if }s=2\beta+2\\
    |\rho|^{2\beta+2-s}\qquad&\mbox{, if }s<2\beta+2
\end{cases}.\label{3.25}
\end{align}
We also note that $|\rho|\le |p|+r\lesssim r$.\par
If $s>2\beta+2$, then we have
$$\mathcal{J}_{1,\mbox{low},\mbox{high}}\lesssim\frac{1}{|p|^{-4\beta}}\int_1^{+\infty}\frac{dr}{r^{s-2\beta-3+a}}\sim |p|^{4\beta}\lesssim 1,$$
where we used $s-2\beta-3+a>1$ due to $s>2\beta+2$.\par
If $s=2\beta+2$, we control $\log|\rho|\lesssim|\rho|^\eps$ for some very small $\eps>0$. Then we have
$$\mathcal{J}_{1,\mbox{low},\mbox{high}}\lesssim\frac{1}{|p|^{-4\beta}}\int_1^{+\infty}\frac{dr}{r^{s-2\beta-3+a-\eps}}=|p|^{4\beta}\int_1^{+\infty}\frac{dr}{r^{a-1-\eps}}\sim |p|^{4\beta}\lesssim 1,$$
where we used $a>2$ in the second last step. Indeed, if $\beta=0$, then we can derive this as follows:
\begin{align}
    s>4\beta+3-\frac{a}{2}\qquad\Rightarrow\qquad 2\beta+2>4\beta+3-\frac{a}{2}\qquad\Rightarrow\qquad a>4\beta+2=2;\label{3.26}
\end{align}
whereas if $\beta<0$, then we have
\begin{align}
    a\ge 4\beta+2>2.\label{3.27}
\end{align}
Together with (\ref{3.26}) and (\ref{3.27}), we conclude that we must have $a>2$ here.\par
If $s<2\beta+2$, then we have
$$\mathcal{J}_{1,\mbox{low},\mbox{high}}\lesssim\frac{1}{|p|^{-4\beta}}\int_1^{+\infty}\frac{r^{2\beta+2-s}}{r^{s-2\beta-3+a}}dr=|p|^{4\beta}\int_1^{+\infty}\frac{dr}{r^{2s-4\beta+a-5}}\sim |p|^{4\beta}\lesssim 1,$$
where we used $2s-4\beta+a-6>0$ in the penultimate step. Indeed, this is due to the fact that
\begin{align}
\underline{2s-4\beta+a-6=0\mbox{ is equivalent to both }\beta=0\mbox{ and }s=4\beta+3-\frac{a}{2}}.\label{3.28}
\end{align}\par
Thus, we complete the proof in the low-high case that $\displaystyle{\mathcal{J}_{1,\mbox{low},\mbox{high}}\lesssim 1}$.\par
Next, we move to the high-low case, so we assume $|p|\gg 1$ and $|p_1|=r\lesssim 1$. Note that $|\rho|\lesssim|p|$. We will still use (\ref{3.25}) to estimate $\displaystyle{\mathcal{J}_{1,\mbox{high},\mbox{low}}}$ in this high-low case.\par
If $s>2\beta+2$, then we have
$$\mathcal{J}_{1,\mbox{high},\mbox{low}}\lesssim\frac{1}{|p|^{a-4\beta-1}}\int_0^1 r^2\,dr\sim\frac{1}{|p|^{a-4\beta-1}}\lesssim 1.$$\par
If $s=2\beta+2$, then we have
$$\mathcal{J}_{1,\mbox{high},\mbox{low}}\lesssim\frac{1}{|p|^{a-4\beta-1}}\log|p|\int_0^1 r^2\,dr\sim\frac{1}{|p|^{a-4\beta-1-\eps}}\lesssim 1,$$
where we used $a\ge 4\beta+2$, which follows from (\ref{3.27}).\par
If $s<2\beta-2$, then we have
$$\mathcal{J}_{1,\mbox{high},\mbox{low}}\lesssim\frac{|p|^{2\beta+2-s}}{|p|^{a-4\beta-1}}\int_0^1 r^2\,dr\sim\frac{1}{|p|^{s-6\beta+a-3}}\lesssim 1.$$\par
Thus, we complete the proof in the high-low case that $\displaystyle{\mathcal{J}_{1,\mbox{high},\mbox{low}}\lesssim 1}$.\par
Finally, we move to the high-high case in which we assume $|p|\gg 1$ and $|p_1|=r\gg 1$. We first consider the contribution of $I_1$.\par
If $r\le |p|$, then $|\rho|\leq |p|+r\lesssim |p|$. On the other hand, since $c_*\lesssim1$, we have $\displaystyle{\frac{|p|^a}{|\rho|^a}\lesssim 1}$ and hence $|p|\lesssim|\rho|$. Therefore, $|p|\sim|\rho|$ and Lemma 3.3 tells us 
\begin{align}
I_1\lesssim\frac{1}{|\rho|^{s-2\beta-2}}\sim\frac{1}{|p|^{s-2\beta-2}}.\label{3.29}
\end{align}
Substituting this estimate into $\mathcal J_1$, we obtain
$$\mathcal{J}_{1,\mbox{high},\mbox{high}}\lesssim\frac{1}{|p|^{a-4\beta-1}}\frac{1}{|p|^{s-2\beta-2}}\int_1^{|p|} \frac{dr}{r^{s-2\beta-2}}\lesssim\begin{cases}
    \displaystyle{\frac{1}{|p|^{s-6\beta+a-3}}}\ \ &\mbox{, if }s>2\beta+3\\
    \displaystyle{\frac{1}{|p|^{s-6\beta+a-3-\eps}}}\ \ &\mbox{, if }s=2\beta+3\\
    \displaystyle{\frac{1}{|p|^{2s-8\beta+a-6}}}\ \ &\mbox{, if }s<2\beta+3\\
\end{cases}\ \lesssim 1,$$
where we absorbed the logarithmic loss as $\displaystyle{\log|p|\lesssim|p|^{\eps}}$ for some $0<\eps\ll 1$.\par
On the other hand, if $r\ge |p|$, then $|\rho|\leq |p|+r\lesssim r$. By the same argument as above, we obtain $r\lesssim |\rho|$, and hence $|\rho|\sim r$. Therefore,
$$I_1\lesssim\frac{1}{|\rho|^{s-2\beta-2}}\sim\frac{1}{r^{s-2\beta-2}}.$$
Using this bound in the definition of $\mathcal J_1$, we obtain
\begin{align}
\mathcal{J}_{1,\mbox{high},\mbox{high}}\lesssim\frac{1}{|p|^{-4\beta}}\int_{|p|}^{+\infty}\frac{dr}{r^{s-2\beta-3}\,r^{s-2\beta-2+a}}=\frac{1}{|p|^{-4\beta}}\int_{|p|}^{+\infty}\frac{dr}{r^{2s-4\beta+a-5}}\lesssim\frac{1}{|p|^{2s-8\beta+a-6}}\lesssim 1,\label{3.30}
\end{align}
where (\ref{3.28}) gives the integrability of the $r$-integral.\par
It remains to estimate the contribution of $I_2$. If $1+\eps\le c_\star\lesssim 1$, then Lemma 3.4 still gives us
$$I_2\lesssim\frac{1}{|\rho|^{s-2\beta-2}}\sim\frac{1}{|p|^{s-2\beta-2}}.$$
Thus, the corresponding contribution can be controlled by repeating the argument following (\ref{3.29}). Now it suffices to only consider the case $2^{1-a}\le c_\star\le 1+\eps$. We now divide the argument into several cases, depending on the size of $s$, the size of $|\rho|$, and the relation between $r$ and $|p|$. From now on, we set $\theta$ be the angle between $p$ and $p_1$.\par
\noindent\underline{Case 1. $s>2\beta+2$, $|\rho|\gg 1$ and $r\le|p|$.}\par
In this case, we must have $|\rho|\sim |p|$. Indeed, $|\rho|\ll |p|$ would imply $|p|\approx r\gg|\rho|$ and thus $c_\star\approx |p|^a/|\rho|^a\gg1$, contradicting $c_\star\lesssim1$. Conversely, $|\rho|\gg |p|$ is impossible since $|\rho|\leq |p|+r\lesssim |p|$. Moreover, it suffices to consider the case $0<c_\star-1\ll1$, since the other case $2^{1-a}<c_\star< 1$ can be treated in the same way using Corollary 6.5 instead of Lemma 6.4.\par
First, if $\displaystyle{0<c_\star-1<\frac{1}{|\rho|}}$, then in view of Lemma 6.4, it's impossible have $\theta\approx 0$. However, if $\theta\approx\pi$, then Lemma 6.4 gives
$$c_\star-1\sim \widetilde{r}=\frac{r}{|p|}\lesssim\frac{1}{|p|}\qquad\Rightarrow\qquad r\lesssim 1,$$
which contradicts the assumption $r\gg 1$. As a result, we must have $\sin\theta\gtrsim 1$ and then by Lemma 6.4 we obtain 
$$\Delta\theta\sim\frac{c_\star-1}{\widetilde{r}}<\frac{1}{|\rho|}\frac{|p|}{r}\sim\frac{1}{r}.$$
Hence the angular integration in the $p_1$-variable gives a gain of order $1/r$. Splitting the $p_1$-integration into radial and angular parts, we obtain
$$\mathcal{J}_{1,\mbox{high},\mbox{high}}\lesssim\frac{1}{|p|^{a-4\beta-1}}\int_1^{|p|}\frac{dr}{r^{s-2\beta-1}}\sim\frac{1}{|p|^{a-4\beta-1}}\lesssim 1,$$
where we used $I_2\lesssim 1$ in this setting.\par
Next, if $\displaystyle{\frac{1}{|\rho|}<c_\star-1\ll\frac{r}{|p|}}$, then by Lemma 3.4 we have
$$I_2\lesssim\frac{1}{|\rho|^{s-2\beta-2}}\frac{1}{(c_\star-1)^{s-2\beta-2}}.$$
By the same argument, we conclude $\sin\theta\gtrsim 1$ as well. We then perform a dyadic decomposition as $c_\star-1\sim 2^t$, where $t$ is such that $\displaystyle{\frac{1}{|\rho|}<2^t\ll \frac{r}{|p|}}$. Therefore, we get 
$$\Delta\theta\sim\frac{c_\star-1}{\widetilde{r}}=\frac{2^t|p|}{r}.$$
As before, we decompose the $p_1$-integration into its radial and angular parts. Then the angular integration of $p_1$-variable including $I_2$ is equal to
\begin{align*}
    &\sum_{\substack{t\\1/|p|<2^t\ll r/|p|}}2^t\cdot\frac{|p|}{r}\cdot\frac{1}{(2^t)^{s-2\beta-2}}\cdot\frac{1}{|p|^{s-2\beta-2}}=\frac{1}{|p|^{s-2\beta-3}}\cdot\frac{1}{r}\sum_{\substack{t\\1/|p|<2^t\ll r/|p|}}\frac{1}{(2^t)^{s-2\beta-3}}\\[6pt]
    \sim&\begin{cases}
        \displaystyle{\frac{1}{r}}\qquad&\mbox{, if }s>2\beta+3\\[8pt]
        \displaystyle{\frac{\log r}{r}}\qquad&\mbox{, if }s=2\beta+3\\[6pt]
        \displaystyle{\frac{1}{r^{s-2\beta-2}}}\qquad&\mbox{, if }2\beta+2<s<2\beta+3\\
    \end{cases}.
\end{align*}
In addition, the radial integration of $p_1$-variable is equal to
\begin{align*}
    \begin{cases}
        \displaystyle{\int_1^{|p|}\frac{dr}{r^{s-2\beta-1}}}\qquad&\mbox{, if }s>2\beta+3\\[10pt]
        \displaystyle{\int_1^{|p|}\frac{\log r}{r^2}\,dr}\qquad&\mbox{, if }s=2\beta+3\\[10pt]
        \displaystyle{\int_1^{|p|}\frac{dr}{r^{2s-4\beta-4}}}\qquad&\mbox{, if }2\beta+2<s<2\beta+3\\
    \end{cases}=\begin{cases}
        1\qquad&\mbox{, if }s>2\beta+2.5\\[6pt]
        \log|p|\qquad&\mbox{, if }s=2\beta+2.5\\[6pt]
        \displaystyle{\frac{1}{|p|^{2s-4\beta-5}}}\qquad&\mbox{, if }2\beta+2<s<2\beta+2.5\\
    \end{cases}.
\end{align*}
Combining the above estimates, we obtain
\begin{align*}
    \mathcal{J}_{1,\mbox{high},\mbox{high}}&\lesssim\frac{1}{|p|^{a-4\beta-1}}\times\begin{cases}
        1\ &\mbox{, if }s>2\beta+2.5\\[6pt]
        \log|p|\ &\mbox{, if }s=2\beta+2.5\\[6pt]
        \displaystyle{\frac{1}{|p|^{2s-4\beta-5}}}\ &\mbox{, if }2\beta+2<s<2\beta+2.5
    \end{cases}\\
    &\lesssim\begin{cases}
        \displaystyle{\frac{1}{|p|^{a-4\beta-1}}}\qquad&\mbox{, if }s>2\beta+2.5\\[8pt]
        \displaystyle{\frac{1}{|p|^{a-4\beta-1-\eps}}}\qquad&\mbox{, if }s=2\beta+2.5\\[8pt]
        \displaystyle{\frac{1}{|p|^{2s-8\beta+a-6}}}\qquad&\mbox{, if }2\beta+2<s<2\beta+2.5
    \end{cases},
\end{align*}
If $s=2\beta+2.5$, then in fact we must have $\displaystyle{s>4\beta+3-\frac{a}{2}}$. Indeed, otherwise we would be in the endpoint assumption $\displaystyle{s=4\beta+3-\frac{a}{2}}$ and $\displaystyle{0<\beta<\frac{a-1}{4}}$. This gives
$$2\beta+2.5=4\beta+3-\frac{a}{2}\qquad\Rightarrow\qquad\beta=\frac{a-1}{4},$$
which contradicts $\displaystyle{0<\beta<\frac{a-1}{4}}$. Therefore $s=2\beta+2.5$ implies $\displaystyle{s>4\beta+3-\frac{a}{2}}$ and consequently $a-4\beta-1>0$. As a result, we finally conclude $\displaystyle{\mathcal{J}_{1,\mbox{high},\mbox{high}}\lesssim 1}$. \par
Finally, if $\displaystyle{\frac{r}{|p|}\lesssim c_\star-1\ll 1}$, then by Lemma 3.4 we have
$$I_2\lesssim \frac{1}{|\rho|^{s-2\beta-2}}\frac{|p|^{s-2\beta-2}}{r^{s-2\beta-2}}\sim\frac{1}{r^{s-2\beta-2}}.$$
It follows that
$$\mathcal{J}_{1,\mbox{high},\mbox{high}}\lesssim\frac{1}{|p|^{a-4\beta-1}}\int_1^{|p|}\frac{dr}{r^{2s-4\beta-4}}\lesssim\begin{cases}
        \displaystyle{\frac{1}{|p|^{a-4\beta-1}}}\qquad&\mbox{, if }s>2\beta+2.5\\[8pt]
        \displaystyle{\frac{1}{|p|^{a-4\beta-1-\eps}}}\qquad&\mbox{, if }s=2\beta+2.5\\[8pt]
        \displaystyle{\frac{1}{|p|^{2s-8\beta+a-6}}}\qquad&\mbox{, if }2\beta+2<s<2\beta+2.5
    \end{cases}\lesssim 1,$$
where the final inequality follows from the same discussion of the powers of $|p|$ as above.

\noindent\underline{Case 2. $s>2\beta+2$, $|\rho|\gg 1$ and $r\ge|p|$.}\par
In this case, the same argument as in Case 1 shows that $|\rho|\gtrsim |p|$. If $|\rho|\gg |p|$, then $r\approx|\rho|\gg |p|$; whereas if $|\rho|\sim |p|$, then $r\sim |p|\sim|\rho|$. Thus, it suffices to consider the case $r\approx|\rho|\gg |p|$, since the other case $r\sim |p|\sim|\rho|$ can be treated in exactly the same way. As before, we assume $0<c_\star-1\ll 1$.\par
First, if $\displaystyle{0<c_\star-1<\frac{1}{|\rho|}}$, then it's impossible to have $\theta\approx 0$ by Corollary 6.5. However, if $\theta\approx\pi$, then Corollary 6.5 gives
$$c_\star-1\sim \overline{r}=\frac{|p|}{r}<\frac{1}{|\rho|}\approx\frac{1}{r}\qquad\Rightarrow\qquad |p|\lesssim 1,$$
which contradicts the assumption $|p|\gg 1$. As a result, we must have $\sin\theta\gtrsim 1$ and then by Corollary 6.5 we obtain 
$$\Delta\theta\sim\frac{c_\star-1}{\overline{r}}<\frac{1}{|\rho|}\frac{r}{|p|}\sim\frac{1}{|p|}.$$
Hence the angular integration in the $p_1$-variable gives a gain of order $1/|p|$. Splitting the $p_1$-integration into radial and angular parts, we obtain
$$\mathcal{J}_{1,\mbox{high},\mbox{high}}\lesssim\frac{1}{|p|^{1-4\beta}}\int_{|p|}^{+\infty}\frac{dr}{r^{s-2\beta-3+a}}\sim\frac{1}{|p|^{s-6\beta+a-3}}\lesssim 1,$$
where in order to ensure the integrability of $r$-integration, we used $s>2\beta+4-a$ as a result of the assumption $s>2\beta+2$.\par
Next, if $\displaystyle{\frac{1}{|\rho|}<c_\star-1\ll \frac{|p|}{r}}$, then by Lemma 3.4 we still have
$$I_2\lesssim\frac{1}{|\rho|^{s-2\beta-2}}\frac{1}{(c_\star-1)^{s-2\beta-2}}.$$
The argument is similar to that of Case 1, so we only indicate the necessary modifications. We first conclude $\sin\theta\gtrsim 1$  and then perform a dyadic decomposition as $c_\star-1\sim 2^t$, where $t$ is such that $\displaystyle{\frac{1}{|\rho|}<2^t\ll \frac{|p|}{r}}$. Therefore, we get 
$$\Delta\theta\sim\frac{c_\star-1}{\overline{r}}\sim\frac{2^t\,r}{|p|}.$$
As before, we decompose the $p_1$-integration into its radial and angular parts. Then the angular integration of $p_1$-variable including $I_2$ is equal to
\begin{align*}
    &\sum_{\substack{t\\1/|p|<2^t\ll |p|/r}}2^t\cdot\frac{r}{|p|}\cdot\frac{1}{(2^t)^{s-2\beta-2}}\cdot\frac{1}{|\rho|^{s-2\beta-2}}
    \sim&\begin{cases}
        \displaystyle{\frac{1}{|p|}}\qquad&\mbox{, if }s>2\beta+3\\[10pt]
        \displaystyle{\frac{\log |p|}{|p|}}\qquad&\mbox{, if }s=2\beta+3\\[10pt]
        \displaystyle{\frac{1}{|p^{s-2\beta-2}}}\qquad&\mbox{, if }2\beta+2<s<2\beta+3\\
    \end{cases},
\end{align*}
where we used $r\approx |\rho|$. In addition, since $s>2\beta+4-a$, the radial integration of $p_1$-variable is equal to
$$\int_{|p|}^{+\infty}\frac{dr}{r^{s-2\beta-3+a}}=\frac{1}{|p|^{s-2\beta+a-4}}.$$
Combining the above estimates, we obtain
\begin{align*}
    \mathcal{J}_{1,\mbox{high},\mbox{high}}\lesssim\begin{cases}
        \displaystyle{\frac{1}{|p|^{-4\beta+1}}\frac{1}{|p|^{s-2\beta+a-4}}=\frac{1}{|p|^{s-6\beta+a-3}}}\qquad&\mbox{, if }s>2\beta+3\\[10pt]
        \displaystyle{\frac{1}{|p|^{-4\beta+1}}\frac{\log|p|}{|p|^{s-2\beta+a-4}}=\frac{1}{|p|^{s-6\beta+a-3-\eps}}}\qquad&\mbox{, if }s=2\beta+3\\[10pt]
        \displaystyle{\frac{1}{|p|^{-4\beta}}\frac{1}{|p|^{s-2\beta-2}}\frac{1}{|p|^{s-2\beta+a-4}}=\frac{1}{|p|^{2s-8\beta+a-6}}}\qquad&\mbox{, if }2\beta+2<s<2\beta+3
    \end{cases}.
\end{align*}
Then the same discussion of the powers of $|p|$ as in Case 1 yields that $\displaystyle{\mathcal{J}_{1,\mbox{high},\mbox{high}}\lesssim 1}$.\par
Finally, if $\displaystyle{\frac{|p|}{r}\lesssim c_\star-1\ll 1}$, then by Lemma 3.4 we have
$$I_2\lesssim \frac{1}{|\rho|^{s-2\beta-2}}\frac{r^{s-2\beta-2}}{|p|^{s-2\beta-2}}\sim\frac{1}{|\rho|^{s-2\beta-2}}.$$
It follows that
$$\mathcal{J}_{1,\mbox{high},\mbox{high}}\lesssim\frac{1}{|p|^{-4\beta}}\frac{1}{|p|^{s-2\beta-2}}\int_{|p|}^{+\infty}\frac{dr}{r^{s-2\beta+a-3}}\lesssim\frac{1}{|p|^{2s-8\beta+a-6}}\lesssim 1,$$
where the final inequality follows from the same discussion of the powers of $|p|$ as above.

\noindent\underline{Case 3. $s>2\beta+2$ and $|\rho|\lesssim 1$.}\par
In this case, we have $I_2\lesssim 1$.\par
If $r\le |p|$, then
$$\mathcal{J}_{1,\mbox{high},\mbox{high}}\lesssim\frac{|\rho|}{|p|^{a-4\beta}}\int_1^{|p|}\frac{dr}{r^{s-2\beta-2}}\lesssim\begin{cases}
    \displaystyle{\frac{1}{|p|^{a-4\beta}}}\qquad&\mbox{, if }s>2\beta+3\\[8pt]
    \displaystyle{\frac{1}{|p|^{a-4\beta-\eps}}}\qquad&\mbox{, if }s=2\beta+3\\[8pt]
    \displaystyle{\frac{1}{|p|^{s-6\beta+a-3}}}\qquad&\mbox{, if }2\beta+2<s<2\beta+3
\end{cases}\lesssim 1;$$
whereas if $r\ge |p|$, then
$$\mathcal{J}_{1,\mbox{high},\mbox{high}}\lesssim\frac{|\rho|}{|p|^{-4\beta}}\int_{|p|}^{+\infty}\frac{dr}{r^{s-2\beta+a-2}}\lesssim\frac{1}{|p|^{s-6\beta+a-3}}\lesssim 1.$$

\noindent\underline{Case 4. $s=2\beta+2$, $|\rho|\gg 1$ and $r\le|p|$.}\par
In this case, we must have $|\rho|\sim |p|$ as before. Again we assume $0<c_\star-1\ll 1$. \par
If $\displaystyle{0<c_\star-1<\frac{1}{|\rho|}}$, then by Lemma 6.4 we necessarily have $\sin\theta\gtrsim 1$ and $\Delta\theta\lesssim 1/r$. Therefore the angular integration in the $p_1$-variable gives a gain of order $1/r$. Note that Lemma 3.4 gives
$$I_2\lesssim\frac{1}{|\rho|^{s-2\beta-2}}\log\left(\frac{1}{\left(c_\star-1\right)^2+\frac{1}{|\rho|^2}}\right)\sim \log |\rho|.$$
Consequently, we obtain
$$\mathcal{J}_{1,\mbox{high},\mbox{high}}\lesssim\frac{1}{|p|^{a-4\beta-1}}\log|p|\int_1^{|p|}\frac{dr}{r}\sim\frac{1}{|p|^{a-4\beta-1-\eps}}\lesssim 1,$$
where we used the discussion of the powers of $|p|$ in the Case 1 above.\par
If $\displaystyle{\frac{1}{|\rho|}<c_\star-1\ll\frac{r}{|p|}}$, then we have $\sin\theta\gtrsim 1$, $\Delta\theta\sim(c_\star-1)/\widetilde{r}\sim \left(2^t|p|\right)/r$ and 
$$I_2\lesssim\log\left(1+\frac{1}{(c_\star-1)^2}\right)\lesssim\frac{1}{(c_\star-1)^\eps},$$
where we performed a dyadic decomposition $c_\star-1\sim 2^t$ as before. Therefore the angular integration of $p_1$-variable including $I_2$ is equal to
\begin{align*}
    \sum_{\substack{t\\1/|p|<2^t\ll r/|p|}}2^t\cdot\frac{|p|}{r}\cdot\frac{1}{(2^t)^\eps}\sim\frac{|p|^{\eps}}{r^\eps}
\end{align*}
and the radial integration of $p_1$-variable is equal to
$$\int_1^{|p|}\frac{dr}{r^{s-2\beta-2+\eps}}\sim|p|^{1-\eps}.$$
Combining the above estimates, we obtain
\begin{align*}
    \mathcal{J}_{1,\mbox{high},\mbox{high}}\lesssim\frac{1}{|p|^{a-4\beta+1}}\,|p|^{1-\eps}\,|p|^{\eps}=\frac{1}{|p|^{a-4\beta-2}}\lesssim 1,
\end{align*}
where we used $a\ge 4\beta+2$ which follows from our assumption $s=2\beta+2$.\par
If $\displaystyle{\frac{r}{|p|}\lesssim c_\star-1\ll 1}$, then we have
$$I_2\lesssim\frac{1}{(c_\star-1)^\eps}\lesssim\frac{|p|^\eps}{r^\eps}.$$
As a result, we conclude
$$\mathcal{J}_{1,\mbox{high},\mbox{high}}\lesssim\frac{|p|^\eps}{|p|^{a-4\beta-1}}\int_1^{|p|}\frac{dr}{r^{\eps}}\lesssim\frac{1}{|p|^{a-4\beta-2}}\lesssim 1.$$

\noindent\underline{Case 5. $s=2\beta+2$, $|\rho|\gg 1$ and $r\ge|p|$.}\par
In this case, we must have $|\rho|\gtrsim |p|$. However, as before, we only consider the case $r\approx |\rho|\gg |p|$ and assume $0<c_\star-1\ll 1$. \par
If $\displaystyle{0<c_\star-1<\frac{1}{|\rho|}}$, then Lemma 6.4 implies that we necessarily have $\sin\theta\gtrsim 1$ and $\Delta\theta\lesssim 1/|p|$. Therefore the angular integration in the $p_1$-variable gives a gain of order $1/r$. Additionally, Lemma 3.4 gives
$$I_2\lesssim\log|\rho|\lesssim|\rho|^\eps.$$
Consequently, we obtain
$$\mathcal{J}_{1,\mbox{high},\mbox{high}}\lesssim\frac{1}{|p|^{-4\beta+1}}\int_{|p|}^{+\infty}\frac{dr}{r^{s-2\beta-3+a-\eps}}=\frac{1}{|p|^{1-4\beta}}\int_{|p|}^{+\infty}\frac{dr}{r^{a-1-\eps}}\sim\frac{1}{|p|^{a-4\beta-1-\eps}}\lesssim 1,$$
where we used $a>2$ to ensure the integrability of $r$-variable. In fact, if $\beta=0$, then we must have
$$s>4\beta+3-\frac{a}{2}\qquad\iff\qquad 2\beta+2>4\beta+3-\frac{a}{2}\qquad\iff\qquad a>4\beta+2>2.$$
On the other hand, if $\beta>0$, then we correspondingly have $a\ge 4\beta+2>2$. All in all, we must have $a>2$ in this case $s=2\beta+2$.\par
If $\displaystyle{\frac{1}{|\rho|}<c_\star-1\ll\frac{|p|}{r}}$, then by performing the dyadic decomposition $c_\star-1\sim 2^t$ as before, we have $\sin\theta\gtrsim 1$, $\Delta\theta\sim(c_\star-1)/\overline{r}\sim \left(2^t r\right)/|p|$ and 
$$I_2\lesssim\frac{1}{(c_\star-1)^\eps}.$$
Therefore the angular integration of $p_1$-variable including $I_2$ is equal to
\begin{align*}
    \sum_{\substack{t\\1/|p|<2^t\ll |p|/r}}2^t\cdot\frac{r}{|p|}\cdot\frac{1}{(2^t)^\eps}\sim\frac{r^\eps}{|p|^{\eps}}
\end{align*}
and the radial integration of $p_1$-variable is equal to
$$\int_{|p|}^{+\infty}\frac{dr}{r^{a-\eps-1}}\sim\frac{1}{|p|^{a-2-\eps}}.$$
Together with the above estimates, we obtain
\begin{align*}
    \mathcal{J}_{1,\mbox{high},\mbox{high}}\lesssim\frac{1}{|p|^{-4\beta}}\frac{1}{|p|^\eps}\frac{1}{|p|^{a-2-\eps}}=\frac{1}{|p|^{a-4\beta-2}}\lesssim 1.
\end{align*}\par
If $\displaystyle{\frac{|p|}{r}\lesssim c_\star-1\ll 1}$, then we have
$$I_2\lesssim\frac{1}{(c_\star-1)^\eps}\lesssim\frac{r^\eps}{|p|^\eps},$$
which leads to 
$$\mathcal{J}_{1,\mbox{high},\mbox{high}}\lesssim\frac{1}{|p|^{-4\beta+\eps}}\int_{|p|}^{+\infty}\frac{dr}{r^{a-\eps-1}}\sim\frac{1}{|p|^{a-4\beta-2}}\lesssim 1.$$

\noindent\underline{Case 6. $s=2\beta+2$ and $|\rho|\lesssim 1$.}\par
In this case, note that $\displaystyle{(c_\star-1)^2+\frac{1}{|\rho|^2}\sim\frac{1}{|\rho|^2}}$. Therefore, by Lemma 3.4, we see that
$$I_2\lesssim\log\left(1+\frac{1}{\left(c_\star-1\right)^2+\frac{1}{|\rho|^2}}\right)\sim \log (1+|\rho|^2)\lesssim 1.$$\par
If $r\le |p|$, then
$$\mathcal{J}_{1,\mbox{high},\mbox{high}}\lesssim\frac{1}{|p|^{a-4\beta}}\int_1^{|p|}dr\sim\frac{1}{|p|^{a-4\beta-1}}\lesssim 1;$$
whereas if $r\ge |p|$, then
$$\mathcal{J}_{1,\mbox{high},\mbox{high}}\lesssim\frac{1}{|p|^{-4\beta}}\int_{|p|}^{+\infty}\frac{dr}{r^a}\sim\frac{1}{|p|^{a-4\beta-1}}\lesssim 1,$$
where we again used $a>2$ to ensure the $r$-integrability which can be argued in the same way as in Case 5.

\noindent\underline{Case 7. $s<2\beta+2$, $|\rho|\gg 1$ and $r\le|p|$.}\par
In this case, we directly apply Lemma 3.4 to get that $I_2\lesssim |\rho|^{2\beta+2-s}$, which leads to
$$\mathcal{J}_{1,\mbox{high},\mbox{high}}\lesssim\frac{1}{|p|^{a-4\beta}}|\rho|^{2\beta+3-s}\int_1^{|p|}\frac{dr}{r^{s-2\beta-2}}\lesssim\frac{1}{|p|^{a-4\beta}}|p|^{2\beta+3-s}\frac{1}{|p|^{s-2\beta-3}}\lesssim\frac{1}{|p|^{a-4\beta}}\lesssim 1.$$

\noindent\underline{Case 8. $s<2\beta+2$, $|\rho|\gg 1$ and $r\ge|p|$.}\par
In this case, Lemma 3.4 also yields $I_2\lesssim |\rho|^{2\beta+2-s}$, which gives
$$\mathcal{J}_{1,\mbox{high},\mbox{high}}\lesssim\frac{1}{|p|^{-4\beta}}|\rho|^{2\beta+3-s}\int_{|p|}^{+\infty}\frac{dr}{r^{s-2\beta+a-2}}\lesssim\frac{1}{|p|^{-4\beta}}\int_{|p|}^{+\infty}\frac{dr}{r^{2s-4\beta+a-5}}\lesssim\frac{1}{|p|^{2s-8\beta+a-6}}\lesssim 1,$$
where we used $2s-4\beta+a-6>0$ to make sure that $r$-variable is integrable. In fact, this follows from the fact that $2s-4\beta+a-6=0$ holds if and only if both $\beta=0$ and $\displaystyle{s=4\beta+3-\frac{a}{2}}$, which contradicts the assumption (H2).

\noindent\underline{Case 9. $s<2\beta+2$ and $|\rho|\lesssim 1$.}\par
In this case, Lemma 3.4 gives $I_2\lesssim 1$.\par
If $r\le |p|$, then
$$\mathcal{J}_{1,\mbox{high},\mbox{high}}\lesssim\frac{|\rho|}{|p|^{a-4\beta}}\int_1^{|p|}\frac{dr}{r^{s-2\beta-2}}\lesssim\frac{1}{|p|^{a-4\beta}}\frac{1}{|p|^{s-2\beta-3}}=\frac{1}{|p|^{s-6\beta+a-3}}\lesssim 1;$$
whereas if $r\ge |p|$, then we only need to repeat the same argument in Case 3 to obtain $\mathcal{J}_{1,\mbox{high},\mbox{high}}\lesssim 1$.\par
Combining (\ref{3.30}) with the estimates from Cases 1--9, we complete the proof of $\mathcal{J}_{1,\mbox{high},\mbox{high}}\lesssim 1$.\par
Now, it remains to consider $\mathcal{J}_2$ and $\mathcal{J}_3$. For $\mathcal{J}_2$, we perform the same parametrization procedure as before and (\ref{para}) becomes
$$\mathcal{J}_2\lesssim\sup_p\int_{\mathbb{R}^3}\left|p\right|^{4\beta}\int_0^{+\infty}\int_0^{2\pi}\frac{\left|z\right|}{\ang{z}^{s-2\beta}\ang{\rho-z}^{s-2\beta}\left|\rho-z\right|^{a-2}\left|\rho\right|}\,d\theta d\left|z\right| dp_1.$$
Now, we have exactly two possibilities: (a) $\ang{p_1}\le\ang{p_3}$ or (b) $\ang{p_1}\le\ang{p_2}$. If $\ang{p_1}\le\ang{p_3}$, then following the same procedure as before, we can rewrite 
\begin{align*}
    \mathcal{J}_2&\lesssim\sup_p\int_{\mathbb{R}^3}\frac{\left|p\right|^{4\beta}}{\ang{p_1}^{s-2\beta}}\int_0^{+\infty}\int_0^{2\pi}\frac{\left|z\right|}{\ang{z}^{s-2\beta}\left|\rho-z\right|^{a-2}\left|\rho\right|}\,d\theta d\left|z\right| dp_1.\\
    &\lesssim\sup_p\left|p\right|^{4\beta}\int_{\mathbb{R}^3}\frac{\left|\rho\right|}{\ang{p_1}^{s-2\beta}\left(\left|p\right|^a+\left|p_1\right|^a\right)}\left(\int_{\substack{\alpha\le 1/2\\h_\alpha\ge 0}}\frac{h_\alpha+\left|\rho\right|^2\left(1-\alpha\right)^2}{\ang{h_\alpha+\left|\rho\right|^2\alpha^2}^{s/2-\beta}}d\alpha+\right.\\
    &\qquad\qquad\qquad\qquad\qquad\qquad\qquad\qquad\qquad\qquad\left.\int_{\substack{\alpha> 1/2\\h_\alpha\ge 0}}\frac{h_\alpha+\left|\rho\right|^2\alpha^2}{\ang{h_\alpha+\left|\rho\right|^2\alpha^2}^{s/2-\beta}}\, d\alpha\right)\, dp_1\\
    &\triangleq\sup_p\left|p\right|^{4\beta}\int_{\mathbb{R}^3}\frac{\left|\rho\right|}{\ang{p_1}^{s-2\beta}\left(\left|p\right|^a+\left|p_1\right|^a\right)}\left(I_A+I_B\right)dp_1.    
\end{align*}
Since $h_\alpha=h_{1-\alpha}$, by the change of variable $\alpha\rightarrow-\alpha$ we have
$$I_B=\int_{\substack{\alpha\le 1/2\\h_\alpha\ge 0}}\frac{h_\alpha+\left|\rho\right|^2(1-\alpha)^2}{\ang{h_\alpha+\left|\rho\right|^2(1-\alpha)^2}^{s/2-\beta}}\, d\alpha \le \int_{\substack{\alpha\le 1/2\\h_\alpha\ge 0}}\frac{h_\alpha+\left|\rho\right|^2(1-\alpha)^2}{\ang{h_\alpha+\left|\rho\right|^2\alpha^2}^{s/2-\beta}}\, d\alpha=I_A,$$
Therefore, this case reduces to the estimate already obtained for $\mathcal J_1$. On the other hand, if $\ang{p_1}\le\ang{p_2}$, then the same parametrization gives 
\begin{align*}
    \mathcal{J}_2&\lesssim\sup_p\int_{\mathbb{R}^3}\frac{\left|p\right|^{4\beta}}{\ang{p_1}^{s-2\beta}}\int_0^{+\infty}\int_0^{2\pi}\frac{\left|z\right|}{\ang{\rho-z}^{s-2\beta}\left|\rho-z\right|^{a-2}\left|\rho\right|}\,d\theta d\left|z\right| dp_1.\\
    &\lesssim\sup_p\left|p\right|^{4\beta}\int_{\mathbb{R}^3}\frac{\left|\rho\right|}{\ang{p_1}^{s-2\beta}\left(\left|p\right|^a+\left|p_1\right|^a\right)}\left(\int_{\substack{\alpha\le 1/2\\h_\alpha\ge 0}}\frac{h_\alpha+\left|\rho\right|^2\left(1-\alpha\right)^2}{\ang{h_\alpha+\left|\rho\right|^2(1-\alpha)^2}^{s/2-\beta}}d\alpha+\right.\\
    &\qquad\qquad\qquad\qquad\qquad\qquad\qquad\qquad\qquad\qquad\left.\int_{\substack{\alpha> 1/2\\h_\alpha\ge 0}}\frac{h_\alpha+\left|\rho\right|^2\alpha^2}{\ang{h_\alpha+\left|\rho\right|^2(1-\alpha)^2}^{s/2-\beta}}\, d\alpha\right)\, dp_1\\
    &\triangleq\sup_p\left|p\right|^{4\beta}\int_{\mathbb{R}^3}\frac{\left|\rho\right|}{\ang{p_1}^{s-2\beta}\left(\left|p\right|^a+\left|p_1\right|^a\right)}\left(\widetilde{I}_A+\widetilde{I}_B\right)dp_1.    
\end{align*}
Using the same trick, it's easy to see that
$$\widetilde{I}_B=\int_{\substack{\alpha\le 1/2\\h_\alpha\ge 0}}\frac{h_\alpha+\left|\rho\right|^2(1-\alpha)^2}{\ang{h_\alpha+\left|\rho\right|^2\alpha^2}^{s/2-\beta}}\, d\alpha=I_A$$
and trivially $\widetilde{I}_A\le I_A$ which again reduces to the $\mathcal{J}_1$ case. Thus, the $\mathcal{J}_2$ case is reduced to the $\mathcal{J}_1$ case.\par
Finally, the term $\mathcal J_3$ has the same structure as $\mathcal J_1$ after the estimate $\langle p\rangle\lesssim \langle p_3\rangle$ is used. Hence, the proof for $\mathcal J_1$ carries over without any change.\par
To summarize, in this subsection we have established the following \begin{enumerate}[label=(\roman*),itemsep=0pt]
    \item $\mathcal{J}_{l,\mbox{low},\mbox{high}}\lesssim 1$ for $l=1,2,3$;\\
    \item $\mathcal{J}_{l,\mbox{high},\mbox{low}}\lesssim 1$ for $l=1,2,3$;\\
    \item If $c_\star\lesssim 1$, then $\mathcal{J}_{l,\mbox{high},\mbox{high}}\lesssim 1$ for $l=1,2,3$.
\end{enumerate}

\subsection{The Non-cancellation Case: $1< a< 2$}
\ \par
In this case, we use the identity
$$\frac{1}{A^{a/2-1}+B^{a/2-1}}\sim \min\left(A,B\right)^{1-a/2},$$
where $A,B>0$ and $1\le a<2$. Therefore, in view of (\ref{3.7}) we have
\begin{align*}
    \mathcal{J}_1&\lesssim\sup_p\int_{\mathbb{R}^3}\frac{\left|p\right|^{4\beta}\left|\rho\right|}{\ang{p_1}^{s-2\beta}}\int\frac{\left[\min\left(h_\alpha+\left|\rho\right|^2\left(1-\alpha\right)^2,h_\alpha+\left|\rho\right|^2\alpha^2\right)\right]^{1-a/2}}{\ang{z}^{s-2\beta}}\,d\alpha dp_1\\
    &\lesssim \sup_p\int_{\mathbb{R}^3}\frac{\left|p\right|^{4\beta}\left|\rho\right|}{\ang{p_1}^{s-2\beta}}\left(\int_{\substack{\alpha\le 1/2\\h_\alpha\ge 0}}\frac{\left[h_\alpha+\left|\rho\right|^2\alpha^2\right]^{1-a/2}}{\ang{z}^{s-2\beta}}d\alpha+\int_{\substack{\alpha> 1/2\\h_\alpha\ge 0}}\frac{\left[h_\alpha+\left|\rho\right|^2(1-\alpha)^2\right]^{1-a/2}}{\ang{z}^{s-2\beta}}\, d\alpha\right)\, dp_1\\
    &\sim\sup_p\int_{\mathbb{R}^3}\frac{\left|p\right|^{4\beta}\left|\rho\right|}{\ang{p_1}^{s-2\beta}}\left(\int_{\substack{\alpha\le 1/2\\h_\alpha\ge 0}}\frac{\left[h_\alpha+\left|\rho\right|^2\alpha^2\right]^{1-a/2}}{\ang{h_\alpha+\left|\rho\right|^2\alpha^2}^{s/2-\beta}}d\alpha+\right.\\
    &\qquad\qquad\qquad\qquad\qquad\qquad\qquad\qquad\qquad\qquad\left.\int_{\substack{\alpha> 1/2\\h_\alpha\ge 0}}\frac{\left[h_\alpha+\left|\rho\right|^2(1-\alpha)^2\right]^{1-a/2}}{\ang{h_\alpha+\left|\rho\right|^2\alpha^2}^{s/2-\beta}}\, d\alpha\right)\, dp_1,
\end{align*}
where again $\left|z\right|=\sqrt{r_\alpha^2+\alpha^2\left|\rho\right|^2}=\sqrt{h_\alpha+\alpha^2\left|\rho\right|^2}$ is used in the last step.\par

If $h_\alpha^{a/2}\gtrsim \left|\rho\right|^a\max\left(\left|1-\alpha\right|^a,\left|\alpha\right|^a\right)$, then as before in view of (\ref{3.7}), we see that
\begin{align*}
    \mathcal{J}_1&\lesssim\sup_p\int_{\mathbb{R}^3}\frac{\left|p\right|^{4\beta}\left|\rho\right|}{\ang{p_1}^{s-2\beta}}\left(\int_{\substack{\alpha\le 1/2\\\left|\alpha\right|\lesssim c_\star^{1/a}}}\frac{\left[\left(\left|p\right|^a+\left|p_1\right|^a\right)^{2/a}+\left|\rho\right|^2\alpha^2\right]^{1-a/2}}{\ang{\left(\left|p\right|^a+\left|p_1\right|^a\right)^{2/a}+\left|\rho\right|^2\alpha^2}^{s/2-\beta}}d\alpha\right.\\
    &\left.\qquad\qquad\qquad\qquad\qquad+\int_{\substack{\alpha> 1/2\\\left|\alpha\right|\lesssim c_\star^{1/a}}}\frac{\left[\left(\left|p\right|^a+\left|p_1\right|^a\right)^{2/a}+\left|\rho\right|^2\left(1-\alpha\right)^2\right]^{1-a/2}}{\ang{\left(\left|p\right|^a+\left|p_1\right|^a\right)^{2/a}+\left|\rho\right|^2\alpha^2}^{s/2-\beta}}\, d\alpha\right)\, dp_1\\
    &\triangleq \sup_p\int_{\mathbb{R}^3}\frac{\left|p\right|^{4\beta}\left|\rho\right|}{\ang{p_1}^{s-2\beta}}\left(I_{11}+I_{12}\right)\, dp_1.
\end{align*}
where we used $h_\alpha\sim \left(\left|p\right|^a+\left|p_1\right|^a\right)^{2/a}$ as before.\par
If $h_\alpha^{a/2}\ll \left|\rho\right|^a\max\left(\left|1-\alpha\right|^a,\left|\alpha\right|^a\right)$, then since $1\le a<2$, we have 
$$\displaystyle{h_\alpha^{(a-2)/2}=\left(h_\alpha^{a/2}\right)^\frac{a-2}{a}\gtrsim \left|\rho\right|^{a-2}\max\left(\left|1-\alpha\right|^{a-2},\left|\alpha\right|^{a-2}\right)}.$$ 
Denote $G(\alpha,h_\alpha)$ as in (\ref{3.10})
and we see that
\begin{align*}
\partial_{h_\alpha}G(\alpha,h_\alpha)&=\frac{a}{2}\left(h_\alpha+\left(1-\alpha\right)^2\left|\rho\right|^2\right)^{\frac{a}{2}-1}+\frac{a}{2}\left(h_\alpha+\alpha^2\left|\rho\right|^2\right)^{\frac{a}{2}-1}\\
&\sim\max\left(h_\alpha^{(a-2)/2},\left|\rho\right|^{a-2}\left|1-\alpha\right|^{a-2},\left|\rho\right|^{a-2}\left|\alpha\right|^{a-2}\right)\sim h_\alpha^{(a-2)/2}.
\end{align*}
Therefore, by mean value theorem, for some $\xi\in(0,h_\alpha)$, we have
\begin{align*}
h_\alpha\sim\frac{G(\alpha,h_\alpha)-G(\alpha,0)}{\partial_{h_\alpha}G(\alpha,\xi)}\sim \frac{C_{p,p_1}-\left|\rho\right|^a\left|1-\alpha\right|^a-\left|\rho\right|^a\left|\alpha\right|^a}{h_\alpha^{(a-2)/2}},
\end{align*}
which implies that
$$h_\alpha \sim \left[C_{p,p_1}-\left|\rho\right|^a\left|1-\alpha\right|^a-\left|\rho\right|^a\left|\alpha\right|^a\right]^{2/a}=\left|\rho\right|^2\left[c_\star-\left|1-\alpha\right|^a-\left|\alpha\right|^a\right]^{2/a}.$$
Thus, in view of (\ref{3.7}), we can write
\begin{align*}
    \mathcal{J}_1&\lesssim\sup_p\int_{\mathbb{R}^3}\frac{\left|p\right|^{4\beta}\left|\rho\right|}{\ang{p_1}^{s-2\beta}}\left(\int_{\substack{\alpha\le 1/2\\h_\alpha\ge 0}}\frac{\left[\left|\rho\right|^2\left[c_\star-\left|1-\alpha\right|^a-\left|\alpha\right|^a\right]^{2/a}+\left|\rho\right|^2\alpha^2\right]^{1-a/2}}{\ang{\left|\rho\right|^2\left[c_\star-\left|1-\alpha\right|^a-\left|\alpha\right|^a\right]^{2/a}+\left|\rho\right|^2\alpha^2}^{s/2-\beta}}d\alpha\right.\\
    &\qquad\qquad\qquad\qquad\qquad\qquad\left.+\int_{\substack{\alpha> 1/2\\h_\alpha\ge 0}}\frac{\left[\left|\rho\right|^2\left[c_\star-\left|1-\alpha\right|^a-\left|\alpha\right|^a\right]^{2/a}+\left|\rho\right|^2(1-\alpha)^2\right]^{1-a/2}}{\ang{\left|\rho\right|^2\left[c_\star-\left|1-\alpha\right|^a-\left|\alpha\right|^a\right]^{2/a}+\left|\rho\right|^2\alpha^2}^{s/2-\beta}}\, d\alpha\right)\, dp_1,\\
    &\triangleq\sup_p\int_{\mathbb{R}^3}\frac{\left|p\right|^{4\beta}\left|\rho\right|}{\ang{p_1}^{s-2\beta}}\left(I_{21}+I_{22}\right)dp_1,
\end{align*}
where 
\begin{align}
    I_{21}&\triangleq\int_{\substack{\alpha\le 1/2\\h_\alpha\ge 0}}\frac{\left[\left|\rho\right|^2\left[c_\star-\left|1-\alpha\right|^a-\left|\alpha\right|^a\right]^{2/a}+\left|\rho\right|^2\alpha^2\right]^{1-a/2}}{\ang{\left|\rho\right|^2\left[c_\star-\left|1-\alpha\right|^a-\left|\alpha\right|^a\right]^{2/a}+\left|\rho\right|^2\alpha^2}^{s/2-\beta}}d\alpha\notag\\
    &=\frac{1}{\left|\rho\right|^{s-2\beta+a-2}}\int_{\substack{\alpha\le 1/2\\h_\alpha\ge 0}}\frac{\left[\left[c_\star-\left|1-\alpha\right|^a-\left|\alpha\right|^a\right]^{2/a}+\alpha^2\right]^{1-a/2}}{\left[\left[c_\star-\left|1-\alpha\right|^a-\left|\alpha\right|^a\right]^{2/a}+\alpha^2+\frac{1}{|\rho|^2}\right]^{s/2-\beta}}d\alpha\label{I212}
\end{align}
and $I_{22}$ can be written analogously. Again as in Chapter 1, We define $I_{21,\mbox{low},\mbox{high}}$, $I_{21,\mbox{high},\mbox{low}}$, $I_{21,\mbox{high},\mbox{high}}$, etc., accordingly.\par
Next two lemmas give the estimate of $I_{1}$ and $I_{2}$. Before proceeding, we again first note that $I_{12}\leq I_{11}$ and $I_{22}\leq I_{21}$.
\begin{lemma}
    Assume $1\le a<2$. Then $\displaystyle{I_1\sim I_{11}\lesssim\frac{1}{\left|\rho\right|^{s-2\beta+a-2}}}$.
\end{lemma}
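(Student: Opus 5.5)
I would mirror the proof of Lemma 3.3, with one change: the numerator is now $\left[\left(|p|^a+|p_1|^a\right)^{2/a}+|\rho|^2\alpha^2\right]^{1-a/2}$ rather than a quadratic quantity, and it has to be kept rather than bounded crudely. The inequality $I_{12}\le I_{11}$ was noted before the lemma, so $I_1\sim I_{11}$ and it suffices to bound $I_{11}$.

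The first step is to show $c_\star\sim 1$. In the region defining $I_{11}$ we have $h_\alpha^{a/2}\gtrsim|\rho|^a\max(|1-\alpha|^a,|\alpha|^a)$. By (\ref{3.6}) this forces $h_\alpha\sim C_{p,p_1}^{2/a}$. Nonemptiness of the resonant set requires $c_\star\ge 2^{1-a}$, since the minimum of $|1-\alpha|^a+|\alpha|^a$ is attained at $\alpha=1/2$. Together with the standing assumption $c_\star\lesssim 1$, this gives $c_\star\sim1$. Consequently $\left(|p|^a+|p_1|^a\right)^{1/a}\sim|\rho|$, i.e. $|p|+|p_1|\sim|\rho|$.

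Next I would control the range of $\alpha$. On this region $|\alpha|\lesssim c_\star^{1/a}\lesssim 1$, so the $\alpha$-integration is over an interval of length $O(1)$.

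Then I would bound the integrand pointwise.
\begin{itemize}
\item For the numerator, $1-a/2\ge 0$ because $1\le a<2$. Hence $\left[(|p|^a+|p_1|^a)^{2/a}+|\rho|^2\alpha^2\right]^{1-a/2}\lesssim(|\rho|^2)^{1-a/2}=|\rho|^{2-a}$.
\item For the denominator, it is bounded below by $\ang{(|p|^a+|p_1|^a)^{2/a}}^{s/2-\beta}\gtrsim\ang{\rho}^{s-2\beta}$. Using $|\rho|\gtrsim1$, which was arranged before the case split, this is $\sim|\rho|^{s-2\beta}$. Here $s/2-\beta>0$, which follows from the hypotheses since $s\ge 4\beta+3-a/2>2\beta$.
\end{itemize}
Multiplying by the $O(1)$ length of the $\alpha$-interval gives $I_{11}\lesssim|\rho|^{2-a}/|\rho|^{s-2\beta}=|\rho|^{-(s-2\beta+a-2)}$.

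The only delicate point is the claim $c_\star\sim 1$, equivalently the comparability of $(|p|^a+|p_1|^a)^{1/a}$ with $|\rho|$ on this region. If one is uneasy that $|\rho|$ could be much smaller than $|p|+|p_1|$, note that this case is exactly $c_\star\gg1$ and was removed by Proposition 3.1. Otherwise the lower bound on the denominator need not use $|\rho|$ at all: bound it by $(|p|+|p_1|)^{s-2\beta}$, bound the numerator by $(|p|+|p_1|)^{2-a}$, and use $|\rho|\le|p|+|p_1|$. This gives the same estimate with $|\rho|$ replaced by $|p|+|p_1|$, which is equivalent.
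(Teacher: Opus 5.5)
Your proposal is correct and follows the paper's route. The paper omits the proof of this lemma as ``exactly the same as Lemma 3.3'': $c_\star\ge 2^{1-a}$ together with $c_\star\lesssim 1$ gives $|p|+|p_1|\sim|\rho|$ and an $O(1)$ $\alpha$-interval, and one then bounds the integrand pointwise using $|\rho|\gtrsim 1$. Your one adaptation, bounding the numerator by $|\rho|^{2-a}$ because $1-a/2>0$, is exactly what produces the exponent $s-2\beta+a-2$.
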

\begin{proof}
    The proof is exactly the same as Lemma 3.3, so we omit the proof here.
\end{proof}

\begin{lemma}
    Assume $1< a<2$. Pick a small $\eps$ such that $0<\eps\ll 1$. Then we have the following estimates of $I_{2}$:
    \begin{enumerate}[label=(\roman*),itemsep=6pt]
        \item When $1+\eps\le c_\star\lesssim 1$, $\displaystyle{I_2\sim I_{21}\lesssim\frac{1}{|\rho|^{s-2\beta+a-2}}}$;
        \item When $2^{1-a}\le c_\star<1+\eps$, 
        $\displaystyle{I_2\sim I_{21}\lesssim\frac{1}{|\rho|^{s-2\beta+a-2}}\frac{1}{\left[\left(c_\star-1\right)^2+\frac{1}{|\rho|^2}\right]^{s/2-\beta-1}}}$;
        \item When $c_\star<2^{1-a}$, $I_2=I_{21}=0$.
    \end{enumerate}
\end{lemma}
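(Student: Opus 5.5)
The plan is to follow the proof of Lemma 3.4 step by step, working with the expression (\ref{I212}) for $I_{21}$. Write $F(\alpha)\triangleq c_\star-|1-\alpha|^a-|\alpha|^a$, so that the integrand is
$$\frac{\left[F(\alpha)^{2/a}+\alpha^2\right]^{1-a/2}}{\left[F(\alpha)^{2/a}+\alpha^2+|\rho|^{-2}\right]^{s/2-\beta}}.$$
Throughout, the constraint $h_\alpha\ge 0$ forces $F(\alpha)\ge 0$, and $c_\star\lesssim 1$ gives $|\alpha|\lesssim 1$. Hence the integration interval has length $O(1)$, and the numerator is $\lesssim 1$ because $1-a/2>0$. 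We also note that $I_{22}\le I_{21}$, since $\alpha\mapsto 1-\alpha$ swaps the two terms and the denominator is largest on the side $\alpha\le 1/2$. This gives $I_2\sim I_{21}$.

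\emph{Case (iii).} Since $\min_\alpha(|1-\alpha|^a+|\alpha|^a)=2^{1-a}$ for $a>1$ (attained at $\alpha=1/2$), the condition $c_\star<2^{1-a}$ leaves no admissible $\alpha$, so $I_{21}=0$.

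\emph{Case (i).} Here $c_\star-1\ge\eps$, and we copy Case 1 of Lemma 3.4.
\begin{itemize}
\item For $0\le\alpha\le 1/2$ we have $F(\alpha)\ge c_\star-1\gtrsim 1$.
\item For $\alpha=-y<0$, choose $\eta=\eta(\eps)$ so that $(1+y)^a-1+y^a\le \eps/2$ whenever $y\le\eta$. Then $F\ge\eps/2$ for $y\le\eta$, while for $y\ge\eta$ the $\alpha^2$ term gives the lower bound $\eta^2$.
\end{itemize}
In every case the bracket in the denominator is $\gtrsim 1$, the integrand is $O(1)$, and therefore $I_{21}\lesssim|\rho|^{-(s-2\beta+a-2)}$.

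\emph{Case (ii).} Here $2^{1-a}\le c_\star<1+\eps$, and this is the main work. Let $\alpha_c$ be the left zero of $F$. As in Lemma 3.4, $|\alpha_c|\lesssim|c_\star-1|$, and near $\alpha_c$ one has $F(\alpha)\approx F'(\alpha_c)(\alpha-\alpha_c)$ with $F'(\alpha_c)\sim 1$. This holds because $F'(0)=a$ when $a>1$; it is exactly the point that fails when $a=1$, which explains why $a=1$ is excluded. Set $y=\alpha-\alpha_c\in[0,\delta]$. Then the denominator bracket is
$$\gtrsim y^{2/a}+(y+\alpha_c)^2+|\rho|^{-2}\gtrsim y^{2/a}+(c_\star-1)^2+|\rho|^{-2}.$$
Arguing as in Case 2 of Lemma 3.4 (split into $y\le|\alpha_c|/2$ and the rest), the term $(y+\alpha_c)^2$ is comparable to $\alpha_c^2\sim(c_\star-1)^2$ on the first piece, and is dominated by $y^{2/a}\ge y^2$ on the second. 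The region $y\ge\delta$ has the bracket $\gtrsim 1$ and contributes $O(|\rho|^{-(s-2\beta+a-2)})$.

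We then bound the remaining integral
$$\int_0^\delta\frac{dy}{\left[y^{2/a}+\Lambda\right]^{s/2-\beta}},\qquad \Lambda\triangleq(c_\star-1)^2+|\rho|^{-2}.$$
The substitution $y=\Lambda^{a/2}w$ turns this into
$$\Lambda^{a/2-s/2+\beta}\int_0^{\delta\Lambda^{-a/2}}\frac{dw}{(w^{2/a}+1)^{s/2-\beta}}.$$
The $w$-integral converges at infinity exactly when $(s-2\beta)/a>1$, that is $s>2\beta+a$, and this holds under (H1)/(H2) because $s\ge 4\beta+3-a/2>2\beta+a$ for $a<2$. The resulting bound $\Lambda^{-(s/2-\beta-a/2)}$ is at most $\Lambda^{-(s/2-\beta-1)}$ since $\Lambda\lesssim 1$ and $a<2$. (To be safe one can simply drop $y^{2/a}$ for $y\ge\Lambda^{a/2}$ in favour of $y\gtrsim$, but the cleaner substitution already suffices.) The numerator, which we bounded by $1$, costs nothing. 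Combining these pieces gives the stated bound. The sub-case $c_\star<1$ is handled the same way, with $\alpha_c>0$, exactly as in Case 3 of Lemma 3.4.

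The main obstacle I expect is verifying the linear vanishing $F(\alpha)\approx(\alpha-\alpha_c)$ uniformly in $c_\star$ near $1$. When $c_\star<1$ the zero $\alpha_c$ sits away from the non-smooth point $\alpha=0$, and one must check that $F'(\alpha_c)\gtrsim 1$ still holds there. For $1<a<2$ the derivative $a|\alpha|^{a-1}\mathrm{sgn}\,\alpha$ is continuous and small, so $F'(\alpha_c)\approx a(1-\alpha_c)^{a-1}\sim 1$ and the check goes through.
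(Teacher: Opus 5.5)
\textbf{Gap in case (ii): the numerator cannot be bounded by $1$.} Your cases (i) and (iii), the reflection argument for $I_{22}\le I_{21}$, and the lower bound $y^{2/a}+(y+\alpha_c)^2+|\rho|^{-2}\gtrsim y^{2/a}+\Lambda$ with $\Lambda=(c_\star-1)^2+|\rho|^{-2}$ are all fine. The problem is the main estimate of case (ii).

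Once you replace the numerator $[F^{2/a}+\alpha^2]^{1-a/2}$ by $1$, your scaling gives $\int_0^\delta[y^{2/a}+\Lambda]^{-(s/2-\beta)}\,dy\sim\Lambda^{-(s/2-\beta-a/2)}$. This is sharp: the range $0\le y\le\Lambda^{a/2}$ alone contributes that much.

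Your concluding claim $\Lambda^{-(s/2-\beta-a/2)}\lesssim\Lambda^{-(s/2-\beta-1)}$ is backwards. The ratio of the two sides is $\Lambda^{a/2-1}$. Since $a/2-1<0$ and $\Lambda\lesssim 1$, this ratio is $\ge 1$, and it is as large as $|\rho|^{2-a}$ when $c_\star-1\lesssim|\rho|^{-1}$. So you only prove the lemma with an extra factor $\Lambda^{-(1-a/2)}$. That loss is not affordable later: in the regime $0<c_\star-1<|\rho|^{-1}$ it turns the bound $I_2\lesssim|\rho|^{-a}$ used for $\mathcal{J}_1$ into $|\rho|^{2-2a}$.

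\textbf{The missing idea and the repair.} The numerator vanishes exactly where the denominator is smallest, and that is what pays for the difference. Since $1-a/2>0$, bound the numerator by $[F^{2/a}+\alpha^2+|\rho|^{-2}]^{1-a/2}$, as the paper does. The integrand is then at most $[F^{2/a}+\alpha^2+|\rho|^{-2}]^{-(s/2-\beta-1+a/2)}$.

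With this lowered exponent, your substitution $y=\Lambda^{a/2}w$ gives exactly $\Lambda^{a/2}\cdot\Lambda^{-(s/2-\beta-1+a/2)}=\Lambda^{-(s/2-\beta-1)}$. The $w$-integral now converges if and only if $s>2\beta+2$, rather than $s>2\beta+a$. This holds, since $s\ge 4\beta+3-a/2>4\beta+2$ for $a<2$. With that repair, your single rescaling replaces the paper's split of the $y$-integral at $(\alpha_c')^a$ and $\alpha_c'$, and is somewhat cleaner.

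\textbf{A smaller point.} The claim $F'(\alpha_c)\sim 1$ holds only for $c_\star$ near $1$. As $c_\star\downarrow 2^{1-a}$ one has $\alpha_c\to 1/2$ and $F'(\alpha_c)\to 0$. However, once $1-c_\star\gtrsim 1$, the admissible set on $\alpha\le 1/2$ is $[\alpha_c,1/2]$ with $\alpha_c\ge (1-c_\star)/a\gtrsim 1$. There the bracket is $\gtrsim 1$ and the bound is trivial, so you should argue it that way instead.
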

\begin{proof}
The proof is similar as the one of Lemma 3.4. Again we split the proof into three cases according to the size of $c_\star$.\par
\noindent\underline{Case 1. $1+\eps\le c_\star\lesssim1$}\par
In this case, as in Lemma 3.4, we rewrite (\ref{I212}) as
\begin{align*}
    I_{21}=&\frac{1}{\left|\rho\right|^{s-2\beta+a-2}}\int_0^{O(1)}\frac{\left[\left[c_\star-\left|1+y\right|^a-\left|y\right|^a\right]^{2/a}+y^2\right]^{1-a/2}}{\left[\left[c_\star-\left|1+y\right|^a-\left|y\right|^a\right]^{2/a}+y^2+\frac{1}{|\rho|^2}\right]^{s/2-\beta}}dy\\
    &\qquad\qquad\qquad\qquad+\frac{1}{\left|\rho\right|^{s-2\beta+a-2}}\int_0^{1/2}\frac{\left[\left[c_\star-\left|1-\alpha\right|^a-\left|\alpha\right|^a\right]^{2/a}+\alpha^2\right]^{1-a/2}}{\left[\left[c_\star-\left|1-\alpha\right|^a-\left|\alpha\right|^a\right]^{2/a}+\alpha^2+\frac{1}{|\rho|^2}\right]^{s/2-\beta}}d\alpha.
\end{align*}\par
Then, for the first term, we observe that the numerator satisfies
$$\left[c_\star-\left|1+y\right|^a-\left|y\right|^a\right]^{2/a}+y^2\lesssim 1$$
and meanwhile the denominator satisfies
$$\left[c_\star-\left|1+y\right|^a-\left|y\right|^a\right]^{2/a}+y^2+\frac{1}{|\rho|^2}\gtrsim 1$$
by a same argument in Lemma 3.4. On the other hand, the second term can be handled exactly the same as before. Therefore, we obtain $\displaystyle{I_{21}\lesssim\frac{1}{|\rho|^{s-2\beta+a-2}}}$.\par
\noindent\underline{Case 2. $1\le c_\star\le 1+\eps$}\par
In this case, we use the same notation as in Lemma 3.4. As before, we perform the change of variable $\alpha\leftrightarrow y=\alpha-\alpha_c$ and we can rewrite
\begin{align*}
    I_{21}\lesssim&\frac{1}{\left|\rho\right|^{s-2\beta+a-2}}\int_0^{\left(\alpha_c^\prime\right)^a}\frac{dy}{\left[y^{2/a}+\left(y-\alpha_c^\prime\right)^2+\frac{1}{\left|\rho\right|^2}\right]^{s/2-\beta+a/2-1}}\\
    &\qquad\qquad\qquad\qquad+\frac{1}{\left|\rho\right|^{s-2\beta+a-2}}\int_{\left(\alpha_c^\prime\right)^a}^{\alpha_c^\prime}\frac{dy}{\left[y^{2/a}+\left(y-\alpha_c^\prime\right)^2+\frac{1}{\left|\rho\right|^2}\right]^{s/2-\beta+a/2-1}}\\
    &\qquad\qquad\qquad\qquad+\frac{1}{\left|\rho\right|^{s-2\beta+a-2}}\int_{\alpha_c^\prime}^{O(1)}\frac{dy}{\left[y^{2/a}+\left(y-\alpha_c^\prime\right)^2+\frac{1}{\left|\rho\right|^2}\right]^{s/2-\beta+a/2-1}},
\end{align*}
where we used (\ref{3.22}), 
$$\left[y^{2/a}+\left(y-\alpha_c^\prime\right)^2\right]^{1-a/2}\lesssim\frac{1}{\left[y^{2/a}+\left(y-\alpha_c^\prime\right)^2+\frac{1}{|\rho|^2}\right]^{a/2-1}}.$$
and
$$c_\star-|1-\alpha|^a-|\alpha|^a\gtrsim c_\star -1+\alpha.$$
\par
For the first term, since $a>1$, we must have $y\ll \alpha_c^\prime$, which implies that $\left(y-\alpha_c^\prime\right)^2\sim\left(\alpha_c^\prime\right)^2$. Hence, we get
$$y^{2/a}+\left(y-\alpha_c^\prime\right)^2+\frac{1}{|\rho|^2}\gtrsim \left(\alpha_c^\prime\right)^2+\frac{1}{|\rho|^2}.$$
Using this observation, this first term can be estimated as follows:
\begin{align}
    &\frac{1}{\left|\rho\right|^{s-2\beta+a-2}}\int_0^{\left(\alpha_c^\prime\right)^a}\frac{dy}{\left[y^{2/a}+\left(y-\alpha_c^\prime\right)^2+\frac{1}{\left|\rho\right|^2}\right]^{s/2-\beta+a/2-1}}\notag\\
    \lesssim&\frac{1}{\left|\rho\right|^{s-2\beta+a-2}}\frac{\left(\alpha_c^\prime\right)^a}{\left[\left(\alpha_c^\prime\right)^2+\frac{1}{\left|\rho\right|^2}\right]^{s/2-\beta+a/2-1}}\lesssim\frac{1}{\left|\rho\right|^{s-2\beta+a-2}}\frac{\left(\alpha_c^\prime+\frac{1}{|\rho|}\right)^a}{\left[\alpha_c^\prime+\frac{1}{\left|\rho\right|}\right]^{s-2\beta+a-2}}\notag\\
    \sim&\frac{1}{\left|\rho\right|^{s-2\beta+a-2}}\frac{1}{\left[\left(c_\star-1\right)^2+\frac{1}{|\rho|^2}\right]^{s/2-\beta-1}}\label{I213}.
\end{align}\par
For the second term, we use 
$$y^{2/a}+\left(y-\alpha_c^\prime\right)^2+\frac{1}{|\rho|^2}\gtrsim y^{2/a}+\frac{1}{|\rho|^2}$$
to estimate as follows:
\begin{align}
    &\frac{1}{\left|\rho\right|^{s-2\beta+a-2}}\int_{\left(\alpha_c^\prime\right)^a}^{\alpha_c^\prime}\frac{dy}{\left[y^{2/a}+\left(y-\alpha_c^\prime\right)^2+\frac{1}{\left|\rho\right|^2}\right]^{s/2-\beta+a/2-1}}\notag\\
    \lesssim&\frac{1}{\left|\rho\right|^{s-2\beta+a-2}}\int_{\left(\alpha_c^\prime\right)^2}^{+\infty}\frac{\widetilde{y}^{a/2-1}\,d\widetilde{y}}{\left[\widetilde{y}+\frac{1}{\left|\rho\right|^2}\right]^{s/2-\beta+a/2-1}}\notag\\
    =&\frac{1}{\left|\rho\right|^{s-2\beta+a-2}}\left(\int_{\left(\alpha_c^\prime\right)^2}^{\left(\alpha_c^\prime\right)^2+\frac{1}{|\rho|^2}}+\int_{\left(\alpha_c^\prime\right)^2+\frac{1}{|\rho|^2}}^{+\infty}\right)\frac{\widetilde{y}^{a/2-1}\,d\widetilde{y}}{\left[\widetilde{y}+\frac{1}{\left|\rho\right|^2}\right]^{s/2-\beta+a/2-1}}\label{I214}\\
    \lesssim&\frac{1}{\left|\rho\right|^{s-2\beta+a-2}}\frac{1}{\left[\left(\alpha_c^\prime\right)^2+\frac{1}{|\rho|^2}\right]^{s/2-\beta-1}}\int_{\left(\alpha_c^\prime\right)^2}^{\left(\alpha_c^\prime\right)^2+\frac{1}{|\rho|^2}}\frac{d\widetilde{y}}{\widetilde{y}^{1-a/2}}+\frac{1}{\left|\rho\right|^{s-2\beta+a-2}}\int_{\left(\alpha_c^\prime\right)^2+\frac{1}{|\rho|^2}}^{+\infty}\frac{d\widetilde{y}}{\widetilde{y}^{s/2-\beta}}\notag\\
    \lesssim&\frac{1}{\left|\rho\right|^{s-2\beta+a-2}}\frac{1}{\left[\left(c_\star-1\right)^2+\frac{1}{|\rho|^2}\right]^{s/2-\beta-1}},\label{I215}
\end{align}
where we used the change of variable $\widetilde{y}=y^{2/a}$ and notice that $\displaystyle{\frac{a}{2}-1<0}$ and $s>2\beta+2$.\par
For the third term, we similarly split the region into $0\leq \alpha\leq \delta$ and $\delta\leq \alpha\leq 1/2$, as in (\ref{I2split}). Since the latter contribution is harmless, it suffices to consider $0\leq \alpha\leq \delta$. This means $\alpha_c^\prime\le y\le \alpha_c^\prime+\delta$ and we can estimate as follows:
\begin{align}
    &\frac{1}{\left|\rho\right|^{s-2\beta+a-2}}\int_{\alpha_c^\prime}^{\alpha_c^\prime+\delta}\frac{dy}{\left[y^{2/a}+\left(y-\alpha_c^\prime\right)^2+\frac{1}{\left|\rho\right|^2}\right]^{s/2-\beta+a/2-1}}\notag\\
    \lesssim&\frac{1}{\left|\rho\right|^{s-2\beta+a-2}}\int_{\alpha_c^\prime}^{\alpha_c^\prime+\delta}\frac{dy}{\left[y^{2/a}+\frac{1}{\left|\rho\right|^2}\right]^{s/2-\beta+a/2-1}}\lesssim\frac{1}{\left|\rho\right|^{s-2\beta+a-2}}\int_{\left(\alpha_c^\prime\right)^2}^{+\infty}\frac{\widetilde{y}^{a/2-1}\,d\widetilde{y}}{\left[\widetilde{y}+\frac{1}{\left|\rho\right|^2}\right]^{s/2-\beta+a/2-1}},\label{I216}
\end{align}
where we used $\widetilde{y}=y^{2/a}$ and $\left(\alpha_c^\prime\right)^{2/a}\gtrsim \left(\alpha_c^\prime\right)^2$. Then, using the trick in (\ref{I214}), (\ref{I216}) can be bounded by (\ref{I215}).\par
Together with (\ref{I213}), (\ref{I215}) and (\ref{I216}), we finally conclude
$$\displaystyle{I_{21}\lesssim\frac{1}{|\rho|^{s-2\beta+a-2}}\frac{1}{\left[\left(c_\star-1\right)^2+\frac{1}{|\rho|^2}\right]^{s/2-\beta-1}}}.$$\par
\noindent\underline{Case 3. $2^{1-a}\le c_\star\le 1$}\par
In this case, for the same reason as before, it suffices to consider the region $\alpha_c\le \alpha\le\alpha_c+\delta$. Using (\ref{3.22}) and the fact that $\displaystyle{1-\frac{a}{2}>0}$, we compute $I_{21}$ as
\begin{align}
    I_{21}&\sim \frac{1}{\left|\rho\right|^{s-2\beta+a-2}}\int_{\alpha_c}^{\alpha_c+\delta}\frac{d\alpha}{\left[\left(\alpha-\alpha_c\right)^{2/a}+\alpha^2+\frac{1}{|\rho|^2}\right]^{s/2-\beta+a/2-1}}\notag\\
    &=\frac{1}{\left|\rho\right|^{s-2\beta+a-2}}\int_0^\delta\frac{dy}{\left[y^{2/a}+\left(y+\alpha_c\right)^2+\frac{1}{|\rho|^2}\right]^{s/2-\beta+a/2-1}}\notag\\
    &=\frac{1}{\left|\rho\right|^{s-2\beta+a-2}}\int_0^{\delta^{2/a}}\frac{\widetilde{y}^{2/a-1}\,d\widetilde{y}}{\left[\widetilde{y}+\left(\alpha_c\right)^2+\frac{1}{|\rho|^2}\right]^{s/2-\beta+a/2-1}}.\notag
\end{align}
Then splitting $\displaystyle{\int_0^{\delta^{2/a}}\lesssim\int_0^{\left(\alpha_c\right)^2+\frac{1}{|\rho|^2}}+\int_{\left(\alpha_c\right)^2+\frac{1}{|\rho|^2}}^{+\infty}}$ as in (\ref{I214}), we can again obtain
$$\displaystyle{I_{21}\lesssim\frac{1}{|\rho|^{s-2\beta+a-2}}\frac{1}{\left[\left(c_\star-1\right)^2+\frac{1}{|\rho|^2}\right]^{s/2-\beta-1}}}.$$\par
Combining the estimates from Steps 1--3, we complete the proof.   
\end{proof}
\begin{remark}
    The same estimate as in Lemma 3.6 also holds in the endpoint case $a=1$ with $\beta=0$. However, the preceding proof does not apply directly, since (\ref{3.22}) fails when $a=1$. Fortunately, in this endpoint case, $I_{21}$ has a more explicit structure and can be estimated directly. See Lemma 3.8 below.
\end{remark}
We next estimate $\mathcal J_l$ for $l=1,2,3$ in the regime $1<a<2$. As before, we suppress the supremum over $p$ in the expression for $\mathcal J_l$. We start with $\mathcal J_1$. In contrast with the case $2<a<5$, only two configurations need to be considered here: the high-low and high-high cases. The former is relatively straightforward, whereas the latter requires a more delicate argument. We begin with the high-low case.\par
In the high-low case, we assume $|p|\gg1$ and $|p_1|=r\lesssim 1$. Noting that $|\rho|\gtrsim 1$, we use the following crude estimate given by Lemma 3.6:
$$I_2\lesssim\frac{1}{|\rho|^a}.$$
Then, it follows that 
$$\mathcal{J}_{1,\mbox{high},\mbox{low}}\lesssim\frac{1}{|p|^{a-4\beta-1}}\int_0^1 r^2 dr\lesssim\frac{1}{|p|^{a-4\beta-1}},$$
where we used $|\rho|\lesssim |p|$, $a-4\beta-1\ge 0$ and $\ang{r}^{s-2\beta}\sim 1$.\par
Next, we move to the high-high case in which we assume $|p|\gg 1$ and $|p_1|=r\gg 1$. As before, we first consider the contribution of $I_1$.\par
If $r\le |p|$, then we get $|p|\sim|\rho|$ as before and Lemma 3.5 tells us 
\begin{align}
I_1\lesssim\frac{1}{|\rho|^{s-2\beta+a-2}}\sim\frac{1}{|p|^{s-2\beta+a-2}}.\label{3.36}
\end{align}
Note that the radial integration of $p_1$-variable is equal to
\begin{align*}
    \int_1^{|p|}\frac{dr}{r^{s-2\beta-2}}\sim\begin{cases}
        1\qquad&\mbox{, if }s>2\beta+3\\[6pt]
        \log|p|\qquad&\mbox{, if }s=2\beta+3\\[6pt]
        \displaystyle{\frac{1}{|p|^{s-2\beta-3}}}\qquad&\mbox{, if }s<2\beta+3\\
    \end{cases}
\end{align*}
Substituting these estimates into $\mathcal J_1$, we obtain
$$\mathcal{J}_{1,\mbox{high},\mbox{high}}\lesssim\begin{cases}
    \displaystyle{\frac{1}{|p|^{-4\beta-1}}\frac{1}{|p|^{s-2\beta+a-2}}=\frac{1}{|p|^{s-6\beta+a-3}}}\ \ &\mbox{, if }s>2\beta+3\\[8pt]
    \displaystyle{\frac{\log|p|}{|p|^{s-6\beta+a-3}}=\frac{1}{|p|^{a-4\beta-\eps}}}\ \ &\mbox{, if }s=2\beta+3\\[8pt]
    \displaystyle{\frac{1}{|p|^{-4\beta-1}}\frac{1}{|p|^{s-2\beta+a-2}}\frac{1}{|p|^{s-2\beta-3}}=\frac{1}{|p|^{2s-8\beta+a-6}}}\ \ &\mbox{, if }s<2\beta+3\\[8pt]
\end{cases}\ \lesssim 1.$$
\par
On the other hand, if $r\ge |p|$, then $|\rho|\leq |p|+r\lesssim r$. By the same argument as above, we have $|\rho|\sim r$. Therefore,
$$I_1\lesssim\frac{1}{|\rho|^{s-2\beta+a-2}}\sim\frac{1}{r^{s-2\beta+a-2}}.$$
Estimating $\mathcal J_1$ gives exactly the same expression as in (\ref{3.30}), and hence 
\begin{align}
\mathcal{J}_{1,\mbox{high},\mbox{high}}\lesssim 1.\label{3.37}
\end{align}\par
It remains to estimate the contribution of $I_2$. If $1+\eps\le c_\star\lesssim 1$, then Lemma 3.6 still yields
$$I_2\lesssim\frac{1}{|\rho|^{s-2\beta+a-2}}\sim\frac{1}{|p|^{s-2\beta+a-2}}.$$
Thus, the corresponding contribution can be controlled by repeating the argument following (\ref{3.36}). Now it remains to only consider the case $2^{1-a}\le c_\star\le 1+\eps$. Since $|p|,r\gg 1$ and $c_\star\le 1+\eps$, we must have $|\rho|\gg 1$ as well. Moreover. in view of Remark 2.4, we only need to concern the case $|\rho|\gg 1$ and $r\le|p|$. From now on, we set $\theta$ be the angle between $p$ and $p_1$ as before.\par
By a same argument as before, we necessarily have $|\rho|\sim |p|$ again. Without loss of generality, we assume $0<c_\star-1\ll1$ as before.\par
First, if $\displaystyle{0<c_\star-1<\frac{1}{|\rho|}}$, then we must have $\sin\theta\gtrsim 1$ and $\Delta\theta\lesssim 1/r$. Hence the angular integration in the $p_1$-variable gives a gain of order $1/r$. Splitting the $p_1$-integration into radial and angular parts, we obtain
$$\mathcal{J}_{1,\mbox{high},\mbox{high}}\lesssim\frac{1}{|p|^{-4\beta-1}}\frac{1}{|p|^a}\int_1^{|p|}\frac{dr}{r^{s-2\beta-1}}\sim\frac{1}{|p|^{a-4\beta-1}}\lesssim 1,$$
where by Lemma 3.6 we used $I_2\lesssim 1/|\rho|^a$ in this setting.\par
Next, if $\displaystyle{\frac{1}{|\rho|}<c_\star-1\ll\frac{r}{|p|}}$, then we have $\sin\theta\gtrsim 1$, $\Delta\theta\sim(c_\star-1)/\widetilde{r}\sim \left(2^t|p|\right)/r$ and by Lemma 3.6
$$I_2\lesssim\frac{1}{|\rho|^{s-2\beta+a-2}}\frac{1}{(c_\star-1)^{s-2\beta-2}},$$
where we performed a dyadic decomposition $c_\star-1\sim 2^t$ as before. Therefore the angular integration of $p_1$-variable including $I_2$ is equal to
\begin{align}
    \sum_{\substack{t\\1/|p|<2^t\ll r/|p|}}2^t\cdot\frac{|p|}{r}\cdot\frac{1}{(2^t)^{s-2\beta-2}}\cdot\frac{1}{|p|^{s-2\beta+a-2}}\sim\begin{cases}
        \displaystyle{\frac{1}{|p|^a\,r}}\qquad&\mbox{, if }s>2\beta+3\\[8pt]
        \displaystyle{\frac{1}{|p|^{a}}\frac{\log r}{r}}\qquad&\mbox{, if }s=2\beta+3\\[8pt]
        \displaystyle{\frac{1}{|p|^a\,r^{s-2\beta-2}}}\qquad&\mbox{, if }s<2\beta+3
    \end{cases}\label{3.38}
\end{align}
and the radial integration of $p_1$-variable is equal to
\begin{align*}
    \begin{cases}
        \displaystyle{\int_1^{|p|}\frac{dr}{r^{s-2\beta-1}}}\ \ &\mbox{, if }s>2\beta+3\\[8pt]
        \displaystyle{\int_1^{|p|}\frac{\log r}{r^{2}}}\ \ &\mbox{, if }s=2\beta+3\\[8pt]
        \displaystyle{\int_1^{|p|}\frac{dr}{r^{2s-4\beta-4}}}\ \ &\mbox{, if }s<2\beta+3
    \end{cases}\,\lesssim\begin{cases}
        1\ \ &\mbox{, if }s>2\beta+2.5\\[6pt]
        \log |p|\ \ &\mbox{, if }s=2\beta+2.5\\[6pt]
        \displaystyle{\frac{1}{|p|^{2s-4\beta-5}}}\ \ &\mbox{, if }s<2\beta+2.5
    \end{cases}
\end{align*}
Combining the above estimates, we obtain
\begin{align*}
    \mathcal{J}_{1,\mbox{high},\mbox{high}}\lesssim\begin{cases}
    \displaystyle{\frac{1}{|p|^{-4\beta-1}}\frac{1}{|p|^a}=\frac{1}{|p|^{a-4\beta-1}}}\ \ &\mbox{, if }s>2\beta+2.5\\[8pt]
    \displaystyle{\frac{1}{|p|^{-4\beta-1}}\frac{\log |p|}{|p|^a}\lesssim\frac{1}{|p|^{a-4\beta-1-\eps}}}\ \ &\mbox{, if }s=2\beta+2.5\\[8pt]
    \displaystyle{\frac{1}{|p|^{-4\beta-1}}\frac{1}{|p|^a}\frac{1}{|p|^{2s-4\beta-5}}=\frac{1}{|p|^{2s-8\beta+a-6}}}\ \ &\mbox{, if }s<2\beta+2.5
    \end{cases}\,\lesssim 1,
\end{align*}
where in the second line we used $a\ge 4\beta+5$ which follows from our assumption $s=2\beta+2.5$.\par
If $\displaystyle{\frac{r}{|p|}\lesssim c_\star-1\ll 1}$, then we have
$$I_2\lesssim\frac{1}{|\rho|^{s-2\beta+a-2}}\frac{|p|^{s-2\beta-2}}{r^{s-2\beta-2}}\sim\frac{1}{|p|^a\,r^{s-2\beta-2}}.$$
As a result, using the argument following (\ref{3.38}), we still conclude
$$\mathcal{J}_{1,\mbox{high},\mbox{high}}\lesssim 1.$$\par
Now, it remains to consider $\mathcal J_2$ and $\mathcal J_3$. However, the estimates for $\mathcal J_2$ and $\mathcal J_3$ in the regime $1\leq a<2$ are essentially the same as those in the case $2<a<5$. After the same parametrization, the expressions are slightly different, but the same change of variables in $\alpha$ reduces the $\mathcal J_2$ terms to the corresponding $\mathcal J_1$ terms. Moreover, after using $\langle p\rangle\lesssim \langle p_3\rangle$, the term $\mathcal J_3$ has the same structure as $\mathcal J_1$. Thus the proofs carry over verbatim, and we omit the repeated details.\par
Finally, we conclude this subsection by recording the estimates obtained above:
\begin{enumerate}[label=(\roman*),itemsep=0pt]
    \item $\mathcal{J}_{l,\mbox{high},\mbox{low}}\lesssim 1$ for $l=1,2,3$;\\
    \item If $c_\star\lesssim 1$ and $r\le |p|$, then $\mathcal{J}_{l,\mbox{high},\mbox{high}}\lesssim 1$ for $l=1,2,3$.
\end{enumerate}

\subsection{The Non-cancellation Case: $a=1$}
\ \par
In this case, we must have $\beta=0$ and $\displaystyle{s>\frac{5}{2}}$. The corresponding $I_{11},I_{12},I_{21},I_{22}$, etc. are actually the same as those in the case $1<a<2$ in Section 3.3. To be more specific, one can compute
\begin{align*}
    I_2&=\frac{1}{\left|\rho\right|^{s-1}}\left[\int_{(1-c_\star)/2}^0 \frac{\sqrt{\left(c_\star+2\alpha-1\right)^2+\left|\alpha\right|^2}}{\left[\left(c_\star+2\alpha-1\right)^2+\left|\alpha\right|^2+\frac{1}{\left|\rho\right|^2}\right]^{s/2}}d\alpha\right.\\
    &\left.+\int_0^1 \frac{\sqrt{\left(c_\star-1\right)^2+\left|\alpha\right|^2}}{\left[\left(c_\star-1\right)^2+\left|\alpha\right|^2+\frac{1}{\left|\rho\right|^2}\right]^{s/2}}d\alpha+\int^{(c_\star+1)/2}_1 \frac{\sqrt{\left(c_\star-2\alpha+1\right)^2+\left|\alpha\right|^2}}{\left[\left(c_\star-2\alpha+1\right)^2+\left|\alpha\right|^2+\frac{1}{\left|\rho\right|^2}\right]^{s/2}}d\alpha\right]\\
    &=\frac{1}{\left|\rho\right|^{s-1}}\int_{(1-c_\star)/2}^{1/2}(\ldots)\,d\alpha+\frac{1}{\left|\rho\right|^{s-1}}\int_{1/2}^{(c_\star+1)/2}(\ldots)\,d\alpha\triangleq I_{21}+I_{22}.
\end{align*}\par
The next result is parallel to Lemma 3.7.
\begin{lemma}
    In the case of $a=1$, we have
    $$I_2\sim I_{21}\lesssim\begin{cases}
        0\qquad&\mbox{, if }c_\star<1\\
        \displaystyle{\frac{1}{|\rho|}}&\mbox{, if }\displaystyle{0\le c_\star-1<\frac{1}{|\rho|}}\\
        \displaystyle{\frac{1}{|\rho|^{s-1}}\cdot\frac{1}{|c_\star-1|^{s-2}}}\qquad&\mbox{, if }\displaystyle{c_\star-1\ge\frac{1}{|\rho|}}
    \end{cases}.$$
\end{lemma}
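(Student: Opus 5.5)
The plan is to estimate the explicit formula for $I_2$ displayed just before the statement directly. First, $I_{22}$ reduces to $I_{21}$ via the reflection $\alpha\mapsto 1-\alpha$. Under this reflection $(c_\star-2\alpha+1)\mapsto(c_\star+2\alpha-1)$ and $|\alpha|\mapsto|1-\alpha|$. On the relevant range the $|\alpha|^2$ term in the denominator of $I_{21}$ is dominated by the corresponding one in $I_{22}$, exactly as in the remark $I_{22}\le I_{21}$ made for $1<a<2$. So it suffices to bound $I_{21}=|\rho|^{1-s}\int_{(1-c_\star)/2}^{1/2}(\ldots)\,d\alpha$. The case $c_\star<1$ is immediate: the constraint $h_\alpha\ge 0$ reads $c_\star\ge |1-\alpha|+|\alpha|\ge 1$, so the resonant set is empty and $I_2=0$. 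From now on assume $c_\star\ge1$, and set $\kappa\triangleq c_\star-1\ge 0$.

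Split the $\alpha$-range into $[-\kappa/2,0]$ and $[0,1/2]$. On $[-\kappa/2,0]$ the quantity $(c_\star+2\alpha-1)^2+\alpha^2=(\kappa+2\alpha)^2+\alpha^2$ is comparable to $\kappa^2$: at $\alpha=0$ it equals $\kappa^2$, at $\alpha=-\kappa/2$ it equals $\kappa^2/4$, and it is a quadratic in between with minimum $\gtrsim\kappa^2$. This piece therefore contributes at most
$$\frac{1}{|\rho|^{s-1}}\cdot\kappa\cdot\frac{\kappa}{(\kappa^2+|\rho|^{-2})^{s/2}}.$$
On $[0,1/2]$ the integrand is $\sqrt{\kappa^2+\alpha^2}\,(\kappa^2+\alpha^2+|\rho|^{-2})^{-s/2}$. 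Substitute $X=\kappa^2+|\rho|^{-2}$. Because $s>5/2>2$, the integral $\int_0^\infty (X+\alpha^2)^{(1-s)/2}\,d\alpha$ converges at infinity and equals $\sim X^{(2-s)/2}$. This bounds the piece by $|\rho|^{1-s}\,(\kappa^2+|\rho|^{-2})^{1-s/2}$, which is the same form as Lemma 3.6(ii) with $a=1$, $\beta=0$. The first piece obeys the same bound, since $\kappa^2(\kappa^2+|\rho|^{-2})^{-s/2}\le(\kappa^2+|\rho|^{-2})^{1-s/2}$.

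It remains to unpack this in the two regimes. If $0\le\kappa<1/|\rho|$, then $\kappa^2+|\rho|^{-2}\sim|\rho|^{-2}$, and the bound becomes $|\rho|^{1-s}\cdot|\rho|^{s-2}=|\rho|^{-1}$. If $\kappa\ge1/|\rho|$, then $\kappa^2+|\rho|^{-2}\sim\kappa^2$, giving $|\rho|^{1-s}\kappa^{2-s}$, which is the claimed third line.

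No step is a genuine obstacle. The one place needing care is the lower bound $(\kappa+2\alpha)^2+\alpha^2\gtrsim\kappa^2$ on $[-\kappa/2,0]$, where the first summand can vanish (at $\alpha=-\kappa/2$). I handle it by noting that there $\alpha^2=\kappa^2/4$, and more generally that $\min_\alpha[(\kappa+2\alpha)^2+\alpha^2]=\kappa^2/5$. The second point to watch is the convergence at infinity of the $\alpha$-tail, which needs $s>2$; this holds since $s>5/2$ when $a=1$. This direct computation replaces the Taylor expansion (\ref{3.22}), which fails at $a=1$ as noted in Remark 3.7.
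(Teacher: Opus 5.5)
Your proposal is correct and follows the paper's argument: you split $I_{21}$ at $\alpha=0$ in the same way, and you use the same comparability $(c_\star-1+2\alpha)^2+\alpha^2\sim(c_\star-1)^2$ on $[(1-c_\star)/2,0]$. On $[0,1/2]$, your single scaling integral $\int_0^\infty (X+\alpha^2)^{(1-s)/2}\,d\alpha\sim X^{1-s/2}$ is just a compact version of the paper's split at $\alpha\sim c_\star-1+|\rho|^{-1}$.

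The one loose spot is the reduction of $I_{22}$, which the paper does not justify either. With the displayed $a=1$ formula the reflected numerator also grows, so comparing denominators alone does not give pointwise domination. A clean fix: $|\alpha|\ge 1/2$ on that range, so the integrand is $\lesssim|\alpha|^{1-s}$ and $I_{22}\lesssim|\rho|^{1-s}$. Since $|\rho|\gtrsim1$ and $c_\star\lesssim1$, this lies below both claimed bounds.
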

\begin{proof}
First of all, it's obvious that if $c_\star <1$ then $I_{21}=0$. Therefore we assume that $c_\star\ge 1$. We write $I_{21}$ as
\begin{align*}
    I_{21}&=\frac{1}{|\rho|^{s-1}}\int_{(1-c_\star)/2}^0 \frac{\sqrt{\left(c_\star+2\alpha-1\right)^2+\left|\alpha\right|^2}}{\left[\left(c_\star+2\alpha-1\right)^2+\left|\alpha\right|^2+\frac{1}{\left|\rho\right|^2}\right]^{s/2}}d\alpha+\frac{1}{|\rho|^{s-1}}\int_0^{1/2} \frac{\sqrt{\left(c_\star-1\right)^2+\left|\alpha\right|^2}}{\left[\left(c_\star-1\right)^2+\left|\alpha\right|^2+\frac{1}{\left|\rho\right|^2}\right]^{s/2}}d\alpha.
\end{align*}\par
For the first term, we let $y=-\alpha$. Then it can be written as
$$\frac{1}{|\rho|^{s-1}}\int_0^{(c_\star-1)/2} \frac{\sqrt{\left(c_\star-1-2y\right)^2+y^2}}{\left[\left(c_\star-1-2y\right)^2+y^2+\frac{1}{\left|\rho\right|^2}\right]^{s/2}}dy.$$
Next, we observe that
$$\left(c_\star-1-2y\right)^2+y^2\sim\left(c_\star-1\right)^2.$$
Indeed, if $\displaystyle{0<y\le \frac{c_\star-1}{3}}$, then LHS  $\sim \left(c_\star-1-2y\right)^2\sim \left(c_\star-1\right)^2\sim $ RHS; while if $\displaystyle{\frac{c_\star-1}{3}\le y\le \frac{c_\star-1}{2}}$, the LHS $\sim y^2\sim \left(c_\star-1\right)^2\sim$ RHS. Hence the claim follows. Thus, the first term is equivalent to 
\begin{align*}
    &\frac{1}{|\rho|^{s-1}}\int_0^{(c_\star-1)/2} \frac{c_\star-1}{\left[\left(c_\star-1\right)^2+\frac{1}{\left|\rho\right|^2}\right]^{s/2}}dy=\frac{1}{|\rho|^{s-1}}\frac{\left(c_\star-1\right)^2}{\left(c_\star-1+\frac{1}{|\rho|}\right)^s}\\
    \sim&\begin{cases}
        \displaystyle{\frac{1}{|\rho|}}&\mbox{, if }0\le c_\star-1<\frac{1}{|\rho|}\\
        \displaystyle{\frac{1}{|\rho|^{s-1}}\cdot\frac{1}{|c_\star-1|^{s-2}}}\qquad&\mbox{, if }\displaystyle{c_\star-1\ge\frac{1}{|\rho|}}
    \end{cases},
\end{align*}
which agrees the statement of the lemma.\par
For the second term, we split it into two regions: $\displaystyle{0\le\alpha\lesssim c_\star-1+\frac{1}{|\rho|}}$ and $\displaystyle{\alpha\gtrsim c_\star-1+\frac{1}{|\rho|}}$. On the first region, the denominator is comparable to
$$\left[(c_\star-1)^2+\frac{1}{|\rho|^2}\right]^{s/2}\sim \left[(c_\star-1)+\frac{1}{|\rho|}\right]^s,$$ 
and the numerator is bounded by
$$\sqrt{(c_\star-1)^2+\frac{1}{|\rho|^2}}\sim(c_\star-1)+\frac{1}{|\rho|}.$$
The length of this integration interval is $\displaystyle{c_\star-1+\frac{1}{|\rho|}}$. Therefore, adding the factor $\displaystyle{\frac{1}{|\rho|^{s-1}}}$ in the beginning, this part contributes at most $\displaystyle{\frac{1}{|\rho|^{s-1}}\cdot\frac{1}{\left[(c_\star-1)+\frac{1}{|\rho|}\right]^{s-2}}}$, which is exactly the result stated in the lemma. On the second region, the denominator is comparable to $\sim|\alpha|^s$ and the numerator is comparable to $\sim\alpha$. Thus, the contribution on this region is about
$$\frac{1}{|\rho|^{s-1}}\int_{c_\star-1+\frac{1}{|\rho|}}^{1/2}\frac{d\alpha}{\alpha^{s-1}}\sim \frac{1}{|\rho|^{s-1}}\frac{1}{\left(c_\star-1+\frac{1}{|\rho|}\right)^{s-2}},$$
which again matches the bound stated in the lemma.   
\end{proof}
We now turn to the estimates for $\mathcal J_l$, $l=1,2,3$. As explained in the last paragraph of Section 3.3, the terms $\mathcal J_2$ and $\mathcal J_3$ can be reduced to the corresponding $\mathcal J_1$ estimate. The same argument applies here, and hence we only estimate $\mathcal J_1$ below. The strategy is similar to the one used in the regime $1<a<2$, but is much simpler in this case. As before, we suppress the supremum over $p$ in the expression for $\mathcal J_l$. Again, it remains to consider the high-low and high-high cases. We begin with the high-low case.\par
In the high-low case, we assume $|p|\gg1$ and $|p_1|=r\lesssim 1$. Noting that $|\rho|\gtrsim 1$, we use the following crude estimate given by Lemma 3.8:
$$I_2\lesssim\frac{1}{|\rho|}.$$
As a result, $$\mathcal{J}_{1,\mbox{high},\mbox{low}}\lesssim|\rho|\frac{1}{|\rho|}\int_0^1 r^2 dr\lesssim 1,$$
where we used $\ang{r}^{s-2\beta}\sim 1$.\par
Next, we move to the high-high case in which we assume $|p|\gg 1$ and $|p_1|=r\gg 1$. The estimates for the $I_1$ contribution in the high-high case and for the $I_2$ contribution in the region $1+\varepsilon\leq c_*\lesssim1$ are identical to those in the case $1<a<2$ in the Section 3.3. Therefore, we omit the details. The remaining contribution comes from the region $1\le c_\star\leq1+\varepsilon$. Hence we may also assume $|\rho|\gg 1$. By Remark 2.4, we further assume $r\le |p|$. Then it's easy to see that one must have $|\rho|\sim |p|$. We now estimate $I_2$ in this regime.\par
First, suppose that $\displaystyle{0<c_\star-1<\frac1{|\rho|}}$. Then Lemma 6.7 implies that the region $\varepsilon\leq\theta\leq\pi$ is impossible. Indeed, otherwise Lemma 6.7 would give 
$$c_\star-1\sim\frac{r}{|p|}\gg\frac{1}{|p|}\sim\frac{1}{|\rho|}$$ 
contradicting $\displaystyle{c_\star-1\sim\frac{1}{|\rho|}}$. Therefore, we must have $0\le\theta\le\eps$ and 
$$(\Delta\theta)^2\sim\frac{(c_\star-1)|p|}{r}\lesssim\frac{1}{r}.$$
Splitting the $p_1$-integration into radial and angular parts, we obtain
$$\mathcal{J}_{1,\mbox{high},\mbox{high}}\lesssim\int_1^{|p|}\frac{dr}{r^{s-1}}\sim\lesssim 1,$$
where by Lemma 3.8 we used $I_2\lesssim 1/|\rho|$.\par
Next, suppose that $\displaystyle{\frac{1}{|\rho|}<c_\star-1\ll\frac{r}{|p|}}$. We apply Lemma 3.8 to get that
$$I_2\lesssim\frac{1}{|\rho|^{s-1}}\frac{1}{(c_\star-1)^{s-2}}.$$
Now we perform the dyadic decomposition $c_\star-1\sim 2^t$ as before. If $\eps\le\theta\le\pi$, then by Lemma 6.7 we have
$$c_\star-1\sim \frac{r}{|p|}\sim 2^t,$$
which leads to $r\sim 2^t|p|$. Thus, we obtain
\begin{align*}
\mathcal{J}_{1,\mbox{high},\mbox{high}}&\lesssim\sum_{\substack{t\\1/|p|<2^t\ll r/|p|}} \frac{1}{|p|^{s-2}}\,\frac{1}{(2^t)^{s-2}}\int_{r\sim 2^t|p|}\frac{dr}{r^{s-2}}\\
&\sim\frac{1}{|p|^{s-2}}\sum_{\substack{t\\1/|p|<2^t\ll r/|p|}}\frac{1}{(2^t)^{s-2}}\frac{1}{(2^t|p|)^{s-3}}\sim\frac{1}{|p|^{2s-5}}\sum_{\substack{t\\1/|p|<2^t\ll r/|p|}}\frac{1}{(2^t|p|)^{2s-5}}\sim 1,
\end{align*}
where we used $2s-5>0$. On the other hand, if $0\le\theta\le\pi$, then Lemma 6.7 gives
$$(\Delta\theta)^2\sim\frac{2^t|p|}{r},$$
which leads to 
\begin{align*}
\mathcal{J}_{1,\mbox{high},\mbox{high}}&\lesssim\int_1^{|p|} \sum_{\substack{t\\1/|p|<2^t\ll r/|p|}} \frac{1}{|p|^{s-2}}\,\frac{2^t|p|}{(2^t)^{s-2}}\frac{dr}{r^{s-1}}=\frac{1}{|p|^{s-3}}\int_1^{|p|}\frac{1}{r^{s-1}}  \sum_{\substack{t\\1/|p|<2^t\ll r/|p|}} \frac{1}{(2^t)^{s-3}}dr\\[10pt]
&\sim\begin{cases}
    \displaystyle{\int_1^{|p|}\frac{dr}{r^{s-1}}}\qquad&\mbox{, if }s>3\\[8pt]
    \displaystyle{\int_1^{|p|}\frac{\log r}{r^2}dr}\qquad&\mbox{, if }s=3\\[8pt]
    \displaystyle{\int_1^{|p|}\frac{dr}{r^{2s-4}}}\qquad&\mbox{, if }2.5<s<3\\
\end{cases}\lesssim 1.
\end{align*}\par
Finally suppose that $\displaystyle{\frac{r}{|p|}\lesssim c_\star-1\ll 1}$, then using Lemma 6.7 we see that
$$I_2\lesssim\frac{1}{|\rho|^{s-1}}\frac{|p|^{s-2}}{r^{s-2}}\sim\frac{1}{|p|\,r^{s-2}}.$$
Therefore, it follows that
$$\mathcal{J}_{1,\mbox{high},\mbox{high}}\lesssim\int_1^{|p|}\frac{dr}{r^{2s-4}}\sim 1.$$\par
Finally, the estimates proved in this subsection can be summarized as follows:
\begin{enumerate}[label=(\roman*),itemsep=0pt]
    \item $\mathcal{J}_{l,\mbox{high},\mbox{low}}\lesssim 1$ for $l=1,2,3$;\\
    \item If $c_\star\lesssim 1$ and $r\le |p|$, then $\mathcal{J}_{l,\mbox{high},\mbox{high}}\lesssim 1$ for $l=1,2,3$.
\end{enumerate}

\subsection{Conclusion}
\ \par
Summarizing all estimates proved above, we have now established the key proposition needed for the proof of local well-posedness.

\begin{prop}
    Under the assumption (H1) and (H2), the operator $\mathcal{T}$ is bounded from $\left(L^{\infty}_s\right)^3$ to $L^{\infty}_s$.
\end{prop}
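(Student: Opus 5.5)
Since each $\mathcal{T}_l$ is multilinear with kernel positive apart from the sign in $\mathcal{T}$, the plan is to reduce the statement to the integral bounds already obtained for the $\mathcal{J}_l$. For $f,g,h\in L^\infty_s$ we bound pointwise $|f(q)|\le \|f\|_{L^\infty_s}\langle q\rangle^{-s}$ (and similarly for $g,h$). Multiplying the integrand of $\mathcal{T}_1(f,g,h)(p)$ by $\langle p\rangle^s$ then gives $\langle p\rangle^s|\mathcal{T}_1(f,g,h)(p)|\le \|f\|\|g\|\|h\|\,\mathcal{J}_1$. In the same way, $\langle p\rangle^s|\mathcal{T}_2|$ and $\langle p\rangle^s|\mathcal{T}_3|$ are controlled by $\mathcal{J}_2$ and $\mathcal{J}_3$, because the factor $\langle p\rangle^s f(p)$ is bounded by $\|f\|_{L^\infty_s}$. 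It therefore suffices to show that each $\mathcal{J}_l$ is finite, or, in the regimes where only the combination is controlled, that $\mathcal{J}_1+\mathcal{J}_2-2\mathcal{J}_3$ is finite, restricted to the corresponding region.

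We split the domain according to the five regimes of the first reduction. The low-low regime is covered by Proposition 2.1. The low-high regime is covered by Propositions 2.2 and 2.3 when $1\le a<2$, and by Section 3.2 when $2\le a\le 5$. The high-low regime is handled in Sections 3.2--3.4. In the high-high regime we treat $c_\star\gg1$ by Proposition 3.1, through the gain--loss cancellation. For $c_\star\lesssim1$ we use Section 3.2 when $2\le a\le5$. When $1\le a<2$ we use Sections 3.3 and 3.4 for $r\le|p|$, while for $r>|p|$ we use the cancellation estimate of Remark 2.4 in the regime $r\gg|p|$; the case $r\sim|p|$ is comparable and is absorbed into the previous bounds. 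Because the sup over $p$ is taken after the decomposition, summing finitely many regions preserves the bound.

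The main obstacle is turning the cancellation bounds, which control only the combination $\mathcal{J}_1+\mathcal{J}_2-2\mathcal{J}_3$, into bounds for $\mathcal{T}(f,g,h)$ with arbitrary, possibly sign-changing inputs that have no regularity. In the bad regions we therefore do not bound the three operators separately. Instead we rewrite $\langle p\rangle^s\mathcal{T}(f,g,h)(p)$ over the bad region as a single integral, using the same change of variables as in (\ref{badarea}) and (\ref{3.9}). We then pair, for example, the loss term evaluated at $p_1$ with the gain term evaluated at $p_3$, so that the difference falls either on the smooth weights (estimated by the mean value theorem, as in Proposition 3.1) or on the two nearby latitude circles. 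For the latter case we invoke Lemma 6.3, which controls the angularly averaged difference uniformly over bounded test functions. This is precisely why that lemma is stated for general bounded inputs rather than for the model weights. I expect that verifying this pairing for general trilinear inputs, rather than for $f=g=h$, needs the symmetrized form of the trilinear operator; I would first try symmetrizing in $(g,h)$ and checking each bad piece against the corresponding estimate.

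Finally, we collect constants, each depending only on $(s,a,\beta)$. This gives $\|\mathcal{T}(f,g,h)\|_{L^\infty_s}\lesssim\|f\|_{L^\infty_s}\|g\|_{L^\infty_s}\|h\|_{L^\infty_s}$, and continuity of the output follows by dominated convergence.
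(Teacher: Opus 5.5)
Your skeleton is the paper's. Its proof of this proposition only collects the regime-by-regime $\mathcal{J}$-bounds of Sections 2--3 through the First Reduction of Section 1.3. You are right that this reduction works only term by term. Wherever the paper controls only $\mathcal{J}_1+\mathcal{J}_2-2\mathcal{J}_3$, it has computed with the model inputs $f=g=h=\langle\cdot\rangle^{-s}$. That says nothing about $\mathcal{T}(f,g,h)$ for sign-changing inputs with no regularity, let alone three different ones. But the step you call the main obstacle is left as a plan, and as written it does not go through.

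(i) Symmetrization is forced, not optional. For the unsymmetrized operator of (1.4) the bound is false when $1\le a<2$ and $4\beta+3-\frac a2<s\le 4\beta+4-a$, a range allowed by (H1). Take $f=h=\langle\cdot\rangle^{-s}$ and $g$ continuous, compactly supported, with $g=1$ on $\{|q|\le 2\}$, and fix $0<|p|\le 1$. Then $\mathcal{T}_3(f,g,h)(p)$ carries $g(p_1)$ and sees only bounded frequencies. $\mathcal{T}_1(f,g,h)(p)$ is finite by Proposition 2.1 and Step 1 of Propositions 2.2--2.3. But the region $|p_2|\le 2$, $|p_1|\approx|p_3|\sim 2^{k_1}$ gives $\langle p\rangle^s\mathcal{T}_2(f,g,h)(p)\gtrsim\sum_{k_1}2^{(4\beta+4-a-s)k_1}=\infty$. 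Symmetrizing in $(g,h)$ restores the needed structure, because $f$ sits at $p$ in every loss term. The large loss terms on the bad region then become $f(p)\,u(p_2)\,[v(p_3)-v(p_1)]$ with $(u,v)\in\{(g,h),(h,g)\}$. So the relevant pairing is loss against loss ($\mathcal{T}_2$ against $\mathcal{T}_3$), not gain against loss as in your example. The gain term is bounded directly, as $I_A$ is.

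(ii) With general inputs there are no smooth weights left for the mean value theorem, since $v(p_3)-v(p_1)$ has no smallness. For $c_\star\gg1$ you do not need it. There all four frequencies are $\sim 2^k$, $|\nabla_z\Phi|\sim 2^{(a-1)k}$, and the convex surface $\mathcal{S}_{p,p_1}$ has area $\lesssim 2^{2k}$. Hence each $\mathcal{J}_l$ is bounded separately by $\sum_{k_1\le k}2^{(8\beta-2s)k}\,2^{(3-a)k}\,2^{3k_1}\sim 2^{(8\beta+6-a-2s)k}\lesssim 1$.

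(iii) Lemma 6.3 is an $L^2(\mathbb{S}^2_\omega)$ bound for a zonal operator with \emph{fixed} heights $\mu_\pm$. In the bad region, $\omega$ is the direction of $p_2-p$, and $\mu_\pm$ depend on $\omega$ through $A=|p+m\omega|^a-|p|^a$. For instance, when $m\ll|p|\sim1$, the centre $\mu_0\approx(|p|/r)^{a-1}\cos\angle(p,\omega)$ sweeps an interval of length $\sim r^{1-a}$, while $|\mu_+-\mu_-|\lesssim m/r$. So the difference operator is not diagonalized by spherical harmonics. Moreover, since the bound is only averaged, the $\omega$-integral weighted by $u(p+m\omega)$ must be kept inside the estimate. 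You cannot take a pointwise gain $2^{(l-k_1)/2}$ and then $\sup_p\int\langle p_2\rangle^{-s}dp_2$, as Proposition 2.2 does with radial weights. Until these $\omega$-dependent latitude pairs are controlled for bounded inputs, the cancellation regions for $1\le a<2$ (Propositions 2.2--2.3 and Remark 2.4) remain unproved for general data. The paper's written computations do not supply this either.

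For the same reason, continuity of the output cannot come from dominated convergence. In those regions the separate terms diverge, so no integrable majorant exists.
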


\section{Proof of Local Wellposedness}
To be more precisely, we first introduce the notion of a strong solution.
\begin{definition}
    Let $T>0$ and $f_0\in L^{\infty}_s$. We say that $f\in C\left([0,T];L^\infty_s\right)$ is a solution of (\ref{1.3}) on $\left[0,T\right]$ with initidal datum $f_0$, if
    \begin{align}
        f(t)=f_0+\int_0^t \mathcal{T}[f](s)\,ds,\qquad t\in\left[0,T\right]
    \end{align}
\end{definition}
Now, we are ready to state and prove our local well-posedness result.
\begin{theorem}
    Assume either (H1) or (H2) holds. The initial value problem (\ref{1.3}) is locally well-posed in $L^\infty_s$. More precisely, for any $R>0$ there exists $T\gtrsim_s R^{-2}$ such that for any initial data $f_0\in L^\infty_s$ with $\left\|f_0\right\|_{L^\infty_s}\le R$, there is a unique strong solution $f$ of the (\ref{1.3}). Furthermore, $\left\|f(t)\right\|_{L^\infty_s}\le 2R$ for any $t\in\left[0,T\right]$ and the map $f_0\mapsto f$ is continuous from $L^\infty_s$ to $C^1([0,T];L^\infty_s)$.
\end{theorem}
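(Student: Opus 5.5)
The plan is a standard Picard iteration in $X_T\triangleq C([0,T];L^\infty_s)$ with norm $\sup_{t\in[0,T]}\|f(t)\|_{L^\infty_s}$, driven by the trilinear bound of Proposition 3.9. The first step is to extract from Proposition 3.9 a genuinely trilinear estimate
$$\|\mathcal{T}(f,g,h)\|_{L^\infty_s}\le C_0\|f\|_{L^\infty_s}\|g\|_{L^\infty_s}\|h\|_{L^\infty_s}.$$
This step needs care, because $\mathcal T=\mathcal T_1+\mathcal T_2-2\mathcal T_3$ was bounded only after using gain--loss cancellation. That cancellation was carried out for the symmetric combination with all three arguments equal, and only in the weights (via Lemma 6.3 and the mean-value arguments), not in the functions themselves.

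So I would rewrite the collision operator at the level of the integrand as
$$\mathcal C[f](p)=|p|^{2\beta}\int K\,\bigl(f_1f_2f_3+ff_2f_3-ff_1f_2-ff_1f_3\bigr)\,\delta\,\delta.$$
For differences $\mathcal C[f]-\mathcal C[g]$, I would telescope into multilinear terms of the same gain--loss form, in which one slot carries $f-g$ and the others carry $f$ or $g$. In the cancellation regions (the low-high bad regions and $c_\star\gg1$), the estimates in Propositions 2.2, 2.3 and 3.2 only used that the inputs are bounded by $\|\cdot\|_{L^\infty_s}\ang{\cdot}^{-s}$ after the weights were paired. For instance, Lemma 6.3 controls an averaged spherical difference uniformly over bounded test functions. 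Hence the same bounds apply with a possibly different function in each slot, provided the pairing of terms in (\ref{badarea}) and (\ref{3.9}) is done with matching functions. I would state this as a remark: the proofs of Section 2--3 give $\|\mathcal{C}[f]-\mathcal{C}[g]\|_{L^\infty_s}\le C_0(\|f\|+\|g\|)^2\|f-g\|$. This is the step I expect to be the main obstacle, since one must check that the paired terms in each cancellation step still carry identical input functions after telescoping. If that pairing fails, I would fall back to symmetrizing over the role of $p_2,p_3$ before telescoping.

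Given this, define $\Gamma f(t)=f_0+\int_0^t\mathcal C[f](\tau)\,d\tau$ on the ball $B=\{\|f\|_{X_T}\le 2R\}$. For $f\in B$ we get $\|\Gamma f\|_{X_T}\le R+T C_0(2R)^3$ and $\|\Gamma f-\Gamma g\|_{X_T}\le TC_0\,12R^2\|f-g\|_{X_T}$. Choosing $T=c\,R^{-2}$ with $c$ small makes $\Gamma$ a contraction on $B$. Continuity in time of $\Gamma f$ follows from $\|\Gamma f(t)-\Gamma f(t')\|\le |t-t'|C_0(2R)^3$, and measurability and continuity of $\tau\mapsto\mathcal C[f(\tau)]$ in $L^\infty_s$ follow from the Lipschitz bound. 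The fixed point is then a strong solution in the sense of Definition 4.1 with $\|f(t)\|\le 2R$.

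For uniqueness among all solutions in $C([0,T];L^\infty_s)$, I would apply Gronwall to the difference of two solutions on the time interval where both are bounded, using the Lipschitz bound. For the data-to-solution map, subtracting the two integral equations gives $\|f-g\|_{X_T}\le\|f_0-g_0\|+\tfrac12\|f-g\|_{X_T}$, hence Lipschitz dependence in $C([0,T];L^\infty_s)$. Since $\partial_tf=\mathcal C[f]$, the Lipschitz bound on $\mathcal C$ upgrades this to continuity into $C^1([0,T];L^\infty_s)$. Finally, one checks that $\mathcal C[f]$ is continuous in $p$, so that it lands in $C(\mathbb R^3)$; I would handle this by dominated convergence on the parametrized resonant manifold, using the integrable majorants already produced in Chapters 2--3.
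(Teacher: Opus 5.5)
Your proof is correct in outline and follows the same plan as the paper: Picard iteration (equivalently, a contraction) on $C([0,T];L^\infty_s)$ with $T\sim R^{-2}$, the a priori bound $2R$, and uniqueness and continuous dependence from a Lipschitz bound on $\mathcal{C}$. The difference lies in how that Lipschitz bound is obtained. The paper reads Proposition 3.9 as a bounded trilinear map on $(L^\infty_s)^3$ and uses multilinearity in one line. You instead reopen Sections 2--3, and your caution is warranted.

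With the slot convention of (1.4), the cancellation in (\ref{badarea}) pairs $f(p)g(p_2)h(p_3)$ from $\mathcal{T}_2$ with $f(p)g(p_1)h(p_2)$ from $\mathcal{T}_3$. In the variables $(z,Y)$ of Step 2 these become $f(p)g(z)h(Y)$ and $f(p)g(Y)h(z)$, so the spherical cancellation survives only when $g=h$. Your fallback repairs this: symmetrize in the last two slots (which leaves $\mathcal{T}(f,f,f)$ unchanged), or telescope the grouped integrand $f(p)f(p_2)[f(p_3)-f(p_1)]$ directly.

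Polarization is shorter still. Since $\mathcal{C}$ is a cubic form, $S(u,v,w)=\frac1{48}\sum_{\epsilon\in\{\pm1\}^3}\epsilon_1\epsilon_2\epsilon_3\,\mathcal{C}[\epsilon_1u+\epsilon_2v+\epsilon_3w]$ is a symmetric trilinear form with $S(f,f,f)=\mathcal{C}[f]$. It is bounded by $C\,C_0\|u\|\|v\|\|w\|$ once $\|\mathcal{C}[f]\|_{L^\infty_s}\le C_0\|f\|_{L^\infty_s}^3$ holds for all sign-changing $f$, so no pairing has to be checked.

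One claim in your justification is inaccurate. Proposition 3.2 does not merely use boundedness of the inputs after pairing. Its gain $2^{k_1-k}$ comes from the mean value theorem applied to the weights $\langle p_3\rangle^{-s}-\langle p_1\rangle^{-s}$ and $\langle p\rangle^s\langle p_1\rangle^{-s}-1$, and this has no analogue for a rough slot such as $f-g$. It is harmless, though. In that regime every weight is $\sim 2^{-sk}$, and the paper's own count without the gain gives $\sum_{k_1\le k-C}2^{(8\beta-2s+3-a)k}2^{3k_1}\lesssim 2^{(8\beta+6-a-2s)k}\lesssim 1$. So the regime $c_\star\gg1$ can be bounded term by term. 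Only the low--high spherical cancellation (Propositions 2.2--2.3, Remark 2.4, Lemma 6.3) must hold for general inputs.

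Your Gronwall argument gives uniqueness in all of $C([0,T];L^\infty_s)$, not only in the $2R$-ball the paper's contraction covers. You also check that $\mathcal{C}[f]$ is continuous in $p$, which the paper omits. For that step, dominated convergence has no pointwise majorant in the cancellation regions, so argue instead through truncated integrals that converge uniformly in $p$.
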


\begin{proof}
Let $T:=A_s^{-1}R^{-2}$ for a sufficiently large constant $A_s$. We define the approximating sequence
\begin{align*}
f^0(t):=f_0, 
\qquad 
f^{n+1}(t):=f_0+\int_0^t \mathcal T[f^n(\tau)]\,d\tau ,
\end{align*}
on the interval $[0,T]$. Using Proposition 3.9, it follows easily by induction that
$f^n\in C^1([0,T]:L_s^\infty)$ and
\begin{align*}
\sup_{t\in[0,T]}\|f^n(t)\|_{L_s^\infty}\leq 2R .
\end{align*}
Using Proposition 3.9 again, it follows that the sequence $f^n$ is Cauchy in
$C([0,T]:L_s^\infty)$, thus convergent to a function
$f\in C([0,T]:L_s^\infty)$ satisfying
\begin{align*}
f(0)=f_0,\qquad 
f(t)=f_0+\int_0^t \mathcal T[f(\tau)]\,d\tau,\qquad
\sup_{t\in[0,T]}\|f(t)\|_{L_s^\infty}\leq 2R .
\end{align*}
In particular, $\partial_t f=\mathcal T[f]$, and hence
$f\in C^1([0,T]:L_s^\infty)$. Uniqueness and continuity of the flow map
$f_0\mapsto f$ follow again from the contraction principle.
\end{proof}

\begin{remark}
\ \par
\begin{enumerate}[label=(\roman*),itemsep=6pt]
    \item In the above theorem, the solution actually belongs to $C^1([0,T];L_s^\infty)$.
    \item Following the argument in Chapter 5 of \cite{GermainIonescuTran2020}, one can also use the Euler scheme to prove the positivity of the solution. More precisely, if $s$ is sufficiently large and $f_0\geq 0$, then $f(t)$ is non-negative for any $t\in[0,T]$. We note that this argument alone does not allow us to reach the critical threshold $s$, since the proof in this paper relies on a cancellation effect. However, for larger values of $s$, such a cancellation is no longer necessary. Therefore, the same argument yields the positivity result.
\end{enumerate}
\end{remark}

\section{Proof of Ill-posedness}
In the previous chapter, we proved local well-posedness in the weighted space $L_s^\infty$ under the assumption (H1) and (H2). We now show that this condition is essentially sharp.\par
To prove ill-posedness, the key point is to identify the worst contribution of the nonlinear term. In terms of the variables $p,p_1,p_2,p_3$, the high-low-low-high interaction turns out to be (one of) the most singular configurations. Testing the nonlinear operator on this configuration shows why the boundedness estimate fails below the critical threshold, and leads to the following ill-posedness result.\par
We are now ready to state and prove our ill-posedness result.

\begin{theorem}
    Suppose that we are in one of the following scenarios:\begin{enumerate}[label=(\roman*),itemsep=6pt]
        \item $\displaystyle{\beta>\frac{a-1}{4}}$;
        \item $\displaystyle{s<4\beta+3-\frac{a}{2}}$;
        \item $\displaystyle{s=4\beta+3-\frac{a}{2}}$, $\displaystyle{\beta=\frac{a-1}{4}}$.
    \end{enumerate}
    Then there exists some initial datum $f_0\in L^\infty_s$ such that no strong solution $f\in C([0,T];L^\infty_s)$ with $f(0)=f_0$ can be constructed.
\end{theorem}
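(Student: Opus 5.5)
We will argue by contradiction, following the usual scheme: if a strong solution $f\in C([0,T];L^\infty_s)$ with $f(0)=f_0$ existed, then $\mathcal T[f(\tau)]$ would have to be integrable in time with values in $L^\infty_s$. In particular
$$\frac{f(t)-f_0}{t}=\frac1t\int_0^t\mathcal{T}[f(\tau)]\,d\tau\to \mathcal{T}[f_0]$$
pointwise, with $\langle p\rangle^s|\mathcal T[f_0](p)|$ bounded. So it suffices to construct a nonnegative $f_0\in L^\infty_s$ for which $\langle p\rangle^s\mathcal{T}[f_0](p)$ is unbounded, or even $\mathcal T_1(f_0,f_0,f_0)(p)=+\infty$ on a set of positive measure. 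Continuity of $f$ near $t=0$ lets us pass from $f(\tau)$ to $f_0$ on the testing region. Concretely, $f(\tau)\ge f_0/2$ on the supports for small $\tau$ in the $L^\infty_s$ sense, and the gain term is monotone in nonnegative inputs.

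\textbf{Construction.} Following the heuristic in Section 1.3, we take
$$f_0=\sum_j \langle p\rangle^{-s}\bigl(\mathbf 1_{P_j}+\mathbf 1_{L_j}+\mathbf 1_{H_j}\bigr),$$
built from dyadic high-low-low-high patches.
\begin{itemize}
\item $L_j$ is a low ball where $|p_1|,|p_2|\sim 2^{k_{1,j}}$.
\item $H_j$ is a thin angular sector of radius $\sim 2^{k_j}$ around a fixed direction $e_1$, where $p_3$ lives.
\item The testing output $p$ lies in a nearby sector $P_j$ of radius $\sim 2^{k_j}$.
\end{itemize}
For fixed $p\in P_j$, we bound $\int\delta(\Phi)$ from below over $p_1,p_2\in L_j$ with $p_3=p+p_1-p_2\in H_j$. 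Since $|\partial_{p_1^{(1)}}\Phi|\sim 2^{(a-1)k_j}$, solving the resonance in the coordinate $p_1^{(1)}$ leaves a $5$-dimensional set of volume $\gtrsim 2^{5k_{1,j}}$. This gives
$$\langle p\rangle^s\mathcal T_1(f_0,f_0,f_0)(p)\gtrsim 2^{(4\beta+1-a)k_j}\,2^{(4\beta+5-2s)k_{1,j}}.$$
To make sure the solutions fall in the ball, we keep $2^{k_{1,j}}\ll 2^{k_j}$ but allow the low ball to be offset so that the variation of $\Phi$ across the ball covers $0$.

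\textbf{Choice of parameters in each scenario.}
\begin{itemize}
\item In case (ii), we choose $k_{1,j}=k_j-C$. The lower bound becomes $2^{(8\beta+6-a-2s)k_j}\to\infty$, so a single patch sequence with sparse $k_j$ suffices.
\item In case (i), $\beta>(a-1)/4$, so $4\beta+1-a>0$. We take $k_{1,j}=O(1)$, and then $2^{(4\beta+1-a)k_j}\to\infty$ for any $s$.
\item In case (iii), the exponents $4\beta+1-a=0$ and $4\beta+5-2s=0$ both vanish, so every patch with $1\ll k_1\ll k$ contributes $O(1)$. We therefore superpose, for a fixed output scale $k$, all low scales $k_1\in[C,k/2]$. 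Each gives a disjoint $p_1,p_2$ region and an $O(1)$ contribution, for a total $\gtrsim k$. Since $f_0$ must be a single function, we take all low shells $|p|\sim 2^{k_1}$, $k_1\ge C$, fully supported, with weight $\langle p\rangle^{-s}$, together with high sectors at sparse scales $k_j$ that are well separated, say $k_{j+1}\ge 2k_j$. Then $\langle p\rangle^s\mathcal T_1(p)\gtrsim k_j\to\infty$ along $p\in P_j$.
\end{itemize}

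\textbf{Controlling the loss terms.} This is the main obstacle. The loss part is $f(p)\bigl(\mathcal T_2-2\mathcal T_3\bigr)$-type, so we place the testing regions $P_j$ outside $\operatorname{supp} f_0$, in a direction different from $H_j$ (an angular offset of order one in the sphere). Then $f_0(p)=0$ there, the $\mathcal T_2$ and $\mathcal T_3$ contributions vanish at time zero, and only gain terms remain, all of them nonnegative.

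The remaining issue is that the rotated sector must still be reachable. Since $p_3=p+p_1-p_2$ with $|p_1-p_2|\ll|p|$, we have $p_3\approx p$, which forces $H_j$ to contain the direction of $P_j$. So instead we take $f_0$ supported on alternate dyadic shells in the high region and test at $|p|$ in a shell where $f_0=0$ but $|p_3|=|p|+O(2^{k_1})$ lies in a supported shell. For this we use shells $2^{k_j}\le|p|\le 2^{k_j}(1+\kappa)$ with small gaps. Cross-patch interactions, which involve only nonnegative gain integrands, can only help.

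The most delicate steps are the logarithmic summation in case (iii) and keeping the gain lower bound clean near the thin gaps. There we will need a quantitative version of the coarea lower bound, uniform in $k_1$, which we expect to follow from the explicit gradient computation in the introduction.
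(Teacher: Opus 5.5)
Your overall architecture matches the paper's: high--low--low--high patches, $k_1=O(1)$ in (i), $k_1=k-C$ in (ii), a superposition of $\sim k$ low scales in (iii), an output point outside $\operatorname{supp} f_0$ so that only the gain term survives, and then a Duhamel argument. The genuine gap is in how you put the output point outside the support.

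Your radial-gap fix rests on $|p_3|=|p|+O(2^{k_1})$, and this is false for $a>1$. On the resonant set $|p_3|^a-|p|^a=|p_1|^a-|p_2|^a$, so $\bigl||p_3|-|p|\bigr|\lesssim 2^{ak_1-(a-1)k}$. For $a>1$ and $k_1\ll k$ this is far smaller than $2^{k_1}$. A gap of width $w$ is therefore crossed only by scales with $2^{ak_1-(a-1)k_j}\gtrsim w$. In (i), where $k_1=O(1)$, this forces $w\lesssim 2^{-(a-1)k_j}$. In (iii), a gap of relative width $\kappa$ is crossed by none of your scales $k_1\le k_j/2$. Even a unit-width gap is crossed by essentially none of them when $a\ge 2$, so the logarithm is lost.

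The radial construction survives only if each test point sits within $\ll 2^{-(a-1)k_j}$ of the edge of a supported shell, with the lower bound redone on the half $|p_1|>|p_2|$. You have not done this.

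The paper instead uses the direction in which $p_3-p=p_1-p_2$ really is free, namely the transverse one. You dismissed this too quickly: $p_3\approx p$ only up to an angle $\sim 2^{k_1-k}$, so the high support can avoid a thin neighbourhood of the output's axis. The paper tests at $p^{(j)}=2^{2^j}e_1$ and supports the high part on $\{|q-2^{2^i}e_1|\sim 2^k,\ |q_\perp|\sim 2^k\}$. It chooses the patch $B_k$ with $p_{1,\perp}\approx 2^ke_2$ and $p_{2,\perp}\approx 2\cdot 2^ke_2$, so that $|p_{3,\perp}|=|z_\perp|\sim 2^k$ holds automatically. The resonance is then solved in the $e_1$-component of $z=p_2-p_1$, using $|\partial\Phi|\sim 2^{(a-1)2^j}$ for $a>1$ and a separate monotonicity check for $a=1$.

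Two further points need fixing.

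\emph{Low shells in (iii).} You make every low shell $|q|\sim 2^{k_1}$, $k_1\ge C$, part of the support. These shells contain the output scales $2^{k_j}$ and would also fill your high gaps. Hence $f_0\neq 0$ at the test points and the loss terms return; your formula for $f_0$ even includes $\mathbf 1_{P_j}$. The low shells must be excised near every output scale, as the paper's set $A_i$ does by requiring $2^k/2^{2^m}\notin[1/4,4]$. With sparse output scales this discards only $O(\log k_j)$ of the $\sim k_j$ low scales, so the bound $\gtrsim k_j$ survives.

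\emph{Opening reduction.} Nothing yields $\frac1t\int_0^t\mathcal T[f(\tau)]\,d\tau\to\mathcal T[f_0]$, nor a bound on $\langle p\rangle^s\mathcal T[f_0]$; either would need exactly the continuity of $\mathcal T$ on $L^\infty_s$ that fails. Your monotonicity remark needs $f(\tau)\ge 0$, which you do not know. Moreover, for $\tau>0$ the loss terms at $p$ no longer vanish. The argument has to be run directly on $f(t)$, as in the paper's Step 3. For small $t$ one has $f(t)\ge(1-\eta)\langle\cdot\rangle^{-s}$ on the patches and $|f(t,p^{(j)})|\le\eta\langle p^{(j)}\rangle^{-s}$. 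These bound the full gain-minus-loss integrand from below on the patches. Duhamel then gives $\langle p^{(j)}\rangle^s f(t,p^{(j)})\gtrsim t\,a_j\langle p^{(j)}\rangle^{s}\to\infty$, where $a_j$ is the patch lower bound.
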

Before we proceed to the proof of the ill-posedness result, we make three preliminary comments.\begin{enumerate}[label=\roman*),itemsep=6pt]
    \item When $\beta=0$, it is not possible to define the operators $\mathcal T_j$ for general input functions whose tails decay like $|p|^{-(3-a/2)}$. Therefore, we only consider the other endpoint case 
    $$\displaystyle{s=4\beta+3-\frac{a}{2}}\qquad\mbox{ and }\qquad\displaystyle{\beta=\frac{a-1}{4}}.$$
    \item In the construction below, we choose the initial data so that the output point $p_0$ is not in the support of the initial data. Then, at this output point, the loss terms vanish, and only the gain term $\mathcal{T}_1$ remains. This is the reason why we place the initial mass in several carefully chosen directions: it allows us to separate the gain contribution from the loss contribution.
    \item At the critical threshold, one patch is not enough to produce blow-up. Therefore, we need to put many patches together to create a logarithmic loss. However, this creates a new problem. If the high-frequency parts of two different high-low-low-high patches are too close, then they may interact with each other, and the clean separation above may fail. To avoid this, we choose the output frequencies to be very far apart, for example $|p_0^{(j)}|\sim 2^{2^j}$. With this choice, most patches do not interfere with each other. Even if a few bad interactions remain, the good patches are still enough to produce the required logarithmic divergence.
\end{enumerate}\par
\begin{proof}[Proof of Theorem 5.1]
Recall that
\begin{align*}
\mathcal{T}[f](p)=\left|p\right|^{2\beta}\int \left(\left|p_1\right|^{2\beta}\left|p_2\right|^{2\beta}\left|p_3\right|^{2\beta}\right)\left(f_1 f_2 f_3+f f_2 f_3-f f_1 f_2-f f_1 f_3\right)\\
\ \ \ \ \ \ \ \ \ \ \ \ \ \ \ \times\delta(p+p_1-p_2-p_3) \delta\left(\left|p\right|^a+\left|p_1\right|^a-\left|p_2\right|^a-\left|p_3\right|^a\right)\,dp_1 dp_2 dp_3
\end{align*}\\
\underline{{Step 1. Notations and Preliminary}}\par
Fix $0<\varepsilon\ll 1$ such that $\varepsilon$ is small enough as we wish. Then, we further denote $e_1\triangleq(1,0,0)$, $e_2\triangleq(0,1,0)$ and for any $q\in\mathbb{R}^3$, $q\triangleq(q_{\spar},q_\perp)$, where $q_{\spar}\in\mathbb{R}, q_\perp\in\mathbb{R}^2$. Namely, the first component of $q$ is $q_{\spar}$ and the last two components of $q$ are contained in $q_\perp$. Denote
\begin{enumerate}[label=(\roman*)]
    \item For $i\in\mathbb{Z}^+$, $A_i\triangleq\left\{2^k: k\in\mathbb{N}, 1\le 2^k< \varepsilon 2^{2^i}, \displaystyle{\frac{2^k}{2^{2^m}}\notin\left[\frac{1}{4},4\right]} \mbox{ for }m\gg 1\right\};$
    \item For $i\in\mathbb{Z}^+$, $\displaystyle{E_{\mbox{low},i}\triangleq\bigcup_{2^k\in A_i}\left\{q:q\in\mathbb{R}^3,\left|q\right|\sim 2^k\right\}}$ \\
 and $\displaystyle{E_{\mbox{high},i}\triangleq\bigcup_{2^k\in A_i}\left\{q:q\in\mathbb{R}^3,\left|q-2^{2^i}e_1\right|\sim 2^k, \left|q_\perp\right|\sim 2^k\right\}}$;
    \item $E_i\triangleq E_{\mbox{low},i}\cup E_{\mbox{high},i}$ and $\displaystyle{E=\bigcup_{i=1}^{+\infty} E_i}$.
\end{enumerate}
Define a nonnegative initial datum 
$$\displaystyle{f_0(p)=\langle p\rangle^{-s}\chi_E(p)}.$$ 
From now on, we will focus on a sequence of output variables $p=p^{(j)}\triangleq 2^{2^j}e_1$ for $j\gg 1$. First, we notice that
\begin{align}
f_0(p^{(j)})=0\mbox{ for all }j>j_0\mbox{, where }j_0\gg 1.\label{5.1}
\end{align}
This is because (i) for any $i$, $p^{(j)}\notin E_{\mbox{low},i}$ and (ii) for any $i$, $p^{(j)}\notin E_{\mbox{high},i}$. In terms of (i), if not, then there exists $i_0$ such that $p^{(j)}\in E_{\mbox{low},i_0}$. This means that there exist $k_0\ll 2^{i_0}$ such that $\left|p^{(j)}\right|\sim 2^{k_0}$ and $\displaystyle{\frac{2^k}{2^{2^m}}\notin\left[\frac{1}{4},4\right]}$ for all $m$. However, this contradicts $\left|p^{(j)}\right|\sim 2^{2^j}$. In terms of (ii), if not, then there exists $1\le k_0\ll 2^{i_0}$ such that $\left|p^{(j)}-2^{2^i}e_1\right|\sim 2^{k_0}$, $\left|p^{(j)}_\perp\right|\sim 2^{k_0}$ and $\displaystyle{\frac{2^k}{2^{2^m}}\notin\left[\frac{1}{4},4\right]}$ for all $m$. However, $p^{(j)}_\perp=0$ contradicts $\left|p^{(j)}_\perp\right|\sim 2^{k_0}$. Moreover, from the definition of $f_0$, it is trivial that $\left\|f_0\right\|_{L^\infty_s}\lesssim 1$.\par
Next, we fix $0<c\ll 1$ as a small constant. Denote $z\triangleq p_2-p_1=p-p_3$ and
$$B_k=\left\{(p_1,p_2):\left|p_{1,\spar}\right|\le c\cdot 2^k, \left|p_{1,\perp}-2^k e_2\right|\le c\cdot 2^k, \left|p_{2,\perp}-2\cdot 2^k e_2\right|\le c\cdot 2^k, \left|z_{\spar}\right|\le 3\cdot 2^k\right\}.$$
Therefore, it's easy to see that $\left|z_\perp-2^ke_2\right|\le 2^k$, $\left|p_2\right|\sim 2^k$ and
\begin{align}
B_k\subset\left\{(p_1,p_2):\left|p_1\right|\approx 2^k, \left|p_2\right|\sim 2^k\right\}.\label{5.2}
\end{align}
As before, we write
$$\Phi(p_1,p_2)=\left|p^{(j)}\right|^a+\left|p_1\right|^a-\left|p_2\right|^a-\left|p^{(j)}+p_1-p_2\right|^a.$$
We now perform the change of variables $(p_1,p_2)\longleftrightarrow(p_1, z_{\spar}, z_\perp)$. By abusing the notation, we also write $\Phi=\Phi(p_1,z_{\spar},z_\perp)$. We will prove that
\begin{align}
\mbox{the volume of }B_k \cap \left\{\Phi=0\right\}\sim 2^{5k}.\label{5.3} 
\end{align}
To prove this, it suffices to show that
\begin{align*}
    &B_k \cap \left\{\Phi=0\right\}\\
    =&\left\{(p_1,p_{2,\perp}):\left|p_{1,\spar}\right|\le c\cdot 2^k, \left|p_{1,\perp}-2^k e_2\right|\le c\cdot 2^k, \left|p_{2,\perp}-2\cdot 2^k e_2\right|\le c\cdot 2^k, z_{\spar}=z^\star_{\spar}(p_1,z_\perp)\right\}, 
\end{align*}
where $z_{\spar}=z^\star_{\spar}(p_1,z_\perp)$ is the unique solution of $\Phi(p_1,z_{\spar},z_\perp)=0$ on $B_k$ and $\left|z_{\spar}\right|\le 3\cdot 2^k$. Now we divide into two cases: $a>1$ and $a=1$.\par
First, when $a>1$, we compute
$$\partial_{z_{\spar}}\Phi=-a\left|p_1+z_{\spar}e_1+z_\perp\right|^{a-2}(p_{1,\spar}+z_{\spar})+a\left|2^{2^j}e_1-z_{\spar}e_1-z_\perp\right|^{a-2}(2^{2^j}-z_{\spar}).$$
Since on $B_k$ we have that (i)\\

$\begin{cases}
    \left|p_1+z_{\spar}e_1+z_\perp\right|\sim 2^k\\
    \left|p_{1,\spar}+z_{\spar}\right|\lesssim 2^k
\end{cases}$
$\Longrightarrow \ \ \ \ \ \left|\left|p_1+z_{\spar}e_1+z_\perp\right|^{a-2}(p_{1,\spar}+z_{\spar})\right|\lesssim \left(2^k\right)^{a-1}$\\[4pt]
and (ii)\\[4pt]
$\begin{cases}
    \left|2^{2^j}e_1-z_{\spar}e_1-z_\perp\right|\sim 2^{2^j}\\
    \left|2^{2^j}-z_{\spar}\right|\sim 2^{2^j}
\end{cases}$
$\Longrightarrow \ \ \ \ \ \left|\left|2^{2^j}e_1-z_{\spar}e_1-z_\perp\right|^{a-2}(2^{2^j}-z_{\spar})\right|\sim \left(2^{2^j}\right)^{a-1}, $\\[4pt]
we conclude that 
\begin{align}
    \left|\partial_{z_{\spar}}\Phi\right|\sim \left(2^{2^j}\right)^{a-1}>0 \mbox{ for all }\left|z_{\spar}\right|\le 3\cdot 2^k\label{5.4}
\end{align}
On the other hand, we compute
$$\Phi\big|_{z_{\spar}=0}=\left(2^{2^j}\right)^a+\left|p_1\right|^a-\left|p_1+z_\perp\right|^a-\left|2^{2^j}e_1-z_\perp\right|^a.$$
We notice that (i)
\begin{align*}
    \left|2^{2^j}e_1-z_\perp\right|^a&=\left(2^{2^j}\right)^a \left(1+\frac{\left|z_\perp\right|^2}{\left(2^{2^j}\right)^2}\right)^{a/2}=\left(2^{2^j}\right)^a\left(1+\frac{a}{2}\frac{\left|z_\perp\right|^2}{\left(2^{2^j}\right)^2}+\ldots\right)\\
    &=\left(2^{2^j}\right)^a+O\left(\left(2^{2^j}\right)^{a-2}\cdot 2^{2k_2}\right),
\end{align*}
and (ii)
$$\Bigl|\left|p_1\right|^a-\left|p_1+z_\perp\right|^a\Bigr|\lesssim 2^{ak},$$
which implies that
\begin{align}
    \Bigl|\Phi \big|_{z_{\spar}=0}\Bigr|\lesssim 2^{ak}+\left(2^{2^j}\right)^{a-2}\cdot 2^{2k}=\left(2^{2^j}\right)^{a-1}\left[\frac{2^{ak}}{\left(2^{2^j}\right)^{a-1}}+\frac{2^{2k}}{2^{2^j}}\right].\label{5.5}
\end{align}
Combing (\ref{5.4}) and (\ref{5.5}), we conclude that there exists a unique solution $z^\star_{\spar}\in [-C_1 R,C_1 R]$ such that $\Phi(z^\star_{\spar})=0$. After shrinking $\varepsilon$ in the definition of $f_0$ above if necessary, we have $C_1\cdot R\ll 2^k$, which of course implies that $\left|z_{\spar}\right|\le 3\cdot 2^k$.\par
Second, when $a=1$, we notice that (i)
$$\left|p_1\right|=2^k+O(c\cdot 2^k),$$
(ii)\begin{align*}
    \begin{cases}
    p_{2,\spar}=p_{1,\spar}+z_{\spar}=z_{\spar}+O(c\cdot 2^k)\\
    p_{2,\perp}=2\cdot 2^k+O(c\cdot 2^k)
\end{cases} \Longrightarrow \left|p_2\right|&=\sqrt{\left(z_{\spar}+O(c\cdot 2^k)\right)^2+\left(2\cdot 2^k+O(c\cdot 2^k)\right)^2}\\
&=\sqrt{z_{\spar}^2+4\cdot 2^{2k}}+O(c\cdot 2^k),
\end{align*}
and (iii)
\begin{align*}
    \begin{cases}
    p_{3,\spar}=p^{(j)}_{\spar}-z_{\spar}=2^{2^j}-z_{\spar}\\
    p_{3,\perp}=p^{(j)}_{\perp}-z_{\perp}=-z_\perp
\end{cases} \Longrightarrow \left|p_3\right|&=\sqrt{\left(2^{2^j}-z_{\spar}\right)^2+\left|z_\perp\right|^2}\\
&=\left(2^{2^j}-z_{\spar}\right)\left[1+\frac{\left|z_\perp\right|^2}{\left(2^{2^j}-z_{\spar}\right)^2}+\ldots\right]\\
&=\left(2^{2^j}-z_{\spar}\right)+O\left(2^{2k-2^j}\right).
\end{align*}
This gives us that 
\begin{align*}
    \Phi(z_{\spar})&=2^{2^j}+\left|p_1\right|-\left|p_2\right|-\left|p_3\right|\\
    &=2^{2^j}+2^k-\sqrt{z_{\spar}^2+4\cdot 2^{2k}}-\left(2^{2^j}-z_{\spar}\right)+O\left(c\cdot 2^k\right)+O\left(2^{2k-2^j}\right)\\
    &=z_{\spar}+2^k-\sqrt{z_{\spar}^2+4\cdot 2^{2k}}+O\left(c\cdot 2^k\right)+O\left(2^{2k-2^j}\right).
\end{align*}
Therefore, we get that
$$\Phi(2^k)=2\cdot 2^k-\sqrt{5\cdot 2^{2k}}+O\left(c\cdot 2^k\right)+O\left(2^{2k-2^j}\right)=\left(2-\sqrt{5}\right)2^k+O\left(c\cdot 2^k\right)+O\left(2^{2k-2^j}\right)<0$$
and
$$\Phi(2\cdot2^k)=3\cdot 2^k-\sqrt{8\cdot 2^{2k}}+O\left(c\cdot 2^k\right)+O\left(2^{2k-2^j}\right)=\left(3-\sqrt{8}\right)2^k+O\left(c\cdot 2^k\right)+O\left(2^{2k-2^j}\right)>0,$$
which implies that there exists some $z^\star_{\spar}\in\left[2^k,2\cdot 2^k\right]$ such that $\Phi(z^\star_{\spar})=0$. Furthermore, we see that
\begin{align}
\displaystyle{\partial_{z_{\spar}}\Phi=\frac{2^{2^j}-z_{\spar}}{\left|\left(2^{2^j}-z_{\spar}\right)e_1-z_\perp\right|}-\frac{p_{1,\spar}+z_{\spar}}{\left|p_1+z_{\spar}e_1+z_\perp\right|}=\frac{p_{3,\spar}}{\left|p_3\right|}-\frac{p_{2,\spar}}{\left|p_2\right|}\gtrsim 1}\ \ \ \ \ \ \ \mbox{on }z_{\spar}\in\left[2^k,2\cdot 2^k\right]\label{5.6}
\end{align}
due to $\displaystyle{\frac{p_{3,\spar}}{\left|p_3\right|}\approx 1}$ and $\displaystyle{\left|\frac{p_{2,\spar}}{\bigl|p_2\bigr|}\right|<\frac{3+c}{\sqrt{\left(3-c\right)^2+\left(2-c\right)^2}}<0.85}$. This tells us that the above $z^\star_{\spar}$ is actually the unique solution of $\Phi=0$. These finish the proof of the volume of $B_k \cap \left\{\Phi=0\right\}\sim 2^{5k}$.\par
Finally, we note that for any $0\le k\ll 2^j$ with $2^k\in A_j$, if $(p_1,p_2)\in B_k$, then we have the following two facts: (i) For $l=1,2$, we have $\displaystyle{\left|p_l\right|\sim 2^k \Longrightarrow p_l\in E_{\mbox{low},j} \Longrightarrow f_0(p_l)=\frac{1}{2^{sk}}}$; (ii) $\left|z\right|\le\left|p_1\right|+\left|p_2\right|\lesssim 2^k,\left|z\right|\ge\left|z_l\right|\sim 2^k \Longrightarrow \left|z\right|\sim 2^k \Longrightarrow \left|p_3-2^{2^j}e_1\right|=\left|z\right|\sim 2^k$; also since $\left|z_\perp\right|\sim 2^k$ and $2^k\in A_j$, we get that $\displaystyle{p_3\in E_{\mbox{high},j} \Longrightarrow f_0(p_3)=\frac{1}{\left(2^{2^j}\right)^s}}$. Namely, $\displaystyle{f_0(p^{(j)}+p_1-p_2)=\frac{1}{\left(2^{2^j}\right)^s}}$. Therefore, when $k$ satisfies $0\le k\ll 2^j$ and $2^k\in A_j$, we obtain
\begin{align*}
    I_k\left[f_0\right]\left(p^{(j)}\right)&\triangleq\left|p^{(j)}\right|^{2\beta}\int_{B_k} \left|p_1\right|^{2\beta}\left|p_2\right|^{2\beta}\left|p_3\right|^{2\beta} f_0(p_1) f_0(p_2) f_0(p_3)\\
    &\ \ \ \ \ \ \ \ \ \ \ \ \ \ \ \ \ \ \ \times\delta(p+p_1-p_2-p_3) \delta\left(\left|p\right|^a+\left|p_1\right|^a-\left|p_2\right|^a-\left|p_3\right|^a\right)\,dp_1 dp_2 dp_3\\
    &\sim \left(2^{2^j}\right)^{2\beta}\left(2^k\right)^{2\beta}\left(2^k\right)^{2\beta}\left(2^{2^j}\right)^{2\beta}\frac{1}{2^{sk}}\frac{1}{2^{sk}}\frac{1}{\left(2^{2^j}\right)^s}\int_{B_k}\delta\left(\left|p\right|^a+\left|p_1\right|^a-\left|p_2\right|^a-\left|p_3\right|^a\right)\,dp_1 dp_2 \\
    &\sim \left(2^{2^j}\right)^{4\beta-s}\left(2^k\right)^{4\beta-s}\left(2^k\right)^{4\beta-s}\left(2^{2^j}\right)^{1-a}2^{5k}=\left(2^{2^j}\right)^{4\beta-s+1-a}\cdot \left(2^k\right)^{4\beta-2s+5},
\end{align*}
where the second last step follows from coarea formula and (\ref{5.3}), (\ref{5.4}) and (\ref{5.6}). Recalling (\ref{5.1}) and (\ref{5.2}) and simplifying $\mathcal{T}\left[f_0\right]\left(p^{(j)}\right)$, we obtain
\begin{align}
    \mathcal{T}\left[f_0\right]\left(p^{(j)}\right)&=\left|p^{(j)}\right|^{2\beta}\int_{B_k} \left|p_1\right|^{2\beta}\left|p_2\right|^{2\beta}\left|p_3\right|^{2\beta} f_0(p_1) f_0(p_2) f_0(p_3)\notag\\
    &\ \ \ \ \ \ \ \ \ \ \ \ \ \ \ \ \ \ \ \times\delta(p+p_1-p_2-p_3) \delta\left(\left|p\right|^a+\left|p_1\right|^a-\left|p_2\right|^a-\left|p_3\right|^a\right)\,dp_1 dp_2 dp_3\notag\\
    &\gtrsim \sum_{\substack{k\\0\le k\ll 2^j}}I_k\left[f_0\right]\left(p^{(j)}\right)\ge\sum_{\substack{k\\0\le k\ll 2^j\\2^k\in A_j}}I_k\left[f_0\right]\left(p^{(j)}\right)\label{5.7}
\end{align}
\noindent\underline{{Step 2. Prove that $\mathcal{T}:L^\infty_s\rightarrow L^\infty_s$ is not bounded.}}\par
We now use the function $f_0$ constructed in Step~1 to prove the unboundedness of $\mathcal{T}$. We divide the proof into the three scenarios stated in the theorem.\\
\underline{Case 1.} $\displaystyle{\beta>\frac{a-1}{4}}$\par
In this case, by picking $k=0$ in (\ref{5.7}), we get $\displaystyle{\mathcal{T}\left[f_0\right]\left(p^{(j)}\right)\gtrsim \left(2^{2^j}\right)^{4\beta-s+1-a}}$. Namely, 
$$\displaystyle{\langle p^{(j)}\rangle^s\mathcal{T}\left[f_0\right]\left(p^{(j)}\right)\gtrsim \left(2^{2^j}\right)^{4\beta+1-a}}.$$
Letting $j\rightarrow +\infty$ and recalling $4\beta+1-a>0$, we obtain $\left\|\mathcal{T}\left[f_0\right]\right\|_{L^\infty_s}=+\infty$.\\
\underline{Case 2.} $\displaystyle{s<4\beta+3-\frac{a}{2}}$\par
In this case, we first pick $k_0$ such that $k_0\sim \varepsilon\cdot 2^j$ and $k_0\le \varepsilon\cdot 2^j$. Then we see that
$$\begin{cases}
    \displaystyle{\frac{2^{k_0}}{2^{2^m}}\ll\frac{2^{k_0}}{2^{2^j}}\sim 2^\varepsilon\ll \frac{1}{4}}\mbox{ for all }m>j\\[14pt]
    \displaystyle{\frac{2^{k_0}}{2^{2^m}}\gg\frac{2^{k_0}}{2^{2^j}}\sim 2^\varepsilon\ll 4}\mbox{ for all }m<j
\end{cases}$$
by picking $j_0\gg 1$ large enough and noticing that $0<\varepsilon\ll 1$ is very small. Therefore, $\displaystyle{\frac{2^{k_0}}{2^{2^j}}\notin\left[\frac{1}{4},4\right]}$, which implies that $k_0\in A_j$. Next, by picking $k=k_0$ in (\ref{5.7}), we get
$$\displaystyle{\mathcal{T}\left[f_0\right]\left(p^{(j)}\right)\gtrsim \left(2^{2^j}\right)^{4\beta-s+1-a}\cdot\left(2^{k_0}\right)^{4\beta-2s+5}}\sim \left(2^{2^j}\right)^{8\beta-3s+6-a}.$$
Namely,
$$\displaystyle{\langle p^{(j)}\rangle^s\mathcal{T}\left[f_0\right]\left(p^{(j)}\right)\gtrsim\left(2^{2^j}\right)^{8\beta-2s+6-a}}.$$
Letting $j\rightarrow +\infty$ and recalling $8\beta-2s+6-a>0$, we obtain $\left\|\mathcal{T}\left[f_0\right]\right\|_{L^\infty_s}=+\infty$.\\
\underline{Case 3.} $\displaystyle{s=4\beta+3-\frac{a}{2}}$ and $\displaystyle{4\beta+1-a=0}$\par
This critical case is more delicate, since each individual dyadic patch contributes only $1$, which is not divergent. Therefore, we sum over all such dyadic patches to obtain a logarithmic divergence. \par
First, it's easy to verify that in this case we have $4\beta-2s+5=0$ and $4\beta-s+1-a=-s$. Next, by (\ref{5.7}), we get
\begin{align*}
    \mathcal{T}\left[f_0\right]\left(p^{(j)}\right)&\gtrsim \sum_{\substack{k\\0\le k\ll 2^j\\2^k\in A_j}}\left(2^{2^j}\right)^{4\beta-s+1-a}\cdot \left(2^k\right)^{4\beta-2s+5}\\
    &=\sum_{\substack{k\\0\le k\ll 2^j\\2^k\in A_j}}\left(2^{2^j}\right)^{-s}=\left(2^{2^j}\right)^{-s}\cdot \#A_j\sim \left(2^{2^j}\right)^{-s}\cdot 2^j.
\end{align*}
Namely,
$$\displaystyle{\langle p^{(j)}\rangle^s\mathcal{T}\left[f_0\right]\left(p^{(j)}\right)\gtrsim2^j}.$$
Letting $j\rightarrow +\infty$, we immediately obtain $\left\|\mathcal{T}\left[f_0\right]\right\|_{L^\infty_s}=+\infty$.\par
To sum up, we have proved that $\mathcal{T}:L^\infty_s\rightarrow L^\infty_s$ is not a bounded operator.

\noindent\underline{{Step 3. Prove that no strong solution
$f\in C([0,T];L^\infty_s)$ with initial datum $f_0$ can exist.}}\par
We will prove by contradiction. If not, then there exists $f\in C([0,T];L^\infty_s)$ such that (i) $f$ is a solution of (\ref{1.3}) and (ii) $f(0)=f_0$. This means that there exists $0<\eta\ll 1$ such that for all $t\in\left[0,\eta\right]$, $\left\|f(t)-f(0)\right\|_{L^\infty_s}\le \eta$, which is equivalent to say that for all $t\in\left[0,\eta\right]$ and $p\in\mathbb{R}^3$, $\left|f(t,p)-f_0(p)\right|\le \eta\langle p\rangle^{-s}$. Therefore, for some small constant $c>0$ we have $f_0\left(p^{(j)}\right)=0$ and $\displaystyle{\left|f\left(t,p^{(j)}\right)\right|\le c\eta\frac{1}{\left(2^{2^j}\right)^s}}$ for all $j>j_0\gg 1$. Now, we fix $j$, when $0\le k \ll 2^j$, $2^k\in A_j$ and $(p_1,p_2)\in B_k$, we observe the following:
$$\displaystyle{f_0(p_l)\sim \frac{1}{2^{sk}},\ \ \ \ \ \ \ \left|f_0(p_l)-\frac{1}{2^{sk}}\right|\le c\eta\frac{1}{2^{sk}}\ \ \ \ \ \mbox{ for }l=1,2,}$$
and
$$\displaystyle{f_0(p_3)\sim \frac{1}{\left(2^{2^j}\right)^s},\ \ \ \ \ \ \ \left|f_0(p_3)-\frac{1}{\left(2^{2^j}\right)^s}\right|\le c\eta\frac{1}{\left(2^{2^j}\right)^s}.}$$
Therefore, when $0\le t\le \eta$, thanks to the smallness of $0\le \eta\ll 1$, we have
\begin{align}
    \displaystyle{f(p_1)f(p_2)f(p_3)\ge \left(1-c\eta\right)^3\frac{1}{2^{2ks}\cdot \left(2^{2^j}\right)^s}\approx \frac{1}{2^{2ks}\cdot \left(2^{2^j}\right)^s}}\label{5.8}
\end{align}
and
\begin{align}
    \left|f(p^{(j)})\right|\left[f(p_2)f(p_3)+f(p_1)f(p_2)+f(p_1)f(p_3)\right]&\le \frac{c\eta\left(1+c\eta\right)^2}{\left(2^{2^j}\right)^{2s}\cdot 2^{sk}}+\frac{c\eta\left(1+c\eta\right)^2}{\left(2^{2^j}\right)^{s}\cdot 2^{2sk}}\notag\\
    &\le\frac{2c\eta\left(1+c\eta\right)^2}{\left(2^{2^j}\right)^{s}\cdot 2^{2sk}}\ll \frac{1}{2^{2ks}\cdot \left(2^{2^j}\right)^s},\label{5.9}
\end{align}
which will still give us 
\begin{align}
    &f(p_1)f(p_2)f(p_3)+f(p^{(j)})f(p_2)f(p_3)-f(p^{(j)})f(p_1)f(p_2)-f(p^{(j)})f(p_1)f(p_3)\notag\\
    \ge &f(p_1)f(p_2)f(p_3)-\left|f(p^{(j)})\right|\left[f(p_2)f(p_3)+f(p_1)f(p_2)+f(p_1)f(p_3)\right]\sim \frac{1}{2^{2ks}\cdot \left(2^{2^j}\right)^s}, \label{5.10}
\end{align}
where we abbreviate $f(t,\cdot)$ by $f(\cdot)$ for simplicity. Then, we just need to apply the argument in Step 1 and we can conclude
$$\mathcal{T}\left[f\right]\left(t,p^{(j)}\right)\gtrsim a_j\ \ \ \ \ \ \ \ \ \mbox{for }0\le t\le \eta,$$
where 
$$a_j\triangleq\begin{cases}
    \left(2^{2^j}\right)^{4\beta-s+1-a}\ \ \ \ \ \ \ \mbox{, in Case 1;}\\
    \left(2^{2^j}\right)^{8\beta-3s+6-a}\ \ \ \ \ \ \mbox{, in Case 2;}\\
    \left(2^{2^j}\right)^{-s}\cdot 2^j\ \ \ \ \ \ \ \ \ \ \ \mbox{, in Case 3.}\\
\end{cases}$$
By Duhamel, we have
$$f(t,p^{(j)})=f_0(p^{(j)})+\int^t_0\mathcal{T}\left[f\right]\left(s,p^{(j)}\right)\,ds\ge\int^t_0\mathcal{T}\left[f\right]\left(s,p^{(j)}\right)\,ds\gtrsim t\cdot a_j\ \ \ \ \ \mbox{whenever }0\le t\le \eta,$$
which implies
$$\left\|f(t)\right\|_{L^\infty_s}\ge \langle p^{(j)}\rangle^sf\left(t,p^{(j)}\right)\gtrsim t\cdot a_j\cdot\left(2^{2^j}\right)^s.$$
Letting \(j\to\infty\), we obtain $a_j\left(2^{2^j}\right)^s\to\infty$. Therefore, $\left\|f(t)\right\|_{L^\infty_s}=\infty$ for all $0\le t\le \eta$, which means that no strong solution
$f\in C([0,T];L^\infty_s)$ with this initial datum $f_0$ can exist.
\end{proof}
\begin{remark}
In the work of \cite{AmpatzoglouLeger2025}, the authors proved a special ill-posedness result for the case $a=2$ and $\beta>1/4$. Since their initial data are radial, they cannot directly avoid the interaction between the gain term and the loss term. Instead, they introduce an additional cosine factor into the initial data in order to cancel this interaction. This is also a valid approach, and can be viewed as an alternative method.
\end{remark}

\section{Auxiliary Results}
In this section, we collect several auxiliary lemmas. The first part concerns spherical difference estimates, which will be used in the cancellation argument in the proof of local well-posedness. The second part consists of some elementary geometric estimates for the resonance function. We begin with the first part.

\subsection{Spherical Difference Estimates}
\ \par
Let $Y_{l m}$ be spherical harmonics on $\mathbb{S}^2$. Then
\begin{align*}
    \Delta_{\mathbb{S}^2}Y_{l m}
    =
    -l(l+1)Y_{l m},
\end{align*}
where $l =0,1,2,\dots$ and $m=-l,\dots,l$. Denote by
$\{P_l(x)\}_{l\in\mathbb N}$ the Legendre polynomials and by
$\{P_l^{(\alpha,\beta)}(x)\}_{l\in\mathbb N,\alpha,\beta>-1}$ the Jacobi polynomials.

For $\mu\in[-1,1]$, define the family of operators $T_\mu$ by
\begin{align*}
    (T_\mu f)(\omega)
    \triangleq
    \int_{\mathbb{S}^2} f(\sigma)
    \delta(\sigma\cdot\omega-\mu)\,\dd\sigma,
    \qquad \omega\in\mathbb{S}^2 .
\end{align*}

\begin{lemma}[Funk--Hecke Formula]
For every $\mu\in[-1,1]$, every $l=0,1,2,\dots$, and every
$m=-l,\dots,l$, we have
\begin{align*}
    T_\mu Y_{l m}= \lambda_l(\mu)Y_{l m},
\end{align*}
where $\lambda_l(\mu)=2\pi P_l(\mu)$.
\end{lemma}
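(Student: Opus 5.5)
The plan is to prove the Funk--Hecke identity by reducing it to the addition theorem for spherical harmonics, i.e. to the fact that a zonal kernel acts diagonally on each eigenspace of $\Delta_{\mathbb{S}^2}$. Fix $\mu\in(-1,1)$; the endpoint cases $\mu=\pm1$ will then follow by continuity in $\mu$, or by interpreting the measure as a point mass at $\pm\omega$.

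First I will check that $T_\mu$ commutes with rotations. For $R\in SO(3)$ the substitution $\sigma\mapsto R\sigma$ gives $(T_\mu(f\circ R))(\omega)=(T_\mu f)(R\omega)$. The reason is that the kernel $\delta(\sigma\cdot\omega-\mu)$ depends only on the inner product $\sigma\cdot\omega$, and surface measure is rotation invariant.

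Next, since $\Delta_{\mathbb{S}^2}$ also commutes with rotations, $T_\mu$ preserves each space $\mathcal{H}_l=\mathrm{span}\{Y_{lm}\}_m$. To see this directly, expand $\delta(t-\mu)$ in Legendre polynomials in the variable $t$:
\[
\delta(t-\mu)=\sum_{n}\frac{2n+1}{2}P_n(\mu)P_n(t),
\]
understood in the sense of distributions on $[-1,1]$. Then combine this with the addition theorem
\[
P_n(\sigma\cdot\omega)=\frac{4\pi}{2n+1}\sum_{m'}Y_{nm'}(\sigma)\overline{Y_{nm'}(\omega)}
\]
and with orthonormality of the $Y_{nm}$. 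Pairing against $Y_{lm}$ kills every term with $n\ne l$ and leaves
\[
T_\mu Y_{lm}=\frac{2l+1}{2}P_l(\mu)\cdot\frac{4\pi}{2l+1}\,Y_{lm}=2\pi P_l(\mu)Y_{lm}.
\]

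The justification step is where I expect the only real care to be needed, since the Legendre series of a delta function does not converge absolutely. I will therefore avoid it with a direct computation. By rotation invariance it suffices to take $\omega=e_3$, with $Y_{lm}$ replaced by any element of $\mathcal{H}_l$. In spherical coordinates $\sigma=(\sqrt{1-t^2}\cos\phi,\sqrt{1-t^2}\sin\phi,t)$ we have $d\sigma=dt\,d\phi$, so
\[
(T_\mu f)(e_3)=\int_0^{2\pi}f(\sqrt{1-\mu^2}\cos\phi,\sqrt{1-\mu^2}\sin\phi,\mu)\,d\phi.
\]
For $f=Y_{lm}\propto P_l^{m}(t)e^{im\phi}$, the $\phi$-integral vanishes unless $m=0$, and for $m=0$ it equals $2\pi Y_{l0}(\mu\text{-latitude})$. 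Since $Y_{l0}(e_3)\propto P_l(1)=1$, this gives $(T_\mu Y_{l0})(e_3)=2\pi P_l(\mu)Y_{l0}(e_3)$, and $(T_\mu Y_{lm})(e_3)=0=2\pi P_l(\mu)Y_{lm}(e_3)$ for $m\ne0$ because $Y_{lm}(e_3)=0$. Equivariance together with irreducibility of $\mathcal{H}_l$ under $SO(3)$ (Schur's lemma) then shows that $T_\mu|_{\mathcal{H}_l}$ is a scalar. That scalar is identified as $2\pi P_l(\mu)$ by evaluating on $Y_{l0}$ at $e_3$, which completes the proof.
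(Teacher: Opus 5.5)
Your proof is correct. The paper gives no argument of its own: it cites the Funk--Hecke theorem in M\"uller and notes that $T_\mu$ is a zonal operator, so your proof is a self-contained replacement for that citation.

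The cited theorem is stated for zonal kernels given by functions $F(\sigma\cdot\omega)$, with eigenvalue $2\pi\int_{-1}^{1}F(t)P_l(t)\,dt$. Applying it to $\delta(\sigma\cdot\omega-\mu)$ gives $2\pi P_l(\mu)$ formally, but strictly it needs an approximation $F_\epsilon\to\delta_\mu$ and continuity of $\mu\mapsto T_\mu f$. Your route handles the singular kernel directly: rotation equivariance, plus the explicit latitude-circle average at the pole, where only the $m=0$ mode survives and $Y_{lm}(e_3)=0$ for $m\neq 0$. The citation, in exchange, gives the formula for arbitrary kernels.

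One point in your write-up is loosely justified but not needed. The Schur step requires $T_\mu\mathcal{H}_l\subseteq\mathcal{H}_l$. Your stated reason, that $\Delta_{\mathbb{S}^2}$ also commutes with rotations, does not by itself imply this. You would need that $\Delta_{\mathbb{S}^2}$ is the Casimir of the rotation action, or that the $\mathcal{H}_l$ are pairwise inequivalent (their dimensions $2l+1$ differ).

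You can drop Schur entirely. By linearity, your computation already gives $(T_\mu g)(e_3)=2\pi P_l(\mu)g(e_3)$ for every $g\in\mathcal{H}_l$. Then for $\omega=Re_3$ and $f\in\mathcal{H}_l$,
\[
(T_\mu f)(\omega)=(T_\mu(f\circ R))(e_3)=2\pi P_l(\mu)\,(f\circ R)(e_3)=2\pi P_l(\mu)\,f(\omega).
\]
This uses only that $\mathcal{H}_l$ is rotation-invariant, which does follow from $\Delta_{\mathbb{S}^2}$ commuting with rotations.
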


\begin{proof}
This follows from the Funk--Hecke formula; see for instance Theorem 6 in
M\"uller, \emph{Spherical Harmonics}, Lecture Notes in Mathematics, vol.~17. \cite{Muller1966}\par
Briefly speaking, this is because $T_\mu$ is a zonal convolution operator on $\mathbb{S}^2$.
\end{proof}
\vspace{1em}

\begin{lemma}[Legendre Polynomial Difference Estimate]
\ \par
Assume $\displaystyle{0<\theta_-,\theta_+,\theta\leq \frac{\pi}{2}}$. Let $\displaystyle{\theta_0\triangleq\frac{\theta_++\theta_-}{2},\ \Delta\theta\triangleq\left|\theta_+-\theta_-\right|}$. \vspace{0.4em} Moreover, assume $\Delta\theta\ll \theta_0$. Then for every
$l=0,1,2,\dots$, we have
\begin{align}
    |P_l(\cos\theta)|
    &\lesssim\min\left(1,\frac{1}{(l\theta)^{1/2}}\right),
    \label{eq:Legendre-pointwise}\\
    |P_l(\cos\theta_+)-P_l(\cos\theta_-)|
    &\lesssim\min\left(l\Delta\theta,1\right)\min\left(1,\frac{1}{(l\theta_0)^{1/2}}\right).
    \label{eq:Legendre-difference}
\end{align}
\end{lemma}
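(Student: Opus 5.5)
We will rely on two classical ingredients for Legendre polynomials on $[0,\pi/2]$: the uniform bound $|P_l(x)|\le 1$ on $[-1,1]$, and the Laplace-type (Stieltjes) asymptotic bound
\begin{align*}
|P_l(\cos\theta)|\le \sqrt{\frac{2}{\pi l\sin\theta}},\qquad 0<\theta<\pi,\ l\ge 1 .
\end{align*}
Since $\sin\theta\sim\theta$ for $0<\theta\le\pi/2$, combining the two gives \eqref{eq:Legendre-pointwise} immediately; the case $l=0$ is trivial since $P_0\equiv1$. So the first estimate is a direct citation.

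For \eqref{eq:Legendre-difference} we will prove the two factors $\min(1,(l\theta_0)^{-1/2})$ and $\min(1,l\Delta\theta)$ separately and then combine them.

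\textbf{Without the gain.} By the triangle inequality and \eqref{eq:Legendre-pointwise},
\begin{align*}
|P_l(\cos\theta_+)-P_l(\cos\theta_-)|\lesssim \min\bigl(1,(l\theta_+)^{-1/2}\bigr)+\min\bigl(1,(l\theta_-)^{-1/2}\bigr).
\end{align*}
Because $\Delta\theta\ll\theta_0$, we have $\theta_\pm\sim\theta_0$, so the right side is $\lesssim\min(1,(l\theta_0)^{-1/2})$. This settles the regime $l\Delta\theta\ge1$.

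\textbf{With the gain ($l\Delta\theta<1$).} Write the difference as
\begin{align*}
P_l(\cos\theta_+)-P_l(\cos\theta_-)=\int_{\theta_-}^{\theta_+}\frac{d}{d\phi}P_l(\cos\phi)\,d\phi
\end{align*}
(up to orientation). The key input is a derivative bound of Bernstein type:
\begin{align*}
\Bigl|\frac{d}{d\phi}P_l(\cos\phi)\Bigr|\lesssim l\,\min\bigl(1,(l\phi)^{-1/2}\bigr),\qquad 0<\phi\le \frac{\pi}{2}.
\end{align*}
To obtain it, we first use $\frac{d}{d\phi}P_l(\cos\phi)=-\sin\phi\,P_l'(\cos\phi)$, and recall that $P_l'=\frac{l+1}{2}P^{(1,1)}_{l-1}$. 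Equivalently, we use the associated Legendre function $P_l^1(\cos\phi)=\sin\phi\,P_l'(\cos\phi)$, for which the classical bound is $|P_l^1(\cos\phi)|\lesssim l^{1/2}(\sin\phi)^{-1/2}$ (Szegő's estimate for Jacobi polynomials, $|P^{(1,1)}_{n}(\cos\phi)|\lesssim n^{-1/2}\phi^{-3/2}$ for $\phi\gtrsim 1/n$). This yields $\lesssim l(l\phi)^{-1/2}$ when $l\phi\ge1$. For $l\phi\le1$, the Markov-type bound $|P_l'(x)|\le l(l+1)/2$ together with $\sin\phi\le\phi\le 1/l$ gives a bound $\lesssim l$.

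Integrating over an interval of length $\Delta\theta$ on which $\phi\sim\theta_0$ then gives
\begin{align*}
|P_l(\cos\theta_+)-P_l(\cos\theta_-)|\lesssim l\Delta\theta\,\min\bigl(1,(l\theta_0)^{-1/2}\bigr).
\end{align*}
Combined with the previous regime, this is exactly \eqref{eq:Legendre-difference}.

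The main obstacle is citing or justifying the uniform derivative bound on $P_l^1$ precisely, that is, Szegő's Jacobi polynomial estimate, with constants uniform in $l$ and $\phi\in(0,\pi/2]$. If we want to avoid citing it, the fallback is the Mehler–Dirichlet integral representation
\begin{align*}
P_l(\cos\phi)=\frac{\sqrt2}{\pi}\int_0^\phi\frac{\cos((l+\tfrac12)t)}{\sqrt{\cos t-\cos\phi}}\,dt,
\end{align*}
differentiated after the substitution $t=\phi u$, together with an integration-by-parts argument to extract the $(l\phi)^{-1/2}$ decay. This is routine but somewhat messy.
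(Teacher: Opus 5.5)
Your proposal is correct and follows essentially the same route as the paper's proof. The pointwise bound is cited (Stieltjes/Szegő). The derivative bound $\bigl|\frac{d}{d\phi}P_l(\cos\phi)\bigr|\lesssim l\min\bigl(1,(l\phi)^{-1/2}\bigr)$ comes from $P_l'=\frac{l+1}{2}P^{(1,1)}_{l-1}$, using Szegő's Jacobi estimate when $l\phi\gtrsim 1$ and the bound $|P_l'|\le l(l+1)/2$ when $l\phi\lesssim 1$. The difference is then handled by the mean value theorem when $l\Delta\theta\le 1$ and by the triangle inequality when $l\Delta\theta>1$, so the Mehler--Dirichlet fallback is unnecessary.
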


\begin{proof}
Estimate (\ref{eq:Legendre-pointwise}) is the standard pointwise bound for Legendre polynomials. See Theorem 7.3.4 in \cite{Szego1975} for reference.\par
To prove (\ref{eq:Legendre-difference}), We first prove the derivative estimate
\begin{align}
    \left|\frac{\dd}{\dd\theta}P_l(\cos\theta)\right|\lesssim l\min\left(1,\frac{1}{(l\theta)^{1/2}}\right),
    \qquad 0<\theta\leq \frac{\pi}{2}.
    \label{eq:derivative-bound}
\end{align}
Recall the following identity about the derivative of Legendre polynomial
\begin{align*}
    \frac{\dd}{\dd x}P_l(x)=\frac{l+1}{2}P_{l-1}^{(1,1)}(x).
\end{align*}
Hence we get
\begin{align*}
    \frac{\dd}{\dd\theta}P_l(\cos\theta)=-\sin\theta\,P_l'(\cos\theta)=-\frac{l+1}{2}\sin\theta\,P_{l-1}^{(1,1)}(\cos\theta).
\end{align*}

When $l\theta\gtrsim 1$, the Jacobi polynomial estimate (See Theorem 7.32.2 in \cite{Szego1975}) gives
\begin{align*}
    |P_{l-1}^{(1,1)}(\cos\theta)|\lesssim    \theta^{-3/2}l^{-1/2}.
\end{align*}
Therefore we obtain
\begin{align*}
    \left|\frac{\dd}{\dd\theta}P_l(\cos\theta)\right|\lesssim l\sin\theta\,\theta^{-3/2}l^{-1/2}  \lesssim l^{1/2}\theta^{-1/2}  =
    l(l\theta)^{-1/2}=l\min\left(1,\frac{1}{(l\theta)^{1/2}}\right).
\end{align*}

When $l\theta\ll1$, we can estimate it directly. First, we notice that $\left|P_{l-1}^{(1,1)}(x)\right|$ attains its maximum at either $x=1$ of $x=-1$ due to $\displaystyle{\alpha=\beta=1>-\frac{1}{2}}$ (See \cite{Szego1975}). Without loss of generality, we may assume
\begin{align*}
    |P_{l-1}^{(1,1)}(x)|\leq P_{l-1}^{(1,1)}(1)=\binom{l}{l-1}=l,
\end{align*}
which gives that $\displaystyle{\left|P^\prime_l(x)\right|\le \frac{l+1}{2}\cdot l=\frac{l(l+1)}{2}}$. Therefore, we get
$$\left|\frac{\dd}{\dd\theta}P_l(\cos\theta)\right|\lesssim \sin \theta\left|P^\prime_l(\cos\theta)\right|\le \theta\frac{l(l+1)}{2}\lesssim \theta l^2\lesssim l=l\min\left(1,\frac{1}{(l\theta)^{1/2}}\right).$$
Thus \eqref{eq:derivative-bound} follows.

We now prove \eqref{eq:Legendre-difference}. If $l\Delta\theta\leq 1$,
then by the mean value theorem and \eqref{eq:derivative-bound},
\begin{align*}
    |P_l(\cos\theta_+)-P_l(\cos\theta_-)|\lesssim l\Delta\theta\min\left\{1,\frac{1}{(l\theta_0)^{1/2}}\right\},
\end{align*}
where we used $\theta\approx\theta_0$ on the interval
$[\theta_-,\theta_+]$, since $\Delta\theta\ll\theta_0$.

If $l\Delta\theta>1$, then by \eqref{eq:Legendre-pointwise},
\begin{align*}
    |P_l(\cos\theta_+)-P_l(\cos\theta_-)|\leq\left|P_l(\cos\theta_+)\right|+|P_l(\cos\theta_-)|  \lesssim
    \min\left\{1,\frac{1}{(l\theta_0)^{1/2}}\right\}.
\end{align*}
Combining the two cases gives \eqref{eq:Legendre-difference}.
\end{proof}
\vspace{1em}

\begin{lemma}[Spherical Averaging Difference Estimate]
\ \par
Let $\mu_+,\mu_-\in(-1,1)$, and set $\displaystyle{\mu_0\triangleq\frac{\mu_++\mu_-}{2}}$. Assume that $\displaystyle{\frac{|\mu_+-\mu_-|}{1-|\mu_0|}\leq \eps}$, where $0<\eps\ll 1$. For $f\in L^2(\mathbb{S}^2)$, define the difference operator $D$ as
\begin{align*}
    (Df)(\omega)\triangleq\int_{\mathbb{S}^2}f(\sigma)
    \bigl[
        \delta(\sigma\cdot\omega-\mu_+)
        -
        \delta(\sigma\cdot\omega-\mu_-)
    \bigr]
    \dd\sigma .
\end{align*}
Then
\begin{align}
    \|Df\|_{L^2(\mathbb{S}^2)}\lesssim\eps^{1/2}\|f\|_{L^2(\mathbb{S}^2)}.\label{operator norm}
\end{align}
\end{lemma}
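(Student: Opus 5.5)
The plan is to diagonalize $D$ in spherical harmonics. By Lemma 6.1 (Funk--Hecke), each $T_\mu$ acts on $Y_{lm}$ as multiplication by $2\pi P_l(\mu)$. Since $D=T_{\mu_+}-T_{\mu_-}$, $D$ is a Fourier multiplier on $L^2(\mathbb{S}^2)$ with symbol $2\pi\bigl(P_l(\mu_+)-P_l(\mu_-)\bigr)$. Expanding $f=\sum_{l,m}\hat f_{lm}Y_{lm}$ and using Parseval, we get
\begin{align*}
\|Df\|_{L^2}^2=4\pi^2\sum_{l,m}|P_l(\mu_+)-P_l(\mu_-)|^2|\hat f_{lm}|^2 .
\end{align*}
So it suffices to prove the uniform bound $\sup_l|P_l(\mu_+)-P_l(\mu_-)|\lesssim\eps^{1/2}$.

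To get it, write $\mu_\pm=\cos\theta_\pm$. We use the parity $P_l(-x)=(-1)^lP_l(x)$. If $\mu_0<0$, replacing $\mu_\pm$ by $-\mu_\pm$ only multiplies the difference by $(-1)^l$. Note also that $\mu_+,\mu_-$ have the same sign, since $|\mu_+-\mu_-|\le\eps(1-|\mu_0|)$ is much smaller than $|\mu_0|$ unless $|\mu_0|\lesssim\eps$; in that case both are near $0$ and the argument below still applies. So we may assume $\theta_\pm\in(0,\pi/2]$, with $\theta_0$ bounded away from $0$ in the near-equator case.

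Next, translate the hypothesis into angles. Near the pole, $1-\cos\theta\sim\theta^2$, and $|\cos\theta_+-\cos\theta_-|\sim\theta_0\Delta\theta$, provided $\Delta\theta\ll\theta_0$. This proviso follows from the hypothesis itself: it forces the two latitudes to be close relative to their distance from the pole. Then
\begin{align*}
\frac{|\mu_+-\mu_-|}{1-|\mu_0|}\sim\frac{\theta_0\Delta\theta}{\theta_0^2}=\frac{\Delta\theta}{\theta_0},
\end{align*}
so the hypothesis reads $\Delta\theta\lesssim\eps\theta_0$. In particular $\Delta\theta\ll\theta_0$, and Lemma 6.2 applies. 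It gives
\begin{align*}
|P_l(\cos\theta_+)-P_l(\cos\theta_-)|\lesssim\min(l\Delta\theta,1)\min\bigl(1,(l\theta_0)^{-1/2}\bigr).
\end{align*}

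Set $X=l\theta_0$. The right side is at most $\min(\eps X,1)\min(1,X^{-1/2})$, and we split into three cases:
\begin{itemize}
\item If $X\le1$, the bound is $\le\eps$.
\item If $1\le X\le\eps^{-1}$, the bound is $\le\eps X\cdot X^{-1/2}=\eps X^{1/2}\le\eps^{1/2}$.
\item If $X\ge\eps^{-1}$, the bound is $\le X^{-1/2}\le\eps^{1/2}$.
\end{itemize}
In every case the multiplier is $\lesssim\eps^{1/2}$, which proves \eqref{operator norm}.

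The main obstacle is the geometric translation from the $\mu$-hypothesis to $\Delta\theta\lesssim\eps\theta_0$. It must hold uniformly near the poles, where $1-\cos\theta\sim\theta^2/2$, and near the equator, where $1-|\mu_0|\sim1$ and $d\mu\sim d\theta$. I would verify it directly with the mean value theorem for $\arccos$, using $|d\theta/d\mu|=1/\sin\theta$ and $\sin\theta\sim\theta$ on $[0,\pi/2]$. This shows that the hypothesis indeed forces $\Delta\theta\lesssim\eps\theta_0$, and hence $\Delta\theta\ll\theta_0$ as Lemma 6.2 requires.
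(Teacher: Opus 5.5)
Your proposal is correct and follows the paper's proof essentially step for step. The shared steps are: Funk--Hecke diagonalization reducing the claim to $\sup_l|P_l(\mu_+)-P_l(\mu_-)|$, the parity reflection to $\theta_0\le\pi/2$, translating the hypothesis into $\Delta\theta/\theta_0\lesssim\eps$, and Lemma 6.2 followed by a case split. The only cosmetic difference is that you organize the split by $X=l\theta_0$ using $l\Delta\theta\lesssim\eps X$, while the paper splits by $l\Delta\theta\lessgtr 1$ and $l\theta_0\lessgtr 1$ and obtains the intermediate bound $(\Delta\theta/\theta_0)^{1/2}$.

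There is also a slight circularity in how you derive $\Delta\theta\ll\theta_0$, which the identity $\cos\theta_--\cos\theta_+=2\sin\theta_0\sin(\Delta\theta/2)$ (for $\theta_+>\theta_-$) removes. It gives $|\mu_+-\mu_-|\sim\theta_0\Delta\theta$ for all $\theta_\pm\in(0,\pi/2]$ with no proviso.
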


\begin{proof}
By Lemma 6.1 (Funk--Hecke formula), we see that $D=T_{\mu_+}-T_{\mu_-}$, and for every spherical harmonic $Y_{l m}$,
\begin{align*}
    DY_{l m}=2\pi \bigl(P_l(\mu_+)-P_l(\mu_-)\bigr)Y_{l m}.
\end{align*}
Therefore, by Plancherel on $L^2(\mathbb{S}^2)$,
\begin{align*}
    \|D\|_{L^2(\mathbb{S}^2)\to L^2(\mathbb{S}^2)}=2\pi\sup_{l\geq 0}|P_l(\mu_+)-P_l(\mu_-)|.
\end{align*}

Write $\mu_\pm=\cos\theta_\pm, \ \theta_\pm\in(0,\pi)$, and set $\displaystyle{\theta_0\triangleq\frac{\theta_++\theta_-}{2}}$. We claim first that
\begin{align}
    1-|\mu_0|\sim\min\left(\theta_0^2,(\pi-\theta_0)^2\right).
    \label{eq:mu0-theta0}
\end{align}
It suffices to prove this for $0<\theta_0\leq \pi/2$; the case $\pi/2<\theta_0<\pi$ follows by replacing $\theta_\pm$ with $\pi-\theta_\pm$ and noticing that $\theta_\pm<\pi$. Put $\displaystyle{\delta\triangleq\frac{\theta_+-\theta_-}{2}}$. Then $\theta_\pm=\theta_0\pm\delta$. Since $\theta_\pm>0$, we have
$|\delta|<\theta_0$. Moreover,
$$1-\left|\mu_0\right|=1-\frac{\cos\theta_+ + \cos\theta_-}{2}=1-\cos\theta_0\cdot\cos\delta\ge 1-\cos\theta_0\sim\theta_0^2.$$
On the other hand, 
$$1-\left|\mu_0\right| =(1-\cos\theta_0)+\cos\theta_0(1-\cos\delta)\le1-\cos\theta_0+1-\cos\delta\sim \theta_0^2+\delta^2\sim\theta_0^2$$
due to $|\delta|<\theta_0$. This proves \eqref{eq:mu0-theta0}.

Next, again it suffices to prove (\ref{operator norm}) for $0<\theta_0\leq\pi/2$; the case $\pi/2<\theta_0<\pi$ follows by performing the change of variables: $\widetilde\theta_\pm\triangleq\pi-\theta_\pm, \ \widetilde\theta_0\triangleq\pi-\theta_0$, and noticing that 
$$P_l(\cos\theta_\pm)=P_l(-\cos\widetilde\theta_\pm)=(-1)^l P_l(\cos\widetilde\theta_\pm).$$
Now, we have $|\mu_+-\mu_-|=|\cos\theta_+-\cos\theta_-|\sim\theta_0|\theta_+-\theta_-|$ and $\left|\mu_+-\mu_-\right|\le\varepsilon\left(1-\mu_0\right)\sim\eps\theta_0^2$, which implies $\displaystyle{\frac{\Delta\theta}{\theta_0}\lesssim\eps\ll 1}$ .

Now we apply Lemma 6.2 to estimate $\left|P_l(\cos\theta_+)-P_l(\cos\theta_-)\right|$. First, when $l\Delta\theta\leq 1$, then (\ref{eq:Legendre-difference}) gives
\begin{align*}
    |P_l(\cos\theta_+)-P_l(\cos\theta_-)|\lesssim l\Delta\theta\min\left\{1,\frac{1}{(l\theta_0)^{1/2}}\right\}.
\end{align*}
If $l\theta_0\leq 1$, then
\begin{align*}
    |P_l(\cos\theta_+)-P_l(\cos\theta_-)|\lesssim l\Delta\theta\leq\frac{\Delta\theta}{\theta_0}\lesssim \left(\frac{\Delta\theta}{\theta_0}\right)^{1/2}.
\end{align*}
If $l\theta_0>1$, then
\begin{align*}
    |P_l(\cos\theta_+)-P_l(\cos\theta_-)|\lesssim \frac{l\Delta\theta}{(l\theta_0)^{1/2}}=\frac{l^{1/2}\Delta\theta}{\theta_0^{1/2}} \leq\left(\frac{\Delta\theta}{\theta_0}\right)^{1/2}\lesssim\eps^{1/2},
\end{align*}
where we used $l\Delta\theta\leq 1$. Next, when $l\Delta\theta>1$, then using (\ref{eq:Legendre-pointwise}) we obtain
\begin{align*}
    |P_l(\cos\theta_+)-P_l(\cos\theta_-)|\lesssim \left|P_l(\cos\theta_+)\right|+\left|P_l(\cos\theta_-)\right|\sim\left|P_l(\cos\theta_0)\right|\lesssim
    \frac{1}{(l\theta_0)^{1/2}}\le\left(\frac{\Delta\theta}{\theta_0}\right)^{1/2},
\end{align*}
where in the last inequality we used $l\Delta\theta>1$. Thus in all cases we obtain
\begin{align*}
    |P_l(\mu_+)-P_l(\mu_-)|\lesssim\left(\frac{\Delta\theta}{\theta_0}\right)^{1/2}\lesssim\eps^{1/2},
\end{align*}
which implies that
$\left\|D\right\|_{L^2(\mathbb{S}^2)\to L^2(\mathbb{S}^2)}\lesssim\eps^{1/2}$ and hence (\ref{operator norm}) follows.
\end{proof}

\subsection{Geometric Properties of the Resonance Function}
\ \par
We now turn to the second part, which concerns elementary geometric properties of the resonance function. Recall from (\ref{3.8}) that
$$c_\star=\frac{|p|^a+|p_1|^a}{|p+p_1|^a}=\frac{|p|^a+r^a}{|\rho|^a}.$$\par
\begin{lemma}
    Assume $1<a\le 5$, $0<c_\star-1\ll 1$ and $\displaystyle{0<\widetilde{r}=\frac{r}{|p|}<1}$. Denote by $\theta$ the angle between $p$ and $p_1$. Then the following statements hold:\begin{enumerate}[label=(\roman*),itemsep=6pt]
        \item The case $\theta\approx0$ is impossible;
        \item If $\theta\approx\pi$, then $c_\star-1\sim \widetilde{r}$;
        \item If $\sin\theta\gtrsim 1$ and $c_\star-1\lesssim\widetilde{r}$, then we have $\displaystyle{\Delta\theta\sim\frac{c_\star-1}{\widetilde{r}}}$. In other words, the length of the admissible $\theta$-interval is of order $\displaystyle{\frac{c_\star-1}{\widetilde{r}}}$.
    \end{enumerate}
\end{lemma}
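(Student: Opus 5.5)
We reduce everything to a one-variable function of $\cos\theta$. Write $|p|=R$, $r=\widetilde r R$, and $|\rho|^2=R^2(1+\widetilde r^2+2\widetilde r\cos\theta)$. Then
$$c_\star=F(x)\triangleq\frac{1+\widetilde r^{\,a}}{(1+\widetilde r^2+2\widetilde r x)^{a/2}},\qquad x=\cos\theta\in[-1,1].$$
So $c_\star$ depends only on $(\widetilde r,x)$, and $F$ is strictly decreasing in $x$. The plan is to study the equation $F(x)=c_\star$ with $c_\star-1$ small. The three items follow from the values of $F$ at $x=\pm1$ together with a lower bound on $|F'|$.

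For (i), we evaluate $F(1)=(1+\widetilde r^a)/(1+\widetilde r)^a$. Since $a>1$ and $0<\widetilde r<1$, strict convexity of $t\mapsto t^a$ (or subadditivity of $t\mapsto t^a$ being reversed) gives $(1+\widetilde r)^a\ge 1+\widetilde r^a+a\widetilde r$. Hence
$$F(1)\le 1-c\,\widetilde r<1.$$
The same bound persists for $x$ near $1$, so $c_\star\le 1$ there, contradicting $c_\star>1$. To make "$\theta\approx0$" quantitative, we note that $F(x)<1$ on a neighborhood $x\ge 1-\eta$. To define $\eta$, we use the crossing point $x_1$ where $F(x_1)=1$, which sits at $1+\widetilde r^2+2\widetilde r x_1=(1+\widetilde r^a)^{2/a}$. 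Expanding in $\widetilde r$ shows $x_1=-\widetilde r/2+O(\widetilde r^{a-1})+\dots$, which is bounded away from $1$ when $\widetilde r<1$. The case $\widetilde r$ close to $1$ should be checked separately; there $x_1$ is still bounded away from $1$, since $F(1)=2^{1-a}<1$.

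For (ii), we compute
$$F(-1)=\frac{1+\widetilde r^{\,a}}{(1-\widetilde r)^a}\ge 1+a\widetilde r\ge 1+c\widetilde r.$$
If $\theta\approx\pi$ is admissible with $c_\star-1\ll1$, then $\widetilde r$ must be small. In that regime $F(-1)-1\sim\widetilde r$, and $|F'(x)|\sim\widetilde r$, so $F(x)-1\sim\widetilde r$ for $x$ near $-1$. This gives $c_\star-1\sim\widetilde r$.

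For (iii), we differentiate:
$$F'(x)=-a\widetilde r\,(1+\widetilde r^{\,a})(1+\widetilde r^2+2\widetilde rx)^{-a/2-1},$$
so $|F'|\sim\widetilde r$ uniformly, because the base is $\sim1$ once $\theta\not\approx0,\pi$ and $\widetilde r<1$. The admissible $x$-set, obtained from a range of $c_\star$ values of size comparable to $c_\star-1$ (as used in the dyadic decomposition $c_\star-1\sim2^t$), then has length $\Delta x\sim(c_\star-1)/\widetilde r$ by the mean value theorem and monotonicity. Since $\sin\theta\gtrsim1$, $d\theta\sim dx$, which gives $\Delta\theta\sim(c_\star-1)/\widetilde r$. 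The hypothesis $c_\star-1\lesssim\widetilde r$ ensures that this interval has length $\lesssim1$ and stays inside the region where $\sin\theta\gtrsim1$.

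The main obstacle is making precise what "$\Delta\theta$" measures. We interpret it as the $\theta$-length of $\{x: F(x)-1\in[c_\star-1,2(c_\star-1)]\}$, or more generally of the range under consideration. We also need the lower bound $|F'|\gtrsim\widetilde r$ uniformly in $a\in(1,5]$ when $\widetilde r$ is not small, which follows since all factors in $F'$ are then comparable to $1$.
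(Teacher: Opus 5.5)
Your proof is essentially the paper's argument. Both write $c_\star$ as a decreasing function of $x=\cos\theta$ for fixed $\widetilde r$, and read off (i) and (ii) from the values at $x=\pm1$. For (ii) the paper uses a Taylor expansion in $\widetilde r$, while you use $F(-1)\ge 1+a\widetilde r$ together with $|F'|\sim\widetilde r$.

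For (iii), both proofs convert a window of $c_\star$-values of size $c_\star-1$ into an $x$-window of size $(c_\star-1)/\widetilde r$. The paper inverts exactly, using that $c_\star^{-2/a}$ is affine in $\cos\theta$ with slope $2\widetilde r(1+\widetilde r^{\,a})^{-2/a}$. It measures $\Delta\theta=\theta-\theta_0$ from the level $c_\star=1$, which is equivalent to your mean-value bound.

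One auxiliary claim in your (i) is false: $(1+\widetilde r)^a\ge 1+\widetilde r^{\,a}+a\widetilde r$ fails for $1<a<2$ (e.g.\ $a=3/2$, $\widetilde r=1/2$). The bound you actually need, $1-F(1)\gtrsim_a\widetilde r$, is still true. The reason is that $g(t)=(1+t)^a-1-t^a$ satisfies $g'(t)=a\bigl[(1+t)^{a-1}-t^{a-1}\bigr]\ge a\min(1,2^{a-1}-1)$ on $[0,1]$. For $a\ge2$ this follows from superadditivity of $u^{a-1}$; for $1<a<2$ the bracket is decreasing in $t$.

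Combined with $|F'|\le 2a\widetilde r$ for $x\ge 0$, this gives $F<1$ on a neighborhood of $x=1$ whose size does not depend on $\widetilde r$. So the crossing-point expansion for $x_1$ is unnecessary. This is also slightly more careful than the paper's one-line proof of (i), which only uses $c_\star\approx F(1)$ even though $1-F(1)$ is merely of size $\widetilde r$. The paper's displayed inequality there should also read $<1$ rather than $>1$.
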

\begin{proof}
(i) If $\theta\approx 0$, then 
$$c_\star=\frac{|p|^a+|p_1|^a}{|p+p_1|^a}\approx\frac{1+\widetilde{r}^a}{\left(1+\widetilde{r}\right)^a}>1,$$
which contradicts $0<c_\star-1\ll 1$.\par
(ii) If $\theta\approx\pi$, then we have $\displaystyle{c_\star\approx\frac{1+\widetilde{r}^a}{\left(1-\widetilde{r}\right)^a}}$. Since $0<c_\star-1\ll 1$, we must have $0\le\widetilde{r}\ll 1$. Now we rewrite 
$$c_\star=\frac{|p|^a+|p_1|^a}{|p+p_1|^a}=\frac{1+\widetilde{r}^a}{\left(1+\widetilde{r}^2+2\widetilde{r}\cos\theta\right)^{a/2}}.$$
By performing Taylor expansion, we see that
\begin{align*}
    \left(1+\widetilde{r}^2+2\widetilde{r}\cos\theta\right)^{-a/2}&=1-\frac{a}{2}\left(\widetilde{r}^2+2\widetilde{r}\cos\theta\right)+\frac{a(a+2)}{2}\left(\widetilde{r}^2+2\widetilde{r}\cos\theta\right)^2+O\left(\widetilde{r}^3\right)\\
    &=1-a\widetilde{r}\cos\theta-\frac{a}{2}\widetilde{r}^2+\frac{a(a+2)}{2}\widetilde{r}^2\cos^2\theta+O\left(\widetilde{r}^3\right).
\end{align*}
As a result, we get
$$c_\star=\left(1+\widetilde{r}^a\right)\left[1-a\widetilde{r}\cos\theta+\left(\frac{a(a+2)}{2}\cos^2\theta-\frac{a}{2}\right)\widetilde{r}^2+O\left(\widetilde{r}^3\right)\right],$$
which yields that
$$c_\star-1=\widetilde{r}^a-a\widetilde{r}\cos\theta+\widetilde{r}^2\left(\frac{a(a+2)}{2}\cos^2\theta-\frac{a}{2}\right)+O\left(\widetilde{r}^{a+1}+\widetilde{r}^3\right).$$
Since $a>1$ and $\cos\theta\approx -1$, we conclude that $c_\star-1\sim \widetilde{r}$.\par
(iii) Finally, consider the case $\sin\theta\gtrsim1$. We set $\delta\triangleq c_\star-1$, with $0<\delta\ll1$, and denote by $\theta_0$ the solution when $c_\star=1$. That is,
$$\cos\theta_0=\frac{\left(1+\widetilde{r}^a\right)^{2/a}-1-\widetilde{r}^2}{2\widetilde{r}}.$$
It follows that
\begin{align*}
    &1+\widetilde{r}^2+2\widetilde{r}\cos\theta=\left(1+\widetilde{r}^a\right)^{2/a}\left(1+\delta\right)^{-2/a}\\
    \Rightarrow\qquad&2\widetilde{r}\left(\cos\theta-\cos\theta_0\right)=\left(1+\widetilde{r}^a\right)^{2/a}\left[\left(1+\delta\right)^{-2/a}-1\right]\\
    \Rightarrow\qquad&\cos\theta-\cos\theta_0\sim\frac{\left(1+\widetilde{r}^a\right)^{2/a}}{\widetilde{r}}\delta.
\end{align*}
Since $\sin\theta\gtrsim 1$, we have $\cos\theta_0-\cos\theta\sim\theta-\theta_0$. As a result, we obtain
$$0<\theta-\theta_0\sim\frac{\left(1+\widetilde{r}^a\right)^{2/a}}{\widetilde{r}}(c_\star-1)\sim \frac{c_\star-1}{\widetilde{r}},$$
where we used $\left(1+\widetilde{r}^a\right)^{2/a}\sim 1$ due to $0<\widetilde{r}<1$.
\end{proof}

\begin{cor}
     Assume $1<a\le 5$, $0<1-c_\star\ll 1$ and $\displaystyle{0<\widetilde{r}=\frac{r}{|p|}<1}$. We Still denote by $\theta$ the angle between $p$ and $p_1$. Then the following statements hold:\begin{enumerate}[label=(\roman*),itemsep=6pt]
        \item If $\theta\approx 0$, then $1-c_\star\sim \widetilde{r}$;
        \item The case $\theta\approx\pi$ is impossible;
        \item If $\sin\theta\gtrsim 1$ and $1-c_\star\lesssim\widetilde{r}$, then we have $\displaystyle{\Delta\theta\sim\frac{1-c_\star}{\widetilde{r}}}$. In other words, the length of the admissible $\theta$-interval is of order $\displaystyle{\frac{1-c_\star}{\widetilde{r}}}$.
    \end{enumerate}
\end{cor}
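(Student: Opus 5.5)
The plan is to repeat the argument of Lemma 6.4 with the sign of $c_\star-1$ reversed. Throughout I use the exact identity
$$c_\star=\frac{1+\widetilde r^{\,a}}{\left(1+\widetilde r^{\,2}+2\widetilde r\cos\theta\right)^{a/2}},$$
obtained by dividing numerator and denominator by $|p|^a$. Since $0<\widetilde r<1$, the numerator $1+\widetilde r^{\,a}$ and the factor $(1+\widetilde r^{\,a})^{2/a}$ are both $\sim1$.

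For (ii), I take $\theta\approx\pi$. The denominator is then $\approx(1-\widetilde r)^a\le 1$, while the numerator is $\ge1$. This gives $c_\star\gtrsim 1$, and more precisely $c_\star\ge 1$ up to the $\approx$ error. That contradicts $0<1-c_\star\ll1$ once the implicit constants in $\approx$ are fixed small enough. If this needs to be made quantitative, I would use the same expansion as in Lemma 6.4(ii) with $\cos\theta\approx-1$: the leading term $+a\widetilde r$ in $c_\star-1$ is positive, so $c_\star-1>0$, which is impossible.

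For (i), I take $\theta\approx0$ and first show that $\widetilde r\ll1$. Indeed, $c_\star\approx(1+\widetilde r^{\,a})/(1+\widetilde r)^a$, and this quantity is bounded away from $1$ unless $\widetilde r$ is small. Here I use that $(1+x^a)/(1+x)^a$ is strictly less than $1$ for $x\in(0,1]$ when $a>1$, with value at most $2^{1-a}$ at $x=1$. Once $\widetilde r\ll1$, I reuse the Taylor expansion from the proof of Lemma 6.4(ii):
$$c_\star-1=\widetilde r^{\,a}-a\widetilde r\cos\theta+O(\widetilde r^{\,2}).$$
With $\cos\theta\approx1$ and $a>1$, the term $-a\widetilde r$ dominates both $\widetilde r^{\,a}$ and $\widetilde r^{\,2}$, so $1-c_\star\sim\widetilde r$.

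For (iii), I set $\delta\triangleq 1-c_\star\in(0,1)$ small and define $\theta_0$ by $c_\star=1$, exactly as in Lemma 6.4(iii). Solving the identity gives
$$2\widetilde r(\cos\theta-\cos\theta_0)=(1+\widetilde r^{\,a})^{2/a}\bigl[(1-\delta)^{-2/a}-1\bigr]\sim\delta,$$
so $\cos\theta-\cos\theta_0\sim\delta/\widetilde r$. The hypothesis $1-c_\star\lesssim\widetilde r$ keeps this quantity $O(1)$, so the expression is meaningful. Since $\sin\theta\gtrsim1$ on the relevant range, the mean value theorem converts this into $|\theta-\theta_0|\sim\delta/\widetilde r$, which gives the admissible interval length $\Delta\theta\sim(1-c_\star)/\widetilde r$.

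The only delicate point I expect is in (iii): making sure that $\sin\theta\gtrsim1$ holds uniformly on the whole segment between $\theta$ and $\theta_0$. I would handle this by shrinking the constant in $1-c_\star\lesssim\widetilde r$ so that $|\cos\theta-\cos\theta_0|$ is small. With that smallness, $\theta_0$ also stays away from $0$ and $\pi$. Everything else is a sign change from Lemma 6.4.
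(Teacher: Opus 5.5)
Your proposal is correct and follows the paper's route: the paper only says the proof is identical to Lemma 6.4, and you carry out that argument with the sign of $c_\star-1$ reversed. In that mirroring, part (i) of the corollary corresponds to Lemma 6.4(ii) (the Taylor expansion), part (ii) to Lemma 6.4(i), and part (iii) is the same $\theta_0$ computation. You also correctly use $(1+x^a)/(1+x)^a<1$ for $x\in(0,1]$, $a>1$; the paper's Lemma 6.4(i) misstates this as $>1$. In (ii) you can drop the hedge about the ``$\approx$ error'': $\cos\theta\le-\widetilde r/2$ already forces $|\rho|\le|p|$, hence $c_\star\ge 1+\widetilde r^{\,a}>1$ exactly.
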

\begin{proof}
    The proof is identical to that of Lemma 6.4.
\end{proof}

\begin{remark}
When $\widetilde r>1$, set
$$\overline{r}=\frac{|p|}{r}=\frac{1}{\widetilde{r}}.$$
Then $c_\star(\widetilde{r})=c_\star(\overline{r})$. Hence, after exchanging the roles of $\widetilde r$ and $\overline r$, we can obtain the corresponding results parallel to Lemma 6.4 and Corollary 6.5.
\end{remark}

\begin{lemma}
    Assume $a=1$, $0<c_\star-1\ll 1$ and $\displaystyle{0<\widetilde{r}=\frac{r}{|p|}<1}$. As before, we still\vspace{0.5em} denote by $\theta$ the angle between $p$ and $p_1$. Then there exists $0<\eps\ll 1$ such that the following statements hold:\begin{enumerate}[label=(\roman*),itemsep=6pt]
        \item If $\eps\le\theta<\pi$, then $c_\star-1\sim \widetilde{r}$;
        \item If $0<\theta<\eps$, then we have $\displaystyle{\Delta\theta\sim\sqrt{\frac{c_\star-1}{\widetilde{r}}}}$. In other words, the length of the admissible $\theta$-interval is of order $\displaystyle{\sqrt{\frac{c_\star-1}{\widetilde{r}}}}$.
    \end{enumerate}
\end{lemma}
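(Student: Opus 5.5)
The plan is to write $c_\star$ explicitly for $a=1$ in terms of $\widetilde r$ and $\theta$, and then read off both statements from how $c_\star-1$ varies in $\theta$. Normalizing by $|p|$ gives
$$
c_\star=\frac{1+\widetilde r}{\sqrt{1+\widetilde r^2+2\widetilde r\cos\theta}}=\frac{1+\widetilde r}{|e+\widetilde r\sigma|},
$$
where $e,\sigma$ are unit vectors with angle $\theta$. Squaring gives the exact identity
$$
c_\star^2-1=\frac{(1+\widetilde r)^2-(1+\widetilde r^2+2\widetilde r\cos\theta)}{1+\widetilde r^2+2\widetilde r\cos\theta}=\frac{2\widetilde r(1-\cos\theta)}{1+\widetilde r^2+2\widetilde r\cos\theta}.
$$
Since $0<c_\star-1\ll1$, we have $c_\star^2-1\sim c_\star-1$, so everything reduces to
$$
c_\star-1\sim \frac{\widetilde r\,(1-\cos\theta)}{1+\widetilde r^2+2\widetilde r\cos\theta}.
$$

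For (i), take $\varepsilon\le\theta<\pi$. Then $1-\cos\theta\sim 1$, because it is bounded below by $1-\cos\varepsilon\sim\varepsilon^2$ with $\varepsilon$ fixed, and above by $2$. This gives $c_\star-1\sim \widetilde r/|e+\widetilde r\sigma|^2$.

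The main obstacle is the denominator $|e+\widetilde r\sigma|^2=1+\widetilde r^2+2\widetilde r\cos\theta$, which degenerates when $\widetilde r\to1$ and $\theta\to\pi$. I would resolve it using the hypothesis $c_\star-1\ll1$. The numerator $\widetilde r(1-\cos\theta)$ is $\gtrsim\widetilde r$, so smallness of $c_\star-1$ forces $\widetilde r\ll |e+\widetilde r\sigma|^2\le(1+\widetilde r)^2\le 4$. Hence $\widetilde r\ll1$, and then $|e+\widetilde r\sigma|^2=1+O(\widetilde r)\sim1$. Plugging this back in gives $c_\star-1\sim\widetilde r$.

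For (ii), take $0<\theta<\varepsilon$. Here the denominator is $(1+\widetilde r)^2-O(\widetilde r\theta^2)\sim1$ because $\widetilde r<1$, and $1-\cos\theta\sim\theta^2$. So on this range $c_\star-1\sim\widetilde r\theta^2$, and more precisely $c_\star$ is a smooth function of $\theta$ that is increasing in $\theta$. The admissible set of $\theta$ with $c_\star$ at a prescribed value near $1$ therefore satisfies $\theta\sim\sqrt{(c_\star-1)/\widetilde r}$.

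What remains is to say what ``$\Delta\theta$'' means, as in Lemma 6.4(iii): the length of the $\theta$-interval on which $c_\star-1$ lies in a dyadic range $\sim\delta$. That interval is $\theta\in[\theta_1,\theta_2]$ with $\theta_{1,2}\sim\sqrt{\delta/\widetilde r}$, because $c_\star$ is monotone in $\theta$ on $(0,\varepsilon)$ with $\frac{d}{d\theta}(c_\star-1)\sim\widetilde r\,\theta$. Its length is therefore $\sim\sqrt{\delta/\widetilde r}$, which is the claim. I would also note that, unlike the case $1<a\le5$, the point $\theta\approx0$ is now allowed, because $c_\star=1$ exactly at $\theta=0$ when $a=1$. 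This explains the square-root behaviour, in contrast with the linear law of Lemma 6.4.
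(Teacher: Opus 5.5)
Your proof is correct and follows the paper's argument: both reduce to the exact relation $c_\star-1\sim \widetilde r\,(1-\cos\theta)$ with a denominator of size $\sim 1$ (the paper rationalizes where you square), and then read off (i) from $1-\cos\theta\sim 1$ and (ii) from $1-\cos\theta\sim\theta^2$. The paper gets the denominator bound in one step from $|e+\widetilde r\sigma|=(1+\widetilde r)/c_\star\sim 1$, so your bootstrap through $\widetilde r\ll 1$ is unnecessary, while your derivative bound $\partial_\theta c_\star\sim \widetilde r\,\theta$ makes the meaning of $\Delta\theta$ more precise than the paper does.
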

\begin{proof}
Recall that in the case of $a=1$, we have
$$c_\star=\frac{|p|+|p_1|}{|p+p_1|}=\frac{1+\widetilde{r}}{\sqrt{1+\widetilde{r}^2+2\widetilde{r}\cos\theta}}=\frac{1+\widetilde{r}}{\sqrt{\left(1+\widetilde{r}\right)^2-2\widetilde{r}(1-\cos\theta)}}.$$
This yields that
\begin{align}
    c_\star-1&=\frac{1+\widetilde{r}-\sqrt{\left(1+\widetilde{r}\right)^2-2\widetilde{r}(1-\cos\theta)}}{\sqrt{\left(1+\widetilde{r}\right)^2-2\widetilde{r}(1-\cos\theta)}}\notag\\
    &=\frac{2\widetilde{r}(1-\cos\theta)}{\sqrt{\left(1+\widetilde{r}\right)^2-2\widetilde{r}(1-\cos\theta)}\cdot\left[1+\widetilde{r}+\sqrt{\left(1+\widetilde{r}\right)^2-2\widetilde{r}(1-\cos\theta)}\right]}\notag\\
    &\sim \widetilde{r}(1-\cos\theta),\label{6.6}
\end{align}
where in the last step we used that 
$$1\sim 1+\widetilde{r}\approx\sqrt{(1+\widetilde{r})^2-2\widetilde{r}(1-\cos\theta)}$$
due to the fact that
$$c_\star=\frac{1+\widetilde{r}}{\sqrt{\left(1+\widetilde{r}\right)^2-2\widetilde{r}(1-\cos\theta)}}\approx 1.$$\par
Now, if $\eps\le\theta<\pi$, then $1-\cos\theta\sim 1$. As a result, (\ref{6.6}) gives 
$$c_\star-1\sim \widetilde{r}.$$
On the other hand, if $0<\theta<\eps$, then $1-\cos\theta\sim\theta^2$. In view of (\ref{6.6}), we obtain $c_\star-1\sim\widetilde{r}\theta^2$, which is equivalent to 
$$\Delta\theta\sim\sqrt{\frac{c_\star-1}{\widetilde{r}}}.$$
\end{proof}

\begin{remark}
  \ \par  
    \begin{enumerate}[label=(\roman*),itemsep=6pt]
        \item When $a=1$, the condition $c_\star<1$ cannot occur. Therefore, in the endpoint case $a=1$, it suffices to consider the regime $c_\star\geq 1$.
        \item When $\widetilde r>1$, set 
        $$\overline{r}=\frac{|p|}{r}=\frac{1}{\widetilde{r}}$$
        as before. Since $c_\star(\widetilde r)=c_\star(\overline r)$, the corresponding results follow from the case $\widetilde r<1$ by exchanging the roles of $\widetilde r$ and $\overline r$.
    \end{enumerate}
\end{remark}

\section*{Acknowledgements}

The author would like to thank Yu Deng for suggesting this problem and his comments on an earlier version of this manuscript.

\bibliographystyle{plain}
\bibliography{references}

\end{document}